\theoremstyle{plain}
\newtheorem*{thm*}{\protect\theoremname}
\theoremstyle{plain}
\newtheorem{thm}{\protect\theoremname}[section]
\theoremstyle{remark}
\newtheorem{notation}[thm]{\protect\notationname}
\theoremstyle{remark}
\newtheorem*{acknowledgement*}{\protect\acknowledgementname}
\theoremstyle{definition}
\newtheorem{defn}[thm]{\protect\definitionname}
\theoremstyle{definition}
\newtheorem{example}[thm]{\protect\examplename}
\theoremstyle{remark}
\newtheorem{rem}[thm]{\protect\remarkname}
\theoremstyle{plain}
\newtheorem{prop}[thm]{\protect\propositionname}
\theoremstyle{plain}
\newtheorem{lem}[thm]{\protect\lemmaname}
\theoremstyle{plain}
\newtheorem{cor}[thm]{\protect\corollaryname}
\newcommand{\bjarrow}{
  \hookrightarrow\mathrel{\mspace{-15mu}}\rightarrow
}
\newcommand{\End}{\operatorname{End}}
\newcommand{\Id}{\operatorname{Id}}
\newcommand{\Hom}{\operatorname{Hom}}
\newcommand{\Aut}{\operatorname{Aut}}
\newcommand{\tr}{\operatorname{tr}}
\renewcommand{\r}{\operatorname{r}}
\newcommand{\Tr}{\operatorname{Tr}}
\newcommand{\lui}[2]{\prescript{#1}{}{#2}}
\newcommand{\R}{\mathcal{R}}
\newcommand{\Z}{\mathbb{Z}}
\newcommand{\Zp}[1][p]{\Z_{#1}}
\newcommand{\F}{\mathcal{F}}
\newcommand{\Fc}{\F^c}
\renewcommand{\H}{\mathcal{H}}
\newcommand{\G}{\mathcal{G}}
\newcommand{\FAB}[2]{\F_{#1}\left(#2\right)}
\newcommand{\FSG}[1][G]{\FAB{S}{#1}}
\newcommand{\FHH}[1][H]{\F_{#1}} 
\newcommand{\Orbitize}[1]{\mathcal{O}\left(#1\right)}
\newcommand{\OF}{\Orbitize{\F}}
\newcommand{\ONF}{\Orbitize{\NF}}
\newcommand{\OFc}{\Orbitize{\Fc}}
\newcommand{\additiveCompletion}[1]{#1_{\sqcup}}
\newcommand{\aOFc}{\additiveCompletion{\OFc}}
\newcommand{\aOF}{\additiveCompletion{\OF}}
\newcommand{\Norm}[2]{N_{#1}\left(#2\right)}
\newcommand{\NF}{N_{\F}}
\newcommand{\NFH}[1][H]{N_{\F}\left(#1\right)}
\newcommand{\NSH}[1][H]{\Norm{S}{#1}}
\newcommand{\NS}{N_S}
\renewcommand{\NG}[1][H]{\Norm{G}{#1}}
\newcommand{\FmuFR}[2][\R]{\mu_{#1}\left(#2\right)}
\newcommand{\muFR}[1][\R]{\FmuFR[#1]{\F}}
\newcommand{\muFcR}[1][\R]{\FmuFR[#1]{\Fc}}
\newcommand{\FmuFRmod}[2][\R]{\FmuFR[#1]{#2}\operatorname{-mod}}
\newcommand{\muFRmod}[1][\R]{\muFR[#1]\operatorname{-mod}}
\newcommand{\I}{\mathcal{I}}
\newcommand{\J}{\mathcal{J}}
\newcommand{\MackHR}[2][\R]{\text{Mack}_{#1}\left(#2\right)}
\newcommand{\MackFHR}[3][\R]{\text{Mack}_{#1}^{{#2}^c}\left(#3\right)}
\newcommand{\MackFR}[1][\R]{\MackHR[#1]{\F}}
\newcommand{\MackFcR}[1][\R]{\MackHR[#1]{\F^c}}
\newcommand{\BGR}[2][\R]{B_{#1}^{#2}}
\newcommand{\BFcR}[1][\R]{\BGR[#1]{\Fc}}
\providecommand{\acknowledgementname}{Acknowledgement}
\providecommand{\corollaryname}{Corollary}
\providecommand{\definitionname}{Definition}
\providecommand{\examplename}{Example}
\providecommand{\lemmaname}{Lemma}
\providecommand{\notationname}{Notation}
\providecommand{\propositionname}{Proposition}
\providecommand{\remarkname}{Remark}
\providecommand{\theoremname}{Theorem}
\begin{document}
\title{Green correspondence on centric Mackey functors over fusion systems.}
\author{Marco Praderio Bova}
\maketitle
\begin{abstract}
In this paper we give a definition of (centric) Mackey functor over
a fusion system (Definitions \ref{def:Mackey-functor.} and \ref{def:F-centric-Mackey-functor.})
which generalizes the notion of Mackey functor over a group. In this
context we prove that, given some conditions on a related ring, the
centric Burnside ring over a fusion system (as defined in \cite{BurnsideRingFusionSystemsDiazLibman})
acts on any centric Mackey functor (Proposition \ref{prop:Action-of-centric-burnside-ring.}).
We also prove that the Green correspondence holds for centric Mackey
functors over fusion systems (Theorem \ref{thm:Green-correspondence.}).
As a mean to prove this we introduce a notion of relative projectivity
for centric Mackey functors over fusion systems (Definition \ref{def:Relative-projectivity.})
and provide a decomposition of a particular product in $\aOFc$ (Definition
\ref{def:Additive-extension}) in terms of the product in $\additiveCompletion{\Orbitize{\left(\NF\right)^{c}}}$
(Theorem \ref{thm:decomposition-direct-product.}).
\end{abstract}
\tableofcontents{}

\section{Introduction.}

A Mackey functor is an algebraic structure possessing operations which
behave like the induction, restriction and conjugation maps in group
representation theory. The concept of Mackey functor has been generalized
to algebraic structures other than groups (see for example \cite{TwoClassificationWebb}).
We are particularly interested on their generalization to fusion systems.

Fusion systems, as defined by Puig in \cite{FrobeniusCategoriesPuig}
(where he calls them Frobenius Categories), are categories intended
to convey the $p$-local structure of a finite group $G$. Oddly enough
not all fusion systems can be derived from finite groups. This gives
them an interest of their own. 

When generalizing to fusion systems, Mackey functors inevitably lose
some properties. One of these is the existence of a Green correspondence.

The Green correspondence first appeared in \cite{GREENTransferTheoremForModularRepresentations}
under the following form.
\begin{thm*}
(\cite[Theorem 2]{GREENTransferTheoremForModularRepresentations})
Let $p$ be a prime, let $\R$ be a complete local $PID$ with residue
field of characteristic $p$, let $G$ be a finite group and let $H$
be a $p$-subgroup of $G$. There exists a one to one correspondence
between finitely generated indecomposable $\R G$-modules with vertex
$H$ and finitely generated $\R\NG$-modules with vertex $H$.
\end{thm*}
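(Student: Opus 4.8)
Write $L=\NG$. Since $\R$ is local with residue field of characteristic $p$, every finitely generated indecomposable $\R G$-module $M$ has a vertex---a $p$-subgroup $Q\le G$, unique up to $G$-conjugacy, minimal with respect to $M$ being relatively $Q$-projective---and a source, an indecomposable $\R Q$-module $S$ with $M\mid\mathrm{Ind}_{Q}^{G}S$; the same holds over $\R L$. The correspondence will send an indecomposable $\R G$-module $M$ of vertex $H$ to the indecomposable summand $g(M)$ of $\mathrm{Res}_{L}^{G}M$ which again has vertex $H$, and an indecomposable $\R L$-module $V$ of vertex $H$ to the indecomposable summand $f(V)$ of $\mathrm{Ind}_{L}^{G}V$ which again has vertex $H$; the plan is to show $f$ and $g$ well defined and mutually inverse. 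Everything is organised around the two families of subgroups
\[
\mathfrak{X}=\{\,H\cap{}^{g}H:g\in G\setminus L\,\},\qquad
\mathfrak{Y}=\{\,{}^{g}H\cap L:g\in G\setminus L\,\}.
\]
Each member of $\mathfrak{X}$ is a proper subgroup of $H$ (if $H={}^{g}H$ then $g\in N_{G}(H)=L$), and no member of $\mathfrak{Y}$ is $L$-conjugate to $H$ (its order is $\le|H|$, with equality only for $g\in L$); so, using that $H$ is normal in $L$ and is not $G$-conjugate to a proper subgroup of itself, a module that is relatively projective with respect to $\mathfrak{X}$, or with respect to $\mathfrak{Y}$, cannot have vertex $H$.

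The technical engine is a pair of Mackey decompositions. Writing $M\mid\mathrm{Ind}_{H}^{G}S$ and expanding $\mathrm{Res}_{L}^{G}\mathrm{Ind}_{H}^{G}S$ by the Mackey formula, the trivial double coset contributes $\mathrm{Ind}_{H}^{L}S$ and every other double coset contributes a module induced from some ${}^{g}H\cap L\in\mathfrak{Y}$; hence, by Krull--Schmidt, $\mathrm{Res}_{L}^{G}M$ is a direct sum of indecomposable summands of $\mathrm{Ind}_{H}^{L}S$ together with a relatively $\mathfrak{Y}$-projective module, and one must show that exactly one of those summands, $g(M)$, has vertex $H$, that $g(M)$ has source $S$, and that $M\mid\mathrm{Ind}_{L}^{G}g(M)$. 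Symmetrically, for $V$ indecomposable of vertex $H$ with source $S'$, one analyses $\mathrm{Ind}_{L}^{G}V$ (via $V\mid\mathrm{Ind}_{H}^{L}S'$ and a Mackey expansion) and obtains a decomposition $\mathrm{Ind}_{L}^{G}V\cong f(V)\oplus X$ with $f(V)$ indecomposable of vertex $H$ and source $S'$, $X$ relatively $\mathfrak{X}$-projective, and $V\mid\mathrm{Res}_{L}^{G}f(V)$.

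Existence of a vertex-$H$ summand is the easy half: if every indecomposable summand of $\mathrm{Res}_{L}^{G}M$ had vertex a proper subgroup of $H$, then restricting once more to $H$ and using $S\mid\mathrm{Res}_{H}^{L}\mathrm{Ind}_{H}^{L}S$ (valid since $H\trianglelefteq L$) would force $S$ to have vertex strictly smaller than $H$. The hard half---and the step I expect to be the main obstacle---is \emph{uniqueness}: that $\mathrm{Res}_{L}^{G}M$ has only one indecomposable summand of vertex $H$, carrying the source of $M$. This is precisely the interaction between relative projectivity and the Mackey decomposition packaged as the Burry--Carlson--Puig lemma, and it is the statement whose fusion-system analogue the rest of the paper is built to supply (Definition \ref{def:Relative-projectivity.} and Theorem \ref{thm:decomposition-direct-product.}). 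Granting it, the bijection is formal: from $\mathrm{Ind}_{L}^{G}V=f(V)\oplus X$ one gets $V\mid\mathrm{Res}_{L}^{G}\mathrm{Ind}_{L}^{G}V=\mathrm{Res}_{L}^{G}f(V)\oplus\mathrm{Res}_{L}^{G}X$ (the divisibility by the Mackey formula), and $\mathrm{Res}_{L}^{G}X$, being relatively $\mathfrak{X}$-projective, has no summand of vertex $H$, so $V\mid\mathrm{Res}_{L}^{G}f(V)$ and hence $V\cong g(f(V))$ by uniqueness; the composite $f\circ g$ is treated symmetrically. Since $f$ and $g$ respect isomorphism and preserve the vertex $H$, they descend to the asserted one-to-one correspondence on isomorphism classes.
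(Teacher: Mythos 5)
The paper never proves this theorem: it is quoted from Green's original article purely as motivation, so there is no internal proof to measure your attempt against, and I can only judge the sketch on its own terms. The strategy you describe is the standard module-theoretic proof and is sound in outline, but as written it is a plan with the decisive step missing. The entire content of the theorem is the claim that $\mathrm{Res}^{G}_{L}M$ has \emph{exactly one} indecomposable summand of vertex $H$ with the rest relatively $\mathfrak{Y}$-projective, and dually that $\mathrm{Ind}^{G}_{L}V$ has exactly one such summand with the rest relatively $\mathfrak{X}$-projective; you state these two decompositions as things ``one must show'' and then explicitly proceed by granting them. Two remarks on that. First, the uniqueness does not require Burry--Carlson--Puig (which is a later strengthening): it follows from the two decompositions together with Krull--Schmidt by the usual back-and-forth, but the decompositions themselves -- in particular that after writing $M\mid\mathrm{Ind}_{H}^{G}S$ the nontrivial double cosets contribute only $\mathfrak{Y}$-projective pieces, and that at most one summand survives with full vertex -- are precisely what must be proved. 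Second, your justification that no member of $\mathfrak{Y}$ is $L$-conjugate to $H$ is wrong as stated: $\left|{}^{g}H\cap L\right|=\left|H\right|$ only says ${}^{g}H\le L$, which can happen for $g\notin L$ (e.g.\ $G=S_{4}$, $H$ generated by a double transposition, $L=D_{8}$). The correct argument is that ${}^{x}\left({}^{g}H\cap L\right)=H$ with $x\in L$ forces $H\le{}^{xg}H$, hence $H={}^{xg}H$ by order, hence $xg\in\NG=L$ and $g\in L$.

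For comparison with what the paper actually does for its fusion-system analogue (Theorem \ref{thm:Green-correspondence.}): the route there is closer to Green's original transfer-theorem argument than to the module-level Mackey argument you sketch. Vertices are converted, via Higman's criterion (Theorem \ref{thm:Higman's-criterion.}), into statements about the transfer ideals $\Tr_{H}^{\F}$, $\Tr_{\mathcal{X}}^{\F}$, $\Tr_{\mathcal{Y}}^{\NF}$ of endomorphism rings (your $\mathfrak{X}$ and $\mathfrak{Y}$ correspond to the families of Notation \ref{nota:X-and-Y.}), and the uniqueness of the correspondent is extracted from an abstract statement about local idempotents and near-isomorphisms (Proposition \ref{prop:Green-correspondence-for-endomorphisms.}); the role of your two Mackey decompositions is played by Lemmas \ref{lem:Mackey-formula-induction-restriction-to-normalizer.} and \ref{lem:induction-from-normalizer-followed-by-restriction-to-normalizer.}, Proposition \ref{prop:Conjecture-workaround.} and Theorem \ref{thm:decomposition-direct-product.}. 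Supplying the analogues of exactly the steps you deferred is what occupies Sections \ref{sec:Relative-projectivity-and-the-Higman's-criterion.} and \ref{sec:Green-correspondence.}.
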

This result was later generalized in \cite{GreenAxiomaticRepresentation}
and \cite{SASAKI198298} to Green functors and Mackey functors over
groups respectively.

In this paper we will prove that, even thought there is in general
no Green correspondence for Mackey functors over fusion systems, a
similar result can be found for centric Mackey functors over a fusion
system (see Definition \ref{def:F-centric-Mackey-functor.} and Theorem
\ref{thm:Green-correspondence.}). This can be used in order to study
centric Mackey functors over a fusion system $\F$ in terms of Mackey
functors over fusion systems of the form $N_{\F}\left(H\right)$ (see
Example \ref{exa:definition-NF.}) with $H\in\Fc$ (see Definition
\ref{def:F-centric.}) fully $\F$-normalized (see Definition \ref{def:Fully-F-normalized.}).
It is known (see \cite[Section 4]{SubgroupFamiliesControllingpLocalFiniteGroups})
that fusion systems of this form derive from finite groups. This makes
them easier to work with than other fusion systems and, therefore,
motivates the interest in proving that the Green correspondence holds
for centric Mackey functors over fusion systems.

The paper is organized as follows.

In Section \ref{sec:Background.} we will briefly recall the definitions
of (saturated) fusion system (Definitions \ref{def:Fusion-system.}
and \ref{def:Saturated-fusion-system.}), of (centric) Mackey functor
over a fusion system (Definitions \ref{def:Mackey-functor.} and \ref{def:F-centric-Mackey-functor.})
and of centric Burnside ring of a fusion system (Definition \ref{def:centric-Burnside-ring.}).
In this section we will also recall some well known properties regarding
these concepts and prove 3 further results. The first one (Proposition
\ref{prop:Decomposition-centric-Mackey-functor.}) will describe a
decomposition of certain induced Mackey functors (see Definition \ref{def:Restriction-induction-and-conjugation-functors.}).
The second result (Lemma \ref{lem:Mackey-formula-induction-restriction.})
will allow us to rewrite the composition of certain induction and
restriction functors (see Definition \ref{def:Restriction-induction-and-conjugation-functors.}).
The third one (Proposition \ref{prop:Action-of-centric-burnside-ring.})
will, under certain conditions concerning a related ring, describe
an action of the centric Burnside ring over a fusion system on any
centric Mackey functor over that fusion system.

In Section \ref{sec:Relative-projectivity-and-the-Higman's-criterion.}
we will introduce the concept of relative projectivity of a Mackey
functor over a fusion system (Definition \ref{def:Relative-projectivity.})
and prove that Higman's criterion holds for Mackey functors over fusion
systems (Theorem \ref{thm:Higman's-criterion.}). To do this we will
need to define the transfer and restriction maps (Definition \ref{def:Transfer-and-restriction.})
and list some of the properties they satisfy (Proposition \ref{prop:Properties-of-transfer-restriction-and-conjugation-maps.}).
These properties will later be needed in Subsections \ref{subsec:Restriction-map-from-trHF-to-trHNF+trYNF.}
and \ref{subsec:decomposition-of-product-in-aOFc.}.

We will conclude with Section \ref{sec:Green-correspondence.} where
we will prove our two main results (Theorems \ref{thm:decomposition-direct-product.}
and \ref{thm:Green-correspondence.}). In Subsection \ref{subsec:Categorical-version.}
we will state and prove Proposition \ref{prop:Green-correspondence-for-endomorphisms.}
which generalizes \cite[Proposition 4.34]{GreenAxiomaticRepresentation}
(see Example \ref{exa:Green.correspondence-Green-functors.}) and
is key too proving that the Green correspondence holds for centric
Mackey functors over fusion systems. Subsections \ref{subsec:N-is-a-direct-summand-of-Ninduction-restriction.}-\ref{subsec:Transfer-map-from-trHNF-to-trHF.}
will be dedicated to developing the tools necessary to prove that
Proposition \ref{prop:Green-correspondence-for-endomorphisms.} can
be applied in the context of centric Mackey functors over fusion systems.
More precisely, during these subsections, we will study different
compositions of the induction and restriction functors (see Definition
\ref{def:Restriction-induction-and-conjugation-functors.}) and of
transfer and restriction maps (see Definition \ref{def:Transfer-and-restriction.})
and prove Theorem \ref{thm:decomposition-direct-product.} which will
allow us to write certain products in $\aOFc$ (Definition \ref{def:Additive-extension})
in terms of products in $\additiveCompletion{\mathcal{O}\left(N_{\F}\left(H\right)\right)}$
for some fully $\F$-normalized (see Definition \ref{def:Fully-F-normalized.}),
$\F$-centric (see Definition \ref{def:F-centric.}) group $H$. Finally,
we will conclude in Subsection \ref{subsec:Proof-of-Green-correspondence.}
where we will use the developed tools in order to apply Proposition
\ref{prop:Green-correspondence-for-endomorphisms.} in the context
of centric Mackey functors over fusion systems and deduce from it
Theorem \ref{thm:Green-correspondence.} which shows that the Green
correspondence holds in the context of centric Mackey functors over
fusion systems.

We conclude this introduction with a brief summary of some common
notation that we will be using throughout the paper
\begin{notation}
\label{nota:Initial-notation.}$\phantom{.}$
\begin{itemize}
\item Given a unital ring $\R$ we will denote by $1_{\R}$ its multiplicative
identity element.
\item Given a group $G$ we will denote by $1_{G}$ the neutral element
of $G$.
\item Given groups $H,K$ such that $H\le K$ we will denote by $\iota_{H}^{K}$
(or simply by $\iota$ if $H$ and $K$ are clear) the natural inclusion
map from $H$ to $K$.
\item All modules over rings will be understood to be left modules unless
otherwise specified.
\item Given rings $\R,\mathcal{S}$ and $\mathcal{S}'$ such that $\R\subseteq\mathcal{S},\mathcal{S}'$
and modules $M$ and $N$ over $\mathcal{S}$ and $\mathcal{S}'$
respectively we will write $M\cong_{\R}N$ to denote that $M$ and
$N$ are equivalent as $\R$-modules.
\item Given finite groups $H$ and $K$ and an $\left(H,K\right)$-biset
$X$ (e.g. we can take $X:=J$ for some finite group $J$ satisfying
$H,K\le J$) we will denote by $\left[H\backslash X/K\right]$ any
choice of representatives of $\left(H,K\right)$-orbits of $X$.
\item Let $\mathcal{D}\subseteq\mathcal{C}$ be categories, unless otherwise
specified, we will write $X\in\mathcal{C}$ to denote that $X$ is
an object of $\mathcal{C}$ and $X\in\mathcal{C}\backslash\mathcal{D}$
to denote that $X\in\mathcal{C}$ and $X\not\in\mathcal{D}$.
\item Given a fusion system $\F$, objects $A,B\in\F$ and a morphism $\varphi\in\Hom_{\F}\left(A,B\right)$,
we will denote by $\overline{\varphi}\in\Hom_{\OF}\left(A,B\right)$
the morphism in $\OF$ with representative $\varphi$ (see Definition
\ref{def:Orbit-category.}).
\item Given a category $\mathcal{C}$, objects $X,Y,X'$ and $Y'$ in $\mathcal{C}$
and a morphism $\varphi\in\Hom_{\mathcal{C}}\left(X,Y\right)$ we
will denote by $\varphi_{*}$ and $\varphi^{*}$ the following induced
maps between hom sets 
\begin{align*}
\varphi_{*}:=\Hom\left(X',-\right)\left(\varphi\right): & \underset{\psi}{\Hom\left(X',X\right)}\underset{\to}{\to}\underset{\varphi\psi}{\Hom\left(X',Y\right)},\\
\varphi^{*}:=\Hom\left(-,Y'\right)\left(\varphi\right): & \underset{\theta}{\Hom\left(Y,Y'\right)}\underset{\to}{\to}\underset{\theta\varphi}{\Hom\left(X,Y'\right)}.
\end{align*}
\end{itemize}
\end{notation}

\begin{acknowledgement*}
The author would like to thank his PhD supervisor Nadia Mazza for
her guidance on his research and the seemingly limitless amount of
resources she is able to provide. He would also like to thank Lancaster
University for the funding provided to conduct his PhD and the reviewers
for their big help on improving the quality of the paper.
\end{acknowledgement*}

\section{\label{sec:Background.}Background and first results.}

In this section we will review the concepts of fusion systems, of
Mackey functors over a fusion system and of centric Burnside ring
of a fusion system. The main results shown in this section are:
\begin{itemize}
\item Proposition \ref{prop:properties-HXK.}: which provide us with ways
of rewriting the sets $\left[H\times K\right]$ (see Definition \ref{def:HX_FK}).
These will become very useful in future calculations.
\item Propositions \ref{prop:Mackey-algebra-basis.} and \ref{prop:Decomposition-centric-Mackey-functor.}
and Lemma \ref{lem:Mackey-formula-induction-restriction.}: which
will translate \cite[Propositions 3.2 and 5.3]{StructureMackeyFunctors}
to the context of Mackey functors over fusion systems thus providing
us with some insight concerning the Mackey algebra (see Definition
\ref{def:Mackey-algebra.}) and the induction and restriction functors
(see Definition \ref{def:Restriction-induction-and-conjugation-functors.}).
\item Proposition \ref{prop:Action-of-centric-burnside-ring.}: which will
translate \cite[Proposition 9.2]{StructureMackeyFunctors} to the
context of centric Mackey functors over fusion systems by first describing
an action of the centric Burnside ring of a fusion system (see Definition
\ref{def:centric-Burnside-ring.}) on any centric Mackey functor over
a fusion system and then rewriting it in terms of the morphisms $\theta^{H}$
and $\theta_{H}$ (see Definition \ref{def:theta_S-and-theta^S.}). 
\end{itemize}
The reader already familiar with these concepts may safely skip this
section keeping in mind the results mentioned above.

\subsection{\label{subsec:Fusion-systems.}Fusion systems.}

What follows is a brief introduction to fusion systems which mostly
aims to establish some notation. For a more thorough introduction
please refer to \cite{IntroductionToFusionSystemsLinckelmann}. In
this subsection we will also report the main results of \cite[Section 4]{FrobeniusCategoriesPuig}
which, given a saturated fusion system $\F$, proves constructively
the existence of products and pullbacks in the category $\aOFc$ (see
Definition \ref{def:Additive-extension} and Propositions \ref{prop:Pullback-OFc.}
and \ref{prop:Product-OFc.}). We will conclude this subsection with
Proposition \ref{prop:properties-HXK.} which will allow us to write
products in $\aOFc$ in terms of other products in the same category.
\begin{defn}
\label{def:Fusion-system.}Let $p$ be a prime and let $S$ be a finite
$p$-group. A\textbf{ fusion system} \textbf{over }$S$ is a category
$\F$ having as objects subgroups of $S$ and satisfying the following
properties for every $H,K\le S$:
\begin{enumerate}
\item Every morphism $\varphi\in\Hom_{\F}\left(H,K\right)$ is an injective
group homomorphism and the composition of morphisms in $\F$ is the
same as the composition of morphisms in the category of groups.
\item $\Hom_{S}\left(H,K\right)\subseteq\Hom_{\F}\left(H,K\right)$. That
is, every group homomorphism from $H$ to $K$ that can be described
as conjugation by an element of $S$ followed by inclusion is a morphism
in $\F$.
\item For every $\varphi\in\Hom_{\F}\left(H,K\right)$ let $\tilde{\varphi}:H\to\varphi\left(H\right)$
be the isomorphism obtained by looking at $\varphi$ as an isomorphism
onto its image. Both $\tilde{\varphi}$ and $\tilde{\varphi}^{-1}$
are isomorphisms in $\F$.
\end{enumerate}
\end{defn}

\begin{example}
\label{exa:fusion-system.}The most common example of fusion system
is obtained by taking a finite group $G$ containing a $p$-group
$S$ and defining $\FSG$ as the fusion system over $S$ whose morphisms
are given by conjugation with elements of $G$ followed by inclusion.
When $S=G$ we will often write $\FHH[S]$ instead of $\F_{S}\left(S\right)$
although the later is the more common notation in the literature.
\end{example}

Definition \ref{def:Fusion-system.} and Example \ref{exa:fusion-system.}
motivate the introduction of the following notation.
\begin{notation}
\label{nota:p,S,F}From now on, unless otherwise specified, all introduced
groups will be understood to be finite, \textbf{$p$ will denote a
prime integer}, \textbf{$S$ will denote a finite $p$-group} and
\textbf{$\F$ will denote a fusion system over $S$}. Moreover, given
subgroups $H,K\le S$ we will write $H=_{\F}K$ if $H$ and $K$ are
isomorphic in $\F$, $H\le_{\F}K$ if there exists $J\le K$ such
that $H=_{\F}J$ and either $H\lneq_{\F}K$ or $H<_{\F}K$ if $H\le_{\F}K$
but $H\not=_{\F}K$. 
\end{notation}

When the term fusion system appears in the literature it is usually
in reference to a particular type of fusion system called saturated
fusion system. These are fusion systems that are built to generalize
Example \ref{exa:fusion-system.} in the case where $S$ is a Sylow
$p$-subgroup of $G$.
\begin{defn}
\label{def:Fully-F-normalized.}Let $H\le S$. We say that $H$ is
\textbf{fully $\F$-normalized} if for every $K=_{\F}H$ we have that
$\left|\NSH[K]\right|\le\left|\NSH\right|$.
\end{defn}

\begin{defn}
\label{def:varphi-normalizer.}Let $H,K\le S$ and let $\varphi\colon H\to K$
be a morphism in $\F$. We define the \textbf{$\varphi$-normalizer}
as the following subgroup of $\NSH$
\[
N_{\varphi}:=\left\{ x\in\NSH\,:\exists z\in\NSH[\varphi\left(H\right)]\text{ such that }\varphi\left(\lui{x}{h}\right)=\lui{z}{\varphi\left(h\right)}\;\forall h\in H\right\} .
\]
\end{defn}

\begin{defn}
\label{def:Saturated-fusion-system.}A fusion system $\F$ is said
to be \textbf{saturated} if the following 2 conditions are satisfied:
\begin{enumerate}
\item $\Aut_{S}\left(S\right)$ is a Sylow $p$-subgroup of $\Aut_{\F}\left(S\right)$.
\item For every $H\le S$ and every $\varphi\in\Hom_{\F}\left(H,S\right)$
such that $\varphi\left(H\right)$ is fully $\F$-normalized there
exists $\hat{\varphi}\in\Hom_{\F}\left(N_{\varphi},S\right)$ such
that $\hat{\varphi}\iota_{H}^{N_{\varphi}}=\varphi$.
\end{enumerate}
\end{defn}

\begin{example}
The fusion system $\FSG$ of Example \ref{exa:fusion-system.} is
saturated if $S$ is a Sylow $p$ subgroup of $G$.
\end{example}

\begin{example}
\label{exa:definition-NF.}Given a saturated fusion system $\F$ and
a fully $\F$-normalized subgroup $H\le S$, we can define the saturated
fusion system $\NFH$ over $\NSH$ by setting for every $A,B\le\NSH$
\[
\Hom_{\NFH}\left(A,B\right):=\left\{ \varphi\in\Hom_{\F}\left(A,B\right)\,|\,\exists\hat{\varphi}\in\Hom_{\F}\left(AH,BH\right)\text{ s.t. }\iota_{B}^{BH}\varphi=\hat{\varphi}\iota_{A}^{AH}\right\} .
\]
\end{example}

Definition \ref{def:Saturated-fusion-system.} motivates the introduction
of the following notation.
\begin{notation}
From now on, unless otherwise specified, \textbf{all introduced fusion
systems will be understood to be saturated}. In particular $\F$ will
denote a saturated fusion system over a finite $p$-group $S$.
\end{notation}

When dealing with Mackey functors over fusion systems (as we will
be doing throughout this paper) it is convenient not to work with
the fusion system directly but rather with its orbit category.
\begin{defn}
\label{def:Orbit-category.}We define the \textbf{orbit category of
a fusion system $\F$ }as the category \textbf{$\OF$} having as objects
the same objects as $\F$ and as morphisms
\[
\Hom_{\OF}\left(H,K\right):=\Aut_{K}\left(K\right)\backslash\Hom_{\F}\left(H,K\right),
\]
for every $H,K\le S$. Here $\Aut_{K}\left(K\right)$ is acting on
$\Hom_{\F}\left(H,K\right)$ by composing on the left.
\end{defn}

An important subcategory of $\OF$ which we will be dealing with often
because of its nice properties is its centric subcategory.
\begin{defn}
\label{def:F-centric.}Let $H\le S$. We say that $H$ is \textbf{$\F$-centric}
if $C_{S}\left(K\right)\le K$ for every $K=_{\F}H$. The \textbf{centric
subcategory of $\F$} (denoted by $\Fc$) is defined as the full subcategory
of $\F$ having as objects $\F$-centric subgroups of $S$. Likewise,
the \textbf{centric subcategory of $\OFc$} (denoted by $\OFc$) is
the full subcategory of $\OF$ having as objects the $\F$-centric
subgroups of $S$.
\end{defn}

We are in fact particularly interested in the additive extension of
$\OFc$.
\begin{defn}
\label{def:Additive-extension}(see \cite[Section 4]{JackowskiMcClureHomotopyDecompositionViaAbelianSubgroups})
Let $\F$ be a fusion system. We denote by $\aOFc$ the \textbf{additive
extension }of $\OFc$. That is $\aOFc$ is the category having as
objects formal finite (possibly empty) coproducts of the form $\bigsqcup_{i=1}^{n}H_{i}$,
where each $H_{i}$ is an object in $\OFc$, and as morphisms $\boldsymbol{f}:\bigsqcup_{i=1}^{n}H_{i}\to\bigsqcup_{j=1}^{m}K_{j}$
tuples of the form $\boldsymbol{f}:=\left(\sigma,\left\{ f_{i}\right\} _{i=1,\dots,n}\right)$
where $\sigma\colon\left\{ 1,\dots,n\right\} \to\left\{ 1,\dots,m\right\} $
is any map and $f_{i}\in\Hom_{\OFc}\left(H_{i},K_{\sigma\left(i\right)}\right)$.
Composition is given by
\[
\left(\tau,\left\{ g_{j}\right\} _{i=j,\dots,m}\right)\left(\sigma,\left\{ f_{i}\right\} _{i=1,\dots,n}\right)=\left(\tau\sigma,\left\{ g_{\sigma\left(i\right)}f_{i}\right\} _{i=1,\dots,n}\right).
\]
Whenever $\sigma$ is clear (for example when $m=1$), we will simply
write
\[
\bigsqcup_{i=1}^{n}f_{i}:=\left(\sigma,\left\{ f_{i}\right\} _{i=1,\dots,n}\right).
\]
We will often abuse notation and consider objects in $\OFc$ as objects
in $\aOFc$ via the natural inclusion of categories $\OFc\hookrightarrow\aOFc$.
\end{defn}

\begin{rem}
It may help to think of the additive extension of a category $\mathcal{C}$
as the full subcategory of the category of diagrams $\text{Set}^{\mathcal{C}^{\text{op}}}$
having as objects finite coproducts of contravariant functors of the
form $\Hom_{\mathcal{C}}\left(-,X\right)$ with $X$ an object in
$\mathcal{C}$. Yoneda's Lemma assures us that the category obtained
in this way is equivalent to the one described in Definition \ref{def:Additive-extension}.
\end{rem}

In \cite{FrobeniusCategoriesPuig} Puig proves constructively that
the category $\aOFc$ admits both products and pullbacks.
\begin{prop}
\label{prop:Pullback-OFc.}(\cite[4.8]{FrobeniusCategoriesPuig})
The category $\aOFc$ admits pullbacks which are distributive with
respect to its coproducts. Moreover, given $H,K,J\in\Fc$ such that
$H,K\le J$ the pullback of the diagram $H\stackrel{\overline{\iota}}{\to}J\stackrel{\overline{\iota}}{\leftarrow}K$
is given by
\begin{align*}
H\times_{J}K & :=\bigsqcup_{\begin{array}{c}
{\scriptstyle x\in\left[H\backslash J/K\right]}\\
{\scriptstyle H^{x}\cap K\in\Fc}
\end{array}}H^{x}\cap K, & \pi_{H}^{H\times_{J}K} & :=\bigsqcup_{\begin{array}{c}
{\scriptstyle x\in\left[H\backslash J/K\right]}\\
{\scriptstyle H^{x}\cap K\in\Fc}
\end{array}}\overline{\iota c_{x}}, & \pi_{K}^{H\times_{J}K} & :=\bigsqcup_{\begin{array}{c}
{\scriptstyle x\in\left[H\backslash J/K\right]}\\
{\scriptstyle H^{x}\cap K\in\Fc}
\end{array}}\overline{\iota}.
\end{align*}
\end{prop}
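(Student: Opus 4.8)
The plan is to reduce everything to the displayed double coset formula for an inclusion--inclusion cospan, and to prove that formula by combining the classical Mackey/double-coset decomposition with one short observation about $\F$-centricity.

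First I would isolate the only genuinely ``centric'' ingredient: \emph{if $P\le Q\le S$ and $P$ is $\F$-centric, then so is $Q$}; equivalently, there is no morphism in $\F$ (hence none in $\OF$) from an $\F$-centric subgroup into a non-$\F$-centric one. This is immediate: for $Q'=_{\F}Q$ realised by an isomorphism $\psi$, the subgroup $\psi(P)$ is $\F$-conjugate to $P$ and hence $\F$-centric, so $C_S(Q')\le C_S(\psi(P))\le\psi(P)\le Q'$.

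The core computation is the inclusion case: fix $H,K,J\in\Fc$ with $H,K\le J$, set $X:=\bigsqcup_{x}H^{x}\cap K$ with $x$ ranging over $[H\backslash J/K]$ subject to $H^{x}\cap K\in\Fc$, and equip it with the projections $\pi_H=\bigsqcup_x\overline{\iota c_x}$ and $\pi_K=\bigsqcup_x\overline{\iota}$ of the statement. Since every object of $\aOFc$ is a coproduct of objects of $\OFc$ and $\Hom$ out of a coproduct is a product of $\Hom$'s, it suffices to check the universal property against a single object $L\in\OFc$. Given a cone $\overline{\alpha}\colon L\to H$, $\overline{\beta}\colon L\to K$ with $\overline{\iota}\,\overline{\alpha}=\overline{\iota}\,\overline{\beta}$, unwinding $\Hom_{\OF}$ shows that representatives $\alpha,\beta$ in $\F$ satisfy $\iota\beta=c_j\circ\iota\alpha$ for some $j\in J$. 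The double coset $HjK$ is independent of all choices, so it singles out one index $x$; writing $j=hxk$ and absorbing $h$ into $\alpha$ and $k$ into $\beta$, one obtains a single morphism $L\to H^{x}\cap K$ in $\OF$ whose composites with $\pi_H,\pi_K$ agree with $\overline{\alpha},\overline{\beta}$. Because $L$ is $\F$-centric, the fact above forces $H^x\cap K\in\Fc$, i.e. $x$ is one of the indices occurring in $X$, so this morphism already lives in $\OFc$. Uniqueness follows because distinct double cosets give distinct summands and because a morphism into a coproduct in $\aOFc$ is determined by the chosen summand together with the component map. \textbf{This is the step I expect to be the main obstacle}: keeping the left/right conjugation conventions straight, and checking well-definedness (independence of $j$ within its double coset, independence of the representative $x$) and uniqueness; by contrast, the centricity bookkeeping is exactly what lets one drop the non-$\F$-centric intersections $H^x\cap K$ harmlessly, since no cone out of a centric $L$ can map into them.

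Finally I would assemble the general statement. For an arbitrary cospan $A\xrightarrow{\boldsymbol f}C\xleftarrow{\boldsymbol g}B$ in $\aOFc$, write $A=\bigsqcup_i A_i$, $B=\bigsqcup_j B_j$, $C=\bigsqcup_\ell C_\ell$; a cone out of a single object $L$ picks indices $i,j$ and factors through $C_\ell$ for the common value $\ell$ of the two structure maps, so the pullback is $\bigsqcup_{(i,j)}A_i\times_{C_\ell}B_j$ over the pairs with matching target component. This simultaneously reduces existence to the single-object case and exhibits the required distributivity over coproducts. For a single-object cospan $A\xrightarrow{\overline f}C\xleftarrow{\overline g}B$, factor $f$ as an $\F$-isomorphism $\widetilde f\colon A\xrightarrow{\sim}f(A)$ followed by $\iota_{f(A)}^{C}$ (axiom (3) of Definition \ref{def:Fusion-system.}), and likewise $g$; note $f(A),g(B)\in\Fc$ since they are $\F$-conjugate to $A,B$. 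Pulling back along an isomorphism is invertible, so $A\times_C B\cong f(A)\times_C g(B)$, which is the inclusion case settled by the core computation. This yields existence of all pullbacks, their distributivity over coproducts, and the explicit formula.
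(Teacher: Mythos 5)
The paper does not prove this proposition at all: it is quoted verbatim from Puig (\cite[4.8]{FrobeniusCategoriesPuig}) with the explicit remark that the results are reported without proof, so there is no in-paper argument to compare yours against. Taken on its own terms, your reconstruction is sound and follows the route one would expect: reduce to cones out of a single $L\in\OFc$, unwind the orbit-category relation $\iota\beta=c_j\iota\alpha$, locate the double coset, and use the upward closure of $\F$-centricity (your opening observation, which is \cite[Proposition 4.4]{IntroductionToFusionSystemsLinckelmann}) to see that the non-centric intersections $H^x\cap K$ can never receive a cone and may be discarded. The assembly step (distributivity over coproducts, then factoring an arbitrary morphism as an $\F$-isomorphism followed by an inclusion) is also correct.

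The one place your sketch is genuinely thin is the uniqueness claim, and it is worth noting that $\F$-centricity is needed there too, not only for discarding summands. If $\overline{\mu}\colon L\to H^{x}\cap K$ and $\overline{\mu'}\colon L\to H^{x'}\cap K$ induce the same cone, then $\mu'=c_k\mu$ for some $k\in K$ and $c_{x'k}\mu=c_{hx}\mu$ for some $h\in H$, whence $(hx)^{-1}x'k\in C_J(\mu(L))$. It is only because $\mu(L)=_{\F}L$ is $\F$-centric that $C_J(\mu(L))\le\mu(L)\le H^{x}\cap K$, which forces $x'\in HxK$ (hence $x=x'$) and then $k\in H^{x}\cap K$ (hence $\overline{\mu}=\overline{\mu'}$). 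Without centricity this centralizer does not get absorbed and uniqueness genuinely fails, which is exactly why $\aOF$ lacks pullbacks. This is the same mechanism the paper uses in its proof of Proposition \ref{prop:properties-HXK.} \eqref{enu:prod-if-F-is-F_S(S).}; making it explicit would close the only real gap in your argument.
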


\begin{prop}
\label{prop:Product-OFc.}(\cite[Proposition 4.7]{FrobeniusCategoriesPuig})
The category $\aOFc$ admits products which are distributive with
respect to its coproducts.
\end{prop}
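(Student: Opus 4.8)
The plan is to deduce the existence of products in $\aOFc$ from the existence of pullbacks (Proposition \ref{prop:Pullback-OFc.}), using the observation that $\aOFc$ is an additive extension of a category with a terminal-like structure coming from $S$ itself. First I would recall the standard categorical fact that a category possessing pullbacks and a terminal object possesses all finite products, since $A\times B$ is the pullback of $A\to \ast \leftarrow B$. The subtlety here is that $\OFc$ need not contain a terminal object: the ambient group $S$ is $\F$-centric (as $C_S(S)=Z(S)\le S$ and likewise for every $K=_\F S$), so $S\in\Fc$, but $\Hom_{\OFc}(H,S)$ may have more than one element, so $S$ is not terminal. Hence the reduction is not literally to "pullback over the terminal object''. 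Instead I would build the product of $H$ and $K$ (for $H,K\in\Fc$) as a suitable coproduct of the pullbacks $H\times_S K$ obtained by ranging over all the ways the two inclusions into $S$ can be twisted, and then check the universal property directly; the coproduct-distributivity of pullbacks from Proposition \ref{prop:Pullback-OFc.} is what makes this bookkeeping manageable.

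Concretely, the key steps in order: (1) Reduce to the case of two objects $H,K$ of $\OFc$, since products distribute over the finite coproducts defining $\aOFc$ and a product of coproducts is computed componentwise; this uses Proposition \ref{prop:Pullback-OFc.}'s distributivity statement. (2) For $H,K\in\Fc$, embed both into $S$ via the canonical $\overline{\iota}$ and form the pullback $H\times_S K=\bigsqcup_{x}H^x\cap K$ as in Proposition \ref{prop:Pullback-OFc.}. (3) Verify that this object, together with the two projection morphisms $\pi_H,\pi_K$ inherited from the pullback, actually satisfies the \emph{product} universal property in $\aOFc$, not merely the pullback property: given any $T\in\aOFc$ with morphisms $\boldsymbol a\colon T\to H$ and $\boldsymbol b\colon T\to K$, one must produce a unique $\boldsymbol c\colon T\to H\times_S K$ with $\pi_H\boldsymbol c=\boldsymbol a$, $\pi_K\boldsymbol c=\boldsymbol b$. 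The point is that $\overline{\iota}\,\boldsymbol a$ and $\overline{\iota}\,\boldsymbol b$ are automatically equal as morphisms $T\to S$ in $\aOFc$ — because any two morphisms $T\to S$ in $\OFc$ differ only by an element of $\Aut_S(S)$ which is already quotiented out, or more carefully because the relevant diagrams of morphisms into $S$ commute up to the identifications built into the orbit category — so the pair $(\boldsymbol a,\boldsymbol b)$ is automatically a cone over $H\to S\leftarrow K$, and the pullback gives the required $\boldsymbol c$. (4) Conclude distributivity over coproducts from the corresponding property for pullbacks.

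I expect the main obstacle to be step (3), and within it the claim that any cone $(\boldsymbol a,\boldsymbol b)$ into $(H,K)$ is automatically a pullback cone over $S$, i.e.\ that $\overline{\iota}^H_S\,\boldsymbol a=\overline{\iota}^K_S\,\boldsymbol b$ holds \emph{on the nose} in $\aOFc$. This is where one must use the precise definition of morphisms in $\OF$ as $\Aut_S(S)$-orbits of $\F$-morphisms together with saturation: for $H\le S$ the composite $H\hookrightarrow S$ is, up to the quotient, the unique morphism $H\to S$ in $\OFc$ that one needs, but showing there is genuinely only one such orbit (equivalently, that $\Hom_{\OFc}(H,S)$ behaves like a singleton for the purpose of cone-matching) requires care — it is essentially the statement that every $\F$-morphism $H\to S$ is $S$-conjugate to the inclusion, which is \emph{false} in general. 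The correct fix, and the technical heart of Puig's argument, is therefore not to collapse $\Hom_{\OFc}(H,S)$ but to take the product over all of it: set $H\times K:=\bigsqcup_{\varphi}\,(\text{pullback of }H\xrightarrow{\varphi}S\xleftarrow{\iota}K)$ with $\varphi$ ranging over $[\Hom_{\F}(H,S)]$ modulo the appropriate equivalence, and then the universal property is checked by decomposing any cone according to which $\varphi$-component it factors through. Verifying uniqueness in this decomposed form — that distinct $\varphi$-components give genuinely disjoint contributions and that no morphism is double-counted — is the calculation I would budget the most space for; everything else is formal nonsense about additive extensions and the distributivity already granted by Proposition \ref{prop:Pullback-OFc.}.
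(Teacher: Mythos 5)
You correctly diagnose the two pitfalls of the naive reduction (that $S$ is not terminal in $\OFc$, and that $\Hom_{\OFc}\left(H,S\right)$ is not a singleton), but the fix you settle on --- defining $H\times K$ as the coproduct, over $\psi$ ranging in $\left[\Hom_{\F}\left(H,S\right)\right]$, of the pullbacks of $H\stackrel{\psi}{\to}S\stackrel{\overline{\iota}}{\leftarrow}K$ --- has a genuine gap: it produces too few components. As recorded right after the statement of Proposition \ref{prop:Product-OFc.}, the components of $H\times_{\F}K$ are indexed by maximal pairs $\left(A,\overline{\varphi}\right)$ in which $A$ is a possibly \emph{proper} subgroup of $H$ and $\varphi\in\Hom_{\F}\left(A,K\right)$ is defined only on $A$. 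Every component your construction produces has structure map to $K$ of the form $\left(\iota c_{x}\psi\right)_{|}$ for some $\psi\in\Hom_{\F}\left(H,S\right)$ and $x\in S$, i.e.\ a morphism that extends (up to $S$-conjugation) to all of $H$. Testing the universal property with $T:=A$ for $A\le H$ centric, $\boldsymbol{a}:=\overline{\iota_{A}^{H}}$ and $\boldsymbol{b}:=\overline{\varphi}$ for an arbitrary $\varphi\in\Hom_{\F}\left(A,K\right)$, a mediating morphism into your object exists only if there are $h\in H$, $x\in S$ and $\psi\in\Hom_{\F}\left(H,S\right)$ with $\overline{\varphi}\,\overline{c_{h}}$ equal to the restriction of $\overline{\iota c_{x}\psi}$ to $A^{h}$. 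In a saturated fusion system this fails in general: the extension axiom only extends $\varphi$ to $N_{\varphi}$ (and only after replacing its image by a fully normalized conjugate), not to $H$. Already for $\F=\FSG$ with $S$ Sylow in $G$, a morphism $c_{g}\colon A\to K$ with $g\in G$ typically agrees on $A$ with no $c_{g'}$ satisfying $\lui{g'}{H}\le S$, even up to the allowed conjugations. So your candidate object is a proper ``sub-coproduct'' of the product and the existence half of the universal property breaks.

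For what it is worth, the paper does not prove this proposition either: it is quoted from Puig, and what the paper supplies is the explicit construction of $\left[H\times_{\F}K\right]$ via pairs $\left(A,\overline{\varphi}\right)$ maximal under the preorder $\precsim_{H}$, taken modulo the equivalence of Equation \eqref{eq:Equivalence-relation.}. Any self-contained proof has to verify the universal property for \emph{that} object (existence via the universal domination of an arbitrary pair by a maximal one, uniqueness via the equivalence relation and the epimorphy of the $\overline{\iota}$'s); pullbacks enter the paper only afterwards, in Proposition \ref{prop:properties-HXK.}, to relate different products to one another, not to construct the product in the first place. Your steps (1) and (4) --- reducing to two objects of $\OFc$ and propagating distributivity --- are fine and formal; the construction in step (3) is where the argument would need to be replaced.
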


In \cite{FrobeniusCategoriesPuig}, Puig explicitly describes the
product of Proposition \ref{prop:Product-OFc.}. Since products are
distributive with respect to coproducts then, in order to define the
products in $\aOFc$, it suffices to describe the product between
any two objects $H,K\in\OFc$.

This product, denoted by $H\times_{\F}F$, can be built as follows;

First take all pairs $\left(A,\overline{\varphi}\right)$ with $A\in\OFc$
satisfying $A\le H$ and $\overline{\varphi}\in\Hom_{\OFc}\left(A,K\right)$.

Then define the preorder $\precsim_{H}$ on the set of all such pairs
by setting $\left(A,\overline{\varphi}\right)\precsim_{H}\left(B,\overline{\psi}\right)$
if and only if there exists $h\in H$ such that $A^{h}\le B$ and
$\overline{\varphi}\overline{c_{h}}=\overline{\psi}\overline{\iota_{A^{h}}^{B}}$.

Then take all pairs that are maximal under such preorder and define
among them the equivalence relation 
\begin{equation}
\left(A,\overline{\varphi}\right)\sim\left(B,\overline{\psi}\right)\stackrel{\text{def}}{\Longleftrightarrow}\left(A,\overline{\varphi}\right)\precsim_{H}\left(B,\overline{\psi}\right)\text{ and }\left(B,\overline{\psi}\right)\precsim_{H}\left(A,\overline{\varphi}\right)\label{eq:Equivalence-relation.}
\end{equation}
Finally fix any set $\left[H\times_{\F}K\right]$ containing exactly
one representative for each equivalence class of maximal elements
under this relation and define
\begin{align}
H\times_{\F}K & :=\bigsqcup_{\left(A,\overline{\varphi}\right)}A, & \pi_{H}^{H\times_{\F}K} & :=\bigsqcup_{\left(A,\overline{\varphi}\right)}\overline{\iota_{A}^{H}}, & \pi_{K}^{H\times_{\F}K} & :=\bigsqcup_{\left(A,\overline{\varphi}\right)}\overline{\varphi},\label{eq:Definition-product.}
\end{align}
where the tuples $\left(A,\overline{\varphi}\right)$ run over the
set $\left[H\times_{\F}K\right]$ and $\pi_{H}^{H\times_{\F}K}:H\times_{\F}K\to H$
and $\pi_{K}^{H\times_{\F}K}:H\times_{\F}K\to K$ denote the natural
projections associated to the product. The definition of the equivalence
$\sim$ ensures us that any choice of the set $\left[H\times_{\F}K\right]$
will lead to isomorphic constructions of $H\times_{\F}K$. Whenever
the fusion system $\F$ is clear we will simply write $H\times K$
and $\left[H\times K\right]$.

In order to reference the previous construction, it is worth introducing
the following.
\begin{defn}
\label{def:HX_FK}For every $H,K\in\Fc$ we denote by $\left[H\times_{\F}K\right]$
(or simply $\left[H\times K\right]$ if $\F$ is clear) any choice
of the set of representatives built as above. In other words $\left[H\times_{\F}K\right]$
is any set of tuples $\left(A,\overline{\varphi}\right)$ such that
$A\in\Fc$, $\overline{\varphi}\in\Hom_{\OFc}\left(A,K\right)$ and
Equation \eqref{eq:Definition-product.} is satisfied.
\end{defn}

We conclude this subsection with a series of identities that will
allow us to write $H\times K$ in terms of other products in $\aOFc$.
\begin{prop}
\label{prop:properties-HXK.}For every $H,K\in\Fc$
\begin{enumerate}
\item \label{enu:HX_FK-and-KX_FH}We can take
\[
\left[K\times H\right]=\left\{ \left(\varphi\left(A\right),\overline{\iota\varphi^{-1}}\right)\,:\,\left(A,\overline{\varphi}\right)\in\left[H\times K\right]\right\} .
\]
Where we are viewing the representative $\varphi$ of $\,\overline{\varphi}$
as an isomorphism onto its image.
\item \label{enu:prod-if-F-is-F_S(S).}If $\F=\FHH[S]$ we can take
\[
\left[H\times_{\FHH[S]}K\right]=\bigsqcup_{\begin{array}{c}
{\scriptstyle x\in\left[K\backslash S/H\right]}\\
{\scriptstyle K^{x}\cap H\in\FHH[S]^{c}}
\end{array}}\left\{ \left(K^{x}\cap H,\overline{\iota c_{x}}\right)\right\} .
\]
\item \label{enu:prod-iso-right.} For every isomorphism $\psi\colon K\to\psi\left(K\right)$
we can take
\[
\left[H\times\psi\left(K\right)\right]=\left\{ \left(A,\overline{\psi\varphi}\right)\,:\,\left(A,\overline{\varphi}\right)\in\left[H\times K\right]\right\} .
\]
\item \label{enu:prod-iso-left.}For every isomorphism $\psi\colon H\to\psi\left(H\right)$
we can take
\[
\left[\psi\left(H\right)\times K\right]=\left\{ \left(\psi\left(A\right),\overline{\varphi\psi^{-1}}\right)\,:\,\left(A,\overline{\varphi}\right)\in\left[H\times K\right]\right\} .
\]
Where we are viewing $\psi$ as an isomorphism between the appropriate
restrictions.
\item \label{enu:prod-pullback-right.}For every $J\in\Fc$ such that $J\le K$
we can take
\[
\left[H\times J\right]=\bigsqcup_{\left(A,\overline{\varphi}\right)\in\left[H\times K\right]}\bigsqcup_{\begin{array}{c}
{\scriptstyle x\in\left[J\backslash K/\varphi\left(A\right)\right]}\\
{\scriptstyle J^{x}\cap\varphi\left(A\right)\in\Fc}
\end{array}}\left\{ \left(\varphi^{-1}\left(J^{x}\cap\varphi\left(A\right)\right),\overline{\iota c_{x}\varphi}\right)\right\} .
\]
Where we are fixing a representative $\varphi$ of $\overline{\varphi}$
and viewing it as an isomorphism between the appropriate restrictions.
\item \label{enu:prod-pullback-left.}For every $J\in\Fc$ such that $J\le H$
we can take
\[
\left[J\times K\right]=\bigsqcup_{\left(A,\overline{\varphi}\right)\in\left[H\times K\right]}\bigsqcup_{\begin{array}{c}
{\scriptstyle x\in\left[A\backslash H/J\right]}\\
{\scriptstyle A^{x}\cap J\in\Fc}
\end{array}}\left\{ \left(A^{x}\cap J,\overline{\varphi\iota c_{x}}\right)\right\} .
\]
\item \label{enu:triple-prod-into-pullback-prod.}For every $J\in\Fc$ we
can take
\[
\bigsqcup_{\left(A,\overline{\varphi}\right)\in\left[H\times K\right]}\bigsqcup_{\left(B,\overline{\psi}\right)\in\left[J\times A\right]}\left\{ \left(B,\overline{\iota\psi}\right)\right\} =\bigsqcup_{\begin{array}{c}
{\scriptstyle \left(C,\overline{\theta}\right)\in\left[J\times H\right]}\\
{\scriptstyle \left(D,\overline{\gamma}\right)\in\left[J\times K\right]}
\end{array}}\bigsqcup_{\begin{array}{c}
{\scriptstyle x\in\left[D\backslash J/C\right]}\\
{\scriptstyle D^{x}\cap C\in\Fc}
\end{array}}\left\{ \left(D^{x}\cap C,\overline{\theta\iota}\right)\right\} .
\]
\end{enumerate}
\end{prop}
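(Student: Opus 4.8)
The strategy is to establish each of the seven identities by reducing it to the defining property of the product in $\aOFc$: that $\left[H\times_{\F}K\right]$ is obtained from maximal elements (under the preorder $\precsim_{H}$) of the poset of pairs $\left(A,\overline{\varphi}\right)$ with $A\le H$, $A\in\Fc$, $\overline{\varphi}\in\Hom_{\OFc}\left(A,K\right)$. So for each claimed set of tuples I would verify (i) each tuple has the right shape ($A\in\Fc$, $A\le$ the correct object, the stated morphism lands where it should), (ii) each listed tuple is maximal under the relevant preorder, and (iii) the listed tuples are pairwise inequivalent and exhaust all equivalence classes of maximal elements. Since the equivalence $\sim$ guarantees independence of the choice of representatives, it suffices to exhibit \emph{one} valid choice in each case.

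Item \ref{enu:HX_FK-and-KX_FH} is the base symmetry: sending $\left(A,\overline{\varphi}\right)$ to $\left(\varphi(A),\overline{\iota\varphi^{-1}}\right)$ is visibly an involution on tuples, it preserves $\Fc$-membership (as $\varphi$ is an $\F$-isomorphism onto its image and $\Fc$ is closed under $\F$-isomorphism), and one checks it carries $\precsim_{H}$ to $\precsim_{K}$ by transporting the conjugating element $h\in H$ to $\varphi(h)$ — here the definition of $N_\varphi$ and saturation are not needed, just that $\overline{c_h}$ becomes $\overline{c_{\varphi(h)}}$ under $\varphi$. Item \ref{enu:prod-if-F-is-F_S(S).} is the computation of the product in the fusion system of $S$ itself: here $\Hom_{\OFc}(A,K)$ is just $S$-conjugation, and the maximal pairs biject with double cosets $[K\backslash S/H]$ via $x\mapsto(K^x\cap H,\overline{\iota c_x})$ — maximality corresponding to the full intersection $K^x\cap H$ rather than a proper subgroup, and the double-coset parametrization handling the equivalence. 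This should be a direct unwinding of the orbit-category morphisms.

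Items \ref{enu:prod-iso-right.} and \ref{enu:prod-iso-left.} follow by post- or pre-composing with the isomorphism $\psi$: the key point is that $\psi$ induces a bijection on the relevant hom-sets in $\OFc$ and, for \ref{enu:prod-iso-left.}, conjugates the preorder $\precsim_H$ to $\precsim_{\psi(H)}$ (transporting $h$ to $\psi(h)$), while for \ref{enu:prod-iso-right.} the preorder on the first coordinate is untouched and only the target changes, so maximality is preserved verbatim. Items \ref{enu:prod-pullback-right.} and \ref{enu:prod-pullback-left.} are the substantive ones: restricting the second (resp.\ first) factor to a subgroup $J$ forces one to ``re-maximize'' each pair $(A,\overline{\varphi})$ from $[H\times K]$, and the Mackey-style double-coset decomposition over $[J\backslash K/\varphi(A)]$ (resp.\ $[A\backslash H/J]$) is exactly what records how the maximal pairs over $J$ refine those over $K$ — I would prove this by checking that the new tuples are maximal in the $J$-poset and that nothing is lost, using Proposition \ref{prop:Pullback-OFc.} to identify the combinatorics with the pullback $J\times_K \varphi(A)$ (resp.\ $A\times_H J$). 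Finally, item \ref{enu:triple-prod-into-pullback-prod.} is a ``associativity/Mackey'' identity for a threefold product; I would derive it by applying \ref{enu:prod-pullback-left.} (or \ref{enu:prod-pullback-right.}) twice — once to expand $[J\times A]$ where $A$ ranges over first coordinates of $[H\times K]$, once to expand $[J\times H]$ and $[J\times K]$ — and then matching the two resulting indexing sets via Proposition \ref{prop:Pullback-OFc.}, so that both sides are seen to parametrize the same set of maximal pairs $(D^x\cap C,\overline{\theta\iota})$ up to equivalence.

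The main obstacle I anticipate is bookkeeping in items \ref{enu:prod-pullback-left.}--\ref{enu:triple-prod-into-pullback-prod.}: ensuring that ``maximal pair of $[H\times K]$, then maximal pair of the refinement over $J$'' genuinely yields a maximal pair over $J$ (and not something that could be enlarged by an element of $H$ not visible after restriction), and that the double-coset representatives are chosen compatibly so the two sides of \ref{enu:triple-prod-into-pullback-prod.} match as \emph{sets of tuples} and not merely up to a size count. The cleanest route is probably to avoid element-chasing where possible and instead phrase each identity as an isomorphism of the corresponding representable diagrams / of the associated pullback objects in $\aOFc$, invoking Propositions \ref{prop:Pullback-OFc.} and \ref{prop:Product-OFc.} and the universal properties, and only descend to explicit tuples at the end to read off the stated formulas.
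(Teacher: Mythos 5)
Your proposal is correct and, in its final form, takes essentially the same route as the paper: the paper handles items \eqref{enu:HX_FK-and-KX_FH} and \eqref{enu:prod-if-F-is-F_S(S).} by direct verification against the universal property and the maximality condition of Definition \ref{def:HX_FK}, and handles items \eqref{enu:prod-iso-right.}--\eqref{enu:triple-prod-into-pullback-prod.} exactly via the ``cleanest route'' you identify, namely general categorical identities for products and pullbacks combined with Propositions \ref{prop:Pullback-OFc.} and \ref{prop:Product-OFc.} to read off the explicit tuples. Your alternative first-pass plan of re-verifying maximality by hand for items \eqref{enu:prod-pullback-right.}--\eqref{enu:triple-prod-into-pullback-prod.} would involve substantially more bookkeeping (especially for \eqref{enu:triple-prod-into-pullback-prod.}, where the paper uses the single identity $\left(Y\times X\right)\times_{Y}\left(Y\times Z\right)=Y\times\left(X\times Z\right)$ rather than iterating the pullback items), so the universal-property fallback is indeed the right choice.
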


\begin{proof}
$\phantom{.}$
\begin{enumerate}
\item With the notation of Item \eqref{enu:HX_FK-and-KX_FH} we have that
$\varphi\left(A\right)\le K$ and, since $A\le H$, we can conclude
that $\overline{\iota\varphi^{-1}}\in\Hom_{\OF}\left(\varphi\left(A\right),H\right)$.
With notation as in the statement we can now define 
\[
f:=\bigsqcup_{\left(A,\overline{\varphi}\right)\in\left[H\times K\right]}\overline{\varphi^{-1}}:\bigsqcup_{\left(A,\overline{\varphi}\right)\in\left[H\times K\right]}\varphi\left(A\right)\to H\times K:=\bigsqcup_{\left(A,\overline{\varphi}\right)\in\left[H\times K\right]}A.
\]
With this setup we have that
\begin{align*}
\pi_{H}^{H\times K}f=\bigsqcup_{\left(A,\overline{\varphi}\right)\in\left[H\times K\right]}\overline{\iota_{A}^{H}}f & =\bigsqcup_{\left(A,\overline{\varphi}\right)\in\left[H\times K\right]}\overline{\iota\varphi^{-1}},\\
\pi_{K}^{H\times K}f=\bigsqcup_{\left(A,\overline{\varphi}\right)\in\left[H\times K\right]}\overline{\varphi}f & =\bigsqcup_{\left(A,\overline{\varphi}\right)\in\left[H\times K\right]}\overline{\iota_{\varphi\left(A\right)}^{K}}.
\end{align*}
Since $K\times H\cong H\times K$ and $f$ is an isomorphism then
the above identities prove Item \eqref{enu:HX_FK-and-KX_FH}. 
\item All morphisms in $\FHH[S]$ are, by definition, of the form $c_{x}$
for some $x\in S$. Thus, for any element $\left(A,\overline{\varphi}\right)\in\left[H\times_{\FHH[S]}K\right]$,
there exists $x\in S$ such that $\lui{x}{A}\le K$ and $\overline{\varphi}=\overline{\iota c_{x}}$.
In particular we have that $A\le K^{x}$. Since $A\le H$ by construction
we can conclude that $A\le K^{x}\cap H$. Therefore we can take $\overline{\iota_{A}^{K^{x}\cap H}}\in\Hom_{\Orbitize{\FHH[S]^{c}}}\left(A,K^{x}\cap H\right)$
and, viewing $c_{x}$ as an isomorphism from $K^{x}\cap H$ to $K^{x}\cap\lui{x}{H}$,
we have that $\overline{\varphi}=\overline{\iota_{K^{x}\cap H}^{K}c_{x}}\,\overline{\iota_{A}^{K^{x}\cap H}}$.
From maximality of the pair $\left(A,\overline{\varphi}\right)$ we
can conclude that $A=K^{x}\cap H$. In other words, all elements in
$\left[H\times_{\FHH[S]}K\right]$ are of the form $\left(K^{x}\cap H,\overline{\iota c_{x}}\right)$
for some $x\in S$. Notice now that, for every $k\in K$, we have
$K^{kx}\cap H=K^{x}\cap H$ and $\overline{\iota c_{kx}}=\overline{\iota c_{x}}$.
Moreover, we know that $\left[H\times_{\FHH[S]}K\right]$ contains
exactly one representative for each of the equivalence classes given
by the relation of Equation \eqref{eq:Equivalence-relation.}. It
is therefore possible to choose $\left[H\times_{\FHH[S]}K\right]$
and $\left[K\backslash S/H\right]$ so that
\begin{equation}
\left[H\times_{\FHH[S]}K\right]=\bigcup_{\begin{array}{c}
{\scriptstyle x\in\left[K\backslash S/H\right]}\\
{\scriptstyle K^{x}\cap H\in\FHH[S]^{c}}
\end{array}}\left\{ \left(K^{xh_{x}}\cap H,\overline{\iota c_{xh_{x}}}\right)\right\} .\label{eq:Almost-prod-in-F_S(S).}
\end{equation}
For some appropriate $h_{x}\in H$. Assume now that there exist $x,y\in S$
and $h\in H$ such that $K^{x}\cap H=K^{yh}\cap H\in\FHH[S]^{c}$
and that $\overline{\iota c_{x}}=\overline{\iota c_{yh}}$. From this
last identity we can deduce that there exist $k\in K$ and $z\in C_{S}\left(K^{x}\cap H\right)$
such that $x=kyhz$. Since $K^{x}\cap H\in\FHH[S]^{c}$ then we have
that $C_{S}\left(K^{x}\cap H\right)\le K^{x}\cap H$ and, in particular,
$z\in H$. We can therefore conclude that $y\in KxH$ and, therefore,
that the union in Equation \eqref{eq:Almost-prod-in-F_S(S).} is disjoint.
Item \eqref{enu:prod-if-F-is-F_S(S).} then follows by making a more
appropriate choice of the representatives $\left[K\backslash S/H\right]$
(i.e. taking $xh_{x}$ instead of $x$).
\item Let $\mathcal{C}$ be a category, let $X,Y,Z\in\mathcal{C}$ be objects
and let $\alpha\colon Y\to Z$ be an isomorphism in $\mathcal{C}$.
We know from category theory that, if the product $X\times Y$ exists
in $\mathcal{C}$, then the product $X\times Z$ also exists in $\mathcal{C}$
and satisfies
\begin{align*}
X\times Z & =X\times Y, & \pi_{Z}^{X\times Z} & =\alpha\pi_{Y}^{X\times Y}, & \pi_{X}^{X\times Z} & =\pi_{X}^{X\times Y}.
\end{align*}
where $\pi_{A}^{A\times B}$ denote the natural projections. With
the notation of Item \eqref{enu:prod-iso-right.} we have that $\overline{\psi}\in\Hom_{\OFc}\left(K,\psi\left(K\right)\right)$
is an isomorphism in $\aOFc$ and for every $\left(A,\overline{\varphi}\right)\in\left[H\times K\right]$
we have $A\le H$ and $\overline{\psi\varphi}\in\Hom_{\OF}\left(A,\psi\left(K\right)\right)$.
We can therefore apply the previous result taking $\mathcal{C}:=\aOFc$,
$X:=H$, $Y:=K$, $Z:=\psi\left(K\right)$ and $\alpha=\overline{\psi}$
and Item \eqref{enu:prod-iso-right.} follows.
\item The same arguments used to prove Item \eqref{enu:prod-iso-right.}
can be used to prove Item \eqref{enu:prod-iso-left.}.
\item Let $\mathcal{C}$ be a category admitting products and pullbacks,
let $X,Y,Z\in\mathcal{C}$ be objects, let $\alpha\colon Z\to Y$
be a morphism in $\mathcal{C}$ and let $\left(X\times Y\right)\times_{Y}Z$
be the pullback of the diagram $X\times Y\stackrel{\pi_{Y}^{X\times Y}}{\longrightarrow}Y\stackrel{\alpha}{\longleftarrow}Z$.
We know from category theory that
\begin{align*}
\left(X\times Y\right)\times_{Y}Z & =X\times Z, & \pi_{Z}^{X\times Z} & =\pi_{Z}^{\left(X\times Y\right)\times_{Y}Z}, & \pi_{X}^{X\times Z} & =\pi_{X}^{X\times Y}\pi_{X\times Y}^{\left(X\times Y\right)\times_{Y}Z}.
\end{align*}
Where $\pi_{A}^{A\times B}$ and $\pi_{A}^{A\times_{C}B}$ denote
the natural projections. With the notation of Item \eqref{enu:prod-pullback-right.}
we have for every $\left(A,\overline{\varphi}\right)\in\left[H\times K\right]$
and every $x\in\left[J\backslash K/\varphi\left(A\right)\right]$
that $\varphi^{-1}\left(J^{x}\cap\varphi\left(A\right)\right)\le A\le H$
and $\overline{\iota c_{x}\varphi}\in\Hom_{\OF}\left(\varphi^{-1}\left(J^{x}\cap\varphi\left(A\right)\right),J\right)$.
Using Proposition \ref{prop:Pullback-OFc.} we can now apply the previous
result taking $Y:=K$, $X:=H$, $Z:=J$ and $\alpha:=\overline{\iota_{J}^{K}}$
and Item \eqref{enu:prod-pullback-right.} follows.
\item The same arguments used to prove Item \eqref{enu:prod-pullback-right.}
can be used to prove Item \eqref{enu:prod-pullback-left.}.
\item Let $\mathcal{C}$ be a category admitting products and pullbacks,
let $X,Y,Z\in\mathcal{C}$ be objects and let $\left(X\times Y\right)\times_{Y}\left(Y\times Z\right)$
be the pullback of the diagram $Y\times X\stackrel{\pi_{Y}^{Y\times X}}{\longrightarrow}Y\stackrel{\pi_{Y}^{Y\times Z}}{\longleftarrow}Y\times Z$.
We know from category theory that
\begin{align*}
\left(Y\times X\right)\times_{Y}\left(Y\times Z\right) & =Y\times\left(X\times Z\right), & \pi_{X}^{Y\times X}\pi_{Y\times X}^{\left(Y\times X\right)\times_{Y}\left(Y\times Z\right)} & =\pi_{X}^{X\times Z}\pi_{X\times Z}^{Y\times\left(X\times Z\right)},\\
\pi_{Y}^{Y\times X}\pi_{Y\times X}^{\left(Y\times X\right)\times_{Y}\left(Y\times Z\right)} & =\pi_{Y}^{Y\times\left(X\times Z\right)}.
\end{align*}
Where $\pi_{A}^{A\times B}$ and $\pi_{A}^{A\times_{C}B}$ denote
the natural projections. For every $\left(A,\overline{\varphi}\right)\in\left[H\times K\right]$,
every $\left(B,\overline{\psi}\right)\in\left[J\times A\right]$,
every $\left(C,\overline{\theta}\right)\in\left[J\times H\right]$,
every $\left(D,\overline{\gamma}\right)\in\left[J\times K\right]$
and every $x\in\left[D\backslash J/C\right]$ we have that $B,D^{x}\cap C\le J$,
that $\overline{\iota\psi}\in\Hom_{\OFc}\left(B,H\right)$ and that
$\overline{\theta\iota}\in\Hom_{\OFc}\left(D^{x}\cap C,H\right)$.
Using Propositions \ref{prop:Pullback-OFc.} and \ref{prop:Product-OFc.}
we can now apply the previous result taking $\mathcal{C}:=\aOFc$,
$X:=H$, $Z:=K$ and $Y:=J$ and Item \eqref{enu:triple-prod-into-pullback-prod.}
follows.
\end{enumerate}
\end{proof}

\subsection{\label{subsec:Mackey-functors-over-fusion-systems.}Mackey functors
over fusion systems.}

In this subsection we will define (centric) Mackey functor over a
fusion system (Definitions \ref{def:Mackey-functor.} and \ref{def:F-centric-Mackey-functor.})
and the (centric) induction, restriction and conjugation functors
(Definition \ref{def:Restriction-induction-and-conjugation-functors.}
and Proposition \ref{prop:centric-Induction-restriction.}). Moreover
we will provide some tools for studying certain induced Mackey functors
(Proposition \ref{prop:Decomposition-centric-Mackey-functor.}) and
certain compositions of the induction and restriction functors (Lemma
\ref{lem:Mackey-formula-induction-restriction.}).

Let us start by defining the Mackey algebra of a fusion system. In
order to do that we will use methods similar to those used in \cite{fuserdFusionBouc,MackeyFunctorsAndBisets}.
\begin{defn}
Let $G$ be a group and let $f\colon G\to f\left(G\right)$ be a group
isomorphism. We define the \textbf{$f$ twisted diagonal of $G$ }as
the subgroup of $f\left(G\right)\times G$ given by
\[
\Delta\left(G,f\right):=\left\{ \left(f\left(x\right),x\right)\in f\left(G\right)\times G\,:\,x\in G\right\} .
\]
\end{defn}

\begin{defn}
\label{def:Restriction-Induction-and-conjugation-bisets.}Throughout
this definition we will denote with an overline $\overline{X}$ the
isomorphism class of a biset $X$. Let $H,K$ be subgroups of $S$
such that $H\le K$ and view $\Delta\left(H,\Id_{H}\right)$ as a
subgroup of $H\times K$. We define the \textbf{restriction from $K$
to $H$ }as the isomorphism class of $\left(H,K\right)$-bisets given
by
\[
R_{H}^{K}:=\overline{\left(H\times K\right)/\Delta\left(H,\Id_{H}\right)}.
\]
Here the $\left(H,K\right)$-biset structure on $\left(H\times K\right)/\Delta\left(H,\Id_{H}\right)$
is given by setting
\[
h\cdot\left(\left(x\times y\right)\Delta\left(H,\Id_{H}\right)\right)\cdot k=\left(hx\right)\times\left(k^{-1}y\right)\Delta\left(H,\Id_{H}\right)
\]
for every $h,x\in H$ and $k,y\in K$.

Likewise, viewing $\Delta\left(H,\Id_{H}\right)$ as a subgroup of
$K\times H$, we define the \textbf{induction from $H$ to $K$ }as
the isomorphism class of $\left(K,H\right)$-bisets given by
\[
I_{H}^{K}:=\overline{\left(K\times H\right)/\Delta\left(H,\Id_{H}\right)}.
\]
Where the $\left(K,H\right)$-biset structure on $\left(K\times H\right)/\Delta\left(H,\Id_{H}\right)$
is defined as before.

Finally, given an isomorphism $\varphi\colon H\to\varphi\left(H\right)$,
we define the \textbf{conjugation by $\varphi$} as the isomorphism
class of $\left(\varphi\left(H\right),H\right)$-bisets given by
\[
c_{\varphi,H}:=\overline{\left(\varphi\left(H\right)\times H\right)/\Delta\left(H,\varphi\right)}.
\]
Again the $\left(\varphi\left(H\right),H\right)$-biset structure
on $\left(\varphi\left(H\right)\times H\right)/\Delta\left(H,\varphi\right)$
is given as before. If $H$ is clear we will simply write $c_{\varphi}$
instead of $c_{\varphi,H}$. 
\end{defn}

We want the Mackey algebra to be an algebra over a commutative ring
$\R$ and generated by elements of the form $R_{H}^{K},I_{H}^{K}$
and $c_{\varphi}$ with $H\le K\le S$ and $\varphi$ an isomorphism
in $\F$. To do this, we will start by giving a standard definition
of the product (also called composition) of two bisets (see \cite{fuserdFusionBouc,MackeyFunctorsAndBisets,bouc-serge-biset-functors}).
Let $H,J,J'$ and $K$ be finite groups, let $X$ be an $\left(H,J\right)$-biset
and let $Y$ be a $\left(J',K\right)$-biset. If $J=J'$ we define
$X\times_{J}Y$ as the $\left(H,K\right)$-biset obtained as a quotient
of the $\left(H,K\right)$-biset $X\times Y$ modulo the equivalence
relation
\[
\left(x\cdot j\right)\times y\sim x\times\left(j\cdot y\right),
\]
where $x\in X,\,y\in Y$ and $j\in J$. With this notation, we define
the product of $X$ and $Y$ as the $\left(H,K\right)$-biset given
by
\[
XY:=X\cdot Y:=\begin{cases}
X\times_{J}Y & \text{if }J=J'\\
\emptyset & \text{else}
\end{cases}.
\]

Notice that, given an $\left(H,J\right)$-biset $X'$ isomorphic to
$X$ and a $\left(J',K\right)$-biset $Y'$ isomorphic to $Y$ then
the $\left(H,K\right)$-biset $X'Y'$ is isomorphic to $XY$. This
allows us to define the product of two isomorphism classes of bisets
as the isomorphism class of the product of any two of their representatives
(see \cite[Lemma 2.3.20 and Theorem 2.4.5]{bouc-serge-biset-functors}).
That is, using the notation of Definition \ref{def:Restriction-Induction-and-conjugation-bisets.},
we define the product of two isomorphism classes of bisets $\overline{X}$
and $\overline{Y}$ as
\[
\overline{X}\,\overline{Y}:=\overline{X}\cdot\overline{Y}:=\overline{XY}.
\]
It is straight forward to prove that this product is associative (see
\cite[Proposition 2.3.14]{bouc-serge-biset-functors}).

We can now define $\mathcal{A}$ as the abelian semigroup generated
by an artificial zero element ($0$) and all the isomorphism classes
of non-empty bisets over any pair of finite groups and with relations
\begin{align*}
\overline{X}+\overline{Y} & =\overline{X\sqcup Y}, & \overline{X}+\overline{Z} & =\overline{Z}+\overline{X},\\
0+0 & =0, & 0+\overline{Z} & =\overline{Z}+0=\overline{Z},
\end{align*}
for all bisets $X,Y$ and $Z$ such that $X$ and $Y$ are bisets
over the same pair of groups. By sending $\overline{\emptyset}$ (seen
as an isomorphism class of biset over any 2 finite groups) to the
element $0$ in $\mathcal{A}$, the previously defined product between
isomorphism classes of bisets can be uniquely extended to $\mathcal{A}$
in a way that is distributive with respect to $+$. With this setup
we have that $\left(\mathcal{A},+,\cdot\right)$ is a semiring. We
can now take the subsemiring of $\mathcal{A}$ generated by isomorphism
classes of bisets of the form $I_{H}^{K},R_{H}^{K}$ and $c_{\varphi}$
with $H\le K\le S$ and $\varphi$ an isomorphism in $\F$. This subsemiring
can be used in order to define the Mackey algebra.
\begin{defn}
\label{def:Mackey-algebra.}The \textbf{Mackey algebra of $\F$ on
the ring $\Z$} (denoted as $\muFR[\Z]$) is the Grothendieck group
of the previously described subsemiring. For every commutative ring
with unit $\R$, we define the \textbf{Mackey algebra of $\F$ on
the ring $\R$ }(or simply \textbf{Mackey algebra} if $\F$ and $\R$
are clear) as the $\R$-algebra
\[
\muFR:=\R\otimes_{\Z}\muFR[\Z].
\]
\end{defn}

The previous definitions motivate the introduction of the following
notation.
\begin{notation}
\label{nota:R-commutative.}From now, unless otherwise specified,
\textbf{$\R$ will denote a commutative ring with unit}.
\end{notation}

The following relations on the elements of the Mackey algebra will
be useful in what follows.
\begin{lem}
\label{lem:Many-properties-definition.}The elements $I_{H}^{K},R_{H}^{K}$
and $c_{\varphi}$ of the Mackey algebra $\muFR$ satisfy relations
analogous to the similarly denoted elements in the Mackey algebra
of a group (see \cite[Section 3]{StructureMackeyFunctors}). More
precisely, the following are satisfied:
\begin{enumerate}
\item \label{enu:property-I_H^H-is-identity.}Let $H$ be a subgroup of
$S$, and let $h\in H$. We have that $I_{H}^{H}=R_{H}^{H}=c_{c_{h},H}$.
Moreover $I_{H}^{H}$ is an idempotent in $\muFR$.
\item \label{enu:property-composition-is-nice.}Let $H,K$ and $J$ be subgroups
of $S$ such that $H\le K\le J$ and let $\varphi\colon H\to\varphi\left(H\right)$
and $\psi\colon\varphi\left(H\right)\to\psi\left(\varphi\left(H\right)\right)$
be isomorphisms in $\F$. We have that
\begin{align*}
R_{H}^{K}R_{K}^{J} & =R_{H}^{J}, & I_{K}^{J}I_{H}^{K} & =I_{H}^{J}, & c_{\psi,\varphi\left(H\right)}c_{\varphi,H} & =c_{\psi\varphi,H}.
\end{align*}
\item \label{enu:Property-conjugation-commutes.}Let $H$ and $K$ be subgroups
of $S$ such that $H\le K$ and let $\theta\colon K\to\theta\left(K\right)$
be an isomorphism in $\F$. We have that
\begin{align*}
c_{\theta,K}I_{H}^{K} & =I_{\theta\left(H\right)}^{\theta\left(K\right)}c_{\theta_{|H},H}, & c_{\theta_{|H},H}R_{H}^{K} & =R_{\theta\left(H\right)}^{\theta\left(K\right)}c_{\theta,K}.
\end{align*}
Where $\theta_{|H}:H\to\theta\left(H\right)$ is the restriction of
$\theta$ to $H$.
\item \label{enu:property-Mackey-formula.}Let $H,K$ and $J$ be subgroups
of $S$ such that $H,K\le J$. We have that
\begin{align*}
R_{K}^{J}I_{H}^{J} & =\sum_{x\in\left[K\backslash J/H\right]}I_{\left(K\cap\lui{x}{H}\right)}^{K}c_{c_{x},\left(K^{x}\cap H\right)}R_{\left(K^{x}\cap H\right)}^{H}.
\end{align*}
\item \label{enu:property-all-0.}All other combinations of induction restriction
and conjugation are $0$.
\end{enumerate}
\end{lem}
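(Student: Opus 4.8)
The plan is to translate every assertion into a statement about isomorphism classes of bisets and then verify it by an explicit bijection of bisets, following the standard computations in Bouc's book \cite{bouc-serge-biset-functors} and in \cite{StructureMackeyFunctors}. Recall that $I_H^K$, $R_H^K$ and $c_{\varphi}$ are by definition transitive bisets of the form $(A\times B)/\Delta(H,f)$, so each product appearing in the lemma is again a quotient of a direct product of groups by (a product of) twisted diagonals, and the identities amount to decomposing such a quotient into a disjoint union of orbits. I will handle the five items roughly in the order they are stated, since each is used to set up the next.

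For item \eqref{enu:property-I_H^H-is-identity.}, note that $\Delta(H,\Id_H)$ viewed inside $H\times H$ is literally $\{(h,h)\}$, and conjugation by $h\in H$ gives $\Delta(H,c_h)=\{({}^h x\cdot x^{-1}\cdot x, x)\}$ which, after the quotient, is isomorphic as a biset to $(H\times H)/\Delta(H,\Id_H)$ because left multiplication by $h$ is an inner automorphism; this gives $I_H^H=R_H^H=c_{c_h,H}$. Idempotency $I_H^H I_H^H=I_H^H$ is the computation $(H\times H)/\Delta\times_H(H\times H)/\Delta\cong (H\times H)/\Delta$, i.e. the identity biset composed with itself. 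For item \eqref{enu:property-composition-is-nice.} the three identities are the transitivity of restriction, of induction and of conjugation: in each case one checks that the map on the level of bisets sending a class $(x\times y)\Delta \times_K (y'\times z)\Delta$ to $(x\times z)\Delta$ is a well-defined biset isomorphism, using that the two diagonals glue correctly because $H\le K\le J$ (resp. that $\psi\varphi$ is the composite isomorphism). Item \eqref{enu:Property-conjugation-commutes.} is the observation that conjugating a restriction (or induction) biset by $\theta$ just relabels the groups: the biset $(\theta(K)\times K)/\Delta(K,\theta) \times_K (K\times H)/\Delta(H,\Id_H)$ is isomorphic to $(\theta(K)\times\theta(H))/\Delta(\theta(H),\Id)\times_{\theta(H)}(\theta(H)\times H)/\Delta(H,\theta_{|H})$, which one proves by writing down the evident bijection and checking equivariance; the second identity is dual.

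The main work is item \eqref{enu:property-Mackey-formula.}, the Mackey formula. Here I would compute $R_K^J I_H^J = (K\times J)/\Delta(K,\Id)\times_J (J\times H)/\Delta(H,\Id)$ as a $(K,H)$-biset and decompose it into transitive pieces. The underlying set is $K\backslash (J\times J) /H$ after collapsing the middle $J$, which is identified with the double coset space, and the orbit through the coset of $(1,x,1)$ for $x\in[K\backslash J/H]$ has stabiliser data exactly $K\cap{}^x H$ on the left and $K^x\cap H$ on the right, with the transition given by conjugation $c_x$; this is precisely the summand $I_{K\cap{}^x H}^{K}\,c_{c_x,K^x\cap H}\,R_{K^x\cap H}^{H}$. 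The point requiring care is to verify that distinct double‑coset representatives give non‑isomorphic (hence disjoint) summands and that the stabiliser computation produces the twisted diagonal with the correct conjugation map $c_x$ rather than some inner‑twisted variant; this is the standard but slightly delicate orbit‑counting argument, identical to \cite[Section 3]{StructureMackeyFunctors} once one knows the products $I_H^K$, $R_H^K$, $c_\varphi$ behave exactly like their group-theoretic counterparts — which is guaranteed because all the bisets involved are honest $(H,K)$-bisets over subgroups of the single ambient $p$-group $S$, so no genuinely fusion-theoretic phenomenon intervenes. Finally, item \eqref{enu:property-all-0.} follows from the definition of the composition $XY$, which is $\emptyset$ (hence $0$ in $\muFR$) unless the middle groups coincide: any product of generators in which an induction/restriction/conjugation does not match its neighbour on the shared group is empty, and a short case check over the nine possible adjacent pairs of generator types shows these are the only surviving combinations, namely those listed in \eqref{enu:property-composition-is-nice.}--\eqref{enu:property-Mackey-formula.}. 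All of these are routine once the biset formalism is set up, so I would present item \eqref{enu:property-Mackey-formula.} in detail and merely indicate the bijections for the rest.
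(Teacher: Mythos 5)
Your proposal is correct and follows essentially the same route as the paper: the paper's proof simply cites \cite[Section 2.3]{bouc-serge-biset-functors} and \cite[Proposition 6.9 and Theorem 5.3]{MackeyFunctorsAndBisets}, observing that $I_H^K$, $R_H^K$ and $c_{\varphi}$ are by definition the standard transitive bisets, so all five items are the classical biset identities you sketch. You merely spell out the double-coset and twisted-diagonal computations that the paper delegates to the references.
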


\begin{proof}
See \cite[Section 2.3]{bouc-serge-biset-functors}. Alternatively
notice that the elements $I_{H}^{K},R_{H}^{K}$ and $c_{\varphi}$
of the Mackey algebra are, by definition, isomorphism classes of the
bisets $\text{Ind}_{x}$, $\text{Res}_{x}$ and $\mathfrak{L}_{x}$
of \cite[Definition 6.8]{MackeyFunctorsAndBisets}. With this in mind
the above relations follow from \cite[Proposition 6.9 and Theorem 5.3]{MackeyFunctorsAndBisets}.
\end{proof}
As an immediate consequence of Lemma \ref{lem:Many-properties-definition.}
we have the following.
\begin{cor}
\label{cor:conjugation-in-F-and-in-orbit-F.}Let $H$ and $K$ be
subgroups of $S$, let $k\in K$ and let $\varphi\colon H\to\varphi\left(H\right)$
be an isomorphism in $\F$ such that $\varphi\left(H\right)\le K$.
We have that $I_{\lui{k}{\varphi\left(H\right)}}^{K}c_{c_{k}\varphi}=I_{\varphi\left(H\right)}^{K}c_{\varphi}$
and that $c_{\varphi^{-1}c_{k^{-1}}}R_{\lui{k}{\varphi\left(H\right)}}^{K}=c_{\varphi^{-1}}R_{\varphi\left(H\right)}^{K}$.
In particular, given $\overline{\varphi}\in\Hom_{\OF}\left(H,K\right)$
and representatives $\varphi_{1},\varphi_{2}\in\overline{\varphi}$
then, seeing $\varphi_{1}$ and $\varphi_{2}$ as isomorphisms onto
their images we can define
\begin{align*}
I_{\overline{\varphi}\left(H\right)}^{K}c_{\overline{\varphi}} & :=I_{\varphi_{1}\left(H\right)}^{K}c_{\varphi_{1}}=I_{\varphi_{2}\left(H\right)}^{K}c_{\varphi_{2}}, & c_{\overline{\varphi^{-1}}}R_{\overline{\varphi}\left(H\right)}^{K} & :=c_{\varphi_{1}^{-1}}R_{\varphi_{1}\left(H\right)}^{K}=c_{\varphi_{2}^{-1}}R_{\varphi_{2}\left(H\right)}^{K}.
\end{align*}
Moreover, given $J\le S$ and $\overline{\psi}\in\Hom_{\OF}\left(K,J\right)$,
we have that
\begin{align*}
I_{\overline{\psi\varphi}\left(H\right)}^{J}c_{\overline{\psi\varphi}} & =I_{\overline{\psi}\left(K\right)}^{J}c_{\overline{\psi}}I_{\overline{\varphi}\left(H\right)}^{K}c_{\overline{\varphi}}, & c_{\overline{\left(\psi\varphi\right)^{-1}}}R_{\overline{\psi\varphi}\left(H\right)}^{J} & =c_{\overline{\varphi^{-1}}}I_{\overline{\varphi}\left(H\right)}^{K}c_{\overline{\psi^{-1}}}R_{\overline{\psi}\left(K\right)}^{J}.
\end{align*}
\end{cor}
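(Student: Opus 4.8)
The plan is to deduce everything from the relations collected in Lemma~\ref{lem:Many-properties-definition.}, using only parts~\eqref{enu:property-I_H^H-is-identity.}, \eqref{enu:property-composition-is-nice.} and~\eqref{enu:Property-conjugation-commutes.}; no new idea is needed beyond careful bookkeeping of the subgroups on which the various conjugation elements live.

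First I would establish the two displayed identities. Since $k\in K\le S$, conjugation by $k$ is a morphism in $\F$ by Definition~\ref{def:Fusion-system.}, and it restricts to an $\F$-isomorphism $c_{k}\colon\varphi(H)\to\lui{k}{\varphi(H)}$; in particular $c_{k}$ is an $\F$-automorphism of $K$ with $\lui{k}{K}=K$, so by parts~\eqref{enu:property-I_H^H-is-identity.} and~\eqref{enu:property-composition-is-nice.} we have $c_{c_{k},K}=I_{K}^{K}$ and $I_{K}^{K}I_{\varphi(H)}^{K}=I_{\varphi(H)}^{K}$. Using part~\eqref{enu:property-composition-is-nice.} to split $c_{c_{k}\varphi,H}=c_{c_{k},\varphi(H)}c_{\varphi,H}$, and then part~\eqref{enu:Property-conjugation-commutes.} with $\theta:=c_{k}\colon K\to K$ and $\varphi(H)$ in the role of $H$, I get
\[
I_{\lui{k}{\varphi(H)}}^{K}c_{c_{k}\varphi}
=I_{\lui{k}{\varphi(H)}}^{K}c_{c_{k},\varphi(H)}c_{\varphi,H}
=c_{c_{k},K}I_{\varphi(H)}^{K}c_{\varphi,H}
=I_{\varphi(H)}^{K}c_{\varphi},
\]
which is the first identity. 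The second is symmetric: split $c_{\varphi^{-1}c_{k^{-1}},\lui{k}{\varphi(H)}}=c_{\varphi^{-1},\varphi(H)}c_{c_{k^{-1}},\lui{k}{\varphi(H)}}$ via part~\eqref{enu:property-composition-is-nice.}, apply part~\eqref{enu:Property-conjugation-commutes.} with $\theta:=c_{k^{-1}}\colon K\to K$ and $\lui{k}{\varphi(H)}$ in the role of $H$ to rewrite $c_{c_{k^{-1}},\lui{k}{\varphi(H)}}R_{\lui{k}{\varphi(H)}}^{K}=R_{\varphi(H)}^{K}c_{c_{k^{-1}},K}$, and finish with $c_{c_{k^{-1}},K}=R_{K}^{K}$ and $R_{\varphi(H)}^{K}R_{K}^{K}=R_{\varphi(H)}^{K}$.

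For the ``in particular'' statement, any two representatives $\varphi_{1},\varphi_{2}$ of a class $\overline{\varphi}\in\Hom_{\OF}(H,K)$ differ by post-composition with an element of $\Aut_{K}(K)$, that is $\varphi_{2}=c_{k}\varphi_{1}$ for some $k\in K$, so $\varphi_{2}(H)=\lui{k}{\varphi_{1}(H)}$ and applying the two identities just proved with $\varphi:=\varphi_{1}$ gives $I_{\varphi_{2}(H)}^{K}c_{\varphi_{2}}=I_{\varphi_{1}(H)}^{K}c_{\varphi_{1}}$ and $c_{\varphi_{2}^{-1}}R_{\varphi_{2}(H)}^{K}=c_{\varphi_{1}^{-1}}R_{\varphi_{1}(H)}^{K}$; hence $I_{\overline{\varphi}(H)}^{K}c_{\overline{\varphi}}$ and $c_{\overline{\varphi^{-1}}}R_{\overline{\varphi}(H)}^{K}$ are well defined. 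For the composition formulas I would fix representatives $\varphi\in\overline{\varphi}$ and $\psi\in\overline{\psi}$, so that $\psi\varphi$ represents $\overline{\psi\varphi}$ and $\psi(K)\le J$. Using part~\eqref{enu:property-composition-is-nice.} I split $c_{\psi\varphi,H}=c_{\psi|_{\varphi(H)},\varphi(H)}c_{\varphi,H}$ and $I_{\psi\varphi(H)}^{J}=I_{\psi(K)}^{J}I_{\psi\varphi(H)}^{\psi(K)}$, then part~\eqref{enu:Property-conjugation-commutes.} with $\theta:=\psi$ gives $I_{\psi\varphi(H)}^{\psi(K)}c_{\psi|_{\varphi(H)},\varphi(H)}=c_{\psi,K}I_{\varphi(H)}^{K}$, so that
\[
I_{\overline{\psi\varphi}(H)}^{J}c_{\overline{\psi\varphi}}
=I_{\psi(K)}^{J}\bigl(c_{\psi,K}I_{\varphi(H)}^{K}\bigr)c_{\varphi,H}
=\bigl(I_{\psi(K)}^{J}c_{\psi,K}\bigr)\bigl(I_{\varphi(H)}^{K}c_{\varphi,H}\bigr)
=I_{\overline{\psi}(K)}^{J}c_{\overline{\psi}}\,I_{\overline{\varphi}(H)}^{K}c_{\overline{\varphi}}.
\]
The restriction formula follows the same pattern, splitting $(\psi\varphi)^{-1}=\varphi^{-1}(\psi|_{\varphi(H)})^{-1}$ and $R_{\psi\varphi(H)}^{J}=R_{\psi\varphi(H)}^{\psi(K)}R_{\psi(K)}^{J}$ and using part~\eqref{enu:Property-conjugation-commutes.} with $\theta:=\psi^{-1}$ to obtain $c_{(\psi|_{\varphi(H)})^{-1},\psi\varphi(H)}R_{\psi\varphi(H)}^{\psi(K)}=R_{\varphi(H)}^{K}c_{\psi^{-1},\psi(K)}$; regrouping then yields $c_{\overline{\varphi^{-1}}}R_{\overline{\varphi}(H)}^{K}c_{\overline{\psi^{-1}}}R_{\overline{\psi}(K)}^{J}$ (so the factor written $I_{\overline{\varphi}(H)}^{K}$ in the last displayed identity of the statement should read $R_{\overline{\varphi}(H)}^{K}$).

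Since every step is a direct application of Lemma~\ref{lem:Many-properties-definition.}, I do not anticipate a genuine obstacle; the step most prone to error is the repeated invocation of part~\eqref{enu:Property-conjugation-commutes.}, where one must restrict $\psi$ (or $\varphi$, or $c_{k}$) precisely to the subgroup occurring as the source of the induction or restriction being moved past, together with the recurring use of the fact that $c_{k}$ with $k\in K$ is an $\F$-automorphism of $K$, so that $\lui{k}{K}=K$ and $c_{c_{k},K}=I_{K}^{K}$ is the idempotent unit on $K$.
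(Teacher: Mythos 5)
Your proof is correct and follows essentially the same route as the paper: split $c_{c_{k}\varphi}$ (resp.\ $c_{\psi\varphi}$) using Lemma \ref{lem:Many-properties-definition.} \eqref{enu:property-composition-is-nice.}, commute the conjugation past the induction/restriction via \eqref{enu:Property-conjugation-commutes.}, and absorb $c_{c_{k},K}=I_{K}^{K}$ via \eqref{enu:property-I_H^H-is-identity.}; the paper only writes out the induction case and your restriction case is the expected dual. You are also right that the factor $I_{\overline{\varphi}\left(H\right)}^{K}$ in the last displayed identity of the statement is a typo for $R_{\overline{\varphi}\left(H\right)}^{K}$.
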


\begin{proof}
We will only prove the statement for the case involving induction,
the proof for the case involving restriction is analogous. The first
part of the statement follows from Lemma \ref{lem:Many-properties-definition.}
\eqref{enu:property-I_H^H-is-identity.}-\eqref{enu:Property-conjugation-commutes.}
via the identities below
\[
I_{\lui{k}{\varphi\left(H\right)}}^{K}c_{c_{k}\varphi}=I_{\lui{k}{\varphi\left(H\right)}}^{K}c_{c_{k}}c_{\varphi}=c_{c_{k}}I_{\varphi\left(H\right)}^{K}c_{\varphi}=I_{K}^{K}I_{\varphi\left(H\right)}^{K}c_{\varphi}=I_{\varphi\left(H\right)}^{K}c_{\varphi}.
\]
The second part of the statement follows from Items \eqref{enu:property-composition-is-nice.}
and \eqref{enu:Property-conjugation-commutes.} of Lemma \ref{lem:Many-properties-definition.}
via the identities below
\[
I_{\overline{\psi\varphi}\left(H\right)}^{J}c_{\overline{\psi\varphi}}=I_{\psi\left(K\right)}^{J}I_{\psi\left(\varphi\left(H\right)\right)}^{\psi\left(K\right)}c_{\psi_{|\varphi\left(H\right)}}c_{\varphi}=I_{\psi\left(K\right)}^{J}c_{\psi}I_{\varphi\left(H\right)}^{K}c_{\varphi}=I_{\overline{\psi}\left(K\right)}^{J}c_{\overline{\psi}}I_{\overline{\varphi}\left(H\right)}^{K}c_{\overline{\varphi}}.
\]
\end{proof}
Another important consequence of Lemma \ref{lem:Many-properties-definition.}
is the following result which translates \cite[Proposition 3.2]{StructureMackeyFunctors}
to the context of Mackey functors over fusion systems.
\begin{prop}
\label{prop:Mackey-algebra-basis.} The Mackey algebra $\muFR$ admits
an $\R$-basis of the form $\mathcal{B}:=\bigsqcup_{A,B\le S}\mathcal{B}_{\left(A,B\right)}$,
where
\[
\mathcal{B}_{\left(A,B\right)}:=\bigsqcup_{\begin{array}{c}
{\scriptstyle C\le A}\\
{\scriptstyle \text{up to }A\text{-conj}}
\end{array}}\bigsqcup_{\varphi\in\left[\Aut_{B}\left(B\right)\backslash\Hom_{\F}\left(C,B\right)/\Aut_{A}\left(C\right)\right]}\left\{ I_{\overline{\varphi}\left(C\right)}^{B}c_{\overline{\varphi}}R_{C}^{A}\right\} .
\]
In particular, $\muFR$ is finitely generated as an $\R$-module.
\end{prop}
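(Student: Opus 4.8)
The plan is to mimic the proof of \cite[Proposition 3.2]{StructureMackeyFunctors}, but keeping careful track of the fact that morphisms in $\F$ are now honest isomorphisms onto their images rather than conjugations. First I would show that $\mathcal{B}$ spans $\muFR$ over $\R$. Since $\muFR$ is generated as an $\R$-algebra by the elements $I_H^K$, $R_H^K$ and $c_\varphi$, it suffices to show that any product of such generators lies in the $\R$-span of $\mathcal{B}$. By Lemma \ref{lem:Many-properties-definition.}\eqref{enu:property-all-0.} any product of generators that does not ``compose'' (in the sense of the source/target groups matching) is $0$, so one reduces to words of the form alternating restrictions, conjugations and inductions. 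Using Lemma \ref{lem:Many-properties-definition.}\eqref{enu:property-composition-is-nice.} consecutive restrictions collapse to a single restriction, consecutive inductions collapse to a single induction, and consecutive conjugations collapse to a single conjugation. The only obstruction to writing an arbitrary word in the normal form ``$I c R$'' is a subword of the shape $R_K^J I_H^J$ (an induction immediately followed by a restriction, reading right to left), and here one invokes the Mackey formula, Lemma \ref{lem:Many-properties-definition.}\eqref{enu:property-Mackey-formula.}, to rewrite it as a sum of terms $I_{K\cap{}^{x}H}^{K}c_{c_x}R_{K^x\cap H}^{H}$, each of which is again of the form $I c R$. Iterating this straightening procedure (which terminates, since each application of the Mackey formula strictly decreases the number of $I$--$R$ inversions, or one can induct on word length) shows every generator-word is an $\R$-linear combination of elements $I_{\psi(C)}^{B}c_{\psi}R_{C}^{A}$ with $C\le A$, $\psi\in\Hom_{\F}(C,B)$, $A,B\le S$. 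It then remains to reduce the indexing set: using Corollary \ref{cor:conjugation-in-F-and-in-orbit-F.} the element $I_{\overline\psi(C)}^{B}c_{\overline\psi}R_C^A$ depends on $\psi$ only through its class $\overline\psi\in\Hom_{\OF}(C,B)$, i.e.\ modulo post-composition by $\Aut_B(B)$; and Lemma \ref{lem:Many-properties-definition.}\eqref{enu:property-composition-is-nice.}--\eqref{enu:Property-conjugation-commutes.} (absorbing $c_{c_a}=I_C^C$ for $a\in A$ into $R_C^A$ via $c_{c_a}R_C^A$, or replacing $C$ by $C^a$) lets us further quotient by pre-composition with $\Aut_A(C)$ and replace $C$ by an $A$-conjugacy class representative. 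This yields exactly the set $\mathcal{B}$, so $\mathcal{B}$ spans.

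For linear independence I would exhibit $\muFR$ as acting faithfully on a module where the images of the elements of $\mathcal{B}$ are visibly independent, exactly as in \cite[Proposition 3.2]{StructureMackeyFunctors}. Concretely, $\muFR$ acts on $\bigoplus_{H\le S}\R[\text{something}]$; the cleanest route is to use the biset description (each basis element is, via \cite[Definition 6.8]{MackeyFunctorsAndBisets}, the class of an explicit transitive biset $(B\times A)/\Delta(C,\psi)$) and to note that in the double Burnside / biset category the classes of bisets $(B\times A)/\Delta(C,\psi)$ are linearly independent once indexed up to the stated conjugacy and isomorphism relations — this is the standard basis of the biset category restricted to subgroups of $S$, cf.\ \cite[Section 2.3]{bouc-serge-biset-functors}. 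Thus $\mathcal{B}$, being a sub-collection of that basis, is $\R$-independent. Assembling: $\mathcal{B}$ is an $\R$-basis, and since $S$ has finitely many subgroups and each $\mathcal{B}_{(A,B)}$ is finite (finitely many $A$-conjugacy classes of $C\le A$, and $\Hom_\F(C,B)$ is finite), $\mathcal{B}$ is finite, so $\muFR$ is finitely generated as an $\R$-module.

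I expect the straightening/spanning argument to be routine once the bookkeeping is set up. The main obstacle is the linear independence: one must be sure that the relation defining $\mathcal{B}_{(A,B)}$ — $C$ up to $A$-conjugacy, and $\psi$ modulo $\Aut_B(B)$ on the left and $\Aut_A(C)$ on the right — is \emph{exactly} the isomorphism relation on the transitive bisets $(B\times A)/\Delta(C,\psi)$, with no further coincidences. This is where one genuinely uses that we are inside a fusion system over a $p$-group (so all the relevant subgroups and hom-sets are finite and the biset-category basis applies verbatim); I would verify it by directly computing when $(B\times A)/\Delta(C,\psi)\cong (B\times A)/\Delta(C',\psi')$ as $(B,A)$-bisets, which forces $C'={}^{a}C$ for some $a\in A$ and $\psi'=c_b\circ\psi\circ c_{a^{-1}}$ for some $b\in B$, matching the indexing of $\mathcal{B}$ on the nose.
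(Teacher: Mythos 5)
Your proposal is correct and takes essentially the same route as the paper: the paper also reduces spanning to the relations of Lemma \ref{lem:Many-properties-definition.} (after first splitting $\muFR$ into the pieces $I_{A}^{A}\muFR I_{B}^{B}$ via the orthogonal idempotents $I_{A}^{A}$), and establishes independence exactly by identifying each $I_{\overline{\varphi}\left(C\right)}^{B}c_{\overline{\varphi}}R_{C}^{A}$ with the transitive biset $\left(B\times A\right)/\Delta\left(C,\varphi\right)$, invoking the classification of transitive bisets up to isomorphism and the unique decomposition of finite bisets into transitive ones from \cite{bouc-serge-biset-functors}, and then passing to the Grothendieck group and tensoring with $\R$. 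The only differences are cosmetic: the paper does not spell out the straightening induction, and your closing worry about the isomorphism relation on $\left(B\times A\right)/\Delta\left(C,\psi\right)$ is settled by the cited lemmas of Bouc (it has nothing to do with $S$ being a $p$-group).
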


\begin{proof}
From Items \eqref{enu:property-I_H^H-is-identity.}, \eqref{enu:property-composition-is-nice.}
and \eqref{enu:property-all-0.} of Lemma \ref{lem:Many-properties-definition.}
we know that $1_{\muFR}=\sum_{H\le S}I_{H}^{H}$ and that the $I_{H}^{H}$
are mutually orthogonal idempotents. With this in mind we can obtain
the following $\R$-module decomposition of $\muFR$
\[
\muFR\cong_{\R}\bigoplus_{A,B\le S}I_{A}^{A}\muFR I_{B}^{B}.
\]

Fix now $A,B\le S$. From the above it suffices to prove that $\mathcal{B}_{\left(A,B\right)}$
is an $\R$-basis of $I_{A}^{A}\muFR I_{B}^{B}$. Using Lemma \ref{lem:Many-properties-definition.}
we can write any element in $I_{A}^{A}\muFR I_{B}^{B}$ as an $\R$-linear
combination of elements of the form $I_{\varphi\left(C\right)}^{B}c_{\varphi,C}R_{C}^{A}$
with $C\le A$ and $\varphi\colon C\to\varphi\left(C\right)$ an isomorphism
in $\F$ satisfying $\varphi\left(C\right)\le B$. For $i=1,2$ let
$C_{i}$ be a subgroup of $A$, let $\varphi_{i}\in\text{Hom}_{\F}\left(C_{i},\varphi_{i}\left(C_{i}\right)\right)$
be an isomorphism in $\F$ such that $\varphi_{i}\left(C_{i}\right)\le B$,
view $\Delta\left(C_{i},\varphi_{i}\right)$ as a subgroup of $B\times A$
and define the representative $X_{i}:=\left(B\times A\right)/\Delta\left(C_{i},\varphi_{i}\right)$
of $I_{\varphi_{i}\left(C_{i}\right)}^{B}c_{\varphi_{i}}R_{C_{i}}^{A}$.
We know (see \cite[Lemma 2.3.4 (1)]{bouc-serge-biset-functors}) that
each $X_{i}$ is a transitive biset. Therefore we can use \cite[Lemma 2.1.9 and Definition 2.3.1]{bouc-serge-biset-functors}
in order to deduce that $I_{\varphi_{1}\left(C_{1}\right)}^{B}c_{\varphi_{1},C_{1}}R_{C_{1}}^{A}=I_{\varphi_{2}\left(C_{2}\right)}^{B}c_{\varphi_{2},C_{2}}R_{C_{2}}^{A}$
if and only if there exist $a\in A$ and $b\in B$ such that $C_{2}=C_{1}^{a}$
and $\varphi_{2}=c_{b}\varphi_{1}c_{a}$. We also know (see \cite[Lemmas 2.1.9 and 2.2.2]{bouc-serge-biset-functors})
that any finite $\left(A,B\right)$-biset can be written in a unique
way (up to isomorphism) as a disjoint union of finite transitive $\left(A,B\right)$-bisets.
Let now $M_{\left(A,B\right)}$ be the commutative monoid generated
by isomorphism classes of $\left(A,B\right)$-bisets of the form $I_{\varphi\left(C\right)}^{B}c_{\varphi,C}R_{C}^{A}$
with addition given by $\overline{X}+\overline{Y}=\overline{X\sqcup Y}$
(see notation of Definition \ref{def:Restriction-Induction-and-conjugation-bisets.}).
We can deduce from the above that $\mathcal{B}_{\left(A,B\right)}$
(viewed as a subset of $M_{\left(A,B\right)}$) is an $\mathbb{N}$-basis
of $M_{\left(A,B\right)}$. Recall now that $I_{A}^{A}\muFR[\Z]I_{B}^{B}$
is, by definition, the Grothendieck group of $M_{\left(A,B\right)}$.
Thus, we can deduce that $\mathcal{B}_{\left(A,B\right)}$ (viewed
as a subset of $I_{A}^{A}\muFR[\Z]I_{B}^{B}$) is a $\Z$-basis of
$I_{A}^{A}\muFR[\Z]I_{B}^{B}$. Since tensor product preserves direct
sum decomposition and $\R\otimes_{\Z}\Z\cong\R$ for any commutative
ring $\R$, then we can deduce that $\mathcal{B}_{\left(A,B\right)}$
(viewed as a subset of $I_{A}^{A}\muFR I_{B}^{B}$) is an $\R$-basis
of $I_{A}^{A}\muFR I_{B}^{B}$ thus concluding the proof.
\end{proof}
\begin{cor}
\label{cor:Mackey-algebra-inclusion.}Let $H\le S$ and let $\H$
be a fusion system over $H$ satisfying $\H\subseteq\F$. There exists
a natural inclusion of Mackey algebras $\FmuFR{\H}\subseteq\muFR$
and this inclusion preserves unit if and only if $H=S$.
\end{cor}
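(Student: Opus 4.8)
The plan is to construct the inclusion directly on the level of the generating bisets. Since $\H$ is a fusion system over $H$ with $\H\subseteq\F$, every generator $I_K^L$, $R_K^L$, $c_\varphi$ of $\mu_{\R}\left(\H\right)$ (with $K\le L\le H$ and $\varphi$ an isomorphism in $\H$) is simultaneously a generator of $\mu_{\R}\left(\F\right)$: the defining bisets $\left(L\times K\right)/\Delta\left(K,\Id_K\right)$, $\left(K\times L\right)/\Delta\left(K,\Id_K\right)$ and $\left(\varphi\left(K\right)\times K\right)/\Delta\left(K,\varphi\right)$ depend only on the subgroups and morphisms involved, not on the ambient fusion system, and $K\le L\le H\le S$ while every isomorphism in $\H$ is an isomorphism in $\F$ (since $\H\subseteq\F$). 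So I would first check that sending each such generator of $\mu_{\R}\left(\H\right)$ to the identically-denoted element of $\mu_{\R}\left(\F\right)$ is a well-defined semiring homomorphism on the subsemiring of $\mathcal{A}$ generated by the $\H$-bisets: this is immediate because the product of bisets (the composition $\times_J$) and the sum ($\sqcup$) are computed the same way regardless of which fusion system we regard the generators as living in. Passing to Grothendieck groups and then tensoring with $\R$ over $\Z$ yields an $\R$-algebra homomorphism $\mu_{\R}\left(\H\right)\to\mu_{\R}\left(\F\right)$.

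Next I would show this homomorphism is injective, i.e. really an inclusion. The cleanest route is Proposition \ref{prop:Mackey-algebra-basis.}: it gives an explicit $\R$-basis $\mathcal{B}$ of $\mu_{\R}\left(\F\right)$ indexed by triples $\left(C,\overline{\varphi},A\right)$ with $C\le A\le S$ and $\overline{\varphi}\in\left[\Aut_B\left(B\right)\backslash\Hom_{\F}\left(C,B\right)/\Aut_A\left(C\right)\right]$, and the same proposition applied to $\H$ gives the analogous basis of $\mu_{\R}\left(\H\right)$ indexed by triples with $C\le A\le H$ and $\overline{\varphi}$ a class in $\Hom_{\H}\left(C,B\right)$. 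The homomorphism sends each basis element $I_{\overline{\varphi}\left(C\right)}^{B}c_{\overline{\varphi}}R_C^A$ of $\mu_{\R}\left(\H\right)$ to the element of $\mu_{\R}\left(\F\right)$ with the same notation; since $\H\subseteq\F$ these are genuine basis elements of $\mu_{\R}\left(\F\right)$, and distinct $\H$-basis elements remain distinct in $\mu_{\R}\left(\F\right)$ — here one must check that two $\H$-conjugacy/double-coset data that are distinct in $\H$ cannot collapse in $\F$, which holds because the relevant groups $\Aut_B(B), \Aut_A(C)$ and the subgroup relation $C\le A$ are intrinsic (they involve only subgroups of $H$, hence of $S$), so the $\F$-basis element is determined by the same combinatorial data. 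Hence the image of the $\H$-basis is a subset of the $\F$-basis, and a map carrying a basis injectively into a basis is injective. This gives the natural inclusion $\mu_{\R}\left(\H\right)\subseteq\mu_{\R}\left(\F\right)$.

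Finally I would address the unit statement. By the computation in the proof of Proposition \ref{prop:Mackey-algebra-basis.}, $1_{\mu_{\R}\left(\H\right)}=\sum_{K\le H}I_K^K$ and $1_{\mu_{\R}\left(\F\right)}=\sum_{K\le S}I_K^K$, where the $I_K^K$ are mutually orthogonal idempotents and are linearly independent (they are among the basis elements of Proposition \ref{prop:Mackey-algebra-basis.}, taking $C=A=B=K$ and $\varphi=\Id$). If $H=S$ the two sums are literally equal, so the inclusion is unital. Conversely, if $H\lneq S$ there is some $K\le S$ with $K\not\le H$, so $1_{\mu_{\R}\left(\F\right)}-\sum_{K\le H}I_K^K=\sum_{K\le S,\,K\not\le H}I_K^K\ne 0$ by linear independence of the $I_K^K$; hence the image of $1_{\mu_{\R}\left(\H\right)}$ is not $1_{\mu_{\R}\left(\F\right)}$, and the inclusion does not preserve the unit.

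The main obstacle is the injectivity argument: one has to be a little careful that the biset-theoretic identifications used to pin down the basis in Proposition \ref{prop:Mackey-algebra-basis.} (via \cite[Lemmas 2.1.9, 2.2.2, 2.3.4]{bouc-serge-biset-functors}) are compatible with enlarging the fusion system — i.e. that two transitive bisets $\left(B\times A\right)/\Delta\left(C_1,\varphi_1\right)$ and $\left(B\times A\right)/\Delta\left(C_2,\varphi_2\right)$ coming from $\H$ are isomorphic as bisets iff they are already "isomorphic over $\H$", which is automatic since biset isomorphism is detected by conjugation in $A$ and $B$ and has nothing to do with $\F$ versus $\H$. Once this is observed, everything else is routine bookkeeping with the explicit bases.
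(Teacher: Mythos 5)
Your proposal is correct and follows essentially the same route as the paper: the generators (bisets) of $\FmuFR{\H}$ are literally elements of $\muFR$ since their defining data do not depend on the ambient fusion system, and the unit claim follows from $1_{\muFR}=\sum_{K\le S}I_{K}^{K}$. You are more thorough than the paper on one point — you explicitly verify injectivity by matching the basis of Proposition \ref{prop:Mackey-algebra-basis.} for $\H$ against that for $\F$ (the paper only notes that the generators land in $\muFR$) — and that verification is sound, since the $A$-conjugacy and $\left(\Aut_{B}\left(B\right),\Aut_{A}\left(C\right)\right)$-double-coset identifications involve only conjugation by elements of subgroups of $H$ and so cannot collapse further when passing from $\H$ to $\F$.
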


\begin{proof}
From Proposition \ref{prop:Mackey-algebra-basis.} we know that $\FmuFR{\H}$
is generated as an $\R$-module by elements of the form $I_{\varphi\left(C\right)}^{B}c_{\varphi}R_{C}^{A}$
such that $A,B,C\le H$ and $\varphi$ is an isomorphism in $\H$.
Since $\H\subseteq\F$ we know that any isomorphism in $\H$ is also
an isomorphism in $\F$ and since $H\le S$ we know that every subgroup
of $H$ is also a subgroup of $S$. Therefore, with $A,B,C$ and $\varphi$
as before, we have that $I_{\varphi\left(C\right)}^{A}c_{\varphi}R_{C}^{B}\in\muFR$.
This gives us the natural inclusion $\FmuFR{\H}\subseteq\muFR$. For
this inclusion to preserve the unit we need to have $1_{\muFR}=\sum_{K\le S}I_{K}^{K}=\sum_{K'\le H}I_{K'}^{K'}=1_{\FmuFR{\H}}$
which happens if and only if $H=S$. 
\end{proof}
We are now ready to define what a Mackey functor over a fusion system
is.
\begin{defn}
\label{def:Mackey-functor.}A \textbf{Mackey functor over $\F$ on
$\R$ }(or simply \textbf{Mackey functor} if $\F$ and $\R$ are clear)
is a finitely generated left $\muFR$-module. The \textbf{category
of Mackey functors over $\F$ on $\R$ }(denoted by \textbf{$\MackFR$})
is the category $\muFRmod$.
\end{defn}

\begin{example}
Any globally defined Mackey functor (see \cite[Section 1]{TwoClassificationWebb})
inherits a structure of Mackey functor over any fusion system. Any
conjugation invariant Mackey functor over a finite group $G$ with
Sylow $p$ subgroup $S$ leads naturally to a Mackey functor over
$\FSG$ (see \cite{fuserdFusionBouc}). The Mackey algebra $\muFR$
is itself a Mackey functor over $\F$.
\end{example}

This definition of Mackey functor over a fusion system allows us to
use some well known results of ring theory in order to define the
induction, restriction and conjugation functors.
\begin{defn}
\label{def:Restriction-induction-and-conjugation-functors.}Let $H\le S$
and let $\H\subseteq\F$ be a fusion system on $H$. From Corollary
\ref{cor:Mackey-algebra-inclusion.} we have that $\FmuFR{\H}\subseteq\muFR$.
This allows us to define the \textbf{restriction from $\F$ to $\H$
functor} as the functor $\downarrow_{\H}^{\F}:\muFRmod\to\FmuFRmod{\H}$,
that sends any $\muFR$-module $M$ to the $\FmuFR{\H}$-module 
\[
M\downarrow_{\H}^{\F}:=\downarrow_{\H}^{\F}\left(M\right):=1_{\FmuFR{\H}}M.
\]
Here $1_{\FmuFR{\H}}=\sum_{K\le H}I_{K}^{K}$ denotes the identity
of $\FmuFR{\H}$ seen as an element of $\muFR$ via the natural inclusion
of Corollary \ref{cor:Mackey-algebra-inclusion.}. 

Analogously, we can define the \textbf{induction from $\H$ to $\F$
functor }as the functor\textbf{ $\uparrow_{\H}^{\F}:\FmuFRmod{\H}\to\muFRmod$},
that sends any $\FmuFR{\H}$-module $N$ to the $\muFR$-module 
\[
N\uparrow_{\H}^{\F}:=\uparrow_{\H}^{\F}\left(N\right):=\muFR1_{\FmuFR{\H}}\otimes_{\FmuFR{\H}}N.
\]

Finally, let $K\le S$ and let $\varphi\colon H\bjarrow K$ be an
isomorphism of groups (not necessarily in $\F$). This isomorphism
induces an isomorphism of $\R$-algebras $\hat{\varphi}:\FmuFR{\FHH}\bjarrow\FmuFR{\FHH[K]}$
obtained by setting 
\[
\hat{\varphi}\left(I_{\lui{h}{C}}^{B}c_{c_{h}}R_{C}^{A}\right):=I_{\lui{\varphi\left(h\right)}{\varphi\left(C\right)}}^{\varphi\left(B\right)}c_{c_{\varphi\left(h\right)}}R_{\varphi\left(C\right)}^{\varphi\left(A\right)},
\]
for every $A,B,C\le H$ and $h\in H$ such that $C\le A$ and $\lui{h}{C}\le B$.
This allows us to define the \textbf{conjugation by $\varphi^{-1}$
functor} as the invertible functor
\[
\lui{\varphi^{-1}}{\cdot}:=\hat{\varphi}^{*}:\FmuFRmod{\FHH[K]}\to\FmuFRmod{\FHH},
\]
that sends any Mackey functor $L$ over $\FHH[K]$ to the Mackey functor
$\lui{\varphi^{-1}}{L}$ over $\FHH$ which equals $L$ as an $\R$-module
and such that for every $I_{\lui{h}{C}}^{B}c_{c_{h}}R_{C}^{A}\in\FmuFR{\FHH}$
as before and every $x\in\lui{\varphi^{-1}}{L}$
\[
I_{\lui{h}{C}}^{B}c_{c_{h}}R_{C}^{A}\cdot x:=I_{\lui{\varphi\left(h\right)}{\varphi\left(C\right)}}^{\varphi\left(B\right)}c_{c_{\varphi\left(h\right)}}R_{\varphi\left(C\right)}^{\varphi\left(A\right)}x.
\]
Where, on the right hand side, we are viewing $x$ as an element of
$L$ in order to apply the action of $\FmuFR{\FHH[K]}$ on it but
we are viewing the result of this action as an element of $\lui{\varphi^{-1}}{L}$.
\end{defn}

Let's now take a moment to notice a key difference between Mackey
functors over groups and Mackey functors over fusion systems. Let
$G$ be a finite group, let $H\le G$ and let $M$ be a Mackey functor
over $G$ on $\R$. It is a well known result (see \cite[Section 3]{GuideToMackeyFunctorsWebb})
that
\begin{equation}
M\downarrow_{H}^{G}\uparrow_{H}^{G}\left(K\right)\cong_{\R}\bigoplus_{x\in\left[K\backslash G/H\right]}M\left(K^{x}\cap H\right).\label{eq:restriction-induction-mackey-groups.}
\end{equation}
It is also well known (see \cite[Proposition 5.3]{StructureMackeyFunctors})
that for every Mackey functor $N$ over $H$ and every $K\le G$ the
following equivalence of Mackey functors over $K$ holds
\begin{equation}
N\uparrow_{H}^{G}\downarrow_{K}^{G}\cong\bigoplus_{x\in\left[K\backslash G/H\right]}\left(\lui{x}{\left(N\downarrow_{K^{x}\cap H}^{H}\right)}\right)\uparrow_{K\cap\lui{x}{H}}^{H}.\label{eq:Induction-restriction-groups.}
\end{equation}
Equations \eqref{eq:restriction-induction-mackey-groups.} and \eqref{eq:Induction-restriction-groups.}
play a key role in the arguments used in \cite{SASAKI198298} in order
to obtain a Green correspondence for Mackey functors over groups.
However, when trying to obtain similar results in the context of Mackey
functors over fusion systems, the author was met with many complications.
All of them can be traced back to the fact that the category $\aOF$
does not in general admit products. In order to avoid such complications,
Proposition \ref{prop:Product-OFc.} suggests that we should introduce
the following.
\begin{defn}
\label{def:F-centric-Mackey-functor.}Let $H\le S$, let $\H\subseteq\F$
be a fusion system over $H$ and let $M$ be a Mackey functor over
$\H$ on $\R$. We say that $M$ is \textbf{$\F$-centric }if $I_{K}^{K}\cdot M=0$
for every $K\in\H\backslash\left(\Fc\cap\H\right)$. The \textbf{category
of $\F$-centric Mackey functors over $\H$ on $\R$ }(denoted by
$\MackFHR{\F}{\H}$) is the full subcategory of $\FmuFRmod{\H}$ whose
objects are $\F$-centric Mackey functors over $\H$.

If $H=S$ and $\H=\F$ we simply say that $M$ is \textbf{centric}
and denote by $\MackFcR$ the \textbf{category of centric Mackey functors
over $\F$ on $\R$}.
\end{defn}

Let $\H$ be a fusion subsystem of $\F$ and let $M$ be an $\H$-centric
Mackey functor over $\H$. The induced Mackey functor $M\uparrow_{\H}^{\F}$
over $\F$ might not be $\F$-centric since we don't necessarily have
$\H^{c}\subseteq\Fc$. However, we have the following result.
\begin{prop}
\label{prop:centric-Induction-restriction.}Let $H$ and $K$ be subgroups
of $S$ such that $H\le K$ and let $\H$ and $\mathcal{K}$ be fusion
systems over $H$ and $K$ respectively such that $\H\subseteq\mathcal{K}\subseteq\F$.
Then we have that:
\begin{enumerate}
\item The functor $\downarrow_{\H}^{\mathcal{K}}$ maps $\MackFHR{\F}{\mathcal{K}}$
to $\MackFHR{\F}{\H}$. In particular $\downarrow_{\H}^{\F}$ maps
$\MackFcR$ to $\MackFHR{\F}{\H}$.
\item The functor $\uparrow_{\H}^{\mathcal{K}}$ maps $\MackFHR{\F}{\H}$
to $\MackFHR{\F}{\mathcal{K}}$. In particular $\uparrow_{\H}^{\F}$
maps $\MackFHR{\F}{\H}$ to $\MackFcR$.
\item For every isomorphism $\varphi\colon H\to\varphi\left(H\right)$ in
$\F$ the functor $\lui{\varphi}{\cdot}$ maps $\MackFHR{\F}{\FHH}$
to $\MackFHR{\F}{\FHH[\varphi\left(H\right)]}$.
\end{enumerate}
\end{prop}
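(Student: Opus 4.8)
The plan is to use that, by Definition~\ref{def:Restriction-induction-and-conjugation-functors.}, all three functors are already available as functors landing in categories of finitely generated modules, so that the only extra thing to check in each case is the vanishing condition of Definition~\ref{def:F-centric-Mackey-functor.}: that $I_Q^Q$ acts as zero on the output whenever $Q$ is an object of the target fusion system which is not $\F$-centric. Before attacking the three items I would record two elementary facts about $\F$-centric subgroups, both immediate from Definition~\ref{def:F-centric.}: \emph{(i)} $\F$-centricity is an invariant of the $\F$-isomorphism class, since if $C=_{\F}C'$ then the condition ``$C_S(D)\le D$ for all $D=_{\F}C$'' is literally the same for $C$ and for $C'$; and \emph{(ii)} if $P\le Q\le S$ and $P$ is $\F$-centric then so is $Q$ --- restricting an $\F$-isomorphism $\psi\colon Q\to Q'$ to $P$ gives $C_S(Q')\le C_S(\psi(P))\le\psi(P)\le Q'$ --- equivalently, every subgroup of a non-$\F$-centric subgroup of $S$ is non-$\F$-centric.

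Items~(1) and~(3) I would dispatch directly. For~(1): $M\downarrow_{\H}^{\mathcal{K}}=1_{\FmuFR{\H}}M$ with $1_{\FmuFR{\H}}=\sum_{J\le H}I_J^J$, so for an object $Q$ of $\H$ the orthogonality of the idempotents $I_J^J$ (Lemma~\ref{lem:Many-properties-definition.}) gives $I_Q^Q1_{\FmuFR{\H}}=I_Q^Q$, hence $I_Q^Q\!\left(M\downarrow_{\H}^{\mathcal{K}}\right)=I_Q^QM$; if $Q$ is not $\F$-centric this is $0$, because $Q$ is then a non-$\F$-centric object of $\mathcal{K}$ and $M\in\MackFHR{\F}{\mathcal{K}}$. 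For~(3): by Definition~\ref{def:Restriction-induction-and-conjugation-functors.} the module $\lui{\varphi}{M}$ is $M$ as an $\R$-module, with $I_{C'}^{C'}$ (for $C'\le\varphi(H)$) acting the way $I_{\varphi^{-1}(C')}^{\varphi^{-1}(C')}$ acts on $M$; and if $C'$ is not $\F$-centric then $\varphi^{-1}(C')=_{\F}C'$ (being again an $\F$-isomorphism applied to $C'$) is not $\F$-centric by~\emph{(i)}, so $I_{\varphi^{-1}(C')}^{\varphi^{-1}(C')}M=0$ since $\varphi^{-1}(C')\le H$ and $M$ is $\F$-centric over $\FHH$. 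In both items the ``in particular'' comes from taking $\mathcal{K}=\F$.

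The main work is item~(2). Here $N\uparrow_{\H}^{\mathcal{K}}=\FmuFR{\mathcal{K}}\,1_{\FmuFR{\H}}\otimes_{\FmuFR{\H}}N$, and I would first use Proposition~\ref{prop:Mackey-algebra-basis.}: since $R_C^AI_J^J$ vanishes unless $A=J$ (Lemma~\ref{lem:Many-properties-definition.}), the right ideal $\FmuFR{\mathcal{K}}\,1_{\FmuFR{\H}}$ is $\R$-spanned by those basis elements $b=I_{\overline{\varphi}(C)}^{B}c_{\overline{\varphi}}R_C^A$ with $A\le H$, so that also $C\le H$. Fix a non-$\F$-centric object $Q$ of $\mathcal{K}$, one such $b$, and $n\in N$. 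If $B\ne Q$ then $I_Q^Qb=0$, so $I_Q^Q(b\otimes n)=0$. If $B=Q$ then $\overline{\varphi}(C)\le Q$, so $\overline{\varphi}(C)$ is not $\F$-centric by~\emph{(ii)}, whence $C=_{\F}\overline{\varphi}(C)$ is not $\F$-centric by~\emph{(i)}, and therefore $I_C^CN=0$; since $R_C^A=I_C^CR_C^A$ lies in $\FmuFR{\H}$ (as $C,A\le H$) we get $R_C^An=I_C^C(R_C^An)\in I_C^CN=0$, and moving $R_C^A$ across the tensor product, together with $I_Q^Qb=b$ (Lemma~\ref{lem:Many-properties-definition.}),
\[
I_Q^Q(b\otimes n)=b\otimes n=I_{\overline{\varphi}(C)}^{Q}c_{\overline{\varphi}}\otimes R_C^An=0.
\]
By $\R$-linearity $I_Q^Q$ then annihilates all of $N\uparrow_{\H}^{\mathcal{K}}$, so $N\uparrow_{\H}^{\mathcal{K}}\in\MackFHR{\F}{\mathcal{K}}$, and $\mathcal{K}=\F$ gives the ``in particular''. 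The one delicate point in the whole proof is this last computation: one must choose the Mackey-algebra basis adapted to the idempotent $1_{\FmuFR{\H}}$ and keep careful track of which biset factor lies in $\FmuFR{\H}$ and which idempotent to insert before invoking $I_C^CN=0$; facts~\emph{(i)} and~\emph{(ii)} take care of everything else.
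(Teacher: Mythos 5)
Your proposal is correct and follows essentially the same route as the paper: items (1) and (3) are immediate from the definitions and the $\F$-isomorphism invariance of centricity, and item (2) reduces via the basis of Proposition \ref{prop:Mackey-algebra-basis.} to showing $I_{\overline{\varphi}(C)}^{Q}c_{\overline{\varphi}}\otimes R_C^A n=0$ by pushing $R_C^A$ across the tensor product and using $R_C^A n\in I_C^C N=0$, exactly as in the paper. The only cosmetic difference is that you prove the closure of non-centricity under passage to subgroups (your fact \emph{(ii)}) directly, where the paper cites Linckelmann's Proposition 4.4.
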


\begin{proof}
$\phantom{.}$
\begin{enumerate}
\item Let $M\in\MackFHR{\F}{\mathcal{K}}$. For every $J\in\H\backslash\left(\H\cap\Fc\right)$
we have that $J\not\in\Fc$ and, therefore, $I_{J}^{J}\left(M\downarrow_{\H}^{\mathcal{K}}\right)=I_{J}^{J}M=0$.
This proves that $M\downarrow_{\H}^{\mathcal{K}}\in\MackFHR{\F}{\H}$.
\item Let $M\in\MackFHR{\F}{\H}$ and let $J\in\mathcal{K}\backslash\left(\mathcal{K}\cap\Fc\right)$.
From Proposition \ref{prop:Mackey-algebra-basis.} and Definition
\ref{def:Restriction-induction-and-conjugation-functors.} we know
that any element in $M\uparrow_{\H}^{\mathcal{K}}$ can be written
as an $\R$-linear combination of elements of the form 
\[
y:=I_{\varphi\left(C\right)}^{B}c_{\varphi}R_{C}^{A}1_{\FmuFR{\H}}\otimes x.
\]
Where $x\in M$ and $I_{\varphi\left(C\right)}^{B}c_{\varphi}R_{C}^{A}\in\FmuFR{\mathcal{K}}$.
Thus, it suffices to prove that $I_{J}^{J}y=0$ for every such $y\in M\uparrow_{\H}^{\mathcal{K}}$.
From Lemma \ref{lem:Many-properties-definition.} \eqref{enu:property-all-0.}
we can assume without loss of generality that $A\le H$ and $B=J$.
With this setup we have that
\begin{align*}
I_{J}^{J}y & =I_{J}^{J}I_{\varphi\left(C\right)}^{J}c_{\varphi}R_{C}^{A}1_{\FmuFR{\H}}\otimes x,\\
 & =I_{\varphi\left(C\right)}^{J}c_{\varphi}R_{C}^{A}\otimes x,\\
 & =I_{\varphi\left(C\right)}^{J}c_{\varphi}\otimes R_{C}^{A}x.
\end{align*}
Where, in the last identity, we are using the fact that the tensor
product is over $\FmuFR{\FHH}$ and $R_{C}^{A}\in\FmuFR{\FHH}$ (see
Corollary \ref{cor:Mackey-algebra-inclusion.}). Since $J\not\in\Fc$
and $\varphi\left(C\right)\le J$ then we can deduce from \cite[Proposition 4.4]{IntroductionToFusionSystemsLinckelmann}
that $\varphi\left(C\right)\not\in\Fc$. From definition of $\F$-centric
subgroup we can deduce that $C\not\in\Fc$. Since $C\le H$ then we
can conclude that $C\in\H\backslash\left(\H\cap\Fc\right)$. Since
$M\in\MackFHR{\F}{\H}$ and $x\in M$ this implies that $R_{C}^{A}x\in I_{C}^{C}M=0$.
Therefore we can conclude once again that $I_{J}^{J}y=0$ thus proving
that $M\uparrow_{\H}^{\mathcal{K}}\in\MackFHR{\F}{\mathcal{K}}$.
\item Let $M\in\MackFHR{\F}{\FHH}$, let $\varphi$ be as in the statement
and let $J\in\FHH[\varphi\left(H\right)]\backslash\left(\FHH[\varphi\left(H\right)]\cap\Fc\right)$.
By definition of $\F$-centric subgroup we know that $\varphi^{-1}\left(J\right)\in\FHH\backslash\left(\FHH\cap\Fc\right)$.
Then, by definition of the functor $\lui{\varphi}{\cdot}$ (see Definition
\ref{def:Restriction-induction-and-conjugation-functors.}), we have
that $I_{J}^{J}\lui{\varphi}{M}=I_{\varphi^{-1}\left(J\right)}^{\varphi^{-1}\left(J\right)}M=0$
thus proving that $\lui{\varphi}{M}\in\MackFHR{\F}{\FHH[\varphi\left(H\right)]}$.
\end{enumerate}
\end{proof}
Proposition \ref{prop:centric-Induction-restriction.} motivates the
introduction of the following.
\begin{notation}
\label{nota:Induction-and-restriction-functors.}Let $\H,\mathcal{K},\F$
and $\varphi\colon H\to\varphi\left(H\right)$ be as in Proposition
\ref{prop:centric-Induction-restriction.}. We will use the same notation
to refer to the functors $\uparrow_{\H}^{\mathcal{K}}$, $\downarrow_{\H}^{\mathcal{K}}$
and $\lui{\varphi}{\cdot}$ of Definition \ref{def:Restriction-induction-and-conjugation-functors.}
and their restrictions given by Proposition \ref{prop:centric-Induction-restriction.}.
\end{notation}

With this setup we are now just one Lemma away from providing a result
analogue to Equation \eqref{eq:restriction-induction-mackey-groups.}
in the context of centric Mackey functors over fusion systems.
\begin{lem}
\label{lem:Equivalence-R-mod-I_A^AM.}Let $H,K\in\Fc$, let $M\in\MackFHR{\F}{\FHH}$,
let $\left(A,\overline{\varphi}\right)\in\left[H\times_{\F}K\right]$,
let $y\in I_{A}^{A}\FmuFR{\FHH}I_{A}^{A}$, let $\I$ be the two sided
ideal of $\muFR$ generated by elements of the form $I_{J}^{J}$ with
$J\in\FHH\backslash\left(\FHH\cap\Fc\right)$ and let $\pi\colon\muFR\to\muFR/\I$
be the natural projection. If $\pi\left(I_{\overline{\varphi}\left(A\right)}^{K}c_{\overline{\varphi}}y\right)=\pi\left(I_{\overline{\varphi}\left(A\right)}^{K}c_{\overline{\varphi}}\right)$
(see Corollary \ref{cor:Mackey-algebra-inclusion.}) then $\pi\left(y\right)=\pi\left(I_{A}^{A}\right)$.
In particular, viewing the subset $I_{\overline{\varphi}\left(A\right)}^{K}c_{\overline{\varphi}}\FmuFR{\FHH}$
of $\muFR$ as a right $\FmuFR{\FHH}$\textup{-module}, and defining
\[
I_{\overline{\varphi}\left(A\right)}^{K}c_{\overline{\varphi}}\otimes_{\FmuFR{\FHH}}M:=I_{\overline{\varphi}\left(A\right)}^{K}c_{\overline{\varphi}}\FmuFR{\FHH}\otimes_{\FmuFR{\FHH}}M,
\]
we have an isomorphism of $\R$-modules from $I_{A}^{A}M$ to $I_{\overline{\varphi}\left(A\right)}^{K}c_{\overline{\varphi}}\otimes_{\FmuFR{\FHH}}M$
that sends $x$\textup{ to $I_{\overline{\varphi}\left(A\right)}^{K}c_{\overline{\varphi}}\otimes x$.}
\end{lem}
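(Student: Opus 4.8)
The plan is to establish the isomorphism $I_A^A M \cong_{\R} I_{\overline{\varphi}(A)}^K c_{\overline{\varphi}} \otimes_{\FmuFR{\FHH}} M$ by first proving the displayed cancellation statement about $\pi$, and then using it to show that the natural map $x \mapsto I_{\overline{\varphi}(A)}^K c_{\overline{\varphi}} \otimes x$ is a well-defined $\R$-linear bijection. The key structural fact I would exploit is that $c_{\overline{\varphi}}$ is essentially invertible: since $\overline{\varphi} \in \Hom_{\OFc}(A,K)$ is represented by a monomorphism $\varphi\colon A \to \varphi(A) \le K$ which is an isomorphism onto its image, the element $c_{\varphi^{-1}} R_{\varphi(A)}^K$ behaves as a one-sided inverse: by Lemma \ref{lem:Many-properties-definition.} \eqref{enu:property-composition-is-nice.}, \eqref{enu:Property-conjugation-commutes.} and the Mackey formula \eqref{enu:property-Mackey-formula.} one checks that $c_{\overline{\varphi^{-1}}} R_{\overline{\varphi}(A)}^K \cdot I_{\overline{\varphi}(A)}^K c_{\overline{\varphi}} = I_A^A$ (the double coset sum collapsing because $\varphi(A) \le K$ meets itself). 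This is the identity I would prove first as the technical heart.

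Granting that, the cancellation statement follows: if $\pi(I_{\overline{\varphi}(A)}^K c_{\overline{\varphi}} y) = \pi(I_{\overline{\varphi}(A)}^K c_{\overline{\varphi}})$, then multiplying on the left by $c_{\overline{\varphi^{-1}}} R_{\overline{\varphi}(A)}^K$ and using that $\pi$ is a ring homomorphism (so it commutes with left multiplication) gives $\pi(c_{\overline{\varphi^{-1}}} R_{\overline{\varphi}(A)}^K I_{\overline{\varphi}(A)}^K c_{\overline{\varphi}} y) = \pi(I_A^A y) = \pi(y)$ on the one side and $\pi(I_A^A) $ on the other. One must be slightly careful that $y \in I_A^A \FmuFR{\FHH} I_A^A$ so that $I_A^A y = y$, which holds by idempotency of $I_A^A$ (Lemma \ref{lem:Many-properties-definition.} \eqref{enu:property-I_H^H-is-identity.}).

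For the isomorphism itself, I would argue as follows. The map $\rho\colon I_A^A M \to I_{\overline{\varphi}(A)}^K c_{\overline{\varphi}} \otimes_{\FmuFR{\FHH}} M$, $x \mapsto I_{\overline{\varphi}(A)}^K c_{\overline{\varphi}} \otimes x$, is clearly $\R$-linear and well-defined. For surjectivity: a general element of $I_{\overline{\varphi}(A)}^K c_{\overline{\varphi}} \FmuFR{\FHH} \otimes_{\FmuFR{\FHH}} M$ is a sum of terms $I_{\overline{\varphi}(A)}^K c_{\overline{\varphi}} a \otimes m$ with $a \in \FmuFR{\FHH}$, $m \in M$; moving $a$ across the tensor and noting $I_{\overline{\varphi}(A)}^K c_{\overline{\varphi}} a = I_{\overline{\varphi}(A)}^K c_{\overline{\varphi}} I_A^A a$ (by the orthogonality relations, only the $I_A^A$-component of $a$ survives on the left), each term equals $I_{\overline{\varphi}(A)}^K c_{\overline{\varphi}} \otimes (I_A^A a\, m) = \rho(I_A^A a\, m)$, and $I_A^A a\, m \in I_A^A M$ since $M$ is $\F$-centric forces the image to lie in $I_A^A M$. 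For injectivity I would build the inverse explicitly using $c_{\overline{\varphi^{-1}}} R_{\overline{\varphi}(A)}^K$: send $I_{\overline{\varphi}(A)}^K c_{\overline{\varphi}} a \otimes m \mapsto c_{\overline{\varphi^{-1}}} R_{\overline{\varphi}(A)}^K I_{\overline{\varphi}(A)}^K c_{\overline{\varphi}} a\, m = I_A^A a\, m$; well-definedness of this on the tensor product is exactly where the cancellation/one-sided-inverse identity is needed, and one then checks the two composites are identities on generators.

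The main obstacle I anticipate is the bookkeeping around the quotient by $\I$ versus working directly in $\muFR$: the identity $c_{\overline{\varphi^{-1}}} R_{\overline{\varphi}(A)}^K I_{\overline{\varphi}(A)}^K c_{\overline{\varphi}} = I_A^A$ may only hold after projecting by $\pi$ (because the Mackey double-coset sum can produce extra terms $I_{A \cap {}^x A}^A \cdots$ supported on proper subgroups, some of which are not $\F$-centric and hence die in $M$ but not in $\muFR$), which is precisely why the statement is phrased with $\pi$ and why one cannot naively invert in $\muFR$. Handling this correctly means proving the cancellation at the level of the action on $M$ — i.e. that $c_{\overline{\varphi^{-1}}} R_{\overline{\varphi}(A)}^K I_{\overline{\varphi}(A)}^K c_{\overline{\varphi}} m = I_A^A m$ for all $m \in I_A^A M$ — using $\F$-centricity to discard the unwanted double-coset contributions, rather than as an identity of ring elements. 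I would also need Corollary \ref{cor:conjugation-in-F-and-in-orbit-F.} throughout to justify that $I_{\overline{\varphi}(A)}^K c_{\overline{\varphi}}$ and $c_{\overline{\varphi^{-1}}} R_{\overline{\varphi}(A)}^K$ are well-defined independently of the chosen representative $\varphi \in \overline{\varphi}$.
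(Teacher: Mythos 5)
Your plan hinges on the identity $c_{\overline{\varphi^{-1}}}R_{\overline{\varphi}\left(A\right)}^{K}\,I_{\overline{\varphi}\left(A\right)}^{K}c_{\overline{\varphi}}=I_{A}^{A}$, which you call the technical heart, and this identity is false — not just in $\muFR$, but also modulo $\I$ and even at the level of the action on an $\F$-centric $M$. By the Mackey formula, $R_{\varphi\left(A\right)}^{K}I_{\varphi\left(A\right)}^{K}=\sum_{x\in\left[\varphi\left(A\right)\backslash K/\varphi\left(A\right)\right]}I_{\varphi\left(A\right)\cap\lui{x}{\varphi\left(A\right)}}^{\varphi\left(A\right)}c_{c_{x}}R_{\varphi\left(A\right)^{x}\cap\varphi\left(A\right)}^{\varphi\left(A\right)}$, and whenever $\varphi\left(A\right)\lneq K$ the double coset sum does not collapse: since $K$ is a $p$-group, $N_{K}\left(\varphi\left(A\right)\right)\gneq\varphi\left(A\right)$, so there are representatives $x\notin\varphi\left(A\right)$ with $\varphi\left(A\right)^{x}\cap\varphi\left(A\right)=\varphi\left(A\right)\in\Fc$, contributing terms $c_{c_{x}}$ that survive in $\muFR/\I$ and act nontrivially on $I_{A}^{A}M$. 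Thus $\F$-centricity does not discard these contributions, your proposed left inverse does not exist, the cancellation argument obtained by left-multiplying does not go through, and the explicit inverse map you define for injectivity is neither well defined nor correctly computed on generators. The same obstruction is exactly why the statement cannot be reduced to invertibility of $c_{\overline{\varphi}}$.

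The paper's proof avoids this entirely. It first reduces the isomorphism to the cancellation statement (the surjection $x\mapsto I_{\overline{\varphi}\left(A\right)}^{K}c_{\overline{\varphi}}\otimes x$ has trivial kernel provided the cancellation holds, because a relation $I_{\overline{\varphi}\left(A\right)}^{K}c_{\overline{\varphi}}\otimes x=0$ is witnessed by some $y\in I_{A}^{A}\FmuFR{\FHH}I_{A}^{A}$ with $\pi\left(I_{\overline{\varphi}\left(A\right)}^{K}c_{\overline{\varphi}}y\right)=\pi\left(I_{\overline{\varphi}\left(A\right)}^{K}c_{\overline{\varphi}}\right)$ and $yx=0$, and then $x=I_{A}^{A}x=yx=0$ using $\J M=0$). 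It then proves the cancellation by expanding $y$ in the $\R$-basis of Proposition \ref{prop:Mackey-algebra-basis.} and showing that left multiplication by $I_{\overline{\varphi}\left(A\right)}^{K}c_{\overline{\varphi}}$ sends distinct basis elements $I_{\lui{h}{B}}^{A}c_{c_{h}}R_{B}^{A}$ with $B\in\Fc$ to distinct basis elements of $I_{K}^{K}\muFR I_{A}^{A}$ modulo $\I$; the distinctness uses the maximality of the pair $\left(A,\overline{\varphi}\right)$ in $\left[H\times_{\F}K\right]$ (via the universal property of the product) together with $C_{H}\left(B\right)\le B$ for $B\in\Fc$. If you want to salvage your write-up, you must replace the inversion step with an argument of this kind: you need injectivity of left multiplication by $I_{\overline{\varphi}\left(A\right)}^{K}c_{\overline{\varphi}}$ on the relevant basis, not a two-sided cancellation coming from an inverse.
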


\begin{proof}
From Lemma \ref{lem:Many-properties-definition.}, Proposition \ref{prop:Mackey-algebra-basis.}
and \cite[Proposition 4.4]{IntroductionToFusionSystemsLinckelmann}
we know that the ideal $\I$ is spanned as an $\R$-module by elements
of the form $I_{\psi\left(J\right)}^{C}c_{\psi}R_{J}^{B}$ such that
exists $J'\in\FHH\backslash\left(\FHH\cap\Fc\right)$ satisfying $J=_{\F}J'$.
Define now $\J:=\I\cap\FmuFR{\FHH}$. From the above we can conclude
that $\J$ is spanned as an $\R$-module by elements of the form $I_{\lui{h}{J}}^{C}c_{c_{h}}R_{J}^{B}$
with $J\in\FHH\backslash\left(\FHH\cap\Fc\right)$ and $h\in H$.
Since $M$ is $\F$-centric then, by definition, we have that $\J M=0$.
On the other hand, from the above description of $\I$ and $\J$,
we know that $\pi\left(I_{\overline{\varphi}\left(A\right)}^{J}c_{\overline{\varphi}}\FmuFR{\FHH}\right)$
is equivalent, as a right $\FmuFR{\FHH}$ module, to $\left(I_{\overline{\varphi}\left(A\right)}^{J}c_{\overline{\varphi}}\FmuFR{\FHH}\right)/\left(I_{\overline{\varphi}\left(A\right)}^{J}c_{\overline{\varphi}}\J\right)$.
We can therefore conclude that
\[
\pi\left(I_{\overline{\varphi}\left(A\right)}^{J}c_{\overline{\varphi}}\right)\otimes_{\FmuFR{\FHH}}M:=\pi\left(I_{\overline{\varphi}\left(A\right)}^{J}c_{\overline{\varphi}}\FmuFR{\FHH}\right)\otimes_{\FmuFR{\FHH}}M\cong I_{\overline{\varphi}\left(A\right)}^{J}c_{\overline{\varphi}}\otimes_{\FmuFR{\FHH}}M.
\]
With this setup we have a surjective morphism of $\R$-modules $\Gamma\colon I_{A}^{A}M\to\pi\left(I_{\overline{\varphi}\left(A\right)}^{J}c_{\overline{\varphi}}\right)\otimes_{\FmuFR{\FHH}}M$
that sends any $x\in I_{A}^{A}M$ to $\pi\left(I_{\overline{\varphi}\left(A\right)}^{J}c_{\overline{\varphi}}\right)\otimes x$.
Assume that $\pi\left(I_{\overline{\varphi}\left(A\right)}^{J}c_{\overline{\varphi}}\right)\otimes x=0$.
Then there exists $y\in I_{A}^{A}\FmuFR{\FHH}I_{A}^{A}$ such that
$\pi\left(I_{\overline{\varphi}\left(A\right)}^{J}c_{\overline{\varphi}}y\right)=\pi\left(I_{\overline{\varphi}\left(A\right)}^{J}c_{\overline{\varphi}}\right)$
and $yx=0$. If the first part of the statement is true then, since
$\J M=0$ we would have that $yx=I_{A}^{A}x=x$. This would prove
that $x=0$ and, therefore, that $\Gamma$ is an isomorphism of $\R$-modules.
In other words we have proven that the second part of the statement
follows from the first.

Let's now prove the first part of the statement. For $i=1,2$ let
$h_{i}\in H$ and $B_{i}\le A^{h_{i}}\cap A$ such that $B_{i}\in\FHH\cap\Fc$
and that
\[
\pi\left(I_{\overline{\varphi\iota c_{h_{1}}}\left(B_{1}\right)}^{K}c_{\overline{\varphi\iota c_{h_{1}}}}R_{B_{1}}^{A}\right)=\pi\left(I_{\overline{\varphi\iota c_{h_{2}}}\left(B_{2}\right)}^{K}c_{\overline{\varphi\iota c_{h_{2}}}}R_{B_{2}}^{A}\right).
\]
Since $B_{i}\not\in\Fc$ then we can deduce from the description of
$\I$ given at the start of the proof, the above identity and Proposition
\ref{prop:Mackey-algebra-basis.} that
\[
I_{\overline{\varphi\iota c_{h_{1}}}\left(B_{1}\right)}^{K}c_{\overline{\varphi\iota c_{h_{1}}}}R_{B_{1}}^{A}=I_{\overline{\varphi\iota c_{h_{2}}}\left(B_{2}\right)}^{K}c_{\overline{\varphi\iota c_{h_{2}}}}R_{B_{2}}^{A}.
\]
From Items \eqref{enu:property-I_H^H-is-identity.} and \eqref{enu:Property-conjugation-commutes.}
of Lemma \ref{lem:Many-properties-definition.} and Proposition \ref{prop:Mackey-algebra-basis.}
we can conclude that there exists $a\in A$ such that $B:=B_{1}=B_{2}^{a}$
and $\overline{\varphi}\,\overline{\iota c_{h_{1}a}}=\overline{\varphi}\,\overline{\iota c_{h_{2}}}$.
Since $h_{1}a,h_{2}\in H$ we also have that $\overline{\iota_{A}^{H}}\,\overline{\iota c_{h_{1}a}}=\overline{\iota_{A}^{H}}=\overline{\iota_{A}^{H}}\,\overline{\iota c_{h_{2}}}$.
From the universal properties of product we can therefore conclude
that $\overline{\iota_{\lui{h_{1}}{B}}^{A}c_{h_{1}a}}=\overline{\iota_{\lui{h_{2}}{B}}^{A}c_{h_{2}}}$.
From definition of $\OFc$ this implies that there exists $b\in A$
such that $c_{bh_{1}a}=c_{h_{2}}$ as an isomorphism from $B$ to
$\lui{h_{2}}{B}$. Therefore, there exists $z\in C_{H}\left(B\right)$
such that $bh_{1}az=h_{2}$. Since $B\in\Fc$ we can conclude that
$z\in B\le A$ and, therefore, $h_{2}\in Ah_{1}A$. Now let $y$ be
as in the statement. From Proposition \ref{prop:Mackey-algebra-basis.}
we can write
\[
y:=\sum_{h\in\left[A\backslash H/A\right]}\sum_{\begin{array}{c}
{\scriptstyle B\le A^{h}\cap A}\\
{\scriptstyle \text{up to }A\text{-conj.}}
\end{array}}\lambda_{h,B}I_{\lui{h}{B}}^{A}c_{c_{h}}R_{B}^{A},
\]
for some $\lambda_{h,B}\in\R$. Since we are only interested in the
projection $\pi\left(y\right)$ we can assume without loss of generality
that $\lambda_{h,B}=0$ whenever $B\in\FHH\backslash\left(\FHH\cap\Fc\right)$.
From the above and Proposition \ref{prop:Mackey-algebra-basis.} we
can conclude that if $y$ satisfies $\pi\left(I_{\overline{\varphi}\left(A\right)}^{J}c_{\overline{\varphi}}y\right)=\pi\left(I_{\overline{\varphi}\left(A\right)}^{J}c_{\overline{\varphi}}\right)$
then $\lambda_{h,B}=0$ unless $B=A$ and $h\in A$ in which case
it equals $1$. In other words we have that $\pi\left(y\right)=\pi\left(I_{A}^{A}\right)$
just as we wanted to prove.
\end{proof}
We can now prove an analogue to Equation \eqref{eq:restriction-induction-mackey-groups.}
in the context of centric Mackey functors over fusion systems.
\begin{prop}
\label{prop:Decomposition-centric-Mackey-functor.}Let $H\in\Fc$,
let $\I$ be the two sided ideal of $\muFR$ generated by elements
of the form $I_{K}^{K}$ with $K\in\F\backslash\Fc$ and let $\pi\colon\muFR\to\muFR/\I$
be the natural projection map. Then the set $\pi\left(\muFR1_{\FmuFR{\FHH}}\right)$
inherits from $\muFR1_{\FmuFR{\FHH}}$ a right $\FmuFR{\FHH}$-module
structure and the following is a $\FmuFR{\FHH}$ basis of $\pi\left(\muFR1_{\FmuFR{\FHH}}\right)$
\[
\mathcal{B}:=\bigsqcup_{K\in\Fc}\bigsqcup_{\left(A,\overline{\varphi}\right)\in\left[H\times_{\F}K\right]}\left\{ \pi\left(I_{\overline{\varphi}\left(A\right)}^{K}c_{\overline{\varphi}}\right)\right\} .
\]
In particular, for any $M\in\MackFHR{\F}{\FHH}$, we have the following
equivalence of $\R$-modules
\[
M\uparrow_{\FHH}^{\F}\cong_{\R}\bigoplus_{K\in\Fc}\bigoplus_{\left(A,\overline{\varphi}\right)\in\left[H\times_{\F}K\right]}I_{\overline{\varphi}\left(A\right)}^{K}c_{\overline{\varphi}}\otimes M\cong_{\R}\bigoplus_{K\in\Fc}\bigoplus_{\left(A,\overline{\varphi}\right)\in\left[H\times_{\F}K\right]}I_{A}^{A}M.
\]
Where each $I_{\overline{\varphi}\left(A\right)}^{K}c_{\overline{\varphi}}\otimes M$
is seen as an $\R$-submodule of $M\uparrow_{\FHH}^{\F}$. 
\end{prop}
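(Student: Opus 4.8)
The plan is to prove the claim in two stages: first establish that $\mathcal{B}$ is an $\FmuFR{\FHH}$-basis of $\pi\left(\muFR1_{\FmuFR{\FHH}}\right)$, and then deduce the $\R$-module decomposition of $M\uparrow_{\FHH}^{\F}$ as a formal consequence. For the first stage, I would start from the $\R$-basis of $\muFR$ given by Proposition \ref{prop:Mackey-algebra-basis.}, restricted to the summand $\muFR I_{H}^{H}$ (using that $1_{\FmuFR{\FHH}}=\sum_{A\le H}I_{A}^{A}$ and the orthogonality of the idempotents $I_{A}^{A}$). An element of this basis has the form $I_{\overline{\psi}\left(C\right)}^{K}c_{\overline{\psi}}R_{C}^{A}$ with $C\le A\le H$, $K\le S$, and $\overline{\psi}\in\Hom_{\OF}\left(C,K\right)$; applying $\pi$ kills every such generator for which $C\notin\Fc$ or $K\notin\Fc$ (using \cite[Proposition 4.4]{IntroductionToFusionSystemsLinckelmann} as in the proof of Lemma \ref{lem:Equivalence-R-mod-I_A^AM.}), so that $\pi\left(\muFR1_{\FmuFR{\FHH}}\right)$ is spanned over $\R$ by the images of those generators with $C,K\in\Fc$. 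Since $R_{C}^{A}1_{\FmuFR{\FHH}}$ and $I_{C}^{C}\in\FmuFR{\FHH}$, these images lie in $\pi\left(I_{\overline{\psi}\left(C\right)}^{K}c_{\overline{\psi}}\right)\cdot\FmuFR{\FHH}$; so it remains to organize the pairs $\left(C,\overline{\psi}\right)$ with $C\in\Fc$, $C\le H$, $\overline{\psi}\in\Hom_{\OFc}\left(C,K\right)$ up to the appropriate equivalence.

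The key point in the first stage is that the indexing of $\mathcal{B}$ by the set $\left[H\times_{\F}K\right]$ (Definition \ref{def:HX_FK}) is exactly what parametrizes these generators modulo the relation that makes $\pi\left(I_{\overline{\psi}\left(C\right)}^{K}c_{\overline{\psi}}R_{C}^{A}\right)=\pi\left(I_{\overline{\varphi}\left(A\right)}^{K}c_{\overline{\varphi}}\right)\cdot(\text{element of }\FmuFR{\FHH})$ for $\left(A,\overline{\varphi}\right)$ a maximal pair $\precsim_{H}$-dominating $\left(C,\overline{\psi}\right)$. Concretely: every $\left(C,\overline{\psi}\right)$ with $C\in\Fc$ is $\precsim_{H}$-below some maximal pair $\left(A,\overline{\varphi}\right)$, i.e. there is $h\in H$ with $C^{h}\le A$ and $\overline{\psi}=\overline{\varphi}\,\overline{\iota_{C^{h}}^{A}c_{h}}$ (up to the orbit-category identification); hence $\pi\left(I_{\overline{\psi}\left(C\right)}^{K}c_{\overline{\psi}}R_{C}^{A}\right)=\pi\left(I_{\overline{\varphi}\left(A\right)}^{K}c_{\overline{\varphi}}\right)\cdot I_{C^{h}}^{A}c_{c_{h}}R_{C}^{A}$, which exhibits it as an $\FmuFR{\FHH}$-multiple of an element of $\mathcal{B}$. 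That shows $\mathcal{B}$ spans. For linear independence over $\FmuFR{\FHH}$, I would argue that a nontrivial $\FmuFR{\FHH}$-relation among the $\pi\left(I_{\overline{\varphi}\left(A\right)}^{K}c_{\overline{\varphi}}\right)$ would, after expanding each coefficient in the $\R$-basis of $\FmuFR{\FHH}$, produce an $\R$-linear dependence among the $\pi$-images of distinct basis elements of $\muFR$ of the form $I_{\overline{\varphi\iota c_{h}}\left(B\right)}^{K}c_{\overline{\varphi\iota c_{h}}}R_{B}^{A}$; the computation showing these are distinct (when $B\in\Fc$) modulo $\I$ is precisely the argument already carried out in the proof of Lemma \ref{lem:Equivalence-R-mod-I_A^AM.} (the $h_{2}\in Ah_{1}A$ discussion, using $C_{H}\left(B\right)\le B$ for $B\in\Fc$). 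So the distinctness/maximality bookkeeping is where the real work lies — that is the main obstacle, and it is essentially a recycling of the Lemma \ref{lem:Equivalence-R-mod-I_A^AM.} proof together with the universal property of the product in $\aOFc$.

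For the second stage, by definition $M\uparrow_{\FHH}^{\F}=\muFR1_{\FmuFR{\FHH}}\otimes_{\FmuFR{\FHH}}M$, and since $M$ is $\F$-centric we have $\I M$-type elements acting as zero — more precisely $\J M=0$ for $\J=\I\cap\FmuFR{\FHH}$ as in Lemma \ref{lem:Equivalence-R-mod-I_A^AM.} — so tensoring $\muFR1_{\FmuFR{\FHH}}$ with $M$ factors through $\pi\left(\muFR1_{\FmuFR{\FHH}}\right)$, giving $M\uparrow_{\FHH}^{\F}\cong_{\R}\pi\left(\muFR1_{\FmuFR{\FHH}}\right)\otimes_{\FmuFR{\FHH}}M$. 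Now applying the right $\FmuFR{\FHH}$-basis $\mathcal{B}$ from the first stage yields
\[
M\uparrow_{\FHH}^{\F}\cong_{\R}\bigoplus_{K\in\Fc}\bigoplus_{\left(A,\overline{\varphi}\right)\in\left[H\times_{\F}K\right]}\pi\left(I_{\overline{\varphi}\left(A\right)}^{K}c_{\overline{\varphi}}\right)\otimes_{\FmuFR{\FHH}}M,
\]
and each summand is $I_{\overline{\varphi}\left(A\right)}^{K}c_{\overline{\varphi}}\otimes_{\FmuFR{\FHH}}M$ in the notation of Lemma \ref{lem:Equivalence-R-mod-I_A^AM.}, seen as an $\R$-submodule of $M\uparrow_{\FHH}^{\F}$. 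Finally, the second isomorphism $I_{\overline{\varphi}\left(A\right)}^{K}c_{\overline{\varphi}}\otimes_{\FmuFR{\FHH}}M\cong_{\R}I_{A}^{A}M$ is exactly the content of the "in particular" part of Lemma \ref{lem:Equivalence-R-mod-I_A^AM.} (the $\R$-module isomorphism sending $x\mapsto I_{\overline{\varphi}\left(A\right)}^{K}c_{\overline{\varphi}}\otimes x$), so the chain of equivalences closes. The only care needed here is checking that the right $\FmuFR{\FHH}$-module structure on $\pi\left(\muFR1_{\FmuFR{\FHH}}\right)$ is well defined, i.e. that $\I\cdot1_{\FmuFR{\FHH}}$ is a right $\FmuFR{\FHH}$-submodule of $\muFR1_{\FmuFR{\FHH}}$ — immediate since $\I$ is a two-sided ideal.
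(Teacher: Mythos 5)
Your proposal is correct and follows essentially the same route as the paper's proof: reduce to the $\R$-basis of $\muFR$ modulo $\I$ from Proposition \ref{prop:Mackey-algebra-basis.}, sort those basis elements according to the maximal pair in $\left[H\times_{\F}K\right]$ dominating them (the universal property of the product), and then obtain the $\R$-module decomposition of $M\uparrow_{\FHH}^{\F}$ by tensoring and invoking Lemma \ref{lem:Equivalence-R-mod-I_A^AM.}. The only caution is that ``linear independence over $\FmuFR{\FHH}$'' must be read as the direct-sum statement $\sum_{i}\pi\left(b_{i}\right)y_{i}=0\Rightarrow\pi\left(b_{i}\right)y_{i}=0$ for all $i$ (the cyclic summands are not free), which is what your expansion argument actually proves and what the paper establishes via its partition of the $\R$-basis.
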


\begin{proof}
Throughout this proof we will denote the right $\FmuFR{\FHH}$-module
$\pi\left(\muFR1_{\FmuFR{\FHH}}\right)$ simply by $\overline{\muFR_{H}}$.

From Lemma \ref{lem:Many-properties-definition.}, Proposition \ref{prop:Mackey-algebra-basis.}
and \cite[Proposition 4.4]{IntroductionToFusionSystemsLinckelmann}
we know that the ideal $\I$ is spanned as an $\R$-module by elements
of the form $I_{\varphi\left(C\right)}^{B}c_{\varphi}R_{C}^{A}$ with
$C\in\F\backslash\Fc$. If $A\not\le H$ we have that $R_{C}^{A}1_{\FmuFR{\FHH}}\otimes_{\FmuFR{\FHH}}M=0$.
On the other hand, if $A\le H$, we have that $C\le H$ and, therefore,
$C\in\FHH\backslash\left(\FHH\cap\Fc\right)$. Since $M\in\MackFHR{\F}{\FHH}$
this implies that $R_{C}^{A}1_{\FmuFR{\FHH}}\otimes_{\FmuFR{\FHH}}M=I_{C}^{C}\otimes_{\FmuFR{\FHH}}R_{C}^{A}M=0$.
In either case we have that $\I1_{\FmuFR{\FHH}}\otimes_{\FmuFR{\FHH}}M=0$.
Using right exactness of the tensor product functor we can conclude
from the above and definition of $\uparrow_{\FHH}^{\F}$ (see Definition
\ref{def:Restriction-induction-and-conjugation-functors.}) that 
\begin{align*}
M\uparrow_{\FHH}^{\F} & \cong\overline{\muFR_{H}}\otimes_{\FmuFR{\FHH}}M.
\end{align*}
Assume now that $\mathcal{B}$ is a $\FmuFR{\FHH}$ basis of the right
$\FmuFR{\FHH}$-module $\overline{\muFR_{H}}$. Since tensor product
preserves direct sums, we obtain from the previous equivalence that
\begin{align*}
M\uparrow_{\FHH}^{\F} & \cong\bigoplus_{K\in\Fc}\bigoplus_{\left(A,\overline{\varphi}\right)\in\left[H\times_{\F}K\right]}\pi\left(I_{\overline{\varphi}\left(A\right)}^{K}c_{\overline{\varphi}}\FmuFR{\FHH}\right)\otimes_{\FmuFR{\FHH}}M,\\
 & \cong\bigoplus_{K\in\Fc}\bigoplus_{\left(A,\overline{\varphi}\right)\in\left[H\times_{\F}K\right]}I_{\overline{\varphi}\left(A\right)}^{K}c_{\overline{\varphi}}\FmuFR{\FHH}\otimes_{\FmuFR{\FHH}}M.
\end{align*}
Where, for the second identity, we are using that $\I1_{\FmuFR{\FHH}}\otimes_{\FmuFR{\FHH}}M=0$
and right exactness of tensor product. The second part of the statement
follows from Lemma \ref{lem:Equivalence-R-mod-I_A^AM.} and the above
by viewing each $I_{\overline{\varphi}\left(A\right)}^{K}c_{\overline{\varphi}}\FmuFR{\FHH}\otimes_{\FmuFR{\FHH}}M$
as the $\R$-submodule $I_{\overline{\varphi}\left(A\right)}^{K}c_{\overline{\varphi}}\otimes_{\FmuFR{\FHH}}M$
of $M\uparrow_{\FHH}^{\F}$. This proves that the second part of the
statement follows from the first.

Let's now prove the first part of the statement. From Proposition
\ref{prop:Mackey-algebra-basis.} and the previous description of
$\I$ we obtain the following equivalence of right $\FmuFR{\FHH}$-modules
\[
\overline{\muFR_{H}}\cong\bigoplus_{K\in\Fc}\pi\left(I_{K}^{K}\right)\overline{\muFR_{H}}.
\]
For every $K\in\Fc$ we can now define
\[
\mathcal{B}^{K}:=\bigsqcup_{\left(A,\overline{\varphi}\right)\in\left[H\times_{\F}K\right]}\left\{ \pi\left(I_{\overline{\varphi}\left(A\right)}^{K}c_{\overline{\varphi}}\right)\right\} .
\]
In order to prove the statement it will suffice to prove that $\mathcal{B}^{K}$
is a right $\FmuFR{\FHH}$-basis of $\pi\left(I_{K}^{K}\right)\overline{\muFR_{H}}$.
In other words we need to prove that for every $K\in\Fc$ there exists
a direct sum decomposition of right $\FmuFR{\FHH}$-modules of the
form
\begin{equation}
\pi\left(I_{K}^{K}\right)\overline{\muFR_{H}}=\bigoplus_{\left(A,\overline{\varphi}\right)\in\left[H\times_{\F}K\right]}\pi\left(I_{\overline{\varphi}\left(A\right)}^{K}c_{\overline{\varphi}}\right)\overline{\muFR_{H}},\label{eq:aux-decomposition-to-prove.}
\end{equation}
Where the summands on the right hand side are seen as right $\FmuFR{\FHH}$-submodules
of $\pi\left(I_{K}^{K}\right)\overline{\muFR_{H}}$.

Fix $K\in\Fc$. From Proposition \ref{prop:Mackey-algebra-basis.}
and the above description of $\I$ we know that $\pi\left(I_{K}^{K}\right)\overline{\muFR_{H}}$
has an $\R$-basis of the form
\[
\mathcal{B}_{\R}^{K}:=\bigsqcup_{J\in\FHH\cap\Fc}\bigsqcup_{\begin{array}{c}
{\scriptstyle B\in\FHH[J]\cap\Fc}\\
{\scriptstyle \text{up to }J\text{-conj.}}
\end{array}}\bigsqcup_{\psi\in\left[\Aut_{K}\left(K\right)\backslash\Hom_{\FHH}\left(B,K\right)/\Aut_{J}\left(B\right)\right]}\left\{ \pi\left(I_{\overline{\psi}\left(B\right)}^{K}c_{\overline{\psi}}R_{B}^{J}\right)\right\} ,
\]
For each $\pi\left(I_{\overline{\psi}\left(B\right)}^{K}c_{\overline{\psi}}R_{B}^{J}\right)\in\mathcal{B}_{\R}^{K}$
we get a map $\overline{\psi}:B\to K$ and a map $\overline{\iota_{B}^{H}}:B\to H$.
From the universal properties of product we can then conclude that
there exists a unique $\left(B^{H,K},\overline{\psi^{H,K}}\right)\in\left[H\times_{\F}K\right]$
and a unique $\overline{\gamma_{\left(B,\overline{\psi}\right)}^{H,K}}\in\Hom_{\OFc}\left(B,B^{H,K}\right)$
such that $\overline{\iota_{B^{H,K}}^{H}}\overline{\gamma_{\left(B,\overline{\psi}\right)}^{H,K}}=\overline{\iota_{B}^{H}}$
and that $\overline{\psi^{H,K}}\overline{\gamma_{\left(B,\overline{\psi}\right)}^{H,K}}=\overline{\psi}$.
From the first identity and definition of $\OF$ we can conclude that
$\overline{\gamma_{\left(B,\overline{\psi}\right)}^{H,K}}\in\Orbitize{\FHH}$.
From the second identity and Corollary \ref{cor:conjugation-in-F-and-in-orbit-F.}
we can deduce that 
\begin{equation}
I_{\overline{\psi}\left(B\right)}^{K}c_{\overline{\psi}}=I_{\overline{\psi^{H,K}}\left(B^{H,K}\right)}^{K}c_{\overline{\psi^{H,K}}}I_{\overline{\gamma_{\left(B,\overline{\psi}\right)}^{H,K}}\left(B\right)}^{B^{H,K}}c_{\overline{\gamma_{\left(B,\overline{\psi}\right)}^{H,K}}}.\label{eq:aux-B^K,(A,f)-generates.}
\end{equation}
This allows us to write $\mathcal{B}_{\R}^{K}=\bigsqcup_{\left(A,\overline{\varphi}\right)\in\left[H\times_{\F}K\right]}\mathcal{B}_{\R}^{K,\left(A,\overline{\varphi}\right)}$
where
\[
\mathcal{B}_{\R}^{K,\left(A,\overline{\varphi}\right)}:=\bigsqcup_{J\in\FHH\cap\Fc}\bigsqcup_{\begin{array}{c}
{\scriptstyle B\in\FHH[J]\cap\Fc}\\
{\scriptstyle \text{up to }J\text{-conj.}}
\end{array}}\bigsqcup_{\begin{array}{c}
{\scriptstyle \psi\in\left[\Aut_{K}\left(K\right)\backslash\Hom_{\FHH}\left(B,K\right)/\Aut_{J}\left(B\right)\right]}\\
{\scriptstyle \left(B^{H,K},\overline{\psi^{H,K}}\right)=\left(A,\overline{\varphi}\right)}
\end{array}}\left\{ \pi\left(I_{\overline{\psi}\left(B\right)}^{K}c_{\overline{\psi}}R_{B}^{J}\right)\right\} .
\]
Fix $\left(A,\overline{\varphi}\right)\in\left[H\times_{\F}K\right]$.
From Equation \eqref{eq:aux-B^K,(A,f)-generates.} we know that $\mathcal{B}_{\R}^{K,\left(A,\overline{\varphi}\right)}\subseteq\pi\left(I_{\overline{\varphi}\left(A\right)}^{K}c_{\overline{\varphi}}\right)\overline{\muFR_{H}}$.
If we now prove that $\mathcal{B}_{\R}^{K,\left(A,\overline{\varphi}\right)}$
is in fact a generating set of $\pi\left(I_{\overline{\varphi}\left(A\right)}^{K}c_{\overline{\varphi}}\right)\overline{\muFR_{H}}$
(as an $\R$-module) then, since $\pi\left(I_{\overline{\varphi}\left(A\right)}^{K}c_{\overline{\varphi}}\right)\overline{\muFR_{H}}$
is a right $\FmuFR{\FHH}$-submodule of $\pi\left(I_{K}^{K}\right)\overline{\muFR_{H}}$
and $\mathcal{B}_{\R}^{K}$ is an $\R$-basis of $\pi\left(I_{K}^{K}\right)\overline{\muFR_{H}}$,
we will obtain Equation \eqref{eq:aux-decomposition-to-prove.} and
the result will follow. From Proposition \ref{prop:Mackey-algebra-basis.}
and the above description of $\I$ it suffices to prove that for every
$J\in\FHH\cap\Fc$, every $C\in\FHH[J]\cap\Fc$ and every $\overline{\theta}\in\Hom_{\Orbitize{\FHH}}\left(C,A\right)$
there exists $\pi\left(I_{\overline{\psi}\left(B\right)}^{K}c_{\overline{\psi}}R_{B}^{J}\right)\in\mathcal{B}_{\R}^{K,\left(A,\overline{\varphi}\right)}$
such that $\pi\left(I_{\overline{\psi}\left(B\right)}^{K}c_{\overline{\psi}}R_{B}^{J}\right)=\pi\left(I_{\overline{\varphi}\left(A\right)}^{K}c_{\overline{\varphi}}I_{\overline{\theta}\left(C\right)}^{A}c_{\overline{\theta}}R_{C}^{J}\right)$.
From the description of $\mathcal{B}_{\R}^{K}$ there exist $j\in J$
and $\pi\left(I_{\overline{\psi}\left(B\right)}^{K}c_{\overline{\psi}}R_{B}^{J}\right)\in\mathcal{B}_{\R}^{K}$
such that $\lui{j}{B}=C$ and $\overline{\psi}=\overline{\varphi}\overline{\theta}\overline{c_{j}}$.
Here we are viewing $c_{j}$ as an isomorphism from $B$ to $C$.
Therefore, by definition, we have that $\overline{\gamma_{\left(B,\overline{\psi}\right)}^{H,K}}=\overline{\theta}\overline{c_{j}}$
and $\left(B^{H,K},\overline{\psi^{H,K}}\right)=\left(A,\overline{\varphi}\right)$.
In other words $\pi\left(I_{\overline{\psi}\left(B\right)}^{K}c_{\overline{\psi}}R_{B}^{J}\right)\in\mathcal{B}_{\R}^{K,\left(A,\overline{\varphi}\right)}$.
From Lemma \ref{lem:Many-properties-definition.} \eqref{enu:Property-conjugation-commutes.}
we know that $c_{j}R_{B}^{J}=R_{C}^{J}$ and, therefore, we can conclude
from the identities above and Corollary \ref{cor:conjugation-in-F-and-in-orbit-F.}
that $I_{\overline{\psi}\left(B\right)}^{K}c_{\overline{\psi}}R_{B}^{J}=I_{\overline{\varphi}\left(A\right)}^{K}c_{\overline{\varphi}}I_{\overline{\theta}\left(C\right)}^{A}c_{\overline{\theta}}R_{C}^{J}$
thus concluding the proof.
\end{proof}
Before proceeding it is worth introducing the following result motivated
by the notation of Proposition \ref{prop:Decomposition-centric-Mackey-functor.}. 
\begin{lem}
\label{lem:proving-muFR-morphism.}Let $H,K\in\Fc$. Then we have
that:
\begin{enumerate}
\item \label{enu:Def-A^theta.}For every $\left(A,\overline{\varphi}\right)\in\left[H\times_{\F}K\right]$,
every $J\in\Fc$ and every $\theta\in\Hom_{\Fc}\left(K,J\right)$
there exist a unique $\left(A^{\theta},\overline{\varphi^{\theta}}\right)\in\left[H\times_{\F}J\right]$
and a unique $\overline{\gamma_{\left(A,\overline{\varphi}\right)}^{\theta}}\in\Hom_{\OFc}\left(A,A^{\theta}\right)$
such that $\overline{\varphi^{\theta}}\overline{\gamma_{\left(A,\overline{\varphi}\right)}^{\theta}}=\overline{\theta}\overline{\varphi}$
and $\overline{\iota_{A^{\theta}}^{H}}\overline{\gamma_{\left(A,\overline{\varphi}\right)}^{\theta}}=\overline{\iota_{A}^{H}}$.
Moreover $\overline{\gamma_{\left(A,\overline{\varphi}\right)}^{\theta}}\in\Orbitize{\FHH}$
and, given $J'\in\Fc$ and $\delta\in\Hom_{\Fc}\left(J,J'\right)$
we have that $A^{\delta\theta}=\left(A^{\theta}\right)^{\delta}$,
that $\overline{\varphi^{\delta\theta}}=\overline{\left(\varphi^{\theta}\right)^{\delta}}$
and that $\overline{\gamma_{\left(A,\overline{\varphi}\right)}^{\delta\theta}}=\overline{\gamma_{\left(A^{\theta},\overline{\varphi^{\theta}}\right)}^{\delta}}\overline{\gamma_{\left(A,\overline{\varphi}\right)}^{\theta}}$.
If $\theta=\iota_{K}^{J}$ we simply write $\left(A^{J},\overline{\varphi^{J}}\right)$
and $\overline{\gamma_{\left(A,\overline{\varphi}\right)}^{J}}$.
\item \label{enu:Composing.}Let $J\in\Fc$ such that $J\ge K$ and let
$\left(A,\overline{\varphi}\right)\in\left[H\times_{\F}K\right]$.
The following identities are satisfied
\begin{align*}
I_{K}^{J}I_{\overline{\varphi}\left(A\right)}^{K}c_{\overline{\varphi}} & =I_{\overline{\varphi^{J}}\left(A^{J}\right)}^{J}c_{\overline{\varphi^{J}}}I_{\overline{\gamma_{\left(A,\overline{\varphi}\right)}^{J}}\left(A\right)}^{A^{J}}c_{\overline{\gamma_{\left(A,\overline{\varphi}\right)}^{J}}},\\
c_{\overline{\varphi^{-1}}}R_{\overline{\varphi}\left(A\right)}^{K}R_{K}^{J} & =c_{\overline{\left(\gamma_{\left(A,\overline{\varphi}\right)}^{J}\right)^{-1}}}R_{\overline{\gamma_{\left(A,\overline{\varphi}\right)}^{J}}\left(A\right)}^{A^{J}}c_{\overline{\left(\varphi^{J}\right)^{-1}}}R_{\overline{\varphi^{J}}\left(A^{J}\right)}^{J}.
\end{align*}
\item \label{enu:using-mackey.}Let $J\in\Fc$ such that $J\ge K$ and let
$\I$ be the two sided ideal of $\muFR$ generated by elements of
the form $I_{C}^{C}$ such that $C\in\F\backslash\Fc$. The following
equivalences are satisfied
\begin{align*}
\sum_{\left(B,\overline{\psi}\right)\in\left[H\times_{\F}J\right]}R_{K}^{J}I_{\overline{\psi}\left(B\right)}^{J}c_{\overline{\psi}} & \equiv\sum_{\left(A,\overline{\varphi}\right)\in\left[H\times_{\F}K\right]}I_{\overline{\varphi}\left(A\right)}^{K}c_{\overline{\varphi}}c_{\overline{\left(\gamma_{\left(A,\overline{\varphi}\right)}^{J}\right)^{-1}}}R_{\overline{\gamma_{\left(A,\overline{\varphi}\right)}^{J}}\left(A\right)}^{A^{J}}, & \text{\ensuremath{\mod}}\I\\
\sum_{\left(B,\overline{\psi}\right)\in\left[H\times_{\F}J\right]}c_{\overline{\psi^{-1}}}R_{\overline{\psi}\left(B\right)}^{J}I_{K}^{J} & \equiv\sum_{\left(A,\overline{\varphi}\right)\in\left[H\times_{\F}K\right]}I_{\overline{\gamma_{\left(A,\overline{\varphi}\right)}^{J}}\left(A\right)}^{A^{J}}c_{\overline{\gamma_{\left(A,\overline{\varphi}\right)}^{J}}}c_{\overline{\varphi^{-1}}}R_{\overline{\varphi}\left(A\right)}^{K}. & \text{\ensuremath{\mod}}\I
\end{align*}
More precisely, for every $\left(B,\overline{\psi}\right)\in\left[H\times_{\F}J\right]$
we have that
\begin{align*}
R_{K}^{J}I_{\overline{\psi}\left(B\right)}^{J}c_{\overline{\psi}} & \equiv\sum_{\begin{array}{c}
{\scriptstyle \left(A,\overline{\varphi}\right)\in\left[H\times_{\F}K\right]}\\
{\scriptstyle \left(A^{J},\overline{\varphi^{J}}\right)=\left(B,\overline{\psi}\right)}
\end{array}}I_{\overline{\varphi}\left(A\right)}^{K}c_{\overline{\varphi}}c_{\overline{\left(\gamma_{\left(A,\overline{\varphi}\right)}^{J}\right)^{-1}}}R_{\overline{\gamma_{\left(A,\overline{\varphi}\right)}^{J}}\left(A\right)}^{A^{J}}, & \text{\ensuremath{\mod}}\I\\
c_{\overline{\psi^{-1}}}R_{\overline{\psi}\left(B\right)}^{J}I_{K}^{J} & \equiv\sum_{\begin{array}{c}
{\scriptstyle \left(A,\overline{\varphi}\right)\in\left[H\times_{\F}K\right]}\\
{\scriptstyle \left(A^{J},\overline{\varphi^{J}}\right)=\left(B,\overline{\psi}\right)}
\end{array}}I_{\overline{\gamma_{\left(A,\overline{\varphi}\right)}^{J}}\left(A\right)}^{A^{J}}c_{\overline{\gamma_{\left(A,\overline{\varphi}\right)}^{J}}}c_{\overline{\varphi^{-1}}}R_{\overline{\varphi}\left(A\right)}^{K}. & \text{\ensuremath{\mod}}\I
\end{align*}
\item \label{enu:Conjugating.}Let $\rho\colon K\to\rho\left(K\right)$
be an isomorphism in $\F$ then, for every $\left(A,\overline{\varphi}\right)\in\left[H\times_{\F}K\right]$,
the morphism $\overline{\gamma_{\left(A,\overline{\varphi}\right)}^{\rho}}$
is an isomorphism and we have
\begin{align*}
c_{\rho}I_{\overline{\varphi}\left(A\right)}^{K}c_{\overline{\varphi}} & =I_{\overline{\varphi^{\rho}}\left(A^{\rho}\right)}^{\rho\left(K\right)}c_{\overline{\varphi^{\rho}}}c_{\gamma_{\left(A,\overline{\varphi}\right)}^{\rho}}, & c_{\overline{\varphi^{-1}}}R_{\overline{\varphi}\left(A\right)}^{K}c_{\rho^{-1}} & =c_{\left(\gamma_{\left(A,\overline{\varphi}\right)}^{\rho}\right)^{-1}}c_{\overline{\left(\varphi^{\rho}\right)^{-1}}}R_{\overline{\varphi^{\rho}}\left(A^{\rho}\right)}^{\rho\left(K\right)}.
\end{align*}
For any representative $\gamma_{\left(A,\overline{\varphi}\right)}^{\rho}\in\overline{\gamma_{\left(A,\overline{\varphi}\right)}^{\rho}}$.
In particular, from Proposition \ref{prop:properties-HXK.} \eqref{enu:prod-iso-right.}
\begin{align*}
\sum_{\left(A,\overline{\varphi}\right)\in\left[H\times_{\F}K\right]}c_{\rho}I_{\overline{\varphi}\left(A\right)}^{K}c_{\overline{\varphi}} & =\sum_{\left(B,\overline{\psi}\right)\in\left[H\times_{\F}\rho\left(K\right)\right]}I_{\overline{\psi}\left(B\right)}^{\rho\left(K\right)}c_{\overline{\psi}}c_{\gamma_{\left(B,\overline{\psi}\right)}^{\rho^{-1}}},\\
\sum_{\left(A,\overline{\varphi}\right)\in\left[H\times_{\F}K\right]}c_{\overline{\varphi^{-1}}}R_{\overline{\varphi}\left(A\right)}^{K}c_{\rho^{-1}} & =\sum_{\left(B,\overline{\psi}\right)\in\left[H\times_{\F}\rho\left(K\right)\right]}c_{\gamma_{\left(B,\overline{\psi}\right)}^{\rho}}c_{\overline{\psi^{-1}}}R_{\overline{\psi}\left(B\right)}^{\rho\left(K\right)}.
\end{align*}
\end{enumerate}
\end{lem}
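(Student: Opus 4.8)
The four items are proved in turn. Items \eqref{enu:Def-A^theta.} and the isomorphism assertion in \eqref{enu:Conjugating.} rest on the universal property of products in $\aOFc$ (Proposition \ref{prop:Product-OFc.}); items \eqref{enu:Composing.}, \eqref{enu:using-mackey.} and \eqref{enu:Conjugating.} are computations with the biset relations of Lemma \ref{lem:Many-properties-definition.} and Corollary \ref{cor:conjugation-in-F-and-in-orbit-F.}, and only \eqref{enu:using-mackey.} invokes the Mackey formula. For \eqref{enu:Def-A^theta.}, given $\left(A,\overline{\varphi}\right)\in\left[H\times_{\F}K\right]$ and $\overline{\theta}\in\Hom_{\OFc}\left(K,J\right)$, the morphisms $\overline{\iota_{A}^{H}}\colon A\to H$ and $\overline{\theta}\,\overline{\varphi}\colon A\to J$ factor, by the universal property of $H\times_{\F}J=\bigsqcup_{\left(B,\overline{\psi}\right)}B$, through a unique morphism $A\to H\times_{\F}J$; a morphism in $\aOFc$ out of a single object is the choice of one summand of the target together with a morphism into it, so this produces a unique summand $A^{\theta}$, a unique $\overline{\gamma_{\left(A,\overline{\varphi}\right)}^{\theta}}\in\Hom_{\OFc}\left(A,A^{\theta}\right)$ and the corresponding projection $\overline{\varphi^{\theta}}$ (the same use of the universal property of a product appears inside the proof of Proposition \ref{prop:Decomposition-centric-Mackey-functor.}). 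Unwinding the relation $\overline{\iota_{A^{\theta}}^{H}}\,\overline{\gamma_{\left(A,\overline{\varphi}\right)}^{\theta}}=\overline{\iota_{A}^{H}}$ through the description of $\Hom_{\OF}$ shows that a representative of $\overline{\gamma_{\left(A,\overline{\varphi}\right)}^{\theta}}$ is conjugation by an element of $H$, hence lies in $\FHH$, so $\overline{\gamma_{\left(A,\overline{\varphi}\right)}^{\theta}}\in\Orbitize{\FHH}$; the naturality relations for $A^{\delta\theta}$, $\overline{\varphi^{\delta\theta}}$ and $\overline{\gamma_{\left(A,\overline{\varphi}\right)}^{\delta\theta}}$ follow by feeding the composite $A\to A^{\theta}\to\left(A^{\theta}\right)^{\delta}$ into the uniqueness clause.

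Item \eqref{enu:Composing.} follows from \eqref{enu:Def-A^theta.} and Corollary \ref{cor:conjugation-in-F-and-in-orbit-F.}: one has $I_{K}^{J}I_{\overline{\varphi}\left(A\right)}^{K}c_{\overline{\varphi}}=I_{\overline{\iota_{K}^{J}\varphi}\left(A\right)}^{J}c_{\overline{\iota_{K}^{J}\varphi}}$ and $I_{\overline{\varphi^{J}}\left(A^{J}\right)}^{J}c_{\overline{\varphi^{J}}}\,I_{\overline{\gamma_{\left(A,\overline{\varphi}\right)}^{J}}\left(A\right)}^{A^{J}}c_{\overline{\gamma_{\left(A,\overline{\varphi}\right)}^{J}}}=I_{\overline{\varphi^{J}\gamma_{\left(A,\overline{\varphi}\right)}^{J}}\left(A\right)}^{J}c_{\overline{\varphi^{J}\gamma_{\left(A,\overline{\varphi}\right)}^{J}}}$, and these coincide since $\overline{\varphi^{J}}\,\overline{\gamma_{\left(A,\overline{\varphi}\right)}^{J}}=\overline{\iota_{K}^{J}}\,\overline{\varphi}$ by \eqref{enu:Def-A^theta.}. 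The restriction identity of \eqref{enu:Composing.}, as well as the restriction identities in \eqref{enu:using-mackey.} and \eqref{enu:Conjugating.}, follow either by rerunning the computation with restrictions in place of inductions (all the relevant relations of Lemma \ref{lem:Many-properties-definition.} are available in ``restriction'' form), or more economically by applying the transpose-of-bisets anti-automorphism of $\muFR$, which exchanges $I_{H}^{K}$ and $R_{H}^{K}$, sends $c_{\varphi}\mapsto c_{\varphi^{-1}}$, reverses products and fixes every generator $I_{C}^{C}$ of $\I$, hence respects congruences modulo $\I$.

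Item \eqref{enu:using-mackey.} is the technical heart; it is enough to establish the pointwise induction congruence, the displayed sum then following by summing over $\left(B,\overline{\psi}\right)\in\left[H\times_{\F}J\right]$ because $\left(A,\overline{\varphi}\right)\mapsto\left(A^{J},\overline{\varphi^{J}}\right)$ partitions $\left[H\times_{\F}K\right]$. Write $I_{\overline{\psi}\left(B\right)}^{J}c_{\overline{\psi}}=I_{\psi\left(B\right)}^{J}c_{\psi}$ (with $\psi$ regarded as an isomorphism onto its image) and apply the Mackey formula (Lemma \ref{lem:Many-properties-definition.} \eqref{enu:property-Mackey-formula.}) to $R_{K}^{J}I_{\psi\left(B\right)}^{J}$; modulo $\I$ the summands with $K^{x}\cap\psi\left(B\right)\notin\Fc$ disappear, and in each remaining summand one commutes $c_{\psi}$ past the restriction via Lemma \ref{lem:Many-properties-definition.} \eqref{enu:Property-conjugation-commutes.}. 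Comparing with Proposition \ref{prop:properties-HXK.} \eqref{enu:prod-pullback-right.} (applied with its ``$K$'' and ``$J$'' being our $J$ and $K$) shows that the surviving double cosets $x$ are in bijection with the pairs $\left(A,\overline{\varphi}\right)\in\left[H\times_{\F}K\right]$ satisfying $\left(A^{J},\overline{\varphi^{J}}\right)=\left(B,\overline{\psi}\right)$, and that for the choice of $\left[H\times_{\F}K\right]$ compatible with this decomposition every $\gamma_{\left(A,\overline{\varphi}\right)}^{J}$ is an inclusion, whence $c_{\overline{\left(\gamma_{\left(A,\overline{\varphi}\right)}^{J}\right)^{-1}}}=I_{A}^{A}$ and $R_{\overline{\gamma_{\left(A,\overline{\varphi}\right)}^{J}}\left(A\right)}^{A^{J}}=R_{A}^{A^{J}}$; a short biset computation then identifies each surviving summand with $I_{\overline{\varphi}\left(A\right)}^{K}c_{\overline{\varphi}}R_{A}^{A^{J}}$. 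The main obstacle is this identification: keeping track of the two representative sets $\left[H\times_{\F}K\right]$ and $\left[H\times_{\F}J\right]$, verifying that the double cosets produced by the Mackey formula are exactly the fibres of $\left(A,\overline{\varphi}\right)\mapsto\left(A^{J},\overline{\varphi^{J}}\right)$, and checking (again via Corollary \ref{cor:conjugation-in-F-and-in-orbit-F.}) that each right-hand summand is independent of the representative chosen within its equivalence class, so that it suffices to argue with the pullback-compatible choice.

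Finally, in \eqref{enu:Conjugating.} we take $\theta=\rho$ an isomorphism in \eqref{enu:Def-A^theta.}: the naturality relations give $\left(A^{\rho}\right)^{\rho^{-1}}=A^{\rho^{-1}\rho}=A$ and $\overline{\gamma_{\left(A^{\rho},\overline{\varphi^{\rho}}\right)}^{\rho^{-1}}}\,\overline{\gamma_{\left(A,\overline{\varphi}\right)}^{\rho}}=\overline{\gamma_{\left(A,\overline{\varphi}\right)}^{\Id}}=\overline{\Id_{A}}$, so $\overline{\gamma_{\left(A,\overline{\varphi}\right)}^{\rho}}$ is an isomorphism with inverse $\overline{\gamma_{\left(A^{\rho},\overline{\varphi^{\rho}}\right)}^{\rho^{-1}}}$. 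The two pointwise identities come from Corollary \ref{cor:conjugation-in-F-and-in-orbit-F.} exactly as in \eqref{enu:Composing.}, using $c_{\rho}=I_{\overline{\rho}\left(K\right)}^{\rho\left(K\right)}c_{\overline{\rho}}$ and the fact that $I_{\overline{\gamma_{\left(A,\overline{\varphi}\right)}^{\rho}}\left(A\right)}^{A^{\rho}}=I_{A^{\rho}}^{A^{\rho}}$ is an identity idempotent; summing and reindexing along the bijection $\left(A,\overline{\varphi}\right)\mapsto\left(A^{\rho},\overline{\varphi^{\rho}}\right)$ from $\left[H\times_{\F}K\right]$ to $\left[H\times_{\F}\rho\left(K\right)\right]$ (Proposition \ref{prop:properties-HXK.} \eqref{enu:prod-iso-right.}), together with the identification of $\overline{\gamma^{\rho}}$ and $\overline{\gamma^{\rho^{-1}}}$ as mutually inverse isomorphisms, gives the two summed identities.
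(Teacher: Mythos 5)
Your proposal is correct and follows essentially the same route as the paper: the universal property of the product for \eqref{enu:Def-A^theta.}, Corollary \ref{cor:conjugation-in-F-and-in-orbit-F.} together with the defining identity $\overline{\varphi^{J}}\,\overline{\gamma_{\left(A,\overline{\varphi}\right)}^{J}}=\overline{\iota_{K}^{J}}\,\overline{\varphi}$ for \eqref{enu:Composing.}, the Mackey formula reduced modulo $\I$ and matched against Proposition \ref{prop:properties-HXK.} \eqref{enu:prod-pullback-right.} for \eqref{enu:using-mackey.}, and the uniqueness clause plus reindexing via Proposition \ref{prop:properties-HXK.} \eqref{enu:prod-iso-right.} for \eqref{enu:Conjugating.}. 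The only cosmetic difference is your suggestion to deduce the restriction identities from the induction ones via the transpose anti-involution of $\muFR$, where the paper simply notes the arguments are analogous; both are fine.
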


\begin{proof}
We will only prove the first equation of each item since the proof
of the second ones are analogous.
\begin{enumerate}
\item Item \eqref{enu:Def-A^theta.} is an immediate consequence of the
universal properties of products. The fact that $\overline{\gamma_{\left(A,\overline{\varphi}\right)}^{\theta}}\in\Orbitize{\FHH}$
follows from definition of $\OF$ and the identity $\overline{\iota_{A^{\theta}}^{H}}\overline{\gamma_{\left(A,\overline{\varphi}\right)}^{\theta}}=\overline{\iota_{A}^{H}}$.
\item Item \eqref{enu:Composing.} follows from the identity $\overline{\varphi^{J}}\overline{\gamma_{\left(A,\overline{\varphi}\right)}^{J}}=\overline{\iota_{K}^{J}}\overline{\varphi}$
and Corollary \ref{cor:conjugation-in-F-and-in-orbit-F.}.
\item Let $\left(B,\overline{\psi}\right)\in\left[H\times_{\F}J\right]$,
fix a representative $\psi\in\overline{\psi}$ and view it as an isomorphism
between the appropriate restrictions. Item \eqref{enu:using-mackey.}
now follows from the identities below
\begin{align*}
R_{K}^{J}I_{\overline{\psi}\left(B\right)}^{J}c_{\overline{\psi}} & =\sum_{x\in\left[K\backslash J/\psi\left(B\right)\right]}I_{K\cap\lui{x}{\left(\psi\left(B\right)\right)}}^{K}c_{c_{x}\psi}R_{\psi^{-1}\left(K^{x}\cap\psi\left(B\right)\right)}^{B}, & \text{Lemma \ref{lem:Many-properties-definition.} \eqref{enu:Property-conjugation-commutes.} and \eqref{enu:property-Mackey-formula.}}\\
 & \equiv\sum_{\begin{array}{c}
{\scriptstyle x\in\left[K\backslash J/\psi\left(B\right)\right]}\\
{\scriptstyle K^{x}\cap\psi\left(B\right)\in\Fc}
\end{array}}I_{K\cap\lui{x}{\left(\psi\left(B\right)\right)}}^{K}c_{c_{x}\psi}R_{\psi^{-1}\left(K^{x}\cap\psi\left(B\right)\right)}^{B}, & \text{\ensuremath{\mod}}\I\\
 & =\sum_{\begin{array}{c}
{\scriptstyle \left(A,\overline{\varphi}\right)\in\left[H\times_{\F}K\right]}\\
{\scriptstyle \left(A^{J},\overline{\varphi^{J}}\right)=\left(B,\overline{\psi}\right)}
\end{array}}I_{\overline{\varphi}\left(A\right)}^{K}c_{\overline{\varphi}}c_{\overline{\left(\gamma_{\left(A,\overline{\varphi}\right)}^{J}\right)^{-1}}}R_{\overline{\gamma_{\left(A,\overline{\varphi}\right)}^{J}}\left(A\right)}^{A^{J}}. & \text{Proposition \ref{prop:properties-HXK.} \eqref{enu:prod-pullback-right.}}
\end{align*}
Where, in the last identity, we are using the fact that the bijection
of Proposition \ref{prop:properties-HXK.} \eqref{enu:prod-pullback-right.},
which sends every $\left(B,\overline{\psi}\right)\in\left[H\times_{\F}J\right]$
and every $x\in\left[K\backslash J/\psi\left(B\right)\right]$ to
$\left(A,\overline{\varphi}\right)=\left(\left(\psi^{-1}\left(K^{x}\cap\psi\left(B\right)\right)\right)^{h},\overline{\iota c_{x}\psi c_{h}}\right)$
for some $h\in H$ (which in Proposition \ref{prop:properties-HXK.}
\eqref{enu:prod-pullback-right.} we can assume to be $1_{S}$), satisfies
$\left(A^{J},\overline{\varphi^{J}}\right)=\left(B,\overline{\psi}\right)$
and $\overline{\gamma_{\left(A,\overline{\varphi}\right)}^{J}}=\overline{\iota_{\lui{h}{A}}^{B}c_{h}}$.
\item From uniqueness of the map $\overline{\gamma_{\left(A,\overline{\varphi}\right)}^{\text{Id}_{K}}}$
we know that $\overline{\gamma_{\left(A,\overline{\varphi}\right)}^{\text{Id}_{K}}}=\overline{\text{Id}_{A}}$,
therefore, from Item \eqref{enu:Def-A^theta.}, we can deduce that
$\overline{\gamma_{\left(A,\overline{\varphi}\right)}^{\rho}}$ is
an isomorphism with inverse $\overline{\gamma_{\left(A^{\rho},\overline{\varphi^{\rho}}\right)}^{\rho^{-1}}}$.
Item \eqref{enu:Conjugating.} now follows from identity $\overline{\varphi^{\rho}}\overline{\gamma_{\left(A,\overline{\varphi}\right)}^{\rho}}=\overline{\rho}\overline{\varphi}$
and Corollary \ref{cor:conjugation-in-F-and-in-orbit-F.}.
\end{enumerate}
\end{proof}
As a consequence of Proposition \ref{prop:Decomposition-centric-Mackey-functor.}
we can recover a result that appears in Mackey functors over groups
and which is, in general, not true for Mackey functors over fusion
systems. We will not prove it in detail since it falls outside the
scope of this paper but it's worth sketching a proof.
\begin{rem}
Let $H\in\Fc$ and view the functors $\uparrow_{\FHH}^{\F}$ and $\downarrow_{\FHH}^{\F}$
as functors between the categories $\MackFHR{\F}{\FHH}$ and $\MackFcR$
(see Proposition \ref{prop:centric-Induction-restriction.}). Then
$\uparrow_{\FHH}^{\F}$ is both right and left adjoint to $\downarrow_{\FHH}^{\F}$.
Define the coinduction Mackey functor $\Uparrow_{\FHH}^{\F}$ as the
functor that sends any $M\in\MackHR{\FHH}$ to
\[
M\Uparrow_{\FHH}^{\F}:=\Hom_{\FmuFR{\FHH}}\left(\muFR\downarrow_{\FHH}^{\F},M\right)\in\muFRmod.
\]
Here we are viewing $M\Uparrow_{\FHH}^{\F}$ as a $\muFR$-module
by setting for every $f\in M\Uparrow_{\FHH}^{\F}$, every $y\in\muFR$
and every $x\in\muFR\downarrow_{\FHH}^{\F}$ the image $\left(y\cdot f\right)\left(x\right)=f\left(xy\right)$.
It is well known that $\Uparrow_{\FHH}^{\F}$ is the right adjoint
of the restriction functor $\downarrow_{\FHH}^{\F}$ while $\uparrow_{\FHH}^{\F}$
is its left adjoint. Therefore, proving that $\uparrow_{\FHH}^{\F}$
and $\Uparrow_{\FHH}^{\F}$ coincide on $\MackFHR{\F}{\FHH}$ would
prove the statement. The broad steps to prove this are as follows.
First use the fact that $M$ is $\F$-centric in order to obtain the
isomorphism
\[
\Hom_{\FmuFR{\FHH}}\left(\muFR\downarrow_{\FHH}^{\F},M\right)\cong\Hom_{\FmuFR{\FHH}}\left(\muFR\downarrow_{\FHH}^{\F}/\I\downarrow_{\FHH}^{\F},M\right).
\]
With $\I$ as in Proposition \ref{prop:Decomposition-centric-Mackey-functor.}.
Using again Proposition \ref{prop:Decomposition-centric-Mackey-functor.}
and the anti involution $\cdot^{*}$ of $\muFR$ which sends every
$I_{\varphi\left(C\right)}^{B}c_{\varphi}R_{C}^{A}$ in $\muFR$ to
$\left(I_{\varphi\left(C\right)}^{B}c_{\varphi}R_{C}^{A}\right)^{*}:=I_{C}^{A}c_{\varphi^{-1}}R_{\varphi\left(C\right)}^{B}$
it can now be proven using arguments dual to those of Proposition
\ref{prop:Decomposition-centric-Mackey-functor.} that the following
is a $\FmuFR{\FHH}$ basis of $\muFR\downarrow_{\FHH}^{\F}/\I\downarrow_{\FHH}^{\F}$
\[
\mathcal{B}:=\bigsqcup_{K\in\Fc}\bigsqcup_{\left(A,\overline{\varphi}\right)\in\left[H\times_{\F}K\right]}\left\{ \pi\left(c_{\overline{\varphi^{-1}}}R_{\overline{\varphi}\left(A\right)}^{K}\right)\right\} .
\]
Where $\pi\colon\muFR\downarrow_{\FHH}^{\F}\to\muFR\downarrow_{\FHH}^{\F}/\I\downarrow_{\FHH}^{\F}$
denotes the natural projection. Using this we can now define for every
$K\in\Fc$, every $\left(A,\overline{\varphi}\right)\in\left[H\times_{\F}K\right]$
and every $x\in I_{A}^{A}M$ the $\FmuFR{\FHH}$-module morphism $f_{\left(A,\overline{\varphi}\right)}^{x}\in M\Uparrow_{\FHH}^{\F}$
that sends every element in $\mathcal{B}$ to $0$ except for $\pi\left(c_{\overline{\varphi^{-1}}}R_{\overline{\varphi}\left(A\right)}^{K}\right)$
which is sent to $x$. With this notation it can be proven that every
$f\in M\Uparrow_{\FHH}^{\F}$ can be written in a unique way as an
$\R$-linear combination of $\R$-module morphisms of the form $f_{\left(A,\overline{\varphi}\right)}^{x}$.
Finally an isomorphism from $M\Uparrow_{\FHH}^{\F}$ to $M\uparrow_{\FHH}^{\F}$
can be obtained from Proposition \ref{prop:Decomposition-centric-Mackey-functor.}
by sending any morphism of the form $f_{\left(A,\overline{\varphi}\right)}^{x}$
to $I_{\overline{\varphi}\left(A\right)}^{K}c_{\overline{\varphi}}\otimes x\in M\uparrow_{\FHH}^{\F}$.
Some care is needed in this last step to prove that this morphism
is in fact a morphism of $\muFR$-modules but Proposition \ref{prop:properties-HXK.}
and Lemma \ref{lem:proving-muFR-morphism.} can be used to this end.
\end{rem}

As we will see in subsection \ref{subsec:N-is-a-direct-summand-of-Ninduction-restriction.}
there are at least 2 ways of translating Equation \eqref{eq:Induction-restriction-groups.}
to the context of Mackey functors over fusion systems. We are now
ready to give the first one.
\begin{lem}
\label{lem:Mackey-formula-induction-restriction.}Let $H,K\in\Fc$,
let $\G$ be a fusion system containing $\F$ and let $M\in\MackFHR{\G}{\FHH}$,
for every $\left(A,\overline{\varphi}\right)\in\left[H\times_{\F}K\right]$
fix a representative $\varphi$ of $\overline{\varphi}$ viewed as
an isomorphism onto its image and define $M_{\left(A,\overline{\varphi}\right)}:=\left(\lui{\varphi}{\left(M\downarrow_{\FHH[A]}^{\FHH}\right)}\right)$.
Each $M_{\left(A,\overline{\varphi}\right)}$ is $\G$-centric and
there exists an isomorphism 
\begin{equation}
\begin{array}{ccc}
\underset{\left(A,\overline{\varphi}\right)\in\left[H\times_{\F}K\right]}{\bigoplus}M_{\left(A,\overline{\varphi}\right)}\text{\ensuremath{\uparrow}}_{\FHH[\varphi\left(A\right)]}^{\FHH[K]} & \stackrel{\Gamma}{\bjarrow} & M\uparrow_{\FHH}^{\F}\downarrow_{\FHH[K]}^{\F},\\
{\scriptstyle I_{\overline{\theta}\left(C\right)}^{J}c_{\overline{\theta}}\otimes_{\FmuFR{\FHH[\varphi\left(A\right)]}}x} & {\scriptstyle \bjarrow} & {\scriptstyle I_{\overline{\theta}\left(C\right)}^{J}c_{\overline{\theta\varphi}}\otimes_{\FmuFR{\FHH}}x}
\end{array}\label{eq:Definition-Gamma.}
\end{equation}
where we are viewing $\varphi$ as an isomorphism between the appropriate
restrictions and we are using Proposition \ref{prop:Decomposition-centric-Mackey-functor.}
and the fact that $M_{\left(A,\overline{\varphi}\right)}\in\MackFHR{\G}{\FHH[\varphi\left(A\right)]}$
to define $\Gamma$ via $\R$ linearity by setting its image on elements
of the form $I_{\overline{\theta}\left(C\right)}^{J}c_{\overline{\theta}}\otimes_{\FmuFR{\FHH[\varphi\left(A\right)]}}x\in M_{\left(A,\overline{\varphi}\right)}\text{\ensuremath{\uparrow}}_{\FHH[\varphi\left(A\right)]}^{\FHH[K]}$
with $J\in\FHH[K]\cap\Fc$, $\left(C,\overline{\theta}\right)\in\left[\varphi\left(A\right)\times_{\FHH[K]}J\right]$
such that $C\in\FHH[K]\cap\Fc$ and $x\in I_{\varphi^{-1}\left(C\right)}^{\varphi^{-1}\left(C\right)}M=I_{C}^{C}M_{\left(A,\overline{\varphi}\right)}$.
\end{lem}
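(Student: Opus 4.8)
The plan treats the two assertions of the statement in turn. For the \textbf{$\G$-centricity}: every $\left(A,\overline{\varphi}\right)\in\left[H\times_{\F}K\right]$ has $A\in\Fc$ and, since $\varphi\left(A\right)=_{\F}A$, also $\varphi\left(A\right)\in\Fc$; moreover $A\le H$ forces $\FHH[A]\subseteq\FHH\subseteq\F\subseteq\G$ and $\varphi$, being an isomorphism in $\Fc$, is one in $\G$. Proposition \ref{prop:centric-Induction-restriction.} holds verbatim with $\G$ in place of $\F$ (its proof uses only that the ambient system contains the subsystems occurring in it), so it gives $M\downarrow_{\FHH[A]}^{\FHH}\in\MackFHR{\G}{\FHH[A]}$ and hence $M_{\left(A,\overline{\varphi}\right)}=\lui{\varphi}{\left(M\downarrow_{\FHH[A]}^{\FHH}\right)}\in\MackFHR{\G}{\FHH[\varphi\left(A\right)]}$. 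For the isomorphism, note first that $\Gc\subseteq\Fc$ (an $\F$-isomorphism is a $\G$-isomorphism, so being $\G$-centric is the stronger condition); hence $M\in\MackFHR{\F}{\FHH}$, and each $M_{\left(A,\overline{\varphi}\right)}\in\MackFHR{\FHH[K]}{\FHH[\varphi\left(A\right)]}$, since $\varphi\left(A\right)\in\Fc$ gives $\varphi\left(A\right)\in\FHH[K]^{c}$ and for $C\le K$ the inclusion $C_{S}\left(C\right)\le C$ gives $C_{K}\left(C\right)\le C$. The plan is then to compute both sides of \eqref{eq:Definition-Gamma.} as $\R$-modules via Proposition \ref{prop:Decomposition-centric-Mackey-functor.}, identify the indexing sets via Proposition \ref{prop:properties-HXK.}, check the stated formula realizes that identification, and finally verify $\FmuFR{\FHH[K]}$-linearity.

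\emph{The domain as an $\R$-module.} Fix $\left(A,\overline{\varphi}\right)\in\left[H\times_{\F}K\right]$. Applying Proposition \ref{prop:Decomposition-centric-Mackey-functor.} to the saturated fusion system $\FHH[K]$, the subgroup $\varphi\left(A\right)$ and the $\FHH[K]$-centric Mackey functor $M_{\left(A,\overline{\varphi}\right)}$ over $\FHH[\varphi\left(A\right)]$ yields
\[
M_{\left(A,\overline{\varphi}\right)}\uparrow_{\FHH[\varphi\left(A\right)]}^{\FHH[K]}\ \cong_{\R}\ \bigoplus_{J\in\FHH[K]^{c}}\ \bigoplus_{\left(C,\overline{\theta}\right)\in\left[\varphi\left(A\right)\times_{\FHH[K]}J\right]}I_{\overline{\theta}\left(C\right)}^{J}c_{\overline{\theta}}\otimes M_{\left(A,\overline{\varphi}\right)} .
\]
By \cite[Proposition 4.4]{IntroductionToFusionSystemsLinckelmann} (a subgroup of a non-$\F$-centric group is non-$\F$-centric), together with $\Gc\subseteq\Fc$ and $\G$-centricity of $M_{\left(A,\overline{\varphi}\right)}$, the summands with $J$ or $C$ outside $\FHH[K]\cap\Fc$ vanish, so one may restrict $J,C$ to $\FHH[K]\cap\Fc$; and by Proposition \ref{prop:Decomposition-centric-Mackey-functor.} and Definition \ref{def:Restriction-induction-and-conjugation-functors.} each surviving summand is $\cong_{\R}I_{C}^{C}M_{\left(A,\overline{\varphi}\right)}\cong_{\R}I_{\varphi^{-1}\left(C\right)}^{\varphi^{-1}\left(C\right)}M$.

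\emph{Matching with the codomain.} On the other side $M\uparrow_{\FHH}^{\F}\downarrow_{\FHH[K]}^{\F}=1_{\FmuFR{\FHH[K]}}\bigl(M\uparrow_{\FHH}^{\F}\bigr)$; by Proposition \ref{prop:Decomposition-centric-Mackey-functor.} and its proof $M\uparrow_{\FHH}^{\F}\cong_{\R}\pi\bigl(\muFR1_{\FmuFR{\FHH}}\bigr)\otimes_{\FmuFR{\FHH}}M$ (where $\pi$ is reduction modulo the non-$\F$-centric ideal), and since $1_{\FmuFR{\FHH[K]}}=\sum_{P\le K}I_{P}^{P}$ fixes $\pi\bigl(I_{\overline{\varphi'}\left(A'\right)}^{L}c_{\overline{\varphi'}}\bigr)$ when $L\le K$ and annihilates it otherwise (Lemma \ref{lem:Many-properties-definition.}), one gets
\[
M\uparrow_{\FHH}^{\F}\downarrow_{\FHH[K]}^{\F}\ \cong_{\R}\ \bigoplus_{\substack{L\in\Fc\\ L\le K}}\ \bigoplus_{\left(A',\overline{\varphi'}\right)\in\left[H\times_{\F}L\right]}I_{\overline{\varphi'}\left(A'\right)}^{L}c_{\overline{\varphi'}}\otimes M
\ \cong_{\R}\ \bigoplus_{\substack{L\in\Fc\\ L\le K}}\ \bigoplus_{\left(A',\overline{\varphi'}\right)\in\left[H\times_{\F}L\right]}I_{A'}^{A'}M ,
\]
the last step by Lemma \ref{lem:Equivalence-R-mod-I_A^AM.}. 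Now Proposition \ref{prop:properties-HXK.}\,\eqref{enu:prod-if-F-is-F_S(S).} lets one index $\left[\varphi\left(A\right)\times_{\FHH[K]}J\right]$ by the $x\in\left[J\backslash K/\varphi\left(A\right)\right]$ with $J^{x}\cap\varphi\left(A\right)\in\Fc$, via $\left(C,\overline{\theta}\right)=\bigl(J^{x}\cap\varphi\left(A\right),\overline{\iota c_{x}}\bigr)$, while Proposition \ref{prop:properties-HXK.}\,\eqref{enu:prod-pullback-right.} says that for $J\in\Fc$ with $J\le K$ the map $\bigl(\left(A,\overline{\varphi}\right),x\bigr)\mapsto\bigl(\varphi^{-1}\left(J^{x}\cap\varphi\left(A\right)\right),\overline{\iota c_{x}\varphi}\bigr)=:\left(A',\overline{\varphi'}\right)$ is a bijection onto $\left[H\times_{\F}J\right]$. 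Under these identifications Corollary \ref{cor:conjugation-in-F-and-in-orbit-F.} gives $I_{\overline{\theta}\left(C\right)}^{J}c_{\overline{\theta\varphi}}=I_{\overline{\varphi'}\left(A'\right)}^{J}c_{\overline{\varphi'}}$ and $I_{\varphi^{-1}\left(C\right)}^{\varphi^{-1}\left(C\right)}M=I_{A'}^{A'}M$; so, summing the domain decomposition over $\left(A,\overline{\varphi}\right)$ and re-indexing through these two bijections reproduces the codomain decomposition term by term, and the map realizing this is exactly $\Gamma$ (which sends $I_{\overline{\theta}\left(C\right)}^{J}c_{\overline{\theta}}\otimes_{\FmuFR{\FHH[\varphi\left(A\right)]}}x$ to $I_{\overline{\theta}\left(C\right)}^{J}c_{\overline{\theta\varphi}}\otimes_{\FmuFR{\FHH}}x$). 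In particular $\Gamma$ is well defined (the only $\R$-linear relations among the $I_{\overline{\theta}\left(C\right)}^{J}c_{\overline{\theta}}\otimes x$ are those supplied by Proposition \ref{prop:Decomposition-centric-Mackey-functor.}) and is an isomorphism of $\R$-modules.

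\emph{The module structure --- the main obstacle.} It remains to show $\Gamma\left(u\cdot v\right)=u\cdot\Gamma\left(v\right)$ for $u$ one of the generators $I$, $R$, $c$ of $\FmuFR{\FHH[K]}$ and $v=I_{\overline{\theta}\left(C\right)}^{J}c_{\overline{\theta}}\otimes x$ a generator of the domain. The idea is to expand $u\,I_{\overline{\theta}\left(C\right)}^{J}c_{\overline{\theta}}$ inside $\FmuFR{\FHH[K]}$ using the Mackey relations (Lemma \ref{lem:Many-properties-definition.}) and Lemma \ref{lem:proving-muFR-morphism.}\,\eqref{enu:Composing.},\eqref{enu:using-mackey.},\eqref{enu:Conjugating.} --- which rewrite the products of $I$, $R$, $c$ with an element $I_{\overline{\theta}\left(C\right)}^{J}c_{\overline{\theta}}$, modulo the non-$\F$-centric ideal, as sums of elements of the same shape followed by small elements of $\FmuFR{\FHH[\varphi\left(A\right)]}$ --- then apply $\Gamma$; and on the other side to compute $u\cdot\Gamma\left(v\right)=\bigl(u\,I_{\overline{\theta}\left(C\right)}^{J}c_{\overline{\theta\varphi}}\bigr)\otimes_{\FmuFR{\FHH}}x$, using $I_{\overline{\theta}\left(C\right)}^{J}c_{\overline{\theta\varphi}}=I_{\overline{\theta}\left(C\right)}^{J}c_{\overline{\theta}}c_{\overline{\varphi}}$ (Corollary \ref{cor:conjugation-in-F-and-in-orbit-F.}) and converting the $\FmuFR{\FHH[\varphi\left(A\right)]}$-action that appeared on the first side into the $\FmuFR{\FHH}$-action through the algebra isomorphism defining $\lui{\varphi}{\cdot}$ (Definition \ref{def:Restriction-induction-and-conjugation-functors.}); the two expressions then coincide. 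This final reconciliation --- carrying the chosen representative $\varphi$ correctly through every conjugation map and matching the two module structures --- is the one genuinely delicate point, and Proposition \ref{prop:properties-HXK.}, Corollary \ref{cor:conjugation-in-F-and-in-orbit-F.} and Lemma \ref{lem:proving-muFR-morphism.} are precisely the tools developed beforehand to carry it out.
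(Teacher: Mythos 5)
Your proposal is correct and follows essentially the same route as the paper: decompose both sides as $\R$-modules via Proposition \ref{prop:Decomposition-centric-Mackey-functor.} and Lemma \ref{lem:Equivalence-R-mod-I_A^AM.}, match the indexing sets through Proposition \ref{prop:properties-HXK.} \eqref{enu:prod-if-F-is-F_S(S).} and \eqref{enu:prod-pullback-right.}, and then check $\FmuFR{\FHH[K]}$-linearity with Lemma \ref{lem:proving-muFR-morphism.}. The only difference is that you leave the final linearity computation as a sketch, but you name exactly the items of Lemma \ref{lem:proving-muFR-morphism.} the paper uses (carrying out the restriction case and declaring induction and conjugation analogous), so the argument is complete in substance.
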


\begin{proof}
The fact that each $M_{\left(A,\overline{\varphi}\right)}$ is $\G$-centric
follows from their definition and Proposition \ref{prop:centric-Induction-restriction.}.

From Propositions \ref{prop:properties-HXK.} and \ref{prop:Decomposition-centric-Mackey-functor.}
we have the following isomorphism of $\R$-modules
\begin{align*}
M\uparrow_{\FHH}^{\F}\downarrow_{\FHH[K]}^{\F} & \cong_{\R}\hspace{-10bp}\bigoplus_{J\in\FHH[K]\cap\Fc}\bigoplus_{\left(B,\overline{\psi}\right)\in\left[H\times_{\F}J\right]}I_{\overline{\psi}\left(B\right)}^{J}c_{\overline{\psi}}\otimes_{\FmuFR{\FHH}}M, & \text{Proposition \ref{prop:Decomposition-centric-Mackey-functor.}}\\
 & \cong_{\R}\hspace{-10bp}\hspace{-10bp}\bigoplus_{\begin{array}{c}
{\scriptstyle J\in\FHH[K]\cap\Fc}\\
{\scriptstyle \left(A,\overline{\varphi}\right)\in\left[H\times_{\F}K\right]}
\end{array}}\hspace{-10bp}\bigoplus_{\begin{array}{c}
{\scriptstyle x\in\left[J\backslash K/\varphi\left(A\right)\right]}\\
{\scriptstyle J^{x}\cap\varphi\left(A\right)\in\Fc}
\end{array}}\hspace{-10bp}I_{J\cap\lui{x}{\left(\varphi\left(A\right)\right)}}^{J}c_{c_{x}\varphi}\otimes_{\FmuFR{\FHH}}M, & \text{Proposition \ref{prop:properties-HXK.} \eqref{enu:prod-pullback-right.}}\\
 & \cong_{\R}\hspace{-10bp}\hspace{-10bp}\bigoplus_{\begin{array}{c}
{\scriptstyle J\in\FHH[K]\cap\Fc}\\
{\scriptstyle \left(A,\overline{\varphi}\right)\in\left[H\times_{\F}K\right]}
\end{array}}\hspace{-10bp}\bigoplus_{\begin{array}{c}
{\scriptstyle \left(C,\overline{\theta}\right)\in\left[\varphi\left(A\right)\times_{\FHH[K]}J\right]}\\
{\scriptstyle C\in\FHH[K]\cap\Fc}
\end{array}}\hspace{-10bp}\hspace{-10bp}I_{\overline{\theta}\left(C\right)}^{J}c_{\overline{\theta\varphi}}\otimes_{\FmuFR{\FHH}}M. & \text{Proposition \ref{prop:properties-HXK.} \eqref{enu:prod-if-F-is-F_S(S).}}
\end{align*}
Where each $\varphi$ is as in the statement. From Proposition \ref{prop:centric-Induction-restriction.}
we know that $M_{\left(A,\overline{\varphi}\right)}\uparrow_{\FHH[\varphi\left(A\right)]}^{\F}$
is $\F$-centric and, therefore, $I_{J}^{J}M_{\left(A,\overline{\varphi}\right)}\uparrow_{\FHH[\varphi\left(A\right)]}^{\FHH[K]}=0$
for every $J\in\FHH[K]\backslash\left(\FHH[K]\cap\Fc\right)$. The
same argument also tells us that $I_{C}^{C}M_{\left(A,\overline{\varphi}\right)}=0$
for every $C\in\FHH[\varphi\left(A\right)]\backslash\left(\FHH[\varphi\left(A\right)]\cap\Fc\right)$.
We can therefore use Proposition \ref{prop:Decomposition-centric-Mackey-functor.}
in order to conclude that
\begin{align*}
M_{\left(A,\overline{\varphi}\right)}\text{\ensuremath{\uparrow}}_{\FHH[\varphi\left(A\right)]}^{\FHH[K]} & \cong_{\R}\bigoplus_{J\in\FHH[K]\cap\Fc}\bigoplus_{\begin{array}{c}
{\scriptstyle \left(C,\overline{\theta}\right)\in\left[\varphi\left(A\right)\times_{\FHH[K]}J\right]}\\
{\scriptstyle C\in\FHH[K]\cap\Fc}
\end{array}}I_{\overline{\theta}\left(C\right)}^{J}c_{\overline{\theta}}\otimes_{\FmuFR{\FHH[\varphi\left(A\right)]}}M_{\left(A,\overline{\varphi}\right)}.
\end{align*}
By definition of the functor $\lui{\varphi}{\cdot}$ (see Definition
\ref{def:Restriction-induction-and-conjugation-functors.}) we now
have that for every $J\in\FHH[K]\cap\Fc$ and every $\left(C,\overline{\theta}\right)\in\left[\varphi\left(A\right)\times_{\FHH[K]}J\right]$
such that $C\in\FHH[K]\cap\Fc$ there is an equivalence of $\R$-modules
$I_{C}^{C}M_{\left(A,\overline{\varphi}\right)}\cong_{\R}I_{\varphi^{-1}\left(C\right)}^{\varphi^{-1}\left(C\right)}M$
realized by sending every $x\in I_{C}^{C}M_{\left(A,\overline{\varphi}\right)}$
to $x$ seen as an element in $I_{\varphi^{-1}\left(C\right)}^{\varphi^{-1}\left(C\right)}M$.
This leads in turn to an equivalence of $\R$-modules $I_{\overline{\theta}\left(C\right)}^{J}c_{\overline{\theta}}\otimes_{\FmuFR{\FHH[\varphi\left(A\right)]}}M_{\left(A,\overline{\varphi}\right)}\cong_{\R}I_{\overline{\theta}\left(C\right)}^{J}c_{\overline{\theta\varphi}}\otimes_{\FmuFR{\FHH}}M$
realized by sending every $I_{\overline{\theta}\left(C\right)}^{J}c_{\overline{\theta}}\otimes_{\FmuFR{\FHH[\varphi\left(A\right)]}}x\in I_{\overline{\theta}\left(C\right)}^{J}c_{\overline{\theta}}\otimes_{\FmuFR{\FHH[\varphi\left(A\right)]}}M_{\left(A,\overline{\varphi}\right)}$
to $I_{\overline{\theta}\left(C\right)}^{J}c_{\overline{\theta\varphi}}\otimes_{\FmuFR{\FHH}}x\in I_{\overline{\theta}\left(C\right)}^{J}c_{\overline{\theta\varphi}}\otimes_{\FmuFR{\FHH}}M$.
Therefore, viewing each $I_{\overline{\theta}\left(C\right)}^{J}c_{\overline{\theta}}\otimes_{\FmuFR{\FHH[\varphi\left(A\right)]}}M_{\left(A,\overline{\varphi}\right)}$
as an $\R$-submodule of $M_{\left(A,\overline{\varphi}\right)}\text{\ensuremath{\uparrow}}_{\FHH[\varphi\left(A\right)]}^{\FHH[K]}$
and each $I_{\overline{\theta}\left(C\right)}^{J}c_{\overline{\theta\varphi}}\otimes_{\FmuFR{\FHH}}M$
as an $\R$-submodule of $M\uparrow_{\FHH}^{\F}\downarrow_{\FHH[K]}^{\F}$,
we can conclude that the morphism $\Gamma$ of the statement is a
bijective $\R$-module morphism. We are now just left with proving
that $\Gamma$ is a morphism of $\FmuFR{\FHH[K]}$-modules. Take $\left(A,\overline{\varphi}\right),J,\left(C,\overline{\theta}\right)$
and $x$ as in the statement and let $J'\in\FHH[K]\cap\Fc$ such that
$J'\le J$. Then we have that
\begin{align*}
 & R_{J'}^{J}\Gamma\left(I_{\overline{\theta}\left(C\right)}^{J}c_{\overline{\theta}}\otimes_{\FmuFR{\FHH[\varphi\left(A\right)]}}x\right)\\
 & \phantom{spa}=\sum_{\left(B,\overline{\psi}\right)}I_{\overline{\psi}\left(B\right)}^{J}c_{\overline{\psi}}c_{\overline{\left(\gamma_{\left(B,\overline{\psi}\right)}^{J}\right)^{-1}}}R_{\overline{\gamma_{\left(B,\overline{\psi}\right)}^{J}}\left(B\right)}^{C}c_{\varphi}\otimes_{\FmuFR{\FHH}}x,\\
 & \phantom{spa}=\sum_{\left(B,\overline{\psi}\right)}I_{\overline{\psi}\left(B\right)}^{J}c_{\overline{\psi}}c_{\varphi}\otimes_{\FmuFR{\FHH}}c_{\varphi^{-1}}c_{\overline{\left(\gamma_{\left(B,\overline{\psi}\right)}^{J}\right)^{-1}}}R_{\overline{\gamma_{\left(B,\overline{\psi}\right)}^{J}}\left(B\right)}^{C}c_{\varphi}\cdot x,\\
 & \phantom{spa}=\sum_{\left(B,\overline{\psi}\right)}\Gamma\left(I_{\overline{\psi}\left(B\right)}^{J}c_{\overline{\psi}}\otimes_{\FmuFR{\FHH[\varphi\left(A\right)]}}c_{\overline{\left(\gamma_{\left(B,\overline{\psi}\right)}^{J}\right)^{-1}}}R_{\overline{\gamma_{\left(B,\overline{\psi}\right)}^{J}}\left(B\right)}^{C}\cdot x\right),\\
 & \phantom{spa}=\Gamma\left(R_{J'}^{J}I_{\overline{\theta}\left(C\right)}^{J}c_{\overline{\theta}}\otimes_{\FmuFR{\FHH[\varphi\left(A\right)]}}x\right).
\end{align*}
Where the $\left(B,\overline{\psi}\right)$ are iterating over the
elements in $\left[H\times_{\FHH[K]}J\right]$ such that $\left(B^{J},\overline{\psi^{J}}\right)=\left(C,\overline{\theta}\right)$,
we are taking $\varphi$ as in the statement and in the first and
second identities we are using Items \eqref{enu:using-mackey.} and
\eqref{enu:Def-A^theta.} of Lemma \ref{lem:proving-muFR-morphism.}
respectively, in the third identity we are using the definition of
$M_{\left(A,\overline{\varphi}\right)}$ and in the last identity
we are repeating the same operations backwards.

Let now $J'\in\FHH[K]\cap\Fc$ such that $J'\ge J$ and let $\rho\colon J\to\rho\left(J\right)$
be an isomorphism in $\FHH[K]$. The same arguments used above but
now replacing Item \eqref{enu:using-mackey.} of Lemma \ref{lem:proving-muFR-morphism.}
with Items \eqref{enu:Composing.} and \eqref{enu:Conjugating.} respectively
(which remove the sum thus making the operations simpler to carry)
we obtain the identities below
\begin{align*}
I_{J}^{J'}\Gamma\left(I_{\overline{\theta}\left(C\right)}^{J}c_{\overline{\theta}}\otimes_{\FmuFR{\FHH[\varphi\left(A\right)]}}x\right) & =\Gamma\left(I_{J}^{J'}I_{\overline{\theta}\left(C\right)}^{J}c_{\overline{\theta}}\otimes_{\FmuFR{\FHH[\varphi\left(A\right)]}}x\right),\\
c_{\rho}\Gamma\left(I_{\overline{\theta}\left(C\right)}^{J}c_{\overline{\theta}}\otimes_{\FmuFR{\FHH[\varphi\left(A\right)]}}x\right) & =\Gamma\left(c_{\rho}I_{\overline{\theta}\left(C\right)}^{J}c_{\overline{\theta}}\otimes_{\FmuFR{\FHH[\varphi\left(A\right)]}}x\right).
\end{align*}

This proves that $\Gamma$ is indeed an $\FmuFR{\FHH[K]}$-module
morphism thus concluding the proof.
\end{proof}
Using Proposition \ref{prop:Decomposition-centric-Mackey-functor.}
we can now define a morphism $\theta^{H}$ from a centric Mackey functor
$M$ over $\F$ to the centric Mackey functor $M\downarrow_{\FHH}^{\F}\uparrow_{\FHH}^{\F}$
by setting for every $K\in\Fc$ and every $x\in I_{K}^{K}M$
\[
\theta_{M}^{H}\left(x\right):=\sum_{\left(A,\overline{\varphi}\right)\in\left[H\times_{\F}K\right]}I_{\overline{\varphi}\left(A\right)}^{K}c_{\overline{\varphi}}\otimes c_{\overline{\varphi^{-1}}}R_{\overline{\varphi}\left(A\right)}^{K}x.
\]
Since the tensor product is over $\FmuFR{\FHH}$ we know that $\theta^{H}$
does not depend on the choice of $\left[H\times_{\F}K\right]$. Thus
we can conclude that it is well defined and an $\R$-module morphism.
Let $K\in\Fc$, let $x\in I_{K}^{K}M$ and let $\rho\colon K\to\rho\left(K\right)$
be an isomorphism in $\F$. Applying Items \eqref{enu:Def-A^theta.}
and \eqref{enu:Conjugating.} of Lemma \ref{lem:proving-muFR-morphism.}
we have that 
\begin{align*}
c_{\rho}\theta_{M}^{H}\left(x\right) & =\sum_{\left(B,\overline{\psi}\right)\in\left[H\times_{\F}K\right]}I_{\overline{\psi^{\rho}}\left(B\right)}^{K}c_{\overline{\psi^{\rho}}}c_{\gamma_{\left(B,\overline{\psi}\right)}^{\rho}}\otimes c_{\overline{\psi^{-1}}}R_{\overline{\psi}\left(B\right)}^{K}x,\\
 & =\sum_{\left(B,\overline{\psi}\right)\in\left[H\times_{\F}K\right]}I_{\overline{\psi^{\rho}}\left(B\right)}^{K}c_{\overline{\psi^{\rho}}}\otimes c_{\left(\gamma_{\left(B,\overline{\psi}\right)}^{\rho^{-1}}\right)^{-1}}c_{\overline{\psi^{-1}}}R_{\overline{\psi}\left(B\right)}^{K}x,\\
 & =\sum_{\left(A,\overline{\varphi}\right)\in\left[H\times_{\F}\rho\left(K\right)\right]}I_{\overline{\varphi}\left(A\right)}^{\rho\left(K\right)}c_{\overline{\varphi}}\otimes c_{\overline{\varphi^{-1}}}R_{\overline{\varphi}\left(A\right)}^{K}c_{\rho}x=\theta_{M}^{H}\left(c_{\rho}x\right).
\end{align*}
With the same notation as above let $J\in\Fc$ such that $J\ge K$
then we have that
\begin{align*}
\theta_{M}^{H}\left(I_{K}^{J}x\right) & =\sum_{\left(A,\overline{\varphi}\right)\in\left[H\times_{\F}K\right]}I_{\overline{\varphi^{J}}\left(A^{J}\right)}^{J}c_{\overline{\varphi^{J}}}\otimes I_{\overline{\gamma_{\left(A,\overline{\varphi}\right)}^{J}}\left(A\right)}^{A^{J}}c_{\overline{\gamma_{\left(A,\overline{\varphi}\right)}^{J}}}c_{\overline{\varphi^{-1}}}R_{\overline{\varphi}\left(A\right)}^{K}x,\\
 & =\sum_{\left(A,\overline{\varphi}\right)\in\left[H\times_{\F}K\right]}I_{K}^{J}I_{\overline{\varphi}\left(A\right)}^{J}c_{\overline{\varphi}}\otimes c_{\overline{\varphi^{-1}}}R_{\overline{\varphi}\left(A\right)}^{K}x=I_{K}^{J}\theta_{M}^{H}\left(x\right).
\end{align*}
Where, in the first identity, we are using Lemma \ref{lem:proving-muFR-morphism.}
\eqref{enu:using-mackey.} together with the fact that $M$ is $\G$-centric
and, therefore, annihilated by the ideal $\I$ of Lemma \ref{lem:proving-muFR-morphism.}
and, in the second identity, we are using Lemma \ref{lem:proving-muFR-morphism.}
\eqref{enu:Def-A^theta.} to move things from one side of the tensor
product to the other and Lemma \ref{lem:proving-muFR-morphism.} \eqref{enu:Composing.}
to simplify the equation. If $J\in\Fc$ is such that $J\le K$ then
the exact same arguments (but starting with $R_{J}^{K}\theta_{M}^{H}\left(x\right)$
instead of $\theta_{M}^{H}\left(R_{J}^{K}x\right)$) prove that $\theta_{M}^{H}$
also commutes with restriction. We can therefore conclude that $\theta_{M}^{H}$
is a morphism of $\muFR$-modules for every $M\in\MackFcR$. This
allows us to give the following definition with which we conclude
this subsection.
\begin{defn}
\label{def:theta_S-and-theta^S.}Let $\G$ be a fusion system containing
$\F$, let $M\in\MackFHR{\G}{\F}$ and let $H\in\Fc$. From Proposition
\ref{prop:centric-Induction-restriction.} we know that the following
is a $\G$-centric Mackey functor over $\F$
\begin{align*}
M_{H}:= & M\downarrow_{\FHH}^{\F}\uparrow_{\FHH}^{\F}.
\end{align*}

Thus the above discussion allows us to define the Mackey functor morphisms
\begin{align*}
\theta_{H}^{M}: & M_{H}\to M, & \theta_{M}^{H}: & M\to M_{H},
\end{align*}
by setting for every $y\otimes x\in M_{H}$, every $K\in\Fc$ and
every $z\in I_{K}^{K}M$
\begin{align*}
\theta_{H}^{M}\left(y\otimes x\right) & :=y\cdot x, & \theta_{M}^{H}\left(z\right) & :=\sum_{\left(A,\overline{\varphi}\right)\in\left[H\times_{\F}K\right]}I_{\overline{\varphi}\left(A\right)}^{K}c_{\overline{\varphi}}\otimes c_{\overline{\varphi^{-1}}}R_{\overline{\varphi}\left(A\right)}^{K}z.
\end{align*}
If there is no possible confusion regarding $M$ we will write $\theta_{H}:=\theta_{H}^{M}$
and $\theta^{H}:=\theta_{M}^{H}$.
\end{defn}

\subsection{\label{subsec:The-F-centric-Burnside-ring.}The centric Burnside
ring over a fusion system.}

Let $G$ be a finite group. It is known (see \cite[Proposition 9.2]{StructureMackeyFunctors})
that the Burnside ring of $G$ can be embedded in the center of the
Mackey algebra of $G$. In this subsection we will prove that there
exists a similar embedding of the centric Burnside ring of $\F$ into
the center of a certain quotient of $\muFR$ (see Proposition \ref{prop:Action-of-centric-burnside-ring.}).

Let us start by recalling the definition of centric Burnside ring
of a fusion system.
\begin{defn}
\label{def:centric-Burnside-ring.}(\cite[Definition 2.11]{BurnsideRingFusionSystemsDiazLibman})
The \textbf{centric Burnside ring of $\F$ }(denoted by \textbf{$\BFcR[]$})
is the Grothendieck group of the semigroup whose elements are isomorphism
classes of $\aOFc$ and addition is given by taking the isomorphism
class of the coproduct of two representatives. This is doted with
a ring structure by taking multiplication of two isomorphism classes
to be the isomorphism class of the product of two of their representatives
and extending by linearity. Given a commutative ring $\R$ we also
define the \textbf{centric Burnside ring of $\F$ on $\R$} as
\[
\BFcR:=\R\otimes_{\Z}\BFcR[].
\]
\end{defn}

An important distinction between the ring $\BFcR$ and the Burnside
ring of a group is that, in general, the isomorphism class $\overline{S}$
of $S$ is not the identity in $\BFcR$. However, we have the following
result due to Sune Reeh.
\begin{prop}
\label{prop:Inverse-of-S.}If every integer prime other than $p$
is invertible in $\R$ then the isomorphism class $\overline{S}$
of $S$ is invertible in $\BFcR$.
\end{prop}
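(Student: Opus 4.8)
The plan is to work with the "mark homomorphism" for the centric Burnside ring, analogous to the one used for ordinary Burnside rings. Recall that $\BFcR$ is the Grothendieck group of iso-classes in $\aOFc$, with multiplication induced by the product $\times_{\F}$ of Proposition \ref{prop:Product-OFc.}. For each $\F$-centric subgroup $P$ (up to $\F$-isomorphism) there is a ring homomorphism $\phi_P\colon\BFcR\to\R$ sending the class of $\bigsqcup_i H_i$ to $\sum_i\bigl|\{\,\overline{\varphi}\in\Hom_{\OFc}(P,H_i)\ \text{up to the equivalence that }P\ \text{is maximal}\,\}\bigr|$; more concretely $\phi_P$ counts, with appropriate normalisation, the morphisms from $P$ into each summand. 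Assembling these over a set of $\F$-isomorphism class representatives of $\F$-centric subgroups gives an injective ring homomorphism $\Phi:=(\phi_P)_P\colon \BFcR\hookrightarrow \prod_P\R$ after inverting suitable primes; this is the content one expects by transporting \cite[Section 2]{BurnsideRingFusionSystemsDiazLibman} into our coefficient ring $\R$.

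First I would compute $\Phi(\overline{S})$. Since $S$ is the terminal object among $\F$-centric subgroups in the relevant sense, for each $\F$-centric $P$ the value $\phi_P(\overline{S})$ is the number of $\Aut_S(S)$-orbits (equivalently $\F$-conjugacy data) of morphisms $\overline{\varphi}\in\Hom_{\OFc}(P,S)$ entering into the product construction — and by the explicit description of $H\times_{\F}K$ preceding Definition \ref{def:HX_FK}, together with saturation (Definition \ref{def:Saturated-fusion-system.}) applied to bring $P$ to a fully $\F$-normalised representative, one finds $\phi_P(\overline{S})=[\,S:N_\varphi\,]$-type indices that are all powers of $p$; in particular $\phi_P(\overline{S})=1$ when $P=S$ and $\phi_P(\overline{S})$ is a positive power of $p$ (or a product of such local indices, hence a unit in $\R$ by hypothesis) for every other $P$. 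Concretely, each $\phi_P(\overline{S})$ is a nonzero integer all of whose prime divisors equal $p$, so under the assumption that every prime $\neq p$ is invertible in $\R$, each coordinate $\phi_P(\overline{S})$ is a unit in $\R$.

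Consequently $\Phi(\overline{S})$ is a unit in $\prod_P\R$. The final step is to descend this back to $\BFcR$: one checks that the subring $\Phi(\BFcR)$ is closed under taking inverses of its elements that are coordinatewise units, i.e. that the element $y\in\prod_P\R$ with $y_P=\phi_P(\overline S)^{-1}$ actually lies in the image of $\Phi$. This is the standard Burnside-ring argument: express $y$ as a $\R$-combination of the (triangular, with unit diagonal entries after inverting $p'$-primes) basis given by the classes $\overline{P}$ of transitive objects, solving the resulting finite upper-triangular linear system over $\R$; invertibility of the off-$p$ primes guarantees the system is solvable. Then the unique preimage $b\in\BFcR$ satisfies $\Phi(b\cdot\overline S)=\Phi(b)\Phi(\overline S)=1=\Phi(1_{\BFcR})$, and injectivity of $\Phi$ gives $b\cdot\overline S=1_{\BFcR}$.

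The main obstacle is the second step: pinning down that every mark $\phi_P(\overline S)$ is an integer whose only prime divisor is $p$. This requires unwinding the maximal-pair/equivalence construction of $H\times_\F K$ from the text and identifying the relevant counts with indices of $\varphi$-normalisers $N_\varphi$ inside $p$-groups, which forces a careful use of the saturation axioms (choosing fully $\F$-normalised representatives and lifting morphisms along $N_\varphi$). Once that local-index computation is in hand, the injectivity of $\Phi$ and the triangular-system inversion are routine, exactly as in the group case treated in \cite{BurnsideRingFusionSystemsDiazLibman}.
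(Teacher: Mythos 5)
Your overall strategy --- compute the marks of $\overline{S}$ under the mark homomorphism $\Phi=\left(\phi_{P}\right)_{P}$ with $\phi_{P}\left(X\right)=\left|\Hom_{\aOFc}\left(P,X\right)\right|$, check that they are units, and descend --- is indeed the route taken in the source the paper cites for this proposition (\cite[Proposition 4.13]{ReehTransferCharacteristicIdempotentFS}, going back to \cite{BurnsideRingFusionSystemsDiazLibman}); the paper itself gives no proof beyond that citation. However, your key computation is backwards. You claim each $\phi_{P}\left(\overline{S}\right)$ is a positive power of $p$ and conclude that it is a unit ``since every prime other than $p$ is invertible in $\R$''. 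A positive power of $p$ is precisely the kind of integer that is \emph{not} guaranteed to be invertible under the hypothesis: the remark after Definition \ref{def:p-local} stresses that $p$ itself need not be invertible (e.g. $\R=\Z_{\left(p\right)}$). What the argument needs, and what is actually true, is that $\phi_{P}\left(\overline{S}\right)=\left|\Hom_{\OFc}\left(P,S\right)\right|$ is \emph{coprime} to $p$. Grouping morphisms by the $S$-conjugacy class of their image gives $\left|\Hom_{\OFc}\left(P,S\right)\right|=\sum_{j}\left|\Aut_{\F}\left(P\right)\right|/\left|\Aut_{S}\left(Q_{j}\right)\right|$, the sum running over $S$-classes $Q_{j}$ inside the $\F$-class of $P$; by saturation each summand equals $\left|\Aut_{\F}\left(P\right)\right|_{p'}\cdot p^{e_{j}}$ with $e_{j}=0$ exactly when $\Aut_{S}\left(Q_{j}\right)$ is a Sylow $p$-subgroup of $\Aut_{\F}\left(Q_{j}\right)$, so the whole point is a Sylow-type congruence showing the sum is prime to $p$ --- not a $p$-power count. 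For instance, for $\F=\FAB{D_{8}}{S_{4}}$ and $P$ the normal Klein four subgroup one gets $\phi_{P}\left(\overline{S}\right)=6/2=3$, visibly not a power of $2$. As written, the central claim is false and the deduction drawn from it is a non sequitur.

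The descent also has two gaps. First, injectivity of $\Phi$ over an arbitrary $\R$ satisfying the hypothesis can fail: for $\R=\Z/p\Z$ the table of marks is triangular with diagonal entries $\left|\Aut_{\OFc}\left(P\right)\right|$, which may be divisible by $p$, so its determinant can vanish in $\R$. The standard repair is to prove the statement for $\R=\Z_{\left(p\right)}$ only, where $\Phi$ is honestly injective with finite cokernel, and then note that any ring in which all primes other than $p$ are invertible is a $\Z_{\left(p\right)}$-algebra, so that invertibility of $\overline{S}$ passes to $\BFcR=\R\otimes_{\Z_{\left(p\right)}}\BFcR[\Z_{\left(p\right)}]$. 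Second, your upper-triangular-system argument for showing $\Phi\left(\overline{S}\right)^{-1}\in\Phi\left(\BFcR\right)$ requires dividing by the diagonal entries $\left|\Aut_{\OFc}\left(P\right)\right|$, which again may be divisible by $p$; inverting the primes other than $p$ does not make that system solvable. One must instead use the finite-cokernel argument: multiplication by the ghost unit $\Phi\left(\overline{S}\right)$ is an automorphism of the product preserving the image of $\Phi$, hence induces a surjection of the finite cokernel onto itself, hence by the snake lemma restricts to a surjection of the image onto itself, which produces the required preimage of $1$. With these three repairs the proof goes through, but each is a genuinely missing ingredient rather than a routine detail.
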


\begin{proof}
See \cite[Proposition 4.13]{ReehTransferCharacteristicIdempotentFS}.
\end{proof}
This result motivates the following definition.
\begin{defn}
\label{def:p-local}We say that a ring $\R$ is $p$\textbf{-local
}if all integer primes other than $p$ are invertible in $\R$.
\end{defn}

\begin{rem}
The definition of $p$-local ring does not specify if $p$ is invertible
or not. This distinction will not be relevant towards the results
shown in this paper. It is however worth noting that, if $\R$ is
a field of characteristic $0$, then arguments analogous to those
of \cite[Theorem 9.1]{ThevenazWebb1989SimpleMackeyFunctors} can be
used in order to prove that $\muFR$ is a semisimple $\R$-algebra.
The exact condition is in fact for $\R$ to be a field where $\left|\Aut_{\F}\left(H\right)\right|$
is invertible for every $H\le S$.
\end{rem}

Before proceeding let us recall exactly how the Burnside ring of a
finite group $G$ embeds into the center of the Mackey algebra. Let
$G$ be a finite group and let $\R$ be a commutative ring, \cite[Proposition 9.2]{StructureMackeyFunctors}
describes the above mentioned embedding as the map that, for every
$H\le G$, sends the isomorphism class $\overline{G/H}$ of the transitive
$G$-set $G/H$ to
\[
\overline{G/H}\to\sum_{K\le G}\sum_{x\in\left[K\backslash G/H\right]}I_{K\cap\lui{x}{H}}^{K}R_{K\cap\lui{x}{H}}^{K}\in Z\left(\mu_{\R}\left(G\right)\right).
\]
This embedding leads to an action of the Burnside ring of $G$ on
any Mackey functor over $G$. When trying to obtain a similar result
for the case of Mackey functors over fusion systems many difficulties
arise. These can, once again, be traced back to the fact that the
category $\aOF$ does not in general admit products. However, we have
the following results with which we conclude this section.
\begin{lem}
\label{lem:Image-into-center.}Let $\I$ be the two sided ideal of
$\muFR$ defined in Proposition \ref{prop:Decomposition-centric-Mackey-functor.},
define $\muFcR:=\muFR/\I$ and denote by $\pi\colon\muFR\twoheadrightarrow\muFcR$
the natural projection. The $\R$-algebra $\muFcR$ is naturally equipped
with a $\muFR$-module structure given by setting $y\cdot\pi\left(x\right)=\pi\left(y\right)\pi\left(x\right)$
for every $x,y\in\muFR$. Moreover, for every $H\in\F\backslash\Fc$
we have that $I_{H}^{H}\in\I$ and, therefore, $I_{H}^{H}\muFcR=0$.
Thus we can view $\muFcR$ as a centric Mackey functor over $\F$
and, using Definition \ref{def:theta_S-and-theta^S.}, we can define
\[
\Gamma\left(H\right):=\theta_{H}^{\muFcR}\left(\theta_{\muFcR}^{H}\left(1_{\muFcR}\right)\right)\in\muFcR
\]
for every $H\in\Fc$. With this setup we have that:
\begin{enumerate}
\item \label{enu:reinterpret-gamma(H).}$\Gamma\left(H\right)$ is in the
center of the $\R$-algebra $\muFcR$ and
\[
\Gamma\left(H\right)=\sum_{J\in\Fc}\sum_{\left(A,\overline{\varphi}\right)\in\left[J\times H\right]}\pi\left(I_{A}^{J}R_{A}^{J}\right).
\]
\item \label{enu:Gamma(H)=00003DGamma(H').}For every $H'=_{\F}H$ (see
Notation \ref{nota:p,S,F}) we have that $\Gamma\left(H'\right)=\Gamma\left(H\right)$.
\item \label{enu:Product-Gamma-H-Gamma-H'.}For every $K\in\Fc$ we have
that 
\[
\Gamma\left(K\right)\Gamma\left(H\right)=\sum_{\left(A,\overline{\varphi}\right)\in\left[K\times H\right]}\Gamma\left(A\right).
\]
\end{enumerate}
\end{lem}

\begin{proof}
$\phantom{.}$
\begin{enumerate}
\item For every $H\in\Fc$, we have that
\[
\Gamma\left(H\right)=\sum_{K\in\Fc}\sum_{\left(A,\overline{\varphi}\right)\in\left[H\times K\right]}\pi\left(I_{\varphi\left(A\right)}^{K}c_{\varphi}c_{\varphi^{-1}}R_{\varphi\left(A\right)}^{K}\right)=\sum_{K\in\Fc}\sum_{\left(B,\overline{\psi}\right)\in\left[K\times H\right]}\pi\left(I_{B}^{K}R_{B}^{K}\right).
\]
Where, for the first identity, we are using the fact that $1_{\muFcR}=\sum_{K\in\Fc}\pi\left(I_{K}^{K}\right)$
and Corollary \ref{cor:conjugation-in-F-and-in-orbit-F.} in order
to take $\varphi$ to be any representative of $\overline{\varphi}$
and view it as an isomorphism onto its image. For the second identity
we are using Items \eqref{enu:property-I_H^H-is-identity.} and \eqref{enu:property-composition-is-nice.}
of Lemma \ref{lem:Many-properties-definition.} in order to remove
$c_{\varphi}c_{\varphi^{-1}}$ and Proposition \ref{prop:properties-HXK.}
\eqref{enu:HX_FK-and-KX_FH} in order to rewrite the sum. This proves
the second part of Item \eqref{enu:reinterpret-gamma(H).}. For the
first part recall from Definition \ref{def:theta_S-and-theta^S.}
that both $\theta_{\muFcR}^{H}$ and $\theta_{H}^{\muFcR}$ are morphisms
of Mackey functors and, since the projection $\pi$ is an $\R$-algebra
morphism, we also know that $\pi\left(1_{\muFR}\right)=1_{\muFcR}$.
We can therefore conclude that, for every $x\in\muFR$, we have
\[
\pi\left(x\right)\Gamma\left(H\right)=x\cdot\theta_{H}^{\muFcR}\left(\theta_{\muFcR}^{H}\left(\pi\left(1_{\muFR}\right)\right)\right)=\theta_{H}^{\muFcR}\left(\theta_{\muFcR}^{H}\left(\pi\left(x\right)\right)\right).
\]
The fact that $\Gamma\left(H\right)$ is in the center of $\muFcR$
now follows from definition of $\theta_{\muFcR}^{H}$ and $\theta_{H}^{\muFcR}$
via the identities below
\begin{align*}
\Gamma\left(H\right)\pi\left(x\right) & =\left(\sum_{K\in\Fc}\sum_{\left(A,\overline{\varphi}\right)\in\left[H\times K\right]}I_{\varphi\left(A\right)}^{K}c_{\varphi}c_{\varphi^{-1}}R_{\varphi\left(A\right)}^{K}\cdot\pi\left(I_{K}^{K}\right)\right)\pi\left(x\right),\\
 & =\left(\sum_{K\in\Fc}\sum_{\left(A,\overline{\varphi}\right)\in\left[H\times K\right]}I_{\varphi\left(A\right)}^{K}c_{\varphi}c_{\varphi^{-1}}R_{\varphi\left(A\right)}^{K}\cdot\pi\left(I_{K}^{K}x\right)\right),\\
 & =\theta_{H}^{\muFcR}\left(\theta_{\muFcR}^{H}\left(\pi\left(x\right)\right)\right)=\pi\left(x\right)\Gamma\left(H\right).
\end{align*}
\item Let $\psi\colon H\bjarrow H'$ be an isomorphism in $\F$. Item \eqref{enu:Gamma(H)=00003DGamma(H').}
follows from Item \eqref{enu:reinterpret-gamma(H).} and Proposition
\ref{prop:properties-HXK.} \eqref{enu:prod-pullback-right.} via
the identities below
\[
\Gamma\left(H\right)=\sum_{K\in\Fc}\sum_{\left(A,\overline{\varphi}\right)\in\left[K\times H\right]}\pi\left(I_{A}^{K}R_{A}^{K}\right)=\sum_{K\in\Fc}\sum_{\left(A,\overline{\psi\varphi}\right)\in\left[K\times H'\right]}\pi\left(I_{A}^{K}R_{A}^{K}\right)=\Gamma\left(H'\right).
\]
\item Item \eqref{enu:Product-Gamma-H-Gamma-H'.} follows from the identities
below
\begin{align*}
\Gamma\left(K\right)\Gamma\left(H\right) & =\sum_{J\in\Fc}\sum_{\begin{array}{c}
{\scriptstyle \left(A,\overline{\varphi}\right)\in\left[J\times H\right]}\\
{\scriptstyle \left(B,\overline{\psi}\right)\in\left[J\times K\right]}
\end{array}}\pi\left(I_{B}^{J}R_{B}^{J}I_{A}^{J}R_{A}^{J}\right),\\
 & =\sum_{J\in\Fc}\sum_{\begin{array}{c}
{\scriptstyle \left(A,\overline{\varphi}\right)\in\left[J\times H\right]}\\
{\scriptstyle \left(B,\overline{\psi}\right)\in\left[J\times K\right]}
\end{array}}\sum_{\begin{array}{c}
{\scriptstyle x\in\left[B\backslash J/A\right]}\\
{\scriptstyle B^{x}\cap A\in\Fc}
\end{array}}\pi\left(I_{B^{x}\cap A}^{J}R_{B^{x}\cap A}^{J}\right),\\
 & =\sum_{\left(C,\overline{\theta}\right)\in\left[H\times K\right]}\sum_{J\in\Fc}\sum_{\left(D,\overline{\gamma}\right)\in\left[J\times C\right]}\pi\left(I_{D}^{J}R_{D}^{J}\right)=\sum_{\left(C,\overline{\theta}\right)\in\left[H\times K\right]}\Gamma\left(C\right).
\end{align*}
Where, for the first identity, we are using the fact that $\pi$ is
a morphism of $\R$-algebras and Lemma \ref{lem:Many-properties-definition.}
\eqref{enu:property-all-0.}, for the second identity, we are using
Lemma \ref{lem:Many-properties-definition.} \eqref{enu:property-Mackey-formula.}
and definition of $\I$ and, for the third identity, we are using
Proposition \ref{prop:properties-HXK.} \eqref{enu:triple-prod-into-pullback-prod.}.
\end{enumerate}
\end{proof}
\begin{prop}
\label{prop:Action-of-centric-burnside-ring.}Let $p$ be a prime,
let $S$ be a finite $p$-group, let $\F$ be a saturated fusion system
over $S$ (see Notation \ref{nota:p,S,F}), let $\R$ be a commutative
ring with unit (see Notation \ref{nota:Induction-and-restriction-functors.})
and let $\I,\muFcR$ and $\Gamma$ be as in Lemma \ref{lem:Image-into-center.}.
For every $X\in\aOFc$ (see Definition \ref{def:Additive-extension})
denote by $\overline{X}\in\BFcR$ (see Definition \ref{def:centric-Burnside-ring.})
its isomorphism class and define the (non necessarily unit preserving)
$\R$-algebra morphism $\overline{\Gamma}:\BFcR\to\muFcR$ by setting
$\overline{\Gamma}\left(\overline{H}\right):=\Gamma\left(H\right)$
for every $H\in\Fc$ and extending by $\R$-linearity. If $\BFcR$
contains a non-zero divisor then $\overline{\Gamma}$ is injective
and, if $\R$ is $p$-local (see Definition \ref{def:p-local}), then
$\BFcR$ contains a unit (see Proposition \ref{prop:Inverse-of-S.})
and $\overline{\Gamma}\left(1_{\BFcR}\right)=1_{\muFcR}$. Moreover,
if $\R$ is $p$-local, then, for every fusion system $\G$ containing
$\F$ and every $M\in\MackFHR{\G}{\F}\subseteq\MackFcR$ (see Definition
\ref{def:F-centric-Mackey-functor.}), the ring $\BFcR$ acts on $M$
by using the morphisms of Definition \ref{def:theta_S-and-theta^S.}
to define for every $H\in\Fc$
\[
\overline{H}\cdot=\theta_{H}^{M}\theta_{M}^{H}\in\End\left(M\right).
\]
\end{prop}

\begin{proof}
From Items \eqref{enu:Gamma(H)=00003DGamma(H').} and \eqref{enu:Product-Gamma-H-Gamma-H'.}
of Lemma \ref{lem:Image-into-center.} we know that $\overline{\Gamma}$
is a well defined (non necessarily unit preserving) $\R$-algebra
morphism.

Viewing $\FmuFR{\FHH[S]}$ as a subset of $\muFR$ (see Corollary
\ref{cor:Mackey-algebra-inclusion.}) we can define $\varUpsilon\colon\pi\left(I_{S}^{S}\FmuFR{\FHH[S]}I_{S}^{S}\right)\to\End\left(\BFcR\right)$
by setting for every $\pi\left(I_{K}^{S}R_{K}^{S}\right)\in\pi\left(I_{S}^{S}\FmuFR{\FHH[S]}I_{S}^{S}\right)$
and every $H\in\Fc$
\[
\varUpsilon\left(\pi\left(I_{K}^{S}R_{K}^{S}\right)\right)\left(\overline{H}\right):=\sum_{\begin{array}{c}
{\scriptstyle x\in\left[K\backslash S/H\right]}\\
{\scriptstyle K^{x}\cap H\in\Fc}
\end{array}}\overline{K^{x}\cap H},
\]
From Items \eqref{enu:property-I_H^H-is-identity.} and \eqref{enu:Property-conjugation-commutes.}
of Lemma \ref{lem:Many-properties-definition.} and Proposition \ref{prop:Mackey-algebra-basis.}
we know that this is sufficient to define $\varUpsilon$ via $\R$-linearity.
From Lemma \ref{lem:Image-into-center.} \eqref{enu:reinterpret-gamma(H).}
and definition of $\overline{\Gamma}$ we also have that $\overline{\Gamma}\left(\BFcR\right)\subseteq\pi\left(\FmuFR{\FHH[S]}\right)$.
Therefore we can define $\varUpsilon'\colon\overline{\Gamma}\left(\BFcR\right)\to\End\left(\BFcR\right)$
by setting $\varUpsilon'\left(x\right)=\varUpsilon\left(\pi\left(I_{S}^{S}\right)x\pi\left(I_{S}^{S}\right)\right)$
for every $x\in\BFcR$. With this setup we can conclude from Proposition
\ref{prop:properties-HXK.} \eqref{enu:prod-pullback-right.} and
Lemma \ref{lem:Image-into-center.} \eqref{enu:reinterpret-gamma(H).}
that $\varUpsilon'\left(\Gamma\left(\overline{H}\right)\right)\left(\overline{K}\right)=\overline{H\times K}$
for every $H,K\in\Fc$. Assume now that $\BFcR$ admits a non zero
divisor $\overline{\Omega}$. Then, for every $\overline{\Psi},\overline{\Phi}\in\BFcR$,
we have that
\[
\varUpsilon'\left(\Gamma\left(\overline{\Psi}\right)\right)\left(\overline{\Omega}\right)=\varUpsilon'\left(\Gamma\left(\overline{\Phi}\right)\right)\left(\overline{\Omega}\right)\Rightarrow\overline{\Psi\times\Omega}=\overline{\Phi\times\Omega}\Rightarrow\overline{\Psi}=\overline{\Phi}.
\]

This proves that the composition $\varUpsilon'\Gamma$ is injective
and, in particular, that $\Gamma$ is injective.

Assume now that $\R$ is $p$-local. By Proposition \ref{prop:Inverse-of-S.}
we know that $\BFcR$ admits a unit. Let us denote by $1_{\BFcR}=\sum_{K\in\Fc}\lambda_{K}\overline{K}$
this unit. From Lemma \ref{lem:Image-into-center.} \eqref{enu:reinterpret-gamma(H).}
and definition of product in $\BFcR$ we have that
\[
\overline{\Gamma}\left(1_{\BFcR}\right)=\sum_{H,K\in\Fc}\sum_{A\in\left[H\times K\right]}\lambda_{K}\pi\left(I_{A}^{H}R_{A}^{H}\right)=\sum_{H\in\Fc}\pi\left(I_{H}^{H}\right)=1_{\muFcR}.
\]
Finally, for every $M\in\MackFHR{\G}{\F}$, we have that $M\in\MackFcR$.
Therefore, by definition of $\I$, we have that $\I M=0$. In particular
$M$ acquires a $\muFcR$-module structure by setting $\pi\left(y\right)\cdot x=y\cdot x$
for every $y\in\muFR$ and every $x\in M$. This leads us to the equivalence
of $\R$-algebras $\End\left(M\right):=\End_{\muFR}\left(M\right)\cong\End_{\muFcR}\left(M\right)$.
Notice now that there exists a natural map $\Theta\colon Z\left(\muFcR\right)\to\End_{\muFcR}\left(M\right)$
defined by setting $\Theta\left(y\right)\left(x\right)=y\cdot x$
for every $y\in Z\left(\muFcR\right)$ and every $x\in M$. With this
notation we can define $\overline{\Omega}\cdot:=\Theta\left(\overline{\Gamma}\left(\overline{\Omega}\right)\right)\in\End\left(M\right)$
for every $\overline{\Omega}\in\BFcR$. Then, for every $H\in\Fc$
and every $x\in M$, we will have that
\begin{align*}
\overline{H}\cdot x & =\theta_{H}^{\muFcR}\left(\theta_{\muFcR}^{H}\left(1_{\muFcR}\right)\right)\cdot x,\\
 & =\sum_{K\in\Fc}\sum_{\left(A,\overline{\varphi}\right)\in\left[H\times K\right]}\pi\left(I_{\overline{\varphi}\left(A\right)}^{K}c_{\overline{\varphi}}c_{\overline{\varphi^{-1}}}R_{\overline{\varphi}\left(A\right)}^{K}\right)\cdot x,\\
 & =\sum_{K\in\Fc}\sum_{\left(A,\overline{\varphi}\right)\in\left[H\times K\right]}I_{\overline{\varphi}\left(A\right)}^{K}c_{\overline{\varphi}}c_{\overline{\varphi^{-1}}}R_{\overline{\varphi}\left(A\right)}^{K}\cdot x=\theta_{H}^{M}\left(\theta_{M}^{H}\left(x\right)\right).
\end{align*}
Where, in the last identity, we are using the fact that $M\in\MackFHR{\G}{\F}\subseteq\MackFcR$
and, in particular $\sum_{K\in\Fc}I_{K}^{K}\cdot x=x$. This concludes
the proof.
\end{proof}

\section{\label{sec:Relative-projectivity-and-the-Higman's-criterion.}Relative
projectivity and Higman's criterion.}

Let $G$ be a finite group and let $M$ be a Mackey functor over $G$
on $\R$. It is known (see \cite[Section 3]{GuideToMackeyFunctorsWebb})
that there exists a minimal family $\mathcal{X}_{M}$ of subgroups
of $G$ closed under $G$-subconjugacy such that $M$ is a direct
summand of $\bigoplus_{H\in\mathcal{X}_{M}}M\downarrow_{H}^{G}\uparrow_{H}^{G}$.
If $\R$ is a complete local $PID$ then the Krull-Schmidt-Azumaya
theorem (see \cite[Theorem 6.12 (ii)]{MethodsOfRepresentationTheoryCurtisReiner})
allows us to use this fact in order to obtain a decomposition of $M$
of the form $M\cong\bigoplus_{H\in\mathcal{X}_{M}}N^{H}$ where each
$N^{H}$ is a (possibly $0$) direct summand of $M\downarrow_{H}^{G}\uparrow_{H}^{G}$.
From this decomposition and minimality of $\mathcal{X}_{M}$ it can
be deduced that, if $M$ is indecomposable, then $\mathcal{X}_{M}$
is generated by a single element called vertex. This fact is essential
in order to describe the Green correspondence and, during this section,
we will prove that a similar process can be applied to centric Mackey
functors over fusion systems. Moreover we will prove that Higman's
criterion (see \cite[Theorem 2.2]{NagaoHirosiRepresentationsOfFiniteGroups})
can be translated to the context of centric Mackey functors (see Theorem
\ref{thm:Higman's-criterion.}). This will provide us with a link
between the vertex of an indecomposable $M\in\MackFcR$ and certain
ideals of $\End\left(M\right)$. Such link will turn out to be essential
towards proving the Green correspondence for centric Mackey functors.

\subsection{\label{subsec:The-defect-set.}The defect set.}

During this subsection we will extend the notion of relative projectivity
(see \cite[Section 3]{GuideToMackeyFunctorsWebb}) to centric Mackey
functors over a fusion system (see Definition \ref{def:Relative-projectivity.}).
We will also prove that, if $\R$ is $p$-local, the notions of defect
set and vertex (see \cite[ Section 3]{GuideToMackeyFunctorsWebb})
can be translated to the context of centric Mackey functors over fusion
systems (see Definition \ref{def:vertex.}).
\begin{defn}
\label{def:Relative-projectivity.}Let $\G$ be a fusion system containing
$\F$, let $M\in\MackFHR{\G}{\F}$ and let $\mathcal{X}$ be a family
of $\F$-centric subgroups of $S$. With notation as in Definition
\ref{def:theta_S-and-theta^S.} we define
\begin{align*}
M_{\mathcal{X}} & :=\bigoplus_{H\in\mathcal{X}}M_{H}, & \theta_{\mathcal{X}}^{M} & :=\sum_{H\in\mathcal{X}}\theta_{H}^{M}:M_{\mathcal{X}}\to M, & \theta_{M}^{\mathcal{X}} & :=\sum_{H\in\mathcal{X}}\theta_{M}^{H}:M\to M_{\mathcal{X}}.
\end{align*}
If there is no possible confusion regarding $M$ we will write $\theta_{\mathcal{X}}:=\theta_{\mathcal{X}}^{M}$
and $\theta^{\mathcal{X}}:=\theta_{M}^{\mathcal{X}}$. We say that
$M$ is \textbf{projective relative to $\mathcal{X}$ }(or \textbf{$\mathcal{X}$-projective})
if $\theta_{\mathcal{X}}$ is split surjective. If $\mathcal{X}=\left\{ H\right\} $
for some $H\in\Fc$ we simply say that $M$ is \textbf{projective
relative to $H$ }(or $H$\textbf{-projective}).
\end{defn}

There is a key difference between the above definition of relative
projectivity and the one given in the case of Mackey functors over
finite groups (see \cite[Section 3]{GuideToMackeyFunctorsWebb}).
Let $G$ be a finite group and let $M$ be a Mackey functor over $G$.
In this case we have that $M_{G}:=M\downarrow_{G}^{G}\uparrow_{G}^{G}\cong M$
and that $\theta_{G}=\Id_{M}$. In particular $\theta_{G}$ splits
and, therefore, any Mackey functor over $G$ is projective relative
to $G$. This result is however lost in the case of Mackey functors
over fusion systems since, given $N\in\MackFcR$, we do not, in general,
have $N_{S}\cong N$ (unless $\F=\FHH[S]$). We do however have the
following.
\begin{lem}
\label{lem:projective-relative-to-S.}Let $\G$ be a fusion system
containing $\F$, let $\R$ be $p$-local and let $M\in\MackFHR{\G}{\F}$.
Then $M$ is $S$-projective.
\end{lem}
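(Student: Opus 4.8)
The plan is to show that the structure morphism $\theta_S^M \colon M_S \to M$ is split surjective by exhibiting an explicit $\muFR$-module section, built essentially from $\theta_M^S$ up to multiplication by a unit in $\R$. The key point is the following computation, which I would carry out first: for any $M \in \MackFHR{\G}{\F}$ the composite $\theta_S^M \circ \theta_M^S \in \End(M)$ equals, by Proposition \ref{prop:Action-of-centric-burnside-ring.} (applied with $H = S$), the action of the isomorphism class $\overline{S} \in \BFcR$ on $M$. So the entire argument reduces to understanding how $\overline{S}$ acts.

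Next I would use the $p$-locality hypothesis. By Proposition \ref{prop:Inverse-of-S.}, since $\R$ is $p$-local the class $\overline{S}$ is invertible in $\BFcR$; let $\overline{S}^{-1} \in \BFcR$ denote its inverse. Because $\BFcR$ acts on $M$ by $\R$-algebra endomorphisms commuting with the $\muFR$-action (Proposition \ref{prop:Action-of-centric-burnside-ring.}), the endomorphism $\overline{S}\cdot \in \End(M)$ is invertible with inverse $\overline{S}^{-1}\cdot$. Write $u := \overline{S}^{-1}\cdot \in \End(M)$, which is a $\muFR$-module automorphism of $M$. Then
\[
\theta_S^M \circ \bigl(\theta_M^S \circ u\bigr) = \bigl(\theta_S^M \circ \theta_M^S\bigr) \circ u = (\overline{S}\cdot) \circ u = \Id_M,
\]
so $\theta_M^S \circ u \colon M \to M_S$ is a section of $\theta_S^M$ in $\MackFcR$. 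Here I should note that $\theta_M^S$ and $\theta_S^M$ are genuine morphisms of $\muFR$-modules (this was established in the discussion preceding Definition \ref{def:theta_S-and-theta^S.} and recorded there), and that $u$ is too, so the composite is a $\muFR$-module morphism and the splitting is a splitting in the relevant category. This exhibits $\theta_S^M = \theta_{\{S\}}^M$ as split surjective, which is precisely the assertion that $M$ is $S$-projective in the sense of Definition \ref{def:Relative-projectivity.}.

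I do not expect a serious obstacle here; the result is essentially a formal consequence of the centric Burnside ring action together with invertibility of $\overline{S}$. The only point requiring a little care is making sure the action $\overline{S}\cdot$ is indeed $\theta_S^M\theta_M^S$ and not merely conjugate to it — but this is exactly the formula $\overline{H}\cdot = \theta_H^M \theta_M^H$ given in the statement of Proposition \ref{prop:Action-of-centric-burnside-ring.}, specialized to $H = S$, so no extra work is needed. A secondary subtlety is that $\overline{S}$ need not be the identity of $\BFcR$ (this was emphasized right before Proposition \ref{prop:Inverse-of-S.}), which is exactly why the correction factor $u$ is necessary; without $p$-locality the argument genuinely breaks down, consistent with the hypothesis.
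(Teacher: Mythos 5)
Your proposal is correct and follows essentially the same route as the paper: both reduce the claim to the identity $\theta_{S}^{M}\theta_{M}^{S}=\overline{S}\cdot$ from Proposition \ref{prop:Action-of-centric-burnside-ring.} and then use the invertibility of $\overline{S}$ in $\BFcR$ (Proposition \ref{prop:Inverse-of-S.}) to produce the section $\theta_{M}^{S}\circ(\overline{S}^{-1}\cdot)$. Your extra remark that the section is a genuine $\muFR$-module morphism is a correct (if implicit in the paper) point of care.
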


\begin{proof}
Since $\F\subseteq\G$ then all $\G$-centric subgroups of $S$ are
also $\F$-centric. In particular we have that $M\in\MackFcR$. Since
$\R$ is $p$-local then, from Proposition \ref{prop:Inverse-of-S.},
we know that the centric Burnside ring $\BFcR$ contains an inverse
of $\overline{S}$. Then, with notation as in Proposition \ref{prop:Action-of-centric-burnside-ring.}
we have that
\[
\theta_{S}\theta^{S}\overline{S}^{-1}\cdot=\left(\overline{S}\cdot\right)\left(\overline{S}^{-1}\cdot\right)=1_{\BFcR}\cdot=\Id_{M}.
\]
This proves that $\theta_{S}$ is split surjective or, equivalently,
that $M$ is $S$-projective thus concluding the proof.
\end{proof}
This last result tells us that, whenever $\R$ is $p$-local, any
centric Mackey functor is projective relative to some family of $\F$-centric
subgroups of $S$ (namely $\left\{ S\right\} $). We would now like
for this family to be unique under certain minimality conditions and
use this uniqueness to define the defect set. In the case of Mackey
functors over finite groups this uniqueness follows from \cite[Lemma 3.2 and Proposition 3.3]{GuideToMackeyFunctorsWebb}.
In order to translate these results to the context of centric Mackey
functors over fusion systems we first need the following.
\begin{lem}
\label{lem:ifX->Y-and-X-proj.-then-Y-proj.}Let $M\in\MackFcR$, let
$\mathcal{X}$ and $\mathcal{Y}$ be families of objects in $\Fc$,
let $\sigma\colon\mathcal{X}\to\mathcal{Y}$ be a map between sets
and let $\varPhi=\left\{ \overline{\varphi_{H}}:H\to\sigma\left(H\right)\right\} _{H\in\mathcal{X}}$
be a family of morphisms in $\OFc$. There exists a (non necessarily
unique) morphism of $\muFR$-modules $\theta_{\varPhi}:M_{\mathcal{X}}\to M_{\mathcal{Y}}$
such that $\theta_{\mathcal{X}}=\theta_{\mathcal{Y}}\theta_{\varPhi}$.
In particular, if $M$ is $\mathcal{X}$-projective, then it is also
$\mathcal{Y}$-projective.
\end{lem}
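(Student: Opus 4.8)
The plan is to build the map $\theta_{\varPhi}$ componentwise, using the morphisms $\overline{\varphi_H}\colon H\to\sigma(H)$ in $\OFc$ to produce, for each $H\in\mathcal{X}$, a $\muFR$-module morphism $M_H\to M_{\sigma(H)}$, and then to take the direct sum over $H\in\mathcal{X}$. Recall that $M_H=M\downarrow_{\FHH}^{\F}\uparrow_{\FHH}^{\F}$, so a morphism $M_H\to M_{\sigma(H)}$ amounts to a morphism of induced modules. The key observation is that an inclusion-type morphism $\overline{\iota}\colon H\to\sigma(H)$ (when $H\le\sigma(H)$) gives a ring inclusion $\FmuFR{\FHH}\subseteq\FmuFR{\FHH[\sigma(H)]}$, hence a natural transformation between the corresponding induction functors; and a conjugation-type morphism gives an isomorphism. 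Since every morphism in $\OFc$ factors as a conjugation followed by an inclusion, I would first treat these two cases and then compose. Concretely, for $\overline{\varphi_H}$ with chosen representative $\varphi_H$ viewed as an isomorphism onto its image $\varphi_H(H)$, define the component $\theta_{\varphi_H}\colon M_H\to M_{\sigma(H)}$ on generators (using Proposition~\ref{prop:Decomposition-centric-Mackey-functor.}) by
\[
I_{\overline{\psi}\left(C\right)}^{K}c_{\overline{\psi}}\otimes x\;\longmapsto\;I_{\overline{\psi}\left(C\right)}^{K}c_{\overline{\psi\varphi_{H}^{-1}}}\otimes x,
\]
extending $\R$-linearly, where on the right $x$ is regarded as an element of $M\downarrow_{\FHH[\varphi_H(H)]}^{\F}=M\downarrow_{\FHH[\sigma(H)]}^{\F}$ via the conjugation functor $\lui{\varphi_H}{\cdot}$ and then restricted along $\varphi_H(H)\le\sigma(H)$. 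This is well-defined because the tensor product is over the appropriate Mackey algebra and Corollary~\ref{cor:conjugation-in-F-and-in-orbit-F.} guarantees independence of the representative of $\overline{\varphi_H}$.

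Next I would verify that each $\theta_{\varphi_H}$ is a morphism of $\muFR$-modules. This is the technical heart of the argument, and it runs exactly parallel to the verification carried out in the proof of Lemma~\ref{lem:Mackey-formula-induction-restriction.} that $\Gamma$ is an $\FmuFR{\FHH[K]}$-module morphism: one checks compatibility with $I_J^{J'}$, with $R_{J'}^{J}$, and with conjugation $c_\rho$ separately, using Lemma~\ref{lem:proving-muFR-morphism.} parts \eqref{enu:Composing.}, \eqref{enu:using-mackey.} and \eqref{enu:Conjugating.} together with Proposition~\ref{prop:properties-HXK.}. Setting $\theta_{\varPhi}:=\bigoplus_{H\in\mathcal{X}}\theta_{\varphi_H}\colon M_{\mathcal{X}}\to M_{\mathcal{Y}}$ (where a summand $M_H$ lands in the $M_{\sigma(H)}$-component of $M_{\mathcal{Y}}$, and if several $H$ map to the same $\sigma(H)$ we simply add), I then need the identity $\theta_{\mathcal{X}}=\theta_{\mathcal{Y}}\theta_{\varPhi}$. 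On the $H$-summand this reads $\theta_H^M=\theta_{\sigma(H)}^M\circ\theta_{\varphi_H}$, which on a generator $I_{\overline{\psi}(C)}^{K}c_{\overline{\psi}}\otimes x$ of $M_H$ says
\[
I_{\overline{\psi}\left(C\right)}^{K}c_{\overline{\psi}}\cdot x=I_{\overline{\psi}\left(C\right)}^{K}c_{\overline{\psi\varphi_{H}^{-1}}}\cdot x';
\]
here $x'$ denotes $x$ transported into $M\downarrow_{\FHH[\sigma(H)]}^{\F}$ as above, and the identity holds because multiplying by $c_{\overline{\varphi_H}}$ (which is how the transport is defined) and then by $c_{\overline{\varphi_H^{-1}}}$ is the identity, by Lemma~\ref{lem:Many-properties-definition.}~\eqref{enu:property-composition-is-nice.}. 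The non-uniqueness in the statement is automatic since the choice of representatives $\varphi_H$ (and, when several $H$ collide under $\sigma$, the way one splits across summands) is not canonical.

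Finally, for the ``in particular'' clause: if $M$ is $\mathcal{X}$-projective, there is $s\colon M\to M_{\mathcal{X}}$ with $\theta_{\mathcal{X}}s=\Id_M$; then $\theta_{\varPhi}s\colon M\to M_{\mathcal{Y}}$ satisfies $\theta_{\mathcal{Y}}(\theta_{\varPhi}s)=(\theta_{\mathcal{Y}}\theta_{\varPhi})s=\theta_{\mathcal{X}}s=\Id_M$, so $\theta_{\mathcal{Y}}$ is split surjective and $M$ is $\mathcal{Y}$-projective. The main obstacle I anticipate is the bookkeeping in showing $\theta_{\varphi_H}$ is $\muFR$-linear: one must be careful about which Mackey algebra the tensor products are taken over and about viewing $\varphi_H$ as an isomorphism between the correct restrictions, but since this is a direct adaptation of the argument already given for Lemma~\ref{lem:Mackey-formula-induction-restriction.} it should go through without genuinely new ideas.
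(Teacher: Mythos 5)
Your proposal is correct and follows essentially the same route as the paper: factor each $\overline{\varphi_H}$ into an isomorphism onto its image followed by an inclusion, twist the tensor product by $c_{\varphi_H}$ to obtain a component map $M_H\to M_{\sigma(H)}$, sum over $H\in\mathcal{X}$, and split $\theta_{\mathcal{Y}}$ via $\theta_{\varPhi}s$. The only difference is presentational: the paper defines the component map on arbitrary elements by $y\otimes x\mapsto yc_{\varphi_H^{-1}}\otimes c_{\varphi_H}x$, which makes $\muFR$-linearity immediate since the map only acts on the right end of $y$, so the longer Lemma~\ref{lem:Mackey-formula-induction-restriction.}-style verification you anticipate is not actually needed.
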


\begin{proof}
Because of the direct sum decomposition of $M_{\mathcal{X}}$ and
$M_{\mathcal{Y}}$ given in Definition \ref{def:Relative-projectivity.}
it suffices to prove the claim in the case where $\mathcal{X}:=\left\{ H\right\} $,
$\mathcal{Y}:=\left\{ K\right\} $ and $\varPhi:=\left\{ \overline{\varphi}:H\to K\right\} $
for some $H,K\in\Fc$ and some $\overline{\varphi}\in\Hom_{\OFc}\left(H,K\right)$.

Fix a representative $\varphi$ of $\overline{\varphi}$ and view
it as an isomorphism onto its image. Then, for every $h\in H$ we
have that $\varphi c_{h}=c_{\varphi\left(H\right)}\varphi$ as isomorphisms
from $H$ to $\varphi\left(H\right)$. With this in mind Items \eqref{enu:property-composition-is-nice.}
and \eqref{enu:Property-conjugation-commutes.} of Lemma \ref{lem:Many-properties-definition.}
tell us that, for every $I_{\lui{h}{C}}^{B}c_{c_{h}}R_{C}^{A}\in\FHH$,
we have 
\[
c_{\varphi,B}I_{\lui{h}{C}}^{B}c_{c_{h}}R_{C}^{A}=I_{\lui{\varphi\left(h\right)}{\left(\varphi\left(C\right)\right)}}^{\varphi\left(B\right)}c_{c_{\varphi\left(h\right)}}R_{\varphi\left(C\right)}^{\varphi\left(A\right)}c_{\varphi,A}\in\FHH[\varphi\left(H\right)]c_{\varphi,A}.
\]
Where we are viewing $\varphi$ as an isomorphism between the appropriate
restrictions and we are viewing $\FHH[\varphi\left(H\right)]c_{\varphi,A}$
as a subset of $\muFR$. Because of Proposition \ref{prop:Mackey-algebra-basis.}
this allows us to define the $\muFR$-module morphism $\theta_{\varphi}\colon M_{H}\bjarrow M_{\varphi\left(H\right)}$
that, for every $y\in\muFR$, every $J\in\FHH$ and every $x\in I_{J}^{J}M\downarrow_{\FHH}^{\F}$,
sends $y\otimes_{\FmuFR{\FHH}}x$ to $yc_{\varphi^{-1},\varphi\left(J\right)}\otimes_{\FmuFR{\FHH[\varphi\left(H\right)]}}c_{\varphi,J}x$.
Notice now that $\FHH[\varphi\left(H\right)]\subseteq\FHH[K]$. Because
of Corollary \ref{cor:Mackey-algebra-inclusion.} this inclusion allows
us to define $\theta_{\iota_{\varphi\left(H\right)}^{K}}\colon M_{\varphi\left(H\right)}\twoheadrightarrow M_{K}$
as the natural $\muFR$-module morphism that, for every $y'\in\muFR1_{\FmuFR{\FHH[\varphi\left(H\right)]}}$
and every $x'\in M\downarrow_{\FHH[\varphi\left(H\right)]}^{\F}$,
sends $y'\otimes_{\FmuFR{\FHH[\varphi\left(H\right)]}}x'$ to $y'\otimes_{\FmuFR{\FHH[K]}}x'$.
With this setup we can finally define the $\muFR$-module morphism
$\theta_{\overline{\varphi}}\colon M_{H}\twoheadrightarrow M_{K}$
as $\theta_{\overline{\varphi}}:=\theta_{\iota_{\varphi\left(H\right)}^{K}}\theta_{\varphi}$
and, with $x,y$ and $J$ as above we have that
\begin{align*}
\theta_{H}\left(y\otimes_{\FmuFR{\FHH}}x\right) & =yx=yc_{\varphi^{-1},\varphi\left(J\right)}c_{\varphi,J}x=\theta_{K}\left(\theta_{\overline{\varphi}}\left(y\otimes_{\FmuFR{\FHH}}x\right)\right).
\end{align*}
Where we are viewing $\varphi$ as an isomorphism between the appropriate
restrictions and, for the second identity, we are using Items \eqref{enu:property-I_H^H-is-identity.}
and \eqref{enu:property-composition-is-nice.} of Lemma \ref{lem:Many-properties-definition.}
in order to introduce $c_{\varphi^{-1},\varphi\left(J\right)}c_{\varphi,J}$.
This proves that $\theta_{H}=\theta_{K}\theta_{\overline{\varphi}}$
thus concluding the proof.
\end{proof}
Using Lemma \ref{lem:ifX->Y-and-X-proj.-then-Y-proj.} we can now
translate \cite[Lemma 3.2]{GuideToMackeyFunctorsWebb} to the context
of centric Mackey functors over fusion systems.
\begin{cor}
\label{cor:projective-respect-to-bigger.}Let $M\in\MackFcR$, let
$\mathcal{X}$ and $\mathcal{Y}$ be families of $\F$-centric subgroups
of $S$ and denote by $\mathcal{X}^{\text{max}}\subseteq\mathcal{X}$
any family of maximal elements of $\mathcal{X}$ (under the preorder
$\le_{\F}$ of Notation \ref{nota:p,S,F}) taken up to $\F$-isomorphism.
\begin{enumerate}
\item \label{enu:X<Y implies Y-proj.}If $M$ is $\mathcal{X}$-projective
and $\mathcal{X}\subseteq\mathcal{Y}$ then $M$ is $\mathcal{Y}$-projective.
\item \label{enu:X-proj implies X^max proj.}If $M$ is $\mathcal{X}$-projective
then it is $\mathcal{X}^{\text{max}}$-projective.
\end{enumerate}
\end{cor}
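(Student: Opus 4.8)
The plan is to deduce both items directly from Lemma \ref{lem:ifX->Y-and-X-proj.-then-Y-proj.} by choosing, in each case, a suitable map $\sigma$ between the two families of subgroups together with a compatible family $\varPhi$ of morphisms in $\OFc$; the ``in particular'' clause of that lemma then does the rest. For Item \eqref{enu:X<Y implies Y-proj.}, since $\mathcal{X}\subseteq\mathcal{Y}$ I would take $\sigma\colon\mathcal{X}\to\mathcal{Y}$ to be the inclusion and $\varPhi:=\left\{\overline{\Id_{H}}\colon H\to H\right\}_{H\in\mathcal{X}}$ the family of identity morphisms in $\OFc$. Lemma \ref{lem:ifX->Y-and-X-proj.-then-Y-proj.} then gives a $\muFR$-module morphism $\theta_{\varPhi}\colon M_{\mathcal{X}}\to M_{\mathcal{Y}}$ with $\theta_{\mathcal{X}}=\theta_{\mathcal{Y}}\theta_{\varPhi}$, so that $\mathcal{X}$-projectivity of $M$ (i.e. split surjectivity of $\theta_{\mathcal{X}}$) forces split surjectivity of $\theta_{\mathcal{Y}}$, which is $\mathcal{Y}$-projectivity.

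For Item \eqref{enu:X-proj implies X^max proj.} the only extra work is to produce the morphisms. The key observation is that every $H\in\mathcal{X}$ satisfies $H\le_{\F}K$ for some $K\in\mathcal{X}^{\text{max}}$: since the set of subgroups of $S$ is finite, $H$ lies below some $\le_{\F}$-maximal element $H'$ of $\mathcal{X}$, and by construction $H'$ is $\F$-isomorphic to its chosen representative $K\in\mathcal{X}^{\text{max}}$, so $H\le_{\F}H'=_{\F}K$. Fixing such a $K$ for each $H$ defines a map $\sigma\colon\mathcal{X}\to\mathcal{X}^{\text{max}}$. By definition of $\le_{\F}$ there is $J\le K$ with $H=_{\F}J$; composing an $\F$-isomorphism $H\to J$ with the inclusion $J\hookrightarrow K$ yields a morphism in $\F$ whose class gives an element $\overline{\varphi_{H}}\in\Hom_{\OFc}\left(H,\sigma\left(H\right)\right)$. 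Applying Lemma \ref{lem:ifX->Y-and-X-proj.-then-Y-proj.} with $\mathcal{Y}:=\mathcal{X}^{\text{max}}$, this $\sigma$, and $\varPhi:=\left\{\overline{\varphi_{H}}\right\}_{H\in\mathcal{X}}$ then shows that $\mathcal{X}$-projectivity of $M$ implies $\mathcal{X}^{\text{max}}$-projectivity.

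The whole argument is essentially bookkeeping once Lemma \ref{lem:ifX->Y-and-X-proj.-then-Y-proj.} is in hand; the one point that genuinely needs care is checking that the intermediate subgroup $J\le K$ witnessing $H\le_{\F}K$ is itself $\F$-centric, so that $\overline{\varphi_{H}}$ really is a morphism of the centric orbit category $\OFc$ and not merely of $\OF$. This is where I would invoke \cite[Proposition 4.4]{IntroductionToFusionSystemsLinckelmann}: being $\F$-isomorphic to the $\F$-centric group $H$, the subgroup $J$ is $\F$-centric. I do not anticipate any other obstacle.
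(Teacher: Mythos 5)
Your proposal is correct and follows essentially the same route as the paper: both items are reduced to Lemma \ref{lem:ifX->Y-and-X-proj.-then-Y-proj.} by exhibiting the inclusion with identity morphisms for Item \eqref{enu:X<Y implies Y-proj.} and, for Item \eqref{enu:X-proj implies X^max proj.}, a map sending each $H\in\mathcal{X}$ to a maximal element $\F$-above it together with a witnessing morphism in $\OFc$. Your extra check that the intermediate subgroup $J$ is $\F$-centric is a reasonable point of care (it is in fact immediate from Definition \ref{def:F-centric.}, since $\F$-centricity only depends on the $\F$-isomorphism class), and the paper simply leaves it implicit.
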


\begin{proof}
From definition of $\mathcal{X}^{\text{max}}$ for every $H\in\mathcal{X}$
exists $J_{H}\in\mathcal{X}^{\text{max}}$ such that $H\le_{\F}J_{H}$
or, equivalently, such that $\Hom_{\OFc}\left(H,J_{H}\right)\not=\emptyset$.
On the other hand, for every $H\in\mathcal{X}$ we can take $K_{H}:=H\in\mathcal{Y}$
and we will have $\text{Id}_{H}\in\Hom_{\OFc}\left(H,K_{H}\right)\not=\emptyset$.
The result now follows from Lemma \ref{lem:ifX->Y-and-X-proj.-then-Y-proj.}.
\end{proof}
Finally we can translate \cite[Proposition 3.3]{GuideToMackeyFunctorsWebb}
to the context of centric Mackey functors over fusion systems.
\begin{prop}
\label{prop:aux-for-defect-set.}Let $M\in\MackFcR$ and let $\mathcal{X}$
and $\mathcal{Y}$ be families of $\F$-centric subgroups of $S$
closed under $\F$-subconjugacy (i.e. $K\in\mathcal{X}$ and $H\le_{\F}K$
imply $H\in\mathcal{X}$ and analogously with $\mathcal{Y}$). If
$M$ is both $\mathcal{X}$-projective and $\mathcal{Y}$-projective
then:
\begin{enumerate}
\item \label{enu:XxY-projective.}$M$ is $\mathcal{X}\times\mathcal{Y}$-projective
where
\[
\mathcal{X\times Y}:=\left\{ A\in\Fc\,|\,\exists H\in\mathcal{X},K\in\mathcal{Y}\text{ and }\overline{\varphi}\colon A\to K\text{ s.t. }\left(A,\overline{\varphi}\right)\in\left[H\times K\right]\right\} .
\]
\item \label{enu:item-XAY-projective.}$M$ is $\mathcal{X}\cap\mathcal{Y}$-projective.
\end{enumerate}
\end{prop}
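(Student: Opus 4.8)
The plan is to prove \eqref{enu:XxY-projective.} first and to deduce \eqref{enu:item-XAY-projective.} from it. For the deduction, note that any $A\in\mathcal{X}\times\mathcal{Y}$ is the first component of some $\left(A,\overline{\varphi}\right)\in\left[H\times K\right]$ with $H\in\mathcal{X}$ and $K\in\mathcal{Y}$; by the construction of the product $H\times_{\F}K$ recalled before Definition \ref{def:HX_FK} this forces $A\le H$ and $\overline{\varphi}\in\Hom_{\OFc}\left(A,K\right)$, hence $A\le_{\F}H$ and $A\le_{\F}K$. Since $\mathcal{X}$ and $\mathcal{Y}$ are closed under $\F$-subconjugacy this gives $A\in\mathcal{X}\cap\mathcal{Y}$, so $\mathcal{X}\times\mathcal{Y}\subseteq\mathcal{X}\cap\mathcal{Y}$, and then \eqref{enu:item-XAY-projective.} follows from \eqref{enu:XxY-projective.} together with Corollary \ref{cor:projective-respect-to-bigger.} \eqref{enu:X<Y implies Y-proj.}.

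To prove \eqref{enu:XxY-projective.}, fix $\muFR$-module morphisms $\sigma\colon M\to M_{\mathcal{X}}$ and $\tau\colon M\to M_{\mathcal{Y}}$ with $\theta_{\mathcal{X}}^{M}\sigma=\Id_{M}$ and $\theta_{\mathcal{Y}}^{M}\tau=\Id_{M}$. Each $\theta_{K}$ (for $K\in\Fc$) is natural in its argument --- indeed $\theta_{K}^{M}$ is the counit of the adjunction $\uparrow_{\FHH[K]}^{\F}\dashv\downarrow_{\FHH[K]}^{\F}$, and naturality is in any case immediate from the formula $\theta_{K}^{M}\left(y\otimes x\right)=y\cdot x$ --- so $\theta_{\mathcal{Y}}$ is a natural transformation from the functor $\left(-\right)_{\mathcal{Y}}$ to the identity on $\MackFcR$. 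Naturality applied to the morphism $\theta_{\mathcal{X}}^{M}\colon M_{\mathcal{X}}\to M$ yields
\[
\theta_{\mathcal{X}}^{M}\circ\theta_{\mathcal{Y}}^{M_{\mathcal{X}}}=\theta_{\mathcal{Y}}^{M}\circ\left(\theta_{\mathcal{X}}^{M}\right)_{\mathcal{Y}}\colon\left(M_{\mathcal{X}}\right)_{\mathcal{Y}}\longrightarrow M,
\]
where $\left(\theta_{\mathcal{X}}^{M}\right)_{\mathcal{Y}}$ is $\left(-\right)_{\mathcal{Y}}$ evaluated at $\theta_{\mathcal{X}}^{M}$. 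As $\left(\theta_{\mathcal{X}}^{M}\right)_{\mathcal{Y}}\left(\sigma\right)_{\mathcal{Y}}=\left(\theta_{\mathcal{X}}^{M}\sigma\right)_{\mathcal{Y}}=\Id_{M_{\mathcal{Y}}}$ and $\theta_{\mathcal{Y}}^{M}\tau=\Id_{M}$, the composite $p:=\theta_{\mathcal{X}}^{M}\theta_{\mathcal{Y}}^{M_{\mathcal{X}}}\colon\left(M_{\mathcal{X}}\right)_{\mathcal{Y}}\to M$ is split surjective, with section $\left(\sigma\right)_{\mathcal{Y}}\tau$.

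Next I would identify the source as a Mackey functor. We have $\left(M_{\mathcal{X}}\right)_{\mathcal{Y}}=\bigoplus_{H\in\mathcal{X}}\bigoplus_{K\in\mathcal{Y}}\left(M_{H}\right)_{K}$, and Lemma \ref{lem:Mackey-formula-induction-restriction.} applied to $M\downarrow_{\FHH}^{\F}\in\MackFHR{\F}{\FHH}$ (with $\G:=\F$) gives, for each $H\in\mathcal{X}$ and $K\in\mathcal{Y}$, a $\FmuFR{\FHH[K]}$-module isomorphism
\[
M_{H}\downarrow_{\FHH[K]}^{\F}\;\cong\;\bigoplus_{\left(A,\overline{\varphi}\right)\in\left[H\times K\right]}\bigl(\lui{\varphi}{\left(M\downarrow_{\FHH[A]}^{\F}\right)}\bigr)\uparrow_{\FHH[\varphi\left(A\right)]}^{\FHH[K]}.
\]
Inducing up to $\F$, using transitivity of induction and the isomorphism $\bigl(\lui{\varphi}{L}\bigr)\uparrow_{\FHH[\varphi\left(A\right)]}^{\F}\cong L\uparrow_{\FHH[A]}^{\F}$ valid for any isomorphism $\varphi$ in $\F$ (this is proved from Lemma \ref{lem:Many-properties-definition.} \eqref{enu:property-I_H^H-is-identity.}--\eqref{enu:Property-conjugation-commutes.} exactly as the morphism $\theta_{\varphi}$ is built in the proof of Lemma \ref{lem:ifX->Y-and-X-proj.-then-Y-proj.}), one obtains a Mackey functor isomorphism $\left(M_{H}\right)_{K}\cong\bigoplus_{\left(A,\overline{\varphi}\right)\in\left[H\times K\right]}M_{A}$ in which every $A$ belongs to $\mathcal{X}\times\mathcal{Y}$. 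Thus $\left(M_{\mathcal{X}}\right)_{\mathcal{Y}}\cong\bigoplus_{i\in I}M_{A_{i}}$ for a finite family with all $A_{i}\in\mathcal{X}\times\mathcal{Y}$ (repetitions allowed), and transporting $p$ along this isomorphism produces a split surjective $\muFR$-module morphism $q=\sum_{i\in I}q_{i}\colon\bigoplus_{i\in I}M_{A_{i}}\to M$.

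Finally I would use the fact that any such $q$ automatically factors through the canonical maps $\theta_{A_{i}}^{M}$, which is what makes the argument go through without Higman's criterion (Theorem \ref{thm:Higman's-criterion.}). Each component $q_{i}\colon M_{A_{i}}=\bigl(M\downarrow_{\FHH[A_{i}]}^{\F}\bigr)\uparrow_{\FHH[A_{i}]}^{\F}\to M$ corresponds, under the adjunction $\uparrow_{\FHH[A_{i}]}^{\F}\dashv\downarrow_{\FHH[A_{i}]}^{\F}$, to an endomorphism $\widehat{q_{i}}$ of $M\downarrow_{\FHH[A_{i}]}^{\F}$, and since the counit of this adjunction at $M$ is precisely $\theta_{A_{i}}^{M}$ we get $q_{i}=\theta_{A_{i}}^{M}\circ\bigl(\widehat{q_{i}}\uparrow_{\FHH[A_{i}]}^{\F}\bigr)$. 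Hence $q$ factors as $\bigl(\sum_{i}\theta_{A_{i}}^{M}\bigr)\circ\bigl(\bigoplus_{i}\widehat{q_{i}}\uparrow_{\FHH[A_{i}]}^{\F}\bigr)$, so the morphism $\sum_{i\in I}\theta_{A_{i}}^{M}\colon\bigoplus_{i\in I}M_{A_{i}}\to M$ is itself split surjective; precomposing a section of it with the relabelling morphism $\bigoplus_{i\in I}M_{A_{i}}\to M_{\mathcal{X}\times\mathcal{Y}}$ which sends the $i$-th summand identically onto the $M_{A_{i}}$-summand of $M_{\mathcal{X}\times\mathcal{Y}}$ (and which intertwines $\sum_{i}\theta_{A_{i}}^{M}$ with $\theta_{\mathcal{X}\times\mathcal{Y}}^{M}$) shows $\theta_{\mathcal{X}\times\mathcal{Y}}^{M}$ is split surjective, i.e.\ $M$ is $\mathcal{X}\times\mathcal{Y}$-projective. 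I expect the main obstacle to be the third step: assembling Lemma \ref{lem:Mackey-formula-induction-restriction.}, transitivity of induction and the conjugation--induction compatibility into one Mackey functor isomorphism and checking it is genuinely $\muFR$-linear rather than just $\R$-linear; the reduction in the first paragraph and the adjunction bookkeeping in the last are formal.
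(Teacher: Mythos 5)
Your proof is correct, and its first two steps coincide with the paper's: the reduction of \eqref{enu:item-XAY-projective.} to \eqref{enu:XxY-projective.} via $\mathcal{X}\times\mathcal{Y}\subseteq\mathcal{X}\cap\mathcal{Y}$ and Corollary \ref{cor:projective-respect-to-bigger.} \eqref{enu:X<Y implies Y-proj.} is exactly the paper's, and your identification of $\left(M_{\mathcal{X}}\right)_{\mathcal{Y}}$ with a direct sum of functors $M_{A}$, $A\in\mathcal{X}\times\mathcal{Y}$, is precisely the composite $\varUpsilon\Gamma$ the paper builds from Lemma \ref{lem:Mackey-formula-induction-restriction.} together with the conjugation--induction compatibility (your cross-reference to the construction of $\theta_{\varphi}$ in Lemma \ref{lem:ifX->Y-and-X-proj.-then-Y-proj.} is the right one, and this is indeed the step where all the nontrivial checking lives). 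Where you genuinely diverge is at the end: the paper verifies by an explicit computation on generators that its isomorphism intertwines $\theta_{\mathcal{Y}}^{M}\theta_{\mathcal{X},\mathcal{Y}}$ with $\theta_{\mathcal{X}\times\mathcal{Y}}^{M}$, whereas you observe that, since $\theta_{A}^{M}$ is the counit of $\uparrow_{\FHH[A]}^{\F}\dashv\downarrow_{\FHH[A]}^{\F}$, \emph{every} morphism $M_{A}\to M$ factors as $\theta_{A}^{M}$ composed with an induced endomorphism, so any split surjection $\bigoplus_{i}M_{A_{i}}\twoheadrightarrow M$ automatically forces $\sum_{i}\theta_{A_{i}}^{M}$ to be split surjective. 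The same adjunction-naturality viewpoint also replaces the paper's generator-level verification that $\theta_{\mathcal{X}}^{M}\theta_{\mathcal{Y}}^{M_{\mathcal{X}}}$ is split. This buys you a proof in which the only hands-on computation is the one already done in Lemma \ref{lem:Mackey-formula-induction-restriction.}; the paper's version, by contrast, exhibits the section of $\theta_{\mathcal{X}\times\mathcal{Y}}^{M}$ completely explicitly, which is occasionally useful downstream but is not needed for the statement.
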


\begin{proof}
For every $A\in\mathcal{X}\times\mathcal{Y}$, there exist, by definition,
$H\in\mathcal{X}$ and $K\in\mathcal{Y}$ such that $A\le_{\F}H,K$.
Since both $\mathcal{X}$ and $\mathcal{Y}$ are closed under $\F$-subconjugacy
this implies that $A\in\mathcal{X}\cap\mathcal{Y}$. In other words
we have that $\mathcal{X}\times\mathcal{Y}\subseteq\mathcal{X}\cap\mathcal{Y}$.
From Corollary \ref{cor:projective-respect-to-bigger.} \eqref{enu:X<Y implies Y-proj.}
we can now deduce that Item \eqref{enu:item-XAY-projective.} follows
from Item \eqref{enu:XxY-projective.}.

Let's prove Item \eqref{enu:XxY-projective.}. For every $H\in\mathcal{X}$,
every $K\in\mathcal{Y}$ and every $\left(A,\overline{\varphi}\right)\in\left[H\times_{\F}K\right]$
let us fix a representative $\varphi$ of $\overline{\varphi}$ and
view it as an isomorphism onto its image. Using the notation of Lemma
\ref{lem:Mackey-formula-induction-restriction.} we have that
\begin{align*}
M':=\bigoplus_{H\in\mathcal{X},K\in\mathcal{Y}}\bigoplus_{\left(A,\overline{\varphi}\right)\in\left[H\times_{\F}K\right]}M_{\left(A,\overline{\varphi}\right)}\uparrow_{\FHH[\varphi\left(A\right)]}^{\F}\cong & \left(M_{\mathcal{X}}\right)_{\mathcal{Y}}:=\bigoplus_{H\in\mathcal{X},K\in\mathcal{Y}}M\downarrow_{\FHH}^{\F}\uparrow_{\FHH}^{\F}\downarrow_{\FHH[K]}^{\F}\uparrow_{\FHH[K]}^{\F},
\end{align*}
We can now define $\Gamma\colon\left(M_{\mathcal{X}}\right)_{\mathcal{Y}}\bjarrow M'$
to be the inverse of the isomorphism induced from the one described
in Lemma \ref{lem:Mackey-formula-induction-restriction.}. 

We can now define $\varUpsilon\colon M'\to M_{\mathcal{X}\times\mathcal{Y}}$
by setting for every $H\in\mathcal{X}$, every $K\in\mathcal{Y}$,
every $\left(A,\overline{\varphi}\right)\in\left[H\times K\right]$,
every $J\le A$, every $x\in I_{J}^{J}M\downarrow_{\FHH[A]}^{\F}$
and every $y\in\muFR1_{\FmuFR{\FHH[\varphi\left(A\right)]}}$
\[
\varUpsilon\left(y\otimes_{\FmuFR{\FHH[\varphi\left(A\right)]}}x\right):=yc_{\varphi,J}\otimes_{\FmuFR{\FHH[A]}}x.
\]
where we are viewing $\varphi$ as an isomorphism between the appropriate
restrictions and, on the left hand side, we are viewing $x$ as an
element of $M_{\left(A,\overline{\varphi}\right)}$ while, on the
right hand side, we are viewing $x$ as an element of $M\downarrow_{\FHH[A]}^{\F}$.
Notice that, for every $h\in H$, we have that $c_{\varphi\left(H\right)}\varphi=c_{h}\varphi$
as isomorphisms from $H$ to $\varphi\left(H\right)$. With this in
mind Items \eqref{enu:property-composition-is-nice.} and \eqref{enu:Property-conjugation-commutes.}
of Lemma \ref{lem:Many-properties-definition.} and Proposition \ref{prop:Mackey-algebra-basis.}
ensure us that the definition of $\varUpsilon$ does not depend on
the choice of representatives of $y\otimes_{\FmuFR{\FHH[\varphi\left(A\right)]}}x$.
Moreover it is immediate from definition that $\varUpsilon$ commutes
with the action of $\muFR$ and, therefore, it's a $\muFR$-module
morphism.

Finally, since $M$ is both $\mathcal{X}$-projective and $\mathcal{Y}$-projective,
there exist Mackey functor morphisms $u_{\mathcal{X}}\colon M\to M_{\mathcal{X}}$
and $u_{\mathcal{Y}}\colon M\to M_{\mathcal{Y}}$ such that $\theta_{\mathcal{X}}^{M}u_{\mathcal{X}}=\theta_{\mathcal{Y}}^{M}u_{\mathcal{Y}}=\Id_{M}$.
Applying restriction and induction functors to the morphisms $u_{\mathcal{X}}$
and $\theta_{\mathcal{X}}$ we can define 
\begin{align*}
u_{\mathcal{X},\mathcal{Y}} & :=\sum_{K\in\mathcal{Y}}\uparrow_{\FHH[K]}^{\F}\left(\downarrow_{\FHH[K]}^{\F}\left(u_{\mathcal{X}}\right)\right)\colon M_{\mathcal{Y}}\to\left(M_{\mathcal{X}}\right)_{\mathcal{Y}},\\
\theta_{\mathcal{X},\mathcal{Y}} & :=\sum_{K\in\mathcal{Y}}\uparrow_{\FHH[K]}^{\F}\left(\downarrow_{\FHH[K]}^{\F}\left(\theta_{\mathcal{X}}^{M}\right)\right)\colon\left(M_{\mathcal{X}}\right)_{\mathcal{Y}}\to M_{\mathcal{Y}}.
\end{align*}
From functoriality of induction and restriction, we have that $\theta_{\mathcal{X},\mathcal{Y}}u_{\mathcal{X},\mathcal{Y}}=\Id_{M_{\mathcal{Y}}}$.

Let $H\in\mathcal{X}$, let $K,J\in\mathcal{Y}$ such that $K\le J$,
let $\left(A,\overline{\varphi}\right)\in\left[H\times_{\F}K\right]$,
let $\varphi$ be the previously fixed representative of $\overline{\varphi}$
viewed as an isomorphism onto its image, let $\left(C,\overline{\theta}\right)\in\left[\varphi\left(A\right)\times_{\FHH[K]}J\right]$,
let $x\in I_{C}^{C}M$ and let $y\in1_{\FmuFR{\FHH[K]}}\muFR1_{\FmuFR{\FHH}}$.
Using the notation of Corollary \ref{cor:conjugation-in-F-and-in-orbit-F.}
we have that
\begin{align*}
\theta_{\mathcal{Y}}^{M}\left(\theta_{\mathcal{X},\mathcal{Y}}\left(y\otimes_{\FmuFR{\FHH[K]}}I_{\overline{\theta}\left(C\right)}^{J}c_{\overline{\theta\varphi}}\otimes_{\FmuFR{\FHH}}x\right)\right) & =\theta_{\mathcal{Y}}^{M}\left(y\otimes_{\FmuFR{\FHH[K]}}I_{\overline{\theta}\left(C\right)}^{J}c_{\overline{\theta\varphi}}x\right)=yI_{\overline{\theta}\left(C\right)}^{J}c_{\overline{\theta\varphi}}x.
\end{align*}
and that
\begin{align*}
yI_{\overline{\theta}\left(C\right)}^{J}c_{\overline{\theta}\varphi}x & =\theta_{\mathcal{X}\times\mathcal{Y}}^{M}\left(yI_{\overline{\theta}\left(C\right)}^{J}c_{\overline{\theta}\varphi}\otimes_{\FmuFR{\FHH[A]}}x\right),\\
 & =\theta_{\mathcal{X}\times\mathcal{Y}}^{M}\left(\varUpsilon\left(yI_{\overline{\theta}\left(C\right)}^{J}c_{\overline{\theta}}\otimes_{\FmuFR{\FHH[\varphi\left(A\right)]}}x\right)\right),\\
 & =\theta_{\mathcal{X}\times\mathcal{Y}}^{M}\left(\varUpsilon\left(\Gamma\left(y\otimes_{\FmuFR{\FHH[K]}}I_{\overline{\theta}\left(C\right)}^{J}c_{\overline{\theta\varphi}}\otimes_{\FmuFR{\FHH}}x\right)\right)\right).
\end{align*}
Where, in the second identity, we are viewing $x$ as an element of
$M_{\left(A,\overline{\varphi}\right)}$. From Lemma \ref{lem:Mackey-formula-induction-restriction.}
we know that every element in $M\downarrow_{\FHH}^{\F}\uparrow_{\FHH}^{\F}\downarrow_{\FHH[K]}^{\F}\uparrow_{\FHH[K]}^{\F}$
can be written as a finite sum of elements of the form $y\otimes_{\FmuFR{\FHH[K]}}I_{\overline{\theta}\left(C\right)}^{J}c_{\overline{\theta\varphi}}\otimes_{\FmuFR{\FHH}}x$.
Therefore the previous identities prove that $\theta_{\mathcal{X}\times\mathcal{Y}}^{M}\varUpsilon\Gamma=\theta_{\mathcal{Y}}^{M}\theta_{\mathcal{X},\mathcal{Y}}$.
With this in mind we obtain
\[
\theta_{\mathcal{X}\times\mathcal{Y}}^{M}\varUpsilon\Gamma u_{\mathcal{X},\mathcal{Y}}u_{\mathcal{Y}}=\theta_{\mathcal{Y}}^{M}\theta_{\mathcal{X},\mathcal{Y}}u_{\mathcal{X},\mathcal{Y}}u_{\mathcal{Y}}=\theta_{\mathcal{Y}}^{M}u_{\mathcal{Y}}=\Id_{M}.
\]
This proves that $\theta_{\mathcal{X}\times\mathcal{Y}}^{M}$ is split
surjective or, equivalently, that $M$ is $\mathcal{X}\times\mathcal{Y}$-projective
thus concluding the proof.
\end{proof}
We can now finally define the defect set of a centric Mackey functor
over a fusion system.
\begin{cor}
\label{cor:defect-set.}Let $\R$ be $p$-local and let $M\in\MackFcR$.
There exists a unique minimal family of $\F$-centric subgroups of
$S$ that is closed under $\F$-subconjugacy and such that $M$ is
projective relative to it. 
\end{cor}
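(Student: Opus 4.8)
The plan is to mimic the classical argument for Mackey functors over groups (see \cite[Proposition 3.3]{GuideToMackeyFunctorsWebb}), using the machinery developed above as the fusion-system replacement for the group-theoretic ingredients. First I would establish existence of \emph{some} family with the required properties: by Lemma \ref{lem:projective-relative-to-S.} the ring being $p$-local guarantees that $M$ is $S$-projective, and since the family $\mathcal{X}_{S}$ of all $\F$-centric subgroups of $S$ that are $\le_{\F}$-below $S$ is of course all of $\Fc$, which is trivially closed under $\F$-subconjugacy, Corollary \ref{cor:projective-respect-to-bigger.} \eqref{enu:X<Y implies Y-proj.} shows $M$ is projective relative to this closed family. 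Hence the collection of closed families relative to which $M$ is projective is non-empty; since $\Fc$ is finite, a minimal one exists.

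The heart of the matter is \emph{uniqueness} of the minimal closed family. Suppose $\mathcal{X}$ and $\mathcal{Y}$ are both minimal closed families relative to which $M$ is projective. By Proposition \ref{prop:aux-for-defect-set.} \eqref{enu:item-XAY-projective.}, $M$ is then $\mathcal{X}\cap\mathcal{Y}$-projective; but $\mathcal{X}\cap\mathcal{Y}$ is again closed under $\F$-subconjugacy (an intersection of such families is such a family), so by minimality of both $\mathcal{X}$ and $\mathcal{Y}$ we must have $\mathcal{X}\cap\mathcal{Y}=\mathcal{X}$ and $\mathcal{X}\cap\mathcal{Y}=\mathcal{Y}$, forcing $\mathcal{X}=\mathcal{Y}$. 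This is the whole argument; the minimal family is then declared to be the defect set.

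I expect the only genuine subtlety — already discharged by the earlier results, so nothing new is needed here — to be the verification that Proposition \ref{prop:aux-for-defect-set.} really does apply: one needs that $\mathcal{X}\cap\mathcal{Y}$ is closed under $\F$-subconjugacy (immediate: if $K\in\mathcal{X}\cap\mathcal{Y}$ and $H\le_{\F}K$ then $H\in\mathcal{X}$ and $H\in\mathcal{Y}$ since each is individually closed) and that both $\mathcal{X}$ and $\mathcal{Y}$ consist of $\F$-centric subgroups of $S$, which is part of the hypothesis of being an admissible family. The $p$-locality of $\R$ is used \emph{only} to guarantee non-emptiness via Lemma \ref{lem:projective-relative-to-S.}; the uniqueness half needs no hypothesis on $\R$ beyond what is already standing. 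So the proof is essentially a two-line deduction: existence from Lemma \ref{lem:projective-relative-to-S.} plus Corollary \ref{cor:projective-respect-to-bigger.}, and uniqueness from Proposition \ref{prop:aux-for-defect-set.} \eqref{enu:item-XAY-projective.} together with minimality — no real obstacle remains once those results are in hand.
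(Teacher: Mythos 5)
Your proof is correct and follows exactly the route the paper intends: existence from Lemma \ref{lem:projective-relative-to-S.} together with Corollary \ref{cor:projective-respect-to-bigger.} \eqref{enu:X<Y implies Y-proj.}, and uniqueness of the minimal closed family from Proposition \ref{prop:aux-for-defect-set.} \eqref{enu:item-XAY-projective.} applied to two putative minimal families. The paper states this as an immediate consequence of those three results; your write-up simply makes the deduction explicit.
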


\begin{proof}
This is an immediate consequence of Lemma \ref{lem:projective-relative-to-S.},
Corollary \ref{cor:projective-respect-to-bigger.} \eqref{enu:X<Y implies Y-proj.}
and Proposition \ref{prop:aux-for-defect-set.} \eqref{enu:item-XAY-projective.}.
\end{proof}
\begin{defn}
\label{def:vertex.}Let $\R$ be $p$-local and let $M\in\MackFcR$.
We call the minimal family of elements in $\Fc$ given in Corollary
\ref{cor:defect-set.} the \textbf{defect set of $M$ }(denoted as
$\mathcal{X}_{M}$). Using the notation of Corollary \ref{cor:projective-respect-to-bigger.}
we call \textbf{defect group} of $M$ any element in $\mathcal{X}_{M}^{\text{max}}$
(for any choice of $\mathcal{X}_{M}^{\text{max}}$. If $\left|\mathcal{X}_{M}^{\text{max}}\right|=1$
we say that $M$ \textbf{admits a vertex} and we call \textbf{vertex
of $M$ }(and denote it by $V_{M}$) any fully $\F$-normalized defect
group of $M$.
\end{defn}

\subsection{\label{subsec:Transfer-maps-and-Higman's criterion.}Transfer maps
and Higman's criterion.}

The main goal of this subsection will be that of translating Higman's
criterion (see \cite[Theorem 2.2]{NagaoHirosiRepresentationsOfFiniteGroups})
to centric Mackey functors over fusion systems (see Theorem \ref{thm:Higman's-criterion.}).
This will allow us to relate the concept of relative projectivity
of an indecomposable Mackey functor $M\in\MackFcR$ to the images
of certain transfer maps (see Definitions \ref{def:Transfer-and-restriction.}
and \ref{def:Image-transfer.}). In order to understand this relation
we need to start by introducing some notation.
\begin{defn}
\label{def:Transfer-and-restriction.}Let $\G$ be a fusion system
containing $\F$, let $M\in\MackFHR{\G}{\F}$, let $H\in\Fc$ and
let $\varphi\colon H\to\varphi\left(H\right)$ be an isomorphism in
$\F$. We define the \textbf{conjugation map from $\FHH$ to $\FHH[\varphi\left(H\right)]$
on $M$} as the $\R$-algebra morphism $\lui{M,\varphi}{\cdot}:\End\left(M\downarrow_{\FHH}^{\F}\right)\to\End\left(M\downarrow_{\FHH[\varphi\left(H\right)]}^{\F}\right)$,
obtained by setting for every $f\in\End\left(M\downarrow_{\FHH}^{\F}\right)$,
every $K\in\FHH[\varphi\left(H\right)]\cap\Fc$ and every $x\in I_{K}^{K}M\downarrow_{\FHH[\varphi\left(H\right)]}^{\F}$
\[
\lui{M,\varphi}{f}\left(x\right):=c_{\varphi,\varphi^{-1}\left(K\right)}\left(f\left(c_{\varphi^{-1},K}\,x\right)\right).
\]
Where we are viewing $\varphi$ as an isomorphism between the appropriate
restrictions and we are viewing $M\downarrow_{\FHH}^{\F}$ and $M\downarrow_{\FHH[\varphi\left(H\right)]}^{\F}$
as subsets of $M$.

We define the \textbf{transfer map from $\FHH$ to $\F$ on $M$}
as the $\R$-module morphism 
\[
\lui{M}{\tr_{\FHH}^{\F}}:\begin{array}{ccc}
\End\left(M\downarrow_{\FHH}^{\F}\right) & \longrightarrow & \End\left(M\right)\\
{\scriptstyle f} & {\scriptstyle \longrightarrow} & {\scriptstyle \theta_{H}^{M}f\uparrow_{\FHH}^{\F}\theta_{M}^{H}}
\end{array}.
\]
where $f\uparrow_{\FHH}^{\F}$ denotes the image of $f$ via the induction
functor $\uparrow_{\FHH}^{\F}$. More precisely, for every $K\in\Fc$,
every $x\in I_{K}^{K}M$ and every $f\in\End\left(M\downarrow_{\FHH}^{\F}\right)$
we have that 
\[
\lui{M}{\tr_{\FHH}^{\F}}\left(f\right)\left(x\right)=\sum_{\left(A,\overline{\varphi}\right)\in\left[H\times_{\F}K\right]}I_{\overline{\varphi}\left(A\right)}^{K}c_{\overline{\varphi}}f\left(c_{\overline{\varphi^{-1}}}R_{\overline{\varphi}\left(A\right)}^{K}x\right).
\]

Finally, given any fusion subsystem $\H\subseteq\F$, we define the
\textbf{restriction map from $\F$ to $\H$ on $M$} as the $\R$-algebra
morphism
\[
\lui{M}{\r_{\H}^{\F}}\colon\begin{array}{ccc}
\End\left(M\right) & \longrightarrow & \End\left(M\downarrow_{\H}^{\F}\right)\\
{\scriptstyle f} & {\scriptstyle \longrightarrow} & {\scriptstyle f\downarrow_{\H}^{\F}}
\end{array}.
\]
where $f\downarrow_{\H}^{\F}$ denotes the image of $f$ via the restriction
functor $\downarrow_{\H}^{\F}$.

Whenever there is no doubt regarding $M$ we will simply write
\begin{align*}
\tr_{\FHH}^{\F} & :=\lui{M}{\tr_{\FHH}^{\F}}, & \r_{\H}^{\F} & :=\lui{M}{\r_{\H}^{\F}}, & \lui{\varphi}{\cdot} & :=\lui{M,\varphi}{\cdot}.
\end{align*}
\end{defn}

Transfer, restriction and conjugation maps satisfy the following properties
which are analogous to those satisfied in the case of Mackey functors
over groups (see \cite[Definition 2.7]{SASAKI198298}). 
\begin{prop}
\label{prop:Properties-of-transfer-restriction-and-conjugation-maps.}Let
$M\in\MackFcR$ then:
\begin{enumerate}
\item \label{enu:Identity.}For every $H\in\Fc$ and $h\in H$ we have that
$\tr_{\FHH}^{\FHH}=\r_{\FHH}^{\FHH}=\lui{c_{h}}{\cdot}=\Id_{\End\left(M\downarrow_{\FHH}^{\F}\right)}$.
\item \label{enu:Composition-restriction.}For all fusion subsystems $\H\subseteq\mathcal{K}\subseteq\F$
we have that $\r_{\H}^{\mathcal{K}}\r_{\mathcal{K}}^{\F}=\r_{\H}^{\F}$.
\item \label{enu:composition-transfer.}For every $H\le K\in\Fc$ we have
that $\tr_{\FHH[K]}^{\F}\tr_{\FHH}^{\FHH[K]}=\tr_{\FHH}^{\F}$.
\item \label{enu:Composition-conjugation.}For all isomorphisms $\varphi,\psi$
in $\Fc$ such that $\varphi\psi$ is defined we have that $\lui{\psi}{\cdot}\lui{\varphi}{\cdot}=\lui{\psi\varphi}{\cdot}$. 
\item \label{enu:Transfer-commutes-with-conjugation.}For every $H\le K\in\Fc$
and every isomorphism $\varphi\colon K\to\varphi\left(K\right)$ in
$\F$ we have that $\lui{\varphi}{\cdot}\tr_{\FHH}^{\FHH[K]}=\tr_{\FHH[\varphi\left(H\right)]}^{\FHH[\varphi\left(K\right)]}\lui{\varphi}{\cdot}$.
\item \label{enu:restriction-commutes-with-conjugation.}For every $H\le K\in\Fc$
and every isomorphism $\varphi\colon K\to\varphi\left(K\right)$ in
$\F$ we have that $\lui{\varphi}{\cdot}\r_{\FHH}^{\FHH[K]}=\r_{\FHH[\varphi\left(H\right)]}^{\FHH[\varphi\left(K\right)]}\lui{\varphi}{\cdot}$.
\item \label{enu:Transfer-eliminates-conjugation.}For every $H\in\Fc$
and every isomorphism $\varphi\colon H\to\varphi\left(H\right)$ in
$\F$ we have that $\tr_{\FHH[\varphi\left(H\right)]}^{\F}\lui{\varphi}{\cdot}=\tr_{\FHH}^{\F}$.
\item \label{enu:Restriction-eliminates-conjugation.}For every $H\in\Fc$
and every isomorphism $\varphi\colon H\to\varphi\left(H\right)$ in
$\F$ we have that $\lui{\varphi}{\cdot}\r_{\FHH[H]}^{\F}=\r_{\FHH[\varphi\left(H\right)]}^{\F}$.
\item \label{enu:Mackey-formula.}For every $K,H\in\Fc$ we have
\[
\r_{\FHH[K]}^{\F}\tr_{\FHH}^{\F}=\sum_{\left(A,\overline{\varphi}\right)\in\left[H\times_{\F}K\right]}\tr_{\FHH[\varphi\left(A\right)]}^{\FHH[K]}\lui{\varphi}{\cdot}\r_{\FHH[A]}^{\FHH[H]}.
\]
Here $\varphi$ is any representative of $\overline{\varphi}$ seen
as an isomorphism onto its image.
\item \label{enu:Green-formula.}For every $H\in\Fc$, every $f\in\End\left(M\right)$
and every $g\in\End\left(M\downarrow_{\FHH}^{\F}\right)$ we have
that
\begin{align*}
f\tr_{\FHH}^{\F}\left(g\right) & =\tr_{\FHH}^{\F}\left(\r_{\FHH}^{\F}\left(f\right)g\right), & \text{ and that} &  & \tr_{\FHH}^{\F}\left(g\right)f & =\tr_{\FHH}^{\F}\left(g\r_{\FHH}^{\F}\left(f\right)\right).
\end{align*}
\item \label{enu:Restriction-and-transfer-makes-burnside.}Let $H\in\Fc$.
Using Notation \ref{nota:Initial-notation.} and the notation of Proposition
\ref{prop:Action-of-centric-burnside-ring.} we have that $\tr_{\FHH}^{\F}\r_{\FHH}^{\F}=\left(\overline{H}\cdot\right)_{*}$.
\end{enumerate}
\end{prop}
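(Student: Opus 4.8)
The plan is to verify the eleven identities one at a time, in each case substituting the explicit formulas of Definition \ref{def:Transfer-and-restriction.} and simplifying with the relations of Lemma \ref{lem:Many-properties-definition.}, with Corollary \ref{cor:conjugation-in-F-and-in-orbit-F.} (to slide conjugations past inductions and restrictions and to read off composites $I_{\overline{\psi\varphi}(A)}^{J}c_{\overline{\psi\varphi}}$), and with the product-rewriting identities of Proposition \ref{prop:properties-HXK.} (occasionally Lemma \ref{lem:proving-muFR-morphism.}) to manipulate the index sets $[H\times_{\F}K]$. The observation used throughout, usually silently, is that every $f\in\End(M)$ is a morphism of $\muFR$-modules, hence commutes with all operators $I_{\bullet}^{\bullet},R_{\bullet}^{\bullet},c_{\bullet}$ occurring in the formulas, and that $\r_{\FHH}^{\F}(f)$ is simply $f$ restricted to the submodule $M\downarrow_{\FHH}^{\F}\subseteq M$.

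Items \eqref{enu:Identity.} and \eqref{enu:Composition-restriction.} are bookkeeping: \eqref{enu:Composition-restriction.} holds because restriction is multiplication by the idempotent $1_{\FmuFR{\H}}$ and $1_{\FmuFR{\H}}1_{\FmuFR{\mathcal K}}=1_{\FmuFR{\H}}$ when $\H\subseteq\mathcal K$; for \eqref{enu:Identity.}, Proposition \ref{prop:properties-HXK.}\eqref{enu:prod-if-F-is-F_S(S).} lets one take $[H\times_{\FHH}K]=\{(K,\overline{\iota})\}$ (the only double coset in $[K\backslash H/H]$ being trivial), $\downarrow_{\FHH}^{\FHH}$ is the identity functor, and $\lui{c_h}{\cdot}$ reduces to the identity since $c_{c_h}c_{c_{h^{-1}}}=c_{\mathrm{Id}}$ is an identity morphism by Lemma \ref{lem:Many-properties-definition.}\eqref{enu:property-I_H^H-is-identity.}--\eqref{enu:property-composition-is-nice.}. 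Items \eqref{enu:Composition-conjugation.}, \eqref{enu:Transfer-commutes-with-conjugation.}, \eqref{enu:restriction-commutes-with-conjugation.}, \eqref{enu:Transfer-eliminates-conjugation.} and \eqref{enu:Restriction-eliminates-conjugation.} are conjugation bookkeeping: \eqref{enu:Composition-conjugation.} is $c_{\psi,\varphi(H)}c_{\varphi,H}=c_{\psi\varphi,H}$ from Lemma \ref{lem:Many-properties-definition.}\eqref{enu:property-composition-is-nice.}; the commutation and cancellation statements follow by plugging in the definitions and reindexing $[H\times_{\F}K]$ along $\varphi$ using Proposition \ref{prop:properties-HXK.}\eqref{enu:prod-iso-right.} and \eqref{enu:prod-iso-left.}, where in \eqref{enu:Transfer-eliminates-conjugation.} and \eqref{enu:Restriction-eliminates-conjugation.} the conjugation map $\lui{\varphi}{\cdot}$ precisely cancels the representative $\varphi$ appearing inside the $c_{\overline{\theta\varphi}}$-type terms.

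For \eqref{enu:composition-transfer.}, \eqref{enu:Mackey-formula.} and \eqref{enu:Green-formula.} the argument runs as follows. Item \eqref{enu:Green-formula.} is the Frobenius identity: evaluating $f\,\tr_{\FHH}^{\F}(g)$ on $x\in I_K^K M$, commuting $f$ past each $I_{\overline{\varphi}(A)}^{K}c_{\overline{\varphi}}$, and using that $f$ and $\r_{\FHH}^{\F}(f)$ agree on $g(c_{\overline{\varphi^{-1}}}R_{\overline{\varphi}(A)}^{K}x)\in M\downarrow_{\FHH}^{\F}$, reassembles $\tr_{\FHH}^{\F}(\r_{\FHH}^{\F}(f)g)$; the second equation is symmetric. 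Item \eqref{enu:Mackey-formula.} is the analogue for transfer and restriction maps of the Mackey formula of Lemma \ref{lem:Many-properties-definition.}\eqref{enu:property-Mackey-formula.}: restricting $\tr_{\FHH}^{\F}(f)$ to $M\downarrow_{\FHH[K]}^{\F}$ amounts to letting $x$ range over $I_{K'}^{K'}M$ with $K'\le K$, and the decomposition of $[H\times_{\F}K']$ furnished by Proposition \ref{prop:properties-HXK.}\eqref{enu:prod-pullback-right.} — combined with the description of $[\varphi(A)\times_{\FHH[K]}K']$ from Proposition \ref{prop:properties-HXK.}\eqref{enu:prod-if-F-is-F_S(S).} and composite-conjugation manipulations via Corollary \ref{cor:conjugation-in-F-and-in-orbit-F.} — matches it term by term with the sum $\sum_{(A,\overline{\varphi})}\tr_{\FHH[\varphi(A)]}^{\FHH[K]}\lui{\varphi}{\cdot}\,\r_{\FHH[A]}^{\FHH[H]}(f)$. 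Item \eqref{enu:composition-transfer.} is the same kind of computation: expand the composite on $x\in I_J^J M$ using the definitions, rewrite $[H\times_{\F}J]$ via Proposition \ref{prop:properties-HXK.}\eqref{enu:prod-pullback-left.} (valid since $H\le K$) and $[H\times_{\FHH[K]}B]$ via Proposition \ref{prop:properties-HXK.}\eqref{enu:prod-if-F-is-F_S(S).}, then collapse the resulting chains of conjugations, inductions and restrictions with Corollary \ref{cor:conjugation-in-F-and-in-orbit-F.} and Lemma \ref{lem:Many-properties-definition.}\eqref{enu:property-composition-is-nice.}.

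Finally, item \eqref{enu:Restriction-and-transfer-makes-burnside.} is a direct unravelling: evaluating $\tr_{\FHH}^{\F}(\r_{\FHH}^{\F}(f))$ on $x\in I_K^K M$, using that $\r_{\FHH}^{\F}(f)$ acts as $f$ and that $f$ commutes with the Mackey algebra to pull $f(x)$ outside, and collapsing $c_{\overline{\varphi}}c_{\overline{\varphi^{-1}}}$ to $I_{\overline{\varphi}(A)}^{\overline{\varphi}(A)}$ by Lemma \ref{lem:Many-properties-definition.}\eqref{enu:property-I_H^H-is-identity.}--\eqref{enu:property-composition-is-nice.}, yields $\sum_{(A,\overline{\varphi})\in[H\times_{\F}K]}I_{\overline{\varphi}(A)}^{K}R_{\overline{\varphi}(A)}^{K}f(x)$, which is precisely $(\overline{H}\cdot)(f(x))$ by the computation of $\theta_{H}^{M}\theta_{M}^{H}$ carried out in Definition \ref{def:theta_S-and-theta^S.} and Proposition \ref{prop:Action-of-centric-burnside-ring.}; summing over $K\in\Fc$ gives $\tr_{\FHH}^{\F}\r_{\FHH}^{\F}=(\overline{H}\cdot)_{*}$. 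I expect the one genuine obstacle to be item \eqref{enu:Mackey-formula.} (and to a lesser extent \eqref{enu:composition-transfer.}), where three layers of bookkeeping — subgroups, representatives of orbit-category morphisms, and double-coset representatives — must be made to line up; once \eqref{enu:Mackey-formula.} and \eqref{enu:Green-formula.} are in place the remaining items are routine.
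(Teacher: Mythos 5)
Your plan matches the paper's proof item for item: the same ingredients (Lemma \ref{lem:Many-properties-definition.}, Corollary \ref{cor:conjugation-in-F-and-in-orbit-F.}, the re-indexings of Proposition \ref{prop:properties-HXK.} \eqref{enu:prod-if-F-is-F_S(S).}, \eqref{enu:prod-iso-left.}, \eqref{enu:prod-pullback-right.} and \eqref{enu:prod-pullback-left.}, and the fact that endomorphisms are $\muFR$-module morphisms) are deployed at the same points. The one detail to make explicit when executing items \eqref{enu:composition-transfer.} and \eqref{enu:Mackey-formula.} is that passing from the double-coset sums produced by Proposition \ref{prop:properties-HXK.} \eqref{enu:prod-pullback-right.}--\eqref{enu:prod-pullback-left.} (indexed by intersections lying in $\Fc$) to the full sums over $\left[\varphi\left(A\right)\times_{\FHH[K]}J\right]$ (indexed by intersections lying in $\FHH[K]^{c}$) uses the $\F$-centricity of $M$ to annihilate the terms attached to non-$\F$-centric intersections.
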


\begin{proof}
$\phantom{.}$
\begin{enumerate}
\item Let $K\in\FHH\cap\Fc$, let $x\in I_{K}^{K}M\downarrow_{\FHH}^{\F}$
and let $f\in\End\left(M\downarrow_{\FHH}^{\F}\right)$. By definition
of restriction we have that $\r_{\FHH}^{\F}\left(f\right)\left(x\right)=f\left(x\right)$.
Since $f$ is a $\FmuFR{\FHH}$-module morphism we have that
\begin{align*}
\lui{c_{h}}{f}\left(x\right) & =c_{c_{h}}f\left(c_{c_{h^{-1}}}x\right)=c_{c_{h}}c_{c_{h^{-1}}}f\left(x\right)=f\left(x\right).
\end{align*}
Where we are viewing $c_{h}$ as an isomorhism from $K^{h}$ to $K$.
Finally, from Proposition \ref{prop:properties-HXK.} \eqref{enu:prod-if-F-is-F_S(S).},
we have that $\left[H\times_{\FHH}K\right]=\left\{ \left(K,\overline{\Id_{K}}\right)\right\} $
and, therefore, from Lemma \ref{lem:Many-properties-definition.}
\eqref{enu:property-I_H^H-is-identity.}, we can conclude that
\begin{align*}
\tr_{\FHH}^{\F}\left(f\right)\left(x\right) & =I_{K}^{K}c_{\overline{\Id_{K}}}\left(f\left(c_{\overline{\Id_{K}}}R_{K}^{K}x\right)\right)=f\left(x\right).
\end{align*}
\item Since the restriction functor satisfies $\downarrow_{\H}^{\mathcal{K}}\downarrow_{\mathcal{K}}^{\F}=\downarrow_{\H}^{\F}$,
then Item \eqref{enu:Composition-restriction.} follows.
\item Let $J\in\Fc$, let $x\in I_{J}^{J}M$ and let $f\in\End\left(M\downarrow_{\FHH}^{\F}\right)$.
From Proposition \ref{prop:properties-HXK.} \eqref{enu:prod-pullback-left.}
we have that 
\begin{align*}
\tr_{\FHH}^{\F}\left(f\right)\left(x\right) & =\sum_{\left(A,\overline{\varphi}\right)\in\left[K\times_{\F}J\right]}\sum_{\begin{array}{c}
{\scriptstyle k\in\left[A\backslash K/H\right]}\\
{\scriptstyle A^{k}\cap H\in\Fc}
\end{array}}I_{\overline{\varphi c_{k}}\left(A^{k}\cap H\right)}^{J}c_{\overline{\varphi c_{k}}}\left(f\left(c_{\overline{\left(\varphi c_{k}\right)^{-1}}}R_{\overline{\varphi c_{k}}\left(A^{k}\cap H\right)}^{J}x\right)\right).
\end{align*}
Since $M\downarrow_{\FHH}^{\F}\in\MackFHR{\F}{\FHH}$ we know that
$c_{\overline{\left(\varphi c_{k}\right)^{-1}}}R_{\overline{\varphi c_{k}}\left(A^{k}\cap H\right)}^{J}\cdot x=0$
for every $\left(A,\overline{\varphi}\right)\in\left[K\times_{\F}J\right]$
and every $k\in\left[A\backslash K/H\right]$ such that $A^{k}\cap H\in\FHH^{c}\backslash\left(\FHH^{c}\cap\Fc\right)$.
Thus, we can replace the second sum of the above equation as a sum
over $k\in\left[A\backslash K/H\right]$ such that $A^{k}\cap H\in\FHH^{c}$.
Using Proposition \ref{prop:properties-HXK.} \eqref{enu:prod-if-F-is-F_S(S).}
we can now rewrite. 
\[
\tr_{\FHH}^{\F}\left(f\right)\left(x\right)=\sum_{\left(A,\overline{\varphi}\right)\in\left[K\times_{\F}J\right]}\sum_{\left(B,\overline{\psi}\right)\in\left[H\times_{\FHH[K]}A\right]}I_{\overline{\varphi\psi}\left(B\right)}^{J}c_{\overline{\varphi\psi}}\left(f\left(c_{\overline{\left(\varphi\psi\right)^{-1}}}R_{\overline{\varphi\psi}\left(B\right)}^{J}x\right)\right).
\]
From Corollary \ref{cor:conjugation-in-F-and-in-orbit-F.} we know
that the above is equal to $\tr_{\FHH[K]}^{\F}\left(\tr_{\FHH}^{\FHH[K]}\left(f\right)\right)\left(x\right)$
thus proving Item \eqref{enu:composition-transfer.}.
\item Let $H\in\Fc$, let $\varphi\colon H\to\varphi\left(H\right)$ and
$\psi\colon\varphi\left(H\right)\to\psi\varphi\left(H\right)$ be
isomorphisms in $\F$, let $J\in\FHH[\psi\varphi\left(H\right)]\cap\Fc$,
let $x\in I_{J}^{J}M\downarrow_{\FHH[\psi\varphi\left(H\right)]}^{\F}$
and let $f\in\End\left(M\downarrow_{\FHH}^{\F}\right)$. Item \eqref{enu:Composition-conjugation.}.
follows from Lemma \ref{lem:Many-properties-definition.} \eqref{enu:property-composition-is-nice.}
via the identities below
\begin{align*}
\lui{\psi\varphi}{f}\left(x\right) & =c_{\psi\varphi}\left(f\left(c_{\varphi^{-1}\psi^{-1}}x\right)\right)=c_{\psi}\left(c_{\varphi}f\left(c_{\varphi^{-1}}c_{\psi^{-1}}x\right)\right)=\lui{\psi}{\left(\lui{\varphi}{f}\right)}\left(x\right).
\end{align*}
\item Let $J\in\FHH[K]\cap\Fc$. Viewing $\left[\varphi\left(J\right)\backslash\varphi\left(K\right)/\varphi\left(H\right)\right]$
as a subset of $\varphi\left(K\right)$ we can take $\varphi^{-1}\left(\left[\varphi\left(J\right)\backslash\varphi\left(K\right)/\varphi\left(H\right)\right]\right)=\left[J\backslash K/H\right]$.
Moreover, for every $\varphi\left(A\right)\le\varphi\left(K\right)$
we have that $\varphi\left(A\right)\in\FHH[\varphi\left(K\right)]^{c}$
if and only if $A\in\FHH[K]^{c}$ and, for every $\varphi\left(k\right)\in\varphi\left(K\right)$
we have that $\varphi^{-1}\left(\varphi\left(J\right)^{\varphi\left(k\right)}\cap\varphi\left(H\right)\right)=J^{k}\cap H$.
From Proposition \ref{prop:properties-HXK.} \eqref{enu:prod-if-F-is-F_S(S).}
we can therefore conclude that 
\begin{align*}
\left[H\times_{\FHH[K]}J\right] & =\hspace{-10bp}\bigsqcup_{\begin{array}{c}
{\scriptstyle k\in\left[J\backslash K/H\right]}\\
{\scriptstyle J^{k}\cap H\in\FHH[K]^{c}}
\end{array}}\hspace{-10bp}\left\{ \left(J^{k}\cap H,\overline{\iota c_{k}}\right)\right\} =\hspace{-10bp}\hspace{-10bp}\bigsqcup_{\left(B,\overline{\psi}\right)\in\left[\varphi\left(H\right)\times_{\FHH[\varphi\left(K\right)]}\varphi\left(J\right)\right]}\hspace{-10bp}\left\{ \left(\varphi^{-1}\left(B\right),\overline{\varphi^{-1}\psi\varphi}\right)\right\} .
\end{align*}
Where, for the second identity, we are using that $c_{k}$ and $\varphi^{-1}c_{\varphi\left(k\right)}\varphi$
are equal as automorphisms of $K$. Let $x\in I_{\varphi\left(J\right)}^{\varphi\left(J\right)}M\downarrow_{\FHH[\varphi\left(K\right)]}^{\F}$.
Using the above identity we have that
\begin{align*}
\tr_{\FHH[\varphi\left(H\right)]}^{\FHH[\varphi\left(K\right)]}\left(\lui{\varphi}{f}\right)\left(x\right) & =\sum_{\left(B,\overline{\psi}\right)\in\left[\varphi\left(H\right)\times_{\FHH[\varphi\left(K\right)]}\varphi\left(J\right)\right]}I_{\overline{\psi}\left(B\right)}^{\varphi\left(J\right)}c_{\overline{\psi\varphi}}\left(f\left(c_{\overline{\left(\psi\varphi\right)^{-1}}}R_{\overline{\psi}\left(B\right)}^{\varphi\left(J\right)}x\right)\right),\\
 & =\sum_{\left(C,\overline{\theta}\right)\in\left[H\times_{\FHH[K]}J\right]}c_{\varphi}I_{\overline{\theta}\left(C\right)}^{J}c_{\overline{\theta}}\left(f\left(c_{\overline{\theta}}R_{\overline{\theta}\left(C\right)}^{J}c_{\varphi^{-1}}x\right)\right)=\lui{\varphi}{\left(\tr_{\FHH}^{\FHH[K]}\left(f\right)\right)}\left(x\right).
\end{align*}
Where, for the second identity, we are using Lemma \ref{lem:Many-properties-definition.}
\eqref{enu:property-I_H^H-is-identity.} and \eqref{enu:Property-conjugation-commutes.}
in order to obtain the identities $I_{\psi\left(B\right)}^{\varphi\left(J\right)}=c_{\varphi}I_{\varphi^{-1}\psi\left(B\right)}^{J}c_{\varphi^{-1}}$
and $R_{\psi\left(B\right)}^{\varphi\left(J\right)}=c_{\varphi}R_{\varphi^{-1}\psi\left(B\right)}^{J}c_{\varphi^{-1}}$
for any representative $\psi$ of $\overline{\psi}$. This proves
Item\textbf{ }\eqref{enu:Transfer-commutes-with-conjugation.}.
\item Let $J\in\FHH[\varphi\left(H\right)]\cap\Fc$ and let $x\in I_{J}^{J}M$.
Item \eqref{enu:restriction-commutes-with-conjugation.} follows from
the identities below
\begin{align*}
\lui{\varphi}{\left(\r_{\FHH}^{\FHH[K]}\left(f\right)\right)}\left(x\right) & =c_{\varphi}\left(f\left(c_{\varphi^{-1}}x\right)\right)=\lui{\varphi}{f}\left(x\right)=\r_{\FHH[\varphi\left(H\right)]}^{\FHH[K]}\left(\lui{\varphi}{f}\right)\left(x\right).
\end{align*}
\item Let $K\in\Fc$, let $x\in I_{K}^{K}M$ and let $f\in\End\left(M\downarrow_{\FHH}^{\F}\right)$.
Using Proposition \ref{prop:properties-HXK.} \eqref{enu:prod-iso-left.}
we have that.
\begin{align*}
\tr_{\FHH[\varphi\left(H\right)]}^{\F}\left(\lui{\varphi}{f}\right)\left(x\right) & =\sum_{\left(B,\overline{\psi}\right)\in\left[\varphi\left(H\right)\times_{\F}K\right]}I_{\overline{\theta\varphi}\left(\varphi^{-1}\left(B\right)\right)}^{K}c_{\overline{\theta\varphi}}\left(f\left(c_{\overline{\left(\theta\varphi\right)^{-1}}}R_{\overline{\theta\varphi}\left(\varphi^{-1}\left(B\right)\right)}^{K}x\right)\right),\\
 & =\sum_{\left(C,\overline{\theta}\right)\in\left[H\times_{\F}K\right]}I_{\overline{\theta}\left(C\right)}^{K}c_{\overline{\theta}}\left(f\left(c_{\overline{\theta^{-1}}}R_{\overline{\theta}\left(C\right)}^{K}x\right)\right)=\tr_{\FHH}^{\F}\left(f\right)\left(x\right).
\end{align*}
Where we are viewing $\varphi$ as an isomorphism between the appropriate
restrictions. This proves Item \eqref{enu:Transfer-eliminates-conjugation.}.
\item Let $K\in\FHH[\varphi\left(H\right)]\cap\Fc$, let $x\in I_{K}^{K}M$
and let $f\in\End\left(M\right)$. Since $f$ is a morphism of $\muFR$-modules
we have that
\begin{align*}
\lui{\varphi}{\left(\r_{\FHH}^{\F}\left(f\right)\right)}\left(x\right) & =c_{\varphi}f\left(c_{\varphi^{-1}}x\right)=c_{\varphi}c_{\varphi^{-1}}f\left(x\right)=f\left(x\right).
\end{align*}
Where we are viewing $\varphi$ as an isomorphism between the appropriate
restrictions. This proves Item \eqref{enu:Restriction-eliminates-conjugation.}.
\item Let $J\in\FHH[K]\cap\Fc$, let $x\in I_{J}^{J}M$ and let $f\in\End\left(M\downarrow_{\FHH}^{\F}\right)$.
From Proposition \ref{prop:properties-HXK.} \eqref{enu:prod-pullback-right.}
we have that 
\begin{align*}
\tr_{\FHH}^{\F}\left(f\right)\left(x\right) & =\sum_{\left(A,\overline{\varphi}\right)\in\left[H\times_{\F}K\right]}\sum_{\begin{array}{c}
{\scriptstyle k\in\left[J\backslash K/\varphi\left(A\right)\right]}\\
{\scriptstyle J^{k}\cap\varphi\left(A\right)\in\Fc}
\end{array}}I_{J\cap\lui{k}{\varphi\left(A\right)}}^{J}c_{c_{k}\varphi}\left(f\left(c_{\left(c_{k}\varphi\right)^{-1}}R_{J\cap\lui{k}{\varphi\left(A\right)}}^{J}x\right)\right).
\end{align*}
Where we are fixing a representative $\varphi$ of $\overline{\varphi}$
and viewing it as an isomorphism onto its image. The same arguments
employed to prove Item \eqref{enu:composition-transfer.} allow us
to replace the second sum of the previous equation with a sum over
$\left[\varphi\left(A\right)\times_{\FHH[K]}J\right]$. This leads
us to the identities
\begin{align*}
\tr_{\FHH}^{\F}\left(f\right)\left(x\right) & =\sum_{\left(A,\overline{\varphi}\right)\in\left[H\times_{\F}K\right]}\sum_{\left(B,\overline{\psi}\right)\in\left[J\times_{\FHH[K]}\varphi\left(A\right)\right]}I_{\overline{\psi}\left(B\right)}^{J}c_{\overline{\psi\varphi}}\left(f\left(c_{\overline{\left(\psi\varphi\right)^{-1}}}R_{\overline{\psi}\left(B\right)}^{J}x\right)\right),\\
 & =\sum_{\left(A,\overline{\varphi}\right)\in\left[H\times_{\F}K\right]}\tr_{\FHH[\varphi\left(A\right)]}^{\FHH[K]}\left(\lui{\varphi}{\left(\r_{\FHH[A]}^{\FHH[H]}\left(f\right)\right)}\right)\left(x\right).
\end{align*}
Here we are viewing $M\downarrow_{\FHH[K]}^{\F}$ as a subset of $M$.
With this inclusion in mind we also have that $\tr_{\FHH}^{\F}\left(f\right)\left(x\right)=\r_{\FHH[K]}^{\F}\left(\tr_{\FHH}^{\F}\left(f\right)\right)\left(x\right)$
and, therefore, the above identities prove Item \eqref{enu:Mackey-formula.}.
\item We will prove just the first identity since the second is proved similarly.
Let $K\in\Fc$. Since $f$ is a morphism of $\muFR$-modules then,
for every $y\in M\downarrow_{\FHH}^{\F}\subset M$ and every $\left(A,\overline{\varphi}\right)\in\left[H\times_{\F}K\right]$
we have that 
\[
f\left(I_{\overline{\varphi}\left(A\right)}^{K}c_{\overline{\varphi}}y\right)=I_{\overline{\varphi}\left(A\right)}^{K}c_{\overline{\varphi}}f\left(y\right)=I_{\overline{\varphi}\left(A\right)}^{K}c_{\overline{\varphi}}\r\downarrow_{\FHH}^{\F}\left(f\right)\left(y\right).
\]
Let $x\in I_{K}^{K}M$. Item \eqref{enu:Green-formula.} now follows
from the above via the identities below
\begin{align*}
f\left(\tr_{\FHH}^{\F}\left(g\right)\left(x\right)\right) & =\sum_{\left(A,\overline{\varphi}\right)\in\left[H\times_{\F}K\right]}I_{\overline{\varphi}\left(A\right)}^{K}c_{\overline{\varphi}}\cdot\left(\left(\r\downarrow_{\FHH}^{\F}\left(f\right)g\right)\left(c_{\overline{\varphi^{-1}}}R_{\overline{\varphi}\left(A\right)}^{K}\cdot x\right)\right),\\
 & =\tr_{\FHH}^{\F}\left(\r\downarrow_{\FHH}^{\F}\left(f\right)g\right)\left(x\right).
\end{align*}
\item Let $K\in\Fc$, let $x\in I_{K}^{K}M$ and let $f\in\End\left(M\right)$.
Since $f$ is a $\muFR$-module morphism then, for every $\left(A,\overline{\varphi}\right)\in\left[H\times_{\F}K\right]$,
we have that $f\left(c_{\overline{\varphi^{-1}}}R_{\overline{\varphi}\left(A\right)}^{K}x\right)=c_{\overline{\varphi^{-1}}}R_{\overline{\varphi}\left(A\right)}^{K}f\left(x\right)$.
Item \eqref{enu:Restriction-and-transfer-makes-burnside.} follows
from this and Proposition \ref{prop:Action-of-centric-burnside-ring.}
via the identities below
\begin{align*}
\tr_{\FHH}^{\F}\left(\r_{\FHH}^{\F}\left(f\right)\right)\left(x\right) & =\sum_{\left(A,\overline{\varphi}\right)\in\left[H\times_{\F}K\right]}I_{\overline{\varphi}\left(A\right)}^{K}c_{\overline{\varphi}}c_{\overline{\varphi^{-1}}}R_{\overline{\varphi}\left(A\right)}^{K}\left(f\left(x\right)\right),\\
 & =\theta_{H}^{M}\left(\theta_{M}^{H}\left(f\left(x\right)\right)\right)=\left(\overline{H}\cdot\right)_{*}\left(f\right)\left(x\right).
\end{align*}
\end{enumerate}
\end{proof}
\begin{rem}
Given a fusion $\mathcal{K}$ contained in $\F$ the transfer $\tr_{\mathcal{K}}^{\F}$
is in general not defined. We will however see in Subsection \ref{subsec:Transfer-map-from-trHNF-to-trHF.}
that something similar can be defined when $\mathcal{K}=N_{\F}\left(H\right)$
for some $H\in\Fc$. In this situation we can obtain a result similar
to Proposition \ref{prop:Properties-of-transfer-restriction-and-conjugation-maps.}
\eqref{enu:composition-transfer.} but replacing $\FHH[K]$ with $N_{\F}\left(H\right)$
(see Lemma \ref{lem:transfer-composes-nicely.}).
\end{rem}

\begin{cor}
\label{cor:Conjugation-is-isomorphism.}Let $M\in\MackFcR$, let $H\in\Fc$
and let $\varphi\colon H\to\varphi\left(H\right)$ be an isomorphism
in $\F$ then $\lui{\Id_{H}}{\cdot}=\Id_{\End\left(M\downarrow_{\FHH}^{\Fc}\right)}$
and $\lui{\varphi}{\cdot}$ is an isomorphism.
\end{cor}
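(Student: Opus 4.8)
The plan is to derive this corollary directly from the relevant parts of Proposition \ref{prop:Properties-of-transfer-restriction-and-conjugation-maps.}. The statement $\lui{\Id_{H}}{\cdot}=\Id_{\End\left(M\downarrow_{\FHH}^{\Fc}\right)}$ is exactly the content of Proposition \ref{prop:Properties-of-transfer-restriction-and-conjugation-maps.} \eqref{enu:Identity.} in the case $h=1_{H}$, since $c_{1_{H}}=\Id_{H}$; so there is essentially nothing to prove there beyond citing it (or, if one prefers, unwinding the definition in Definition \ref{def:Transfer-and-restriction.}: for $x\in I_{K}^{K}M\downarrow_{\FHH}^{\F}$ one gets $\lui{\Id_{H}}{f}\left(x\right)=c_{\Id,K}\left(f\left(c_{\Id,K}\,x\right)\right)=f\left(x\right)$ by Lemma \ref{lem:Many-properties-definition.} \eqref{enu:property-I_H^H-is-identity.}).

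For the second assertion, the key observation is that $\varphi\colon H\to\varphi\left(H\right)$ is an isomorphism in $\F$, hence $\varphi^{-1}\colon\varphi\left(H\right)\to H$ is also an isomorphism in $\F$ (Definition \ref{def:Fusion-system.} (3)), so $\lui{\varphi^{-1}}{\cdot}$ is defined. Then I would apply Proposition \ref{prop:Properties-of-transfer-restriction-and-conjugation-maps.} \eqref{enu:Composition-conjugation.} twice, to the composable pairs $\varphi,\varphi^{-1}$ and $\varphi^{-1},\varphi$, to get
\[
\lui{\varphi^{-1}}{\cdot}\,\lui{\varphi}{\cdot}=\lui{\varphi^{-1}\varphi}{\cdot}=\lui{\Id_{H}}{\cdot}=\Id_{\End\left(M\downarrow_{\FHH}^{\Fc}\right)},
\]
and symmetrically $\lui{\varphi}{\cdot}\,\lui{\varphi^{-1}}{\cdot}=\lui{\varphi\varphi^{-1}}{\cdot}=\lui{\Id_{\varphi\left(H\right)}}{\cdot}=\Id_{\End\left(M\downarrow_{\FHH[\varphi\left(H\right)]}^{\Fc}\right)}$, where in each case I use the first part of the corollary (applied to $H$ and to $\varphi\left(H\right)$ respectively). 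This exhibits $\lui{\varphi^{-1}}{\cdot}$ as a two-sided inverse of $\lui{\varphi}{\cdot}$, so $\lui{\varphi}{\cdot}$ is an isomorphism.

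There is no real obstacle here; the only point requiring a little care is making sure the composition in Proposition \ref{prop:Properties-of-transfer-restriction-and-conjugation-maps.} \eqref{enu:Composition-conjugation.} is read in the correct order — that item is stated as $\lui{\psi}{\cdot}\lui{\varphi}{\cdot}=\lui{\psi\varphi}{\cdot}$ whenever $\varphi\psi$ is defined (i.e. the target of $\varphi$ is the source of $\psi$), so one should check that with $\psi:=\varphi^{-1}$ the composite $\varphi^{-1}\varphi=\Id_{H}$ is indeed the one appearing, and likewise for the reversed order. Also one should note that $\varphi$, being an isomorphism in $\F$ between two objects, restricts to isomorphisms between all $\F$-centric subgroups involved (a subgroup of $H$ is $\F$-centric iff its image under $\varphi$ is, by \cite[Proposition 4.4]{IntroductionToFusionSystemsLinckelmann}, which is used implicitly in Definition \ref{def:Transfer-and-restriction.}), so all the conjugation maps are well-defined endomorphisms of the respective centric restrictions. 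With these bookkeeping points in place the proof is complete.
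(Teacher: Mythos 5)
Your proposal is correct and follows essentially the same route as the paper: verify $\lui{\Id_{H}}{\cdot}=\Id$ directly from the definition (via Lemma \ref{lem:Many-properties-definition.} \eqref{enu:property-I_H^H-is-identity.}) and then use Proposition \ref{prop:Properties-of-transfer-restriction-and-conjugation-maps.} \eqref{enu:Composition-conjugation.} to exhibit $\lui{\varphi^{-1}}{\cdot}$ as a two-sided inverse. If anything, you are slightly more careful than the paper in distinguishing $\lui{\Id_{H}}{\cdot}$ from $\lui{\Id_{\varphi\left(H\right)}}{\cdot}$ in the two composites.
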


\begin{proof}
Let $K\in\FHH\cap\Fc$, let $x\in I_{K}^{K}M$ and let $f\in\End\left(M\downarrow_{\FHH}^{\Fc}\right)$.
From definition of conjugation map and Lemma \ref{lem:Many-properties-definition.}
\eqref{enu:property-I_H^H-is-identity.} we have that 
\[
\lui{\Id_{H}}{f}\left(x\right)=c_{\Id_{H}}f\left(c_{\Id_{H}}x\right)=f\left(x\right).
\]
Thus we have that $\lui{\Id_{H}}{\cdot}=\Id_{\End\left(M\downarrow_{\FHH}^{\Fc}\right)}$.
Using Proposition \ref{prop:Properties-of-transfer-restriction-and-conjugation-maps.}
\eqref{enu:Composition-conjugation.} we can now deduce that
\[
\lui{\varphi}{\cdot}\lui{\varphi^{-1}}{\cdot}=\lui{\varphi^{-1}}{\cdot}\lui{\varphi}{\cdot}=\lui{\Id_{H}}{\cdot}=\Id_{\End\left(M\downarrow_{\FHH}^{\Fc}\right)}.
\]
This proves that $\lui{\varphi}{\cdot}$ has an inverse and, therefore,
is an isomorphism.
\end{proof}
\begin{defn}
\label{def:Image-transfer.}Let $M\in\MackFcR$, let $H\in\Fc$ and
let $\mathcal{X}$ be a family of objects in $\Fc$. We define the
\textbf{transfer image from $H$ to $\F$ on $M$ }and the \textbf{transfer
image from $\mathcal{X}$} \textbf{to $\F$ on $M$ }respectively
as
\begin{align*}
\lui{M}{\Tr_{H}^{\F}} & :=\tr_{\FHH}^{\F}\left(\End\left(M\downarrow_{\FHH}^{\F}\right)\right), & \text{and} &  & \lui{M}{\Tr_{\mathcal{X}}^{\F}} & :=\sum_{H\in\mathcal{X}}\lui{M}{\Tr_{H}^{\F}}.
\end{align*}
If there is no possible confusion we will simply write $\Tr_{H}^{\F}:=\lui{M}{\Tr_{H}^{\F}}$
and $\Tr_{\mathcal{X}}^{\F}:=\lui{M}{\Tr_{\mathcal{X}}^{\F}}$.
\end{defn}

\begin{lem}
\label{lem:Image-transfer-is-ideal.}With the notation of Definition
\ref{def:Image-transfer.}, both $\Tr_{H}^{\F}$ and $\Tr_{\mathcal{X}}^{\F}$
are two sided ideals of $\End\left(M\right)$.
\end{lem}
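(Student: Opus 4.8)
The plan is to show directly that $\Tr_{H}^{\F}$ is closed under addition and absorbs multiplication by $\End\left(M\right)$ on both sides; the statement for $\Tr_{\mathcal{X}}^{\F}$ then follows immediately since a sum of two sided ideals is again a two sided ideal. Closure under addition is clear: $\tr_{\FHH}^{\F}$ is an $\R$-module morphism (Definition \ref{def:Transfer-and-restriction.}), so its image is an $\R$-submodule of $\End\left(M\right)$, hence in particular an additive subgroup. So the whole content is the absorption property.

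For the absorption property the key tool is Proposition \ref{prop:Properties-of-transfer-restriction-and-conjugation-maps.} \eqref{enu:Green-formula.}, the ``Green formula''. Given $f\in\End\left(M\right)$ and an arbitrary element $\tr_{\FHH}^{\F}\left(g\right)\in\Tr_{H}^{\F}$ with $g\in\End\left(M\downarrow_{\FHH}^{\F}\right)$, I would compute
\begin{align*}
f\,\tr_{\FHH}^{\F}\left(g\right) & =\tr_{\FHH}^{\F}\left(\r_{\FHH}^{\F}\left(f\right)g\right),\\
\tr_{\FHH}^{\F}\left(g\right)\,f & =\tr_{\FHH}^{\F}\left(g\,\r_{\FHH}^{\F}\left(f\right)\right).
\end{align*}
Since $\r_{\FHH}^{\F}\left(f\right)\in\End\left(M\downarrow_{\FHH}^{\F}\right)$ (again Definition \ref{def:Transfer-and-restriction.}, where $\r_{\FHH}^{\F}$ is defined as a map into $\End\left(M\downarrow_{\FHH}^{\F}\right)$) and $\End\left(M\downarrow_{\FHH}^{\F}\right)$ is a ring, both $\r_{\FHH}^{\F}\left(f\right)g$ and $g\,\r_{\FHH}^{\F}\left(f\right)$ lie in $\End\left(M\downarrow_{\FHH}^{\F}\right)$, so the right-hand sides are again elements of $\Tr_{H}^{\F}=\tr_{\FHH}^{\F}\left(\End\left(M\downarrow_{\FHH}^{\F}\right)\right)$. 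This shows $f\,\tr_{\FHH}^{\F}\left(g\right)\in\Tr_{H}^{\F}$ and $\tr_{\FHH}^{\F}\left(g\right)\,f\in\Tr_{H}^{\F}$, i.e. $\Tr_{H}^{\F}$ is a two sided ideal. For $\Tr_{\mathcal{X}}^{\F}=\sum_{H\in\mathcal{X}}\Tr_{H}^{\F}$ I would just note that a finite sum of two sided ideals of a ring is a two sided ideal: it is an additive subgroup as a sum of additive subgroups, and $f\cdot\sum_{H}\Tr_{H}^{\F}=\sum_{H}f\cdot\Tr_{H}^{\F}\subseteq\sum_{H}\Tr_{H}^{\F}$, and symmetrically on the right.

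There is really no serious obstacle here: the lemma is a formal consequence of the Green formula already established in Proposition \ref{prop:Properties-of-transfer-restriction-and-conjugation-maps.} \eqref{enu:Green-formula.} together with the fact that transfer and restriction are $\R$-module (resp. $\R$-algebra) maps between the appropriate endomorphism rings. The only point requiring the slightest care is checking that $\End\left(M\downarrow_{\FHH}^{\F}\right)$ is genuinely a ring under composition and that $\r_{\FHH}^{\F}$ lands in it, both of which are immediate from Definition \ref{def:Transfer-and-restriction.}. If one wanted to be completely self-contained one could alternatively avoid invoking the Green formula and instead verify absorption by hand using the explicit formula $\tr_{\FHH}^{\F}\left(g\right)\left(x\right)=\sum_{\left(A,\overline{\varphi}\right)\in\left[H\times_{\F}K\right]}I_{\overline{\varphi}\left(A\right)}^{K}c_{\overline{\varphi}}g\left(c_{\overline{\varphi^{-1}}}R_{\overline{\varphi}\left(A\right)}^{K}x\right)$ and the fact that $f$ commutes with the operators $I_{\overline{\varphi}\left(A\right)}^{K}c_{\overline{\varphi}}$ and $c_{\overline{\varphi^{-1}}}R_{\overline{\varphi}\left(A\right)}^{K}$, but this is precisely the computation that proves the Green formula, so citing \eqref{enu:Green-formula.} is the efficient route.
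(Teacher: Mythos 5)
Your proposal is correct and follows exactly the paper's route: the paper proves this lemma as an immediate consequence of Proposition \ref{prop:Properties-of-transfer-restriction-and-conjugation-maps.} \eqref{enu:Green-formula.}, which is precisely the absorption computation you spell out. Your additional remarks on closure under addition and on sums of two sided ideals are the details the paper leaves implicit.
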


\begin{proof}
This is an immediate consequence of Proposition \ref{prop:Properties-of-transfer-restriction-and-conjugation-maps.}
\eqref{enu:Green-formula.}.
\end{proof}
We now have the following result reminiscent of Lemma \ref{lem:ifX->Y-and-X-proj.-then-Y-proj.}.
\begin{lem}
\label{lem:Transfer-inclusion.}Let $\mathcal{X}$ and $\mathcal{Y}$
be families of objects in $\Fc$, let $\sigma\colon\mathcal{X}\to\mathcal{Y}$
be a map between sets and let $\varPhi=\left\{ \varphi_{H}:H\to\sigma\left(H\right)\right\} _{H\in\mathcal{X}}$
be a family of morphisms in $\Fc$. Then, we have that $\Tr_{\mathcal{X}}^{\F}\subseteq\Tr_{\mathcal{Y}}^{\F}$
regardless of the associated centric Mackey functor.
\end{lem}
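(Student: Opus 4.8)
The plan is to reduce the inclusion to a statement about single subgroups and then chain two of the identities already established in Proposition~\ref{prop:Properties-of-transfer-restriction-and-conjugation-maps.}. Since, by Definition~\ref{def:Image-transfer.}, $\Tr_{\mathcal{X}}^{\F}=\sum_{H\in\mathcal{X}}\Tr_{H}^{\F}$ and $\Tr_{\mathcal{Y}}^{\F}=\sum_{K\in\mathcal{Y}}\Tr_{K}^{\F}$, it suffices to show that $\Tr_{H}^{\F}\subseteq\Tr_{\sigma\left(H\right)}^{\F}$ for each fixed $H\in\mathcal{X}$, and then add these inclusions together.

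So fix $H\in\mathcal{X}$ and set $\varphi:=\varphi_{H}\colon H\to\sigma\left(H\right)$. A morphism in $\Fc$ is an injective group homomorphism, so I would view $\varphi$ as an isomorphism onto its image $\varphi\left(H\right)\le\sigma\left(H\right)$; since $\varphi\left(H\right)=_{\F}H$ is $\F$-centric, $\varphi\left(H\right)\in\Fc$, so the transfer $\tr_{\FHH[\varphi\left(H\right)]}^{\FHH[\sigma\left(H\right)]}$ is defined. Then for an arbitrary $f\in\End\left(M\downarrow_{\FHH}^{\F}\right)$ I would write
\[
\tr_{\FHH}^{\F}\left(f\right)=\tr_{\FHH[\varphi\left(H\right)]}^{\F}\left(\lui{\varphi}{f}\right)=\tr_{\FHH[\sigma\left(H\right)]}^{\F}\left(\tr_{\FHH[\varphi\left(H\right)]}^{\FHH[\sigma\left(H\right)]}\left(\lui{\varphi}{f}\right)\right),
\]
where the first equality is Proposition~\ref{prop:Properties-of-transfer-restriction-and-conjugation-maps.}~\eqref{enu:Transfer-eliminates-conjugation.} applied to $\varphi$ (as an isomorphism onto its image), and the second is Proposition~\ref{prop:Properties-of-transfer-restriction-and-conjugation-maps.}~\eqref{enu:composition-transfer.} applied to the inclusion $\varphi\left(H\right)\le\sigma\left(H\right)$. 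The inner term $\tr_{\FHH[\varphi\left(H\right)]}^{\FHH[\sigma\left(H\right)]}\left(\lui{\varphi}{f}\right)$ is an element of $\End\left(M\downarrow_{\FHH[\sigma\left(H\right)]}^{\F}\right)$, so the right-hand side lies in $\Tr_{\sigma\left(H\right)}^{\F}$. As $f$ ranges over $\End\left(M\downarrow_{\FHH}^{\F}\right)$ this yields $\Tr_{H}^{\F}\subseteq\Tr_{\sigma\left(H\right)}^{\F}\subseteq\Tr_{\mathcal{Y}}^{\F}$, and summing over $H\in\mathcal{X}$ gives $\Tr_{\mathcal{X}}^{\F}\subseteq\Tr_{\mathcal{Y}}^{\F}$.

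There is no real obstacle here: the argument is entirely formal once Proposition~\ref{prop:Properties-of-transfer-restriction-and-conjugation-maps.} is in hand, and it is manifestly independent of the choice of $M$, which is exactly the ``regardless of the associated centric Mackey functor'' clause. The only two points needing a line of care are that a morphism of $\Fc$ is in general not an isomorphism and so must be replaced by its corestriction onto its image before invoking \eqref{enu:Transfer-eliminates-conjugation.}, and that this image $\varphi\left(H\right)$ is again $\F$-centric so that the intermediate transfer between $\FHH[\varphi\left(H\right)]$ and $\FHH[\sigma\left(H\right)]$ makes sense.
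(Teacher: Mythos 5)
Your proof is correct and follows essentially the same route as the paper: reduce to a single morphism $\varphi\colon H\to K$, use Proposition~\ref{prop:Properties-of-transfer-restriction-and-conjugation-maps.}~\eqref{enu:Transfer-eliminates-conjugation.} to replace $H$ by $\varphi\left(H\right)$, and then apply \eqref{enu:composition-transfer.} to the inclusion $\varphi\left(H\right)\le K$. The only cosmetic difference is that the paper additionally cites Corollary~\ref{cor:Conjugation-is-isomorphism.} to upgrade the first step to the equality $\Tr_{H}^{\F}=\Tr_{\varphi\left(H\right)}^{\F}$, whereas you only need (and only claim) the containment.
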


\begin{proof}
From definition of $\Tr_{\mathcal{X}}^{\F}$ and $\Tr_{\mathcal{Y}}^{\F}$
it suffices to prove the statement in the case where $\mathcal{X}:=\left\{ H\right\} $,
$\mathcal{Y}:=\left\{ K\right\} $ and $\Phi:=\left\{ \varphi\colon H\to K\right\} $
for some objects $H,K\in\Fc$ and some morphism $\varphi\in\F$. In
what follows we will view $\varphi$ as an isomorphism onto its image.
From Proposition \ref{prop:Properties-of-transfer-restriction-and-conjugation-maps.}
\eqref{enu:Transfer-eliminates-conjugation.} we have that $\Tr_{H}^{\F}=\tr_{\FHH[\varphi\left(A\right)]}^{\F}\left(\lui{\varphi}{\left(\End\left(M\downarrow_{\FHH}^{\F}\right)\right)}\right)$.
From Corollary \ref{cor:Conjugation-is-isomorphism.} we can conclude
that $\Tr_{H}^{\F}=\Tr_{\varphi\left(H\right)}^{\F}$. Finally, using
Proposition \ref{prop:Properties-of-transfer-restriction-and-conjugation-maps.}
\eqref{enu:composition-transfer.} on the groups $\varphi\left(H\right)\le K$
we can conclude that $\Tr_{H}^{\F}\subseteq\Tr_{K}^{\F}$ just as
we wanted to prove.
\end{proof}
We can now provide the following definition which, as we will later
see (Theorem \ref{thm:Higman's-criterion.}), is closely related to
Definition \ref{def:Relative-projectivity.}.
\begin{defn}
\label{def:relative-projectivity-transfer.}Let $M\in\MackFcR$, let
$f\in\End\left(M\right)$ and let $\mathcal{X}$ be a family of objects
in $\Fc$. We say that $f$ is \textbf{projective relative to $\mathcal{X}$
}(or $\mathcal{X}$\textbf{-projective}) if $f\in\Tr_{\mathcal{X}}^{\F}$.
If $\mathcal{X}=\left\{ H\right\} $ for some $H\in\Fc$ we will simply
say that $f$ is \textbf{projective relative to $H$} (or \textbf{$H$-projective}).
\end{defn}

Let $G$ be a finite group, let $H\le G$ and let $M$ be a Mackey
functor over $G$. Using Equation \eqref{eq:Induction-restriction-groups.}
we can define $\pi_{M}$ to be the natural projection of $M\uparrow_{H}^{G}\downarrow_{H}^{G}$
onto the summand $\left(\lui{1_{G}}{\left(M\downarrow_{H}^{H}\right)}\right)\uparrow_{H}^{H}\cong M$.
By composing it with the natural inclusion, the morphism $\pi_{M}$
can be seen as an endomorphism of $M\downarrow_{H}^{G}\uparrow_{H}^{G}$.
In order to prove Higman's criterion for Mackey functors over finite
groups (see \cite[Theorem 2.2]{NagaoHirosiRepresentationsOfFiniteGroups})
Hirosi and Tsushima use the identity $\tr_{H}^{G}\left(\pi_{M}\right)=\Id_{M\uparrow_{H}^{G}}$
where $\tr_{H}^{G}$ denotes the transfer map for Mackey functors
over finite groups (see \cite[Definition 2.7]{SASAKI198298}). In
order to prove Higman's criterion for centric Mackey functors over
fusion systems (and thus relate Definitions \ref{def:Relative-projectivity.}
and \ref{def:relative-projectivity-transfer.}) we will need a similar
result.
\begin{lem}
\label{lem:Induced-projection-is-identity.}Let $H\in\Fc$, let $M\in\MackFHR{\F}{\FHH}$
and let $\pi_{M}\in\End\left(M\uparrow_{\FHH}^{\F}\downarrow_{\FHH}^{\F}\right)$
be the composition of the projection onto the summand $\left(\lui{\Id_{H}}{\left(M\downarrow_{\FHH}^{\FHH}\right)}\right)\text{\ensuremath{\uparrow}}_{\FHH}^{\FHH}\cong M$
(see Lemma \ref{lem:Mackey-formula-induction-restriction.}) and the
natural inclusion. Then we have that $\tr_{\FHH}^{\F}\left(\pi_{M}\right)=\Id_{M\uparrow_{\FHH}^{\F}}$.
\end{lem}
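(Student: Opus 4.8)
The plan is to unwind the definition of $\tr_{\FHH}^{\F}$ applied to $\pi_M$ using the explicit formula given in Definition \ref{def:Transfer-and-restriction.}, and to show that all but one of the terms in the resulting sum vanish, leaving exactly the identity. Concretely, fix $K\in\Fc$ and $x\in I_K^K\left(M\uparrow_{\FHH}^{\F}\right)$, and recall from Definition \ref{def:Transfer-and-restriction.} that
\[
\tr_{\FHH}^{\F}\left(\pi_M\right)(x)=\sum_{\left(A,\overline{\varphi}\right)\in\left[H\times_{\F}K\right]}I_{\overline{\varphi}\left(A\right)}^{K}c_{\overline{\varphi}}\,\pi_M\left(c_{\overline{\varphi^{-1}}}R_{\overline{\varphi}\left(A\right)}^{K}x\right),
\]
where $\pi_M$ is computed inside $M\uparrow_{\FHH}^{\F}\downarrow_{\FHH}^{\F}$.

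First I would make the domain of $\pi_M$ fully explicit via Lemma \ref{lem:Mackey-formula-induction-restriction.}: the decomposition $M\uparrow_{\FHH}^{\F}\downarrow_{\FHH}^{\F}\cong\bigoplus_{\left(B,\overline{\psi}\right)\in\left[H\times_{\F}H\right]}M_{\left(B,\overline{\psi}\right)}\uparrow_{\FHH[\psi(B)]}^{\FHH}$, together with Proposition \ref{prop:properties-HXK.} \eqref{enu:prod-if-F-is-F_S(S).}, identifies $\left[H\times_{\FHH}H\right]$ with double cosets $\left[H\backslash H/H\right]=\{1\}$, so that $\left[H\times_{\F}H\right]$ contains the distinguished pair $\left(H,\overline{\Id_H}\right)$ corresponding to the diagonal summand $\left(\lui{\Id_H}{(M\downarrow_{\FHH}^{\FHH})}\right)\uparrow_{\FHH}^{\FHH}\cong M$; $\pi_M$ is by definition the projection onto this summand followed by the inclusion. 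Now I need to understand, for each $\left(A,\overline{\varphi}\right)\in\left[H\times_{\F}K\right]$, where $c_{\overline{\varphi^{-1}}}R_{\overline{\varphi}\left(A\right)}^{K}x$ lands in that decomposition. The key computation is to apply Lemma \ref{lem:Mackey-formula-induction-restriction.} with $K$ replaced by $H$ (i.e. restrict from $\F$ to $\FHH$) and use Proposition \ref{prop:properties-HXK.} \eqref{enu:prod-pullback-right.} or \eqref{enu:prod-iso-right.}, or more directly Lemma \ref{lem:proving-muFR-morphism.} \eqref{enu:using-mackey.}, to express the composite $R_{\overline{\varphi}\left(A\right)}^{K}$ followed by the restriction-to-$\FHH$ decomposition. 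Since $A\in\Fc$ and $A\le H$, the pair $\left(A,\overline{\iota_A^H}\right)$ is (up to the preorder $\precsim_H$) dominated by the maximal pair $\left(H,\overline{\Id_H}\right)$, and this is precisely what forces the projection $\pi_M$ to act nontrivially only when $A=H$ and $\overline{\varphi}$ is an isomorphism; for all other summands $\pi_M$ kills the image because the relevant $\varphi(A)$-component is not the diagonal one.

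Assembling this, $\tr_{\FHH}^{\F}\left(\pi_M\right)(x)$ reduces to $\sum_{\left(A,\overline{\varphi}\right)}I_{\overline{\varphi}\left(A\right)}^{K}c_{\overline{\varphi}}c_{\overline{\varphi^{-1}}}R_{\overline{\varphi}\left(A\right)}^{K}x$ where the sum is now genuinely over all of $\left[H\times_{\F}K\right]$ again once the $\pi_M$ has been resolved — but here I would instead argue more cleanly: by Lemma \ref{lem:Many-properties-definition.} \eqref{enu:property-I_H^H-is-identity.} and \eqref{enu:property-composition-is-nice.} we have $c_{\overline{\varphi}}c_{\overline{\varphi^{-1}}}=I_{\overline{\varphi}(A)}^{\overline{\varphi}(A)}$, so each surviving term is $I_{\overline{\varphi}\left(A\right)}^{K}R_{\overline{\varphi}\left(A\right)}^{K}x$, and the total is $\theta_H^{M\uparrow_{\FHH}^{\F}}\!\!\left(\theta^{H}_{M\uparrow_{\FHH}^{\F}}(x)\right)$ restricted appropriately. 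Comparing with Definition \ref{def:theta_S-and-theta^S.} and the explicit basis in Proposition \ref{prop:Decomposition-centric-Mackey-functor.}: an element of $I_K^K\left(M\uparrow_{\FHH}^{\F}\right)$ is an $\R$-combination of $I_{\overline{\psi}(B)}^{K}c_{\overline{\psi}}\otimes m$ with $\left(B,\overline{\psi}\right)\in\left[H\times_{\F}K\right]$ and $m\in I_B^B M$; tracking such a generator through $R_{\overline{\varphi}(A)}^{K}$, the projection $\pi_M$, and the transfer back up, exactly one pair $\left(A,\overline{\varphi}\right)=\left(B,\overline{\psi}\right)$ contributes, returning the generator itself, which gives $\tr_{\FHH}^{\F}(\pi_M)=\Id$.

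The main obstacle I anticipate is bookkeeping the behaviour of $\pi_M$ under the double-coset / pullback combinatorics: one must verify precisely that after restricting $R_{\overline{\varphi}(A)}^{K}x$ to $\FHH$ and re-expanding via Proposition \ref{prop:properties-HXK.}, the \emph{only} summand of $M\uparrow_{\FHH}^{\F}\downarrow_{\FHH}^{\F}$ on which $\pi_M$ is nonzero is hit exactly once and with coefficient $1$, with no "diagonal overlap" contributions from nonidentity double cosets in $\left[H\backslash K/\varphi(A)\right]$ when $\varphi(A)\neq H$ — this is where centricity (via Lemma \ref{lem:Equivalence-R-mod-I_A^AM.}, ensuring the tensor-product identifications are faithful and non-centric pieces vanish modulo $\I$) and the maximality defining $\left[H\times_{\F}K\right]$ must be used carefully. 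Once that single-term survival is established, the remaining manipulations are the routine identities of Lemma \ref{lem:Many-properties-definition.}.
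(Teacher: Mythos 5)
Your overall strategy (expand $\tr_{\FHH}^{\F}\left(\pi_{M}\right)$ term by term and show that exactly one summand survives the projection) is the same as the paper's, but your sketch is missing the one step that makes the argument work, and it contains an incorrect intermediate claim. The paper first observes that $\tr_{\FHH}^{\F}\left(\pi_{M}\right)$ is a $\muFR$-module morphism and that $M\uparrow_{\FHH}^{\F}$ is generated over $\muFR$ by elements $I_{K}^{K}\otimes x$ with $K\in\FHH\cap\Fc$ (so $K\le H$); hence it suffices to evaluate on those. For such a generator, $c_{\overline{\varphi^{-1}}}R_{\overline{\varphi}\left(A\right)}^{K}\left(I_{K}^{K}\otimes x\right)=c_{\overline{\varphi^{-1}}}R_{\overline{\varphi}\left(A\right)}^{K}\otimes x$, and whether $\pi_{M}$ kills it is decided purely by whether $c_{\overline{\varphi^{-1}}}R_{\overline{\varphi}\left(A\right)}^{K}\in\FmuFR{\FHH}$, i.e.\ whether $\overline{\varphi}\in\Orbitize{\FHH}$; maximality of the pairs in $\left[H\times_{\F}K\right]$ then forces $A=K$ and the unique surviving term to be $I_{K}^{K}$. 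Note in particular that the surviving pair is $\left(K,\overline{\iota}\right)$, not $\left(H,\text{iso}\right)$: your assertion that ``$\pi_{M}$ acts nontrivially only when $A=H$ and $\overline{\varphi}$ is an isomorphism'' is false (take $K\lneq H$ and $\left(A,\overline{\varphi}\right)=\left(K,\overline{\Id_{K}}\right)$), and it also conflates the indexing set $\left[H\times_{\F}H\right]$ of the decomposition of $M\uparrow_{\FHH}^{\F}\downarrow_{\FHH}^{\F}$ with the indexing set $\left[H\times_{\F}K\right]$ of the transfer sum.

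Without the module-morphism reduction you are forced to evaluate on arbitrary $x\in I_{K}^{K}\left(M\uparrow_{\FHH}^{\F}\right)$ for arbitrary $K\in\Fc$, and there the question of whether $\pi_{M}$ kills $c_{\overline{\varphi^{-1}}}R_{\overline{\varphi}\left(A\right)}^{K}x$ depends on $x$ and not only on $\left(A,\overline{\varphi}\right)$: expanding $x$ as a combination of $I_{\overline{\psi}\left(B\right)}^{K}c_{\overline{\psi}}\otimes m$ and applying the Mackey formula produces, for each pair of pairs, a sum over double cosets, and one must show that no nonidentity double coset ever lands in the diagonal summand while the identity coset for $\left(A,\overline{\varphi}\right)=\left(B,\overline{\psi}\right)$ contributes exactly $I_{K}^{K}$. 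That is exactly the ``bookkeeping'' you name as the main anticipated obstacle, and it is left entirely unverified; it is also the whole content of the lemma. So as written the proposal does not close the argument, whereas the paper's reduction makes the combinatorics collapse to a one-line maximality check.
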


\begin{proof}
From Definition \ref{def:Restriction-induction-and-conjugation-functors.}
we know that every element in $M\uparrow_{\FHH}^{\F}$ is of the form
$y\otimes x$ for some $y\in\muFR$ and some $x\in M$. Therefore,
since $\tr_{\FHH}^{\F}\left(\pi_{M}\right)$ is a morphism of $\muFR$-modules,
it suffices to prove that $\tr_{\FHH}^{\F}\left(\pi_{M}\right)\left(I_{K}^{K}\otimes x\right)=I_{K}^{K}\otimes x$
for every $K\in\FHH\cap\Fc$ and every $x\in I_{K}^{K}M$. Fix $x$
and $K$ as described. From definition of $\pi_{M}$ we have that
\[
\tr_{\FHH}^{\F}\left(\pi_{M}\right)\left(I_{K}^{K}\otimes x\right)=\sum_{\begin{array}{c}
{\scriptstyle \left(A,\overline{\varphi}\right)\in\left[H\times_{\F}K\right]}\\
{\scriptstyle c_{\overline{\varphi^{-1}}}R_{\overline{\varphi}\left(A\right)}^{K}\in\FmuFR{\FHH}}
\end{array}}I_{\overline{\varphi}\left(A\right)}^{K}c_{\overline{\varphi}}c_{\overline{\varphi^{-1}}}R_{\overline{\varphi}\left(A\right)}^{K}\otimes x.
\]
Since $K\le H$ by assumption, then we have that $c_{\overline{\varphi^{-1}}}R_{\overline{\varphi}\left(A\right)}^{K}\in\FmuFR{\FHH}$
if and only if $\overline{\varphi}\in\Orbitize{\FHH}$. For every
$\left(A,\overline{\varphi}\right)\in\left[H\times_{\F}K\right]$
satisfying $\overline{\varphi}\in\Orbitize{\FHH}$ we can assume without
loss of generality that $A\le K$ and that $\overline{\varphi}=\overline{\iota_{A}^{K}}$
(see Definition \ref{def:HX_FK}). From maximality of the pair $\left(A,\overline{\varphi}\right)$
(see again Definition \ref{def:HX_FK}) the previous description implies
that $A=K$. We can therefore conclude that there exists a unique
$\left(A,\overline{\varphi}\right)\in\left[H\times_{\F}K\right]$
such that $c_{\overline{\varphi^{-1}}}R_{\overline{\varphi}\left(A\right)}^{K}\in\FmuFR{\FHH}$.
Moreover $\left[H\times_{\F}K\right]$ can be taken so that this element
satisfies $c_{\overline{\varphi^{-1}}}R_{\overline{\varphi}\left(A\right)}^{K}=I_{K}^{K}$.
The result now follows from the equation above.
\end{proof}
We are now finally ready to translate Higman's criterion to the context
of centric Mackey functors over fusion systems.
\begin{thm}
\label{thm:Higman's-criterion.}(Higman's criterion) Let $\G$ be
a fusion system containing $\F$, let $M\in\MackFHR{\G}{\F}\subseteq\MackFcR$
(see Definition \ref{def:F-centric-Mackey-functor.}) be an indecomposable
Mackey functor and let $H\in\Fc$. The following are equivalent:
\begin{enumerate}
\item \label{enu:G-centric-summand.}There exists $N\in\MackFHR{\G}{\FHH}$
such that $M$ is a summand of $N\uparrow_{\FHH}^{\F}$ (see Definition
\ref{def:Restriction-induction-and-conjugation-functors.}).
\item \label{enu:summand.}There exists $N\in\MackFHR{\F}{\FHH}$ such that
$M$ is a summand of $N\uparrow_{\FHH}^{\F}$.
\item \label{enu:IdM-projective.}$\Id_{M}$ is $H$-projective (see Definition
\ref{def:relative-projectivity-transfer.}).
\item \label{enu:End(M)=00003DTr_H^F.}\textup{$\End\left(M\right)=\Tr_{H}^{\F}$
(see Definition \ref{def:Image-transfer.}).}
\item \label{enu:theta_H-is-epi-rel-proj.}$\theta_{H}$ (see Definition
\ref{def:theta_S-and-theta^S.}) is an epimorphism and, given $N,L\in\MackFcR$
and Mackey functor morphisms $\varphi\colon N\twoheadrightarrow L$
and $\psi\colon M\to L$ with $\varphi$ surjective, if there exists
a Mackey functor morphism $\gamma\colon M\downarrow_{\FHH}^{\F}\to N\downarrow_{\FHH}^{\F}$
such that $\varphi\downarrow_{\FHH}^{\F}\gamma=\psi\downarrow_{\FHH}^{\F}$
then there exists a Mackey functor morphism $\hat{\gamma}:M\to N$
such that $\varphi\hat{\gamma}=\psi$.
\item \label{enu:theta^H-is-mono-rel-inj.}$\theta^{H}$ (see Definition
\ref{def:theta_S-and-theta^S.}) is a monomorphism and, given $N,L\in\MackFcR$
and Mackey functor morphisms $\varphi\colon L\hookrightarrow N$ and
$\psi\colon L\to M$ with $\varphi$ injective, if there exists a
Mackey functor morphism $\gamma\colon N\downarrow_{\FHH}^{\F}\to M\downarrow_{\FHH}^{\F}$
such that $\gamma\varphi\downarrow_{\FHH}^{\F}=\psi\downarrow_{\FHH}^{\F}$
then there exists a Mackey functor morphism $\hat{\gamma}\colon N\to M$
such that $\hat{\gamma}\varphi=\psi$.
\item \label{enu:theta_H-is-epi-rel-split.}$\theta_{H}$ is an epimorphism
and, given $N\in\MackFcR$ and an epimorphism of Mackey functors $\varphi\colon N\twoheadrightarrow M$,
if $\varphi\downarrow_{\FHH}^{\F}$ splits then $\varphi$ splits.
\item \label{enu:theta^H-is-mono-rel-split.}$\theta^{H}$ is a monomorphism
and, given $N\in\MackFcR$ and a monomorphism of Mackey functors $\varphi\colon M\hookrightarrow N$,
if $\varphi\downarrow_{\FHH}^{\F}$ splits then $\varphi$ splits.
\item \label{enu:projective.}$\theta_{H}$ is split surjective (equivalently
$M$ is $H$-projective see Definition \ref{def:Relative-projectivity.}).
\item \label{enu:injective.}$\theta^{H}$ is split injective.
\item \label{enu:M-summand-M_H.}$M$ is a direct summand of $M_{H}$ (see
Definition \ref{def:theta_S-and-theta^S.}).
\end{enumerate}
\end{thm}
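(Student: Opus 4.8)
The plan is to dispose of the formal equivalence $(3)\Leftrightarrow(4)$ first, then run a single cycle through the ``projectivity'' conditions $(3)$, $(4)$, $(9)$, $(10)$, $(11)$, $(1)$, $(2)$, and finally hang the lifting/splitting conditions $(5)$--$(8)$ off that cycle. For $(3)\Leftrightarrow(4)$: by Lemma~\ref{lem:Image-transfer-is-ideal.} the set $\Tr_{H}^{\F}$ is a two-sided ideal of $\End(M)$, so it contains $\Id_{M}$ if and only if it equals $\End(M)$. Two preliminary observations will be used throughout. First, I would extend the transfer of Definition~\ref{def:Transfer-and-restriction.} to morphisms between possibly different centric Mackey functors by setting $\tr_{\FHH}^{\F}(g):=\theta_{H}^{Q}\circ g\!\uparrow_{\FHH}^{\F}\circ\,\theta_{P}^{H}\colon P\to Q$ for $g\colon P\!\downarrow_{\FHH}^{\F}\to Q\!\downarrow_{\FHH}^{\F}$; since $\theta_{H}$ and $\theta^{H}$ are natural transformations (immediate from Definition~\ref{def:theta_S-and-theta^S.}), one gets $b\circ\tr_{\FHH}^{\F}(g)\circ a=\tr_{\FHH}^{\F}\bigl(b\!\downarrow_{\FHH}^{\F}\circ\,g\circ a\!\downarrow_{\FHH}^{\F}\bigr)$ for all composable $a,b$, i.e. the Green identity of Proposition~\ref{prop:Properties-of-transfer-restriction-and-conjugation-maps.}~\eqref{enu:Green-formula.} promoted to morphisms. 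Second, applying Lemma~\ref{lem:Induced-projection-is-identity.} to $M\!\downarrow_{\FHH}^{\F}\in\MackFHR{\G}{\FHH}$ produces $\pi\in\End(M_{H}\!\downarrow_{\FHH}^{\F})$ with $\tr_{\FHH}^{\F}(\pi)=\Id_{M_{H}}$, so $M_{H}$ is $H$-projective; and the decomposition of Lemma~\ref{lem:Mackey-formula-induction-restriction.} (the summand indexed by $(H,\overline{\Id_{H}})\in[H\times_{\F}H]$) exhibits $\theta_{H}^{M}\!\downarrow_{\FHH}^{\F}$ as split surjective and $\theta_{M}^{H}\!\downarrow_{\FHH}^{\F}$ as split injective.

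For the cycle I would argue: $(4)\Rightarrow(9),(10)$ --- if $\Id_{M}=\theta_{H}^{M}\circ f\!\uparrow_{\FHH}^{\F}\circ\,\theta_{M}^{H}$ then $f\!\uparrow_{\FHH}^{\F}\circ\,\theta_{M}^{H}$ splits $\theta_{H}$ and $\theta_{H}^{M}\circ f\!\uparrow_{\FHH}^{\F}$ is a retraction of $\theta^{H}$; $(9)\Rightarrow(11)$ --- a split epimorphism displays its target as a direct summand of its source; $(11)\Rightarrow(4)$ and $(10)\Rightarrow(4)$ --- write $\Id_{M}$ as $p\circ i$ (resp. $r\circ\theta^{H}$), insert $\Id_{M_{H}}=\tr_{\FHH}^{\F}(\pi)$ in the middle, and apply the promoted Green identity to land $\Id_{M}$ inside $\Tr_{H}^{\F}$; $(1)\Rightarrow(2)$ --- $\MackFHR{\G}{\FHH}\subseteq\MackFHR{\F}{\FHH}$ because $\Gc\subseteq\Fc$; $(2)\Rightarrow(4)$ --- for $N\in\MackFHR{\F}{\FHH}$ with $M$ a summand of $N\!\uparrow_{\FHH}^{\F}$ use $\Id_{N\uparrow}=\tr_{\FHH}^{\F}(\pi_{N})$ (Lemma~\ref{lem:Induced-projection-is-identity.}) and the same Frobenius sandwich; $(11)\Rightarrow(1)$ --- take $N:=M\!\downarrow_{\FHH}^{\F}\in\MackFHR{\G}{\FHH}$ (Proposition~\ref{prop:centric-Induction-restriction.}), so $M$ is a summand of $M_{H}=N\!\uparrow_{\FHH}^{\F}$. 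Together with $(3)\Leftrightarrow(4)$ this yields $(1)\Leftrightarrow(2)\Leftrightarrow(3)\Leftrightarrow(4)\Leftrightarrow(9)\Leftrightarrow(10)\Leftrightarrow(11)$.

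Next I would attach $(5)$--$(8)$. From $\Id_{M}=\theta_{H}^{M}\circ f\!\uparrow_{\FHH}^{\F}\circ\,\theta_{M}^{H}$ (condition $(4)$), and using naturality of $\theta_{H}$, the candidates $\hat\gamma:=\theta_{H}^{N}\circ(\gamma f)\!\uparrow_{\FHH}^{\F}\circ\,\theta_{M}^{H}$ and $s:=\theta_{H}^{N}\circ(\sigma f)\!\uparrow_{\FHH}^{\F}\circ\,\theta_{M}^{H}$ verify the lifting clause of $(5)$ and the splitting clause of $(7)$; the dual formulas (built from $\theta^{H}$, using injectivity of $\varphi$) give $(6)$ and $(8)$; and ``$\theta_{H}$ epi'', resp. ``$\theta^{H}$ mono'', is already part of $(4)\Leftrightarrow(9)\Leftrightarrow(10)$. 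Conversely, to close $(5),(7)\Rightarrow(9)$ I would feed the test datum $\varphi:=\theta_{H}^{M}\colon M_{H}\twoheadrightarrow M$ (legitimate since $\theta_{H}$ is epi by hypothesis), $\psi:=\Id_{M}$, and $\gamma:=$ a section of $\theta_{H}^{M}\!\downarrow_{\FHH}^{\F}$ into $(5)$ (resp. let $(7)$ split $\theta_{H}^{M}$, since $\theta_{H}^{M}\!\downarrow_{\FHH}^{\F}$ splits), obtaining a section of $\theta_{H}$; dually $(6),(8)\Rightarrow(10)$ via $\varphi:=\theta_{M}^{H}\colon M\hookrightarrow M_{H}$ and a retraction of $\theta_{M}^{H}\!\downarrow_{\FHH}^{\F}$. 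This completes the list of equivalences.

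The main obstacle will be the two preliminary observations, not the cycle. Promoting the Green identity requires genuinely checking that $\theta_{H}$ and $\theta^{H}$ are natural, i.e. a careful but routine unwinding of Definition~\ref{def:theta_S-and-theta^S.} against the definitions of $\uparrow_{\FHH}^{\F}$ and $\downarrow_{\FHH}^{\F}$. The harder point is the splitness of $\theta_{H}^{M}\!\downarrow_{\FHH}^{\F}$ and $\theta_{M}^{H}\!\downarrow_{\FHH}^{\F}$: one must isolate inside the decomposition of Lemma~\ref{lem:Mackey-formula-induction-restriction.} the ``diagonal'' summand of $M_{H}\!\downarrow_{\FHH}^{\F}$ arising from $(H,\overline{\Id_{H}})\in[H\times_{\F}H]$, check that this pair is genuinely maximal (not absorbed by $\F$-fusion into a larger one), and verify that $\theta_{H}^{M}\!\downarrow_{\FHH}^{\F}$ restricts to an isomorphism from it onto $M\!\downarrow_{\FHH}^{\F}$ while $\theta_{M}^{H}\!\downarrow_{\FHH}^{\F}$ maps isomorphically into it. This is precisely the bookkeeping packaged by Lemma~\ref{lem:Induced-projection-is-identity.}, whose morphism $\pi_{M}$ is the relevant projector; matching $\theta_{H}^{M}\!\downarrow_{\FHH}^{\F}$ and $\theta_{M}^{H}\!\downarrow_{\FHH}^{\F}$ against $\pi_{M}$ and against the identification of that summand with $M\!\downarrow_{\FHH}^{\F}$ is where real care will be needed.
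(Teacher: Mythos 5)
Your proposal is correct and follows essentially the same route as the paper: both hinge on Lemma \ref{lem:Induced-projection-is-identity.} (giving $\tr_{\FHH}^{\F}(\pi)=\Id$ on induced modules), the summand identification of Lemma \ref{lem:Mackey-formula-induction-restriction.} to split $\theta_{H}\!\downarrow_{\FHH}^{\F}$ and $\theta^{H}\!\downarrow_{\FHH}^{\F}$, the Green formula sandwich to land $\Id_{M}$ in $\Tr_{H}^{\F}$, and the same candidate lifts $\hat{\gamma}=\theta_{H}^{N}\circ(\gamma f)\!\uparrow_{\FHH}^{\F}\circ\,\theta_{M}^{H}$ for conditions (5)--(8). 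The only differences are organizational (a different ordering of the implication cycle) and your explicit packaging of the naturality of $\theta_{H},\theta^{H}$ as a ``promoted'' Green identity, which the paper instead verifies inline by element computations.
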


\begin{proof}
The proof is analogous to that of \cite[Theorem 2.2]{NagaoHirosiRepresentationsOfFiniteGroups}
except for some details in the proof of the implications \eqref{enu:summand.}$\Rightarrow$\eqref{enu:IdM-projective.},
\eqref{enu:theta_H-is-epi-rel-split.}$\Rightarrow$\eqref{enu:projective.}
and \eqref{enu:theta^H-is-mono-rel-split.}$\Rightarrow$\eqref{enu:injective.}
for which we will need to use Lemmas \ref{lem:Mackey-formula-induction-restriction.}
and \ref{lem:Induced-projection-is-identity.} in order to replace
analogous results for Mackey functors over finite groups.

\eqref{enu:G-centric-summand.}$\Rightarrow$\eqref{enu:summand.}.

Since $\F\subseteq\G$, then $\FHH\cap\G^{c}\subseteq\FHH\cap\Fc$
and, therefore, $\MackFHR{\G}{\FHH}\subseteq\MackFHR{\F}{\FHH}$.
The implication follows.

\eqref{enu:summand.}$\Rightarrow$\eqref{enu:IdM-projective.}.

Let $N\in\MackFHR{\F}{\FHH}$ such that there exists $L\in\MackFR$
satisfying $N\uparrow_{\FHH}^{\F}=M\oplus L$, Let $\pi_{M}$ be the
endomorphism of $N\uparrow_{\FHH}^{\F}$ given by the natural projection
onto $M$ followed by the natural inclusion and let $\pi_{N}\in\End\left(N\uparrow_{\FHH}^{\F}\downarrow_{\FHH}^{\F}\right)$
be the endomorphism of Lemma \ref{lem:Induced-projection-is-identity.}
satisfying $\lui{N}{\tr_{\FHH}^{\F}}=N\uparrow_{\FHH}^{\F}$. Since
restriction preserves direct sums then we have that $N\uparrow_{\FHH}^{\F}\downarrow_{\FHH}^{\F}=M\downarrow_{\FHH}^{\F}\oplus L\downarrow_{\FHH}^{\F}$
and that the endomorphism $\lui{N}{\r_{\FHH}^{\F}}\left(\pi_{M}\right)$
of $N\uparrow_{\FHH}^{\F}\downarrow_{\FHH}^{\F}$ is the projection
onto $M\downarrow_{\FHH}^{\F}$ followed by the natural inclusion.
We can now define $f\in\End\left(M\downarrow_{\FHH}^{\F}\right)$
by setting for every $x\in M\downarrow_{\FHH}^{\F}$
\[
f\left(x\right):=\lui{N}{\r_{\FHH}^{\F}}\left(\pi_{M}\right)\left(\pi_{N}\left(x\right)\right).
\]
Here we are seeing $M\downarrow_{\FHH}^{\F}$ as a subset of $N\uparrow_{\FHH}^{\F}\downarrow_{\FHH}^{\F}$
in order to apply $\pi_{N}$. With this setup, for every $K\in\Fc$
and every $x\in I_{K}^{K}M\subseteq I_{K}^{K}N$, we have that. 
\[
\lui{M}{\tr_{\FHH}^{\F}}\left(f\right)\left(x\right)=\lui{N}{\tr_{\FHH}^{\F}}\left(\lui{N}{\r_{\FHH}^{\F}}\left(\pi_{M}\right)\pi_{N}\right)\left(x\right)=\pi_{M}\lui{N}{\tr_{\FHH}^{\F}}\left(\pi_{N}\right)\left(x\right)=\pi_{M}\left(x\right)=x.
\]
where the first identity follows from definition, for the second identity
we are using Proposition \ref{prop:Properties-of-transfer-restriction-and-conjugation-maps.}
\eqref{enu:Green-formula.}, for the third we are using Lemma \ref{lem:Induced-projection-is-identity.}
and for the last we are using the fact that $x\in M$ and definition
of $\pi_{M}$. From the above we can conclude that $\lui{M}{\tr_{\FHH}^{\F}}\left(f\right)=\Id_{M}$
which implies that $\Id_{M}$ is $H$-projective thus proving the
implication.

\eqref{enu:IdM-projective.}$\Leftrightarrow$\eqref{enu:End(M)=00003DTr_H^F.}.

By definition we have that $\Id_{M}$ is $H$-projective if and only
if $\Id_{M}\in\Tr_{H}^{\F}$. From Lemma \ref{lem:Image-transfer-is-ideal.}
we know that $\Tr_{H}^{\F}$ is an ideal of $\End\left(M\right)$.
Therefore $\Tr_{H}^{\F}=\End\left(M\right)$ if and only if $\Id_{M}\in\Tr_{H}^{\F}$.
This proves that Items \eqref{enu:IdM-projective.} and \eqref{enu:End(M)=00003DTr_H^F.}
are equivalent.

\eqref{enu:IdM-projective.}$\Rightarrow$\eqref{enu:theta_H-is-epi-rel-proj.}.

If Item \eqref{enu:IdM-projective.} is satisfied then there exists
$f\in\End\left(M\downarrow_{\FHH}^{\F}\right)$ such that $\tr_{\FHH}^{\F}\left(f\right)=\theta_{H}^{M}f\uparrow_{\FHH}^{\F}\theta_{M}^{H}=\Id_{M}$
(see Definition \ref{def:Transfer-and-restriction.}). Therefore $\theta_{M}^{H}$
is a split injective and $\theta_{H}^{M}$ is split surjective. In
particular $\theta_{H}^{M}$ is surjective. Let $N,L,\varphi,\psi$
and $\gamma$ be as in the statement of item \eqref{enu:IdM-projective.}
and define $\hat{\gamma}:=\theta_{H}^{N}\left(\gamma f\right)\uparrow_{\FHH}^{\F}\theta_{M}^{H}$.
Then, for every $x\in N$ and every $y\in\muFR$, we have that 
\[
\varphi\left(\theta_{H}^{N}\left(\gamma\uparrow_{\FHH}^{\F}\left(y\otimes x\right)\right)\right)=y\varphi\downarrow_{\FHH}^{\F}\left(\gamma\left(x\right)\right)=y\psi\downarrow_{\FHH}^{\F}\left(x\right)=\theta_{H}^{M}\left(\psi\downarrow_{\FHH}^{\F}\uparrow_{\FHH}^{\F}\left(y\otimes x\right)\right).
\]
Where, for the first identity, we are using the fact that $\varphi$
is a $\muFR$-module morphism in order to get $\varphi\left(y\gamma\left(x\right)\right)=y\varphi\left(\gamma\left(x\right)\right)=y\varphi\downarrow_{\FHH}^{\F}\left(\gamma\left(x\right)\right)$.
The above equation proves that $\varphi\theta_{H}^{N}\gamma\uparrow_{\FHH}^{\F}=\theta_{H}^{M}\psi\downarrow_{\FHH}^{\F}\uparrow_{\FHH}^{\F}$.
The implication now follows from the identities below
\[
\varphi\hat{\gamma}=\varphi\theta_{H}^{N}\left(\gamma f\right)\uparrow_{\FHH}^{\F}\theta_{M}^{H}=\theta_{H}^{M}\left(\psi\downarrow_{\FHH}^{\F}f\right)\uparrow_{\FHH}^{\F}\theta_{M}^{H}=\tr_{\FHH}^{\F}\left(\r_{\FHH}^{\F}\left(\psi\right)f\right)=\psi\tr_{\FHH}^{\F}\left(f\right)=\psi.
\]
Where, for the third identity, we are using Definition \ref{def:Transfer-and-restriction.}
while, for the fourth identity, we are using Proposition \ref{prop:Properties-of-transfer-restriction-and-conjugation-maps.}
\eqref{enu:Green-formula.}.

\eqref{enu:IdM-projective.}$\Rightarrow$\eqref{enu:theta^H-is-mono-rel-inj.}.

Let $f$ be as in the previous implication. As before we have that
$\theta_{M}^{H}$ is split injective and, in particular, it is injective.
Let $N,L,\varphi,\psi$ and $\gamma$ be as in the statement of Item
\eqref{enu:IdM-projective.} and define $\hat{\gamma}:=\theta_{H}^{M}\left(f\gamma\right)\uparrow_{\FHH}^{\F}\theta_{N}^{H}$.
Then, for every $K\in\Fc$ and every $x\in I_{K}^{K}M$, we have that
\[
\left(\gamma\uparrow_{\FHH}^{\F}\theta_{N}^{H}\varphi\right)\left(x\right)=\sum_{\left(A,\overline{\alpha}\right)\in\left[H\times_{\F}K\right]}I_{\overline{\alpha}\left(A\right)}^{K}c_{\overline{\alpha}}\otimes\gamma\varphi\downarrow_{\FHH}^{\F}\left(c_{\overline{\alpha^{-1}}}R_{\overline{\alpha}\left(A\right)}^{K}x\right)=\left(\psi\downarrow_{\FHH}^{\F}\uparrow_{\FHH}^{\F}\theta_{M}^{H}\right)\left(x\right).
\]
Where, for the second identity, we are using the identity $\gamma\varphi\downarrow_{\FHH}^{\F}=\psi\downarrow_{\FHH}^{\F}$
while, for the first identity, we are using that $\varphi$ is a morphism
of $\muFR$-modules in order to get $c_{\overline{\alpha^{-1}}}R_{\overline{\alpha}\left(A\right)}^{K}\varphi\left(x\right)=\varphi\left(c_{\overline{\alpha^{-1}}}R_{\overline{\alpha}\left(A\right)}^{K}x\right)$
and we are using that $c_{\overline{\alpha^{-1}}}R_{\overline{\alpha}\left(A\right)}^{K}x\in M\downarrow_{\FHH}^{\F}$
in order to write $\varphi\downarrow_{\FHH}^{\F}$ instead of $\varphi$.
The above equation proves that $\gamma\uparrow_{\FHH}^{\F}\theta_{N}^{H}\varphi=\psi\downarrow_{\FHH}^{\F}\uparrow_{\FHH}^{\F}\theta_{M}^{H}$.
The implication now follows from the identities below
\[
\hat{\gamma}\varphi=\theta_{H}^{M}\left(f\gamma\right)\uparrow_{\FHH}^{\F}\theta_{N}^{H}\varphi=\theta_{H}^{M}\left(f\psi\downarrow_{\FHH}^{\F}\right)\uparrow_{\FHH}^{\F}\theta_{M}^{H}=\tr_{\FHH}^{\F}\left(f\r_{\FHH}^{\F}\left(\psi\right)\right)=\tr_{\FHH}^{\F}\left(f\right)\psi=\psi.
\]
Where, for the third identity, we are using Definition \ref{def:Transfer-and-restriction.}
while, for the fourth identity, we are using Proposition \ref{prop:Properties-of-transfer-restriction-and-conjugation-maps.}
\eqref{enu:Green-formula.}.

\eqref{enu:theta_H-is-epi-rel-proj.}$\Rightarrow$\eqref{enu:theta_H-is-epi-rel-split.}.

With the notation of Item \eqref{enu:theta_H-is-epi-rel-proj.} let
$L:=M$ and $\psi:=\Id_{M}$. Since $\varphi\downarrow_{\FHH}^{\F}$
splits then there exists $\gamma\colon M\downarrow_{\FHH}^{\F}\to N\downarrow_{\FHH}^{\F}$
such that $\varphi\downarrow_{\FHH}^{\F}\gamma=\Id_{M\downarrow_{\FHH}^{\F}}=\psi\downarrow_{\FHH}^{\F}$.
Therefore, by hypothesis, there exists $\hat{\gamma}:M\to N$ such
that $\varphi\hat{\gamma}=\psi=\Id_{M}$. In other words $\varphi$
splits.

\eqref{enu:theta^H-is-mono-rel-inj.}$\Rightarrow$\eqref{enu:theta^H-is-mono-rel-split.}.

With notation as in Item \eqref{enu:theta^H-is-mono-rel-inj.} let
$L:=M$, $\psi:=\Id_{M}$ and $\gamma\colon N\to M$ such that $\gamma\varphi\downarrow_{\FHH}^{\F}=\Id_{M\downarrow_{\FHH}^{\F}}=\psi\downarrow_{\FHH}^{\F}$.
Then, by hypothesis, there exists $\hat{\gamma}\colon N\to M$ such
that $\hat{\gamma}\varphi=\Id_{M}$. In other words $\varphi$ splits.

\eqref{enu:theta_H-is-epi-rel-split.}$\Rightarrow$\eqref{enu:projective.}.

Let $f\colon M\downarrow_{\FHH}^{\F}\hookrightarrow M_{H}\downarrow_{\FHH}^{\F}$
be the $\FmuFR{\FHH}$-module morphism given by Lemma \ref{lem:Mackey-formula-induction-restriction.}
and that sends $M\downarrow_{\FHH}^{\F}$ isomorphically into the
summand $\left(\lui{\Id_{H}}{\left(M\downarrow_{\FHH}^{\F}\right)}\right)\text{\ensuremath{\uparrow}}_{\FHH}^{\FHH}$
of $M_{H}\downarrow_{\FHH}^{\F}$. With this setup we have that $\theta_{H}\downarrow_{\FHH}^{\F}f=\Id_{M}\downarrow_{\FHH}^{\F}$.
Item \eqref{enu:projective.} now follows from Item \eqref{enu:theta_H-is-epi-rel-split.}
by taking $N:=M_{H}$ and $\varphi=\theta_{H}$.

\eqref{enu:theta^H-is-mono-rel-split.}$\Rightarrow$\eqref{enu:injective.}.

From Lemma \ref{lem:Mackey-formula-induction-restriction.} we can
take $\pi\colon M_{H}\downarrow_{\FHH}^{\F}\to M\downarrow_{\FHH}^{\F}$
to be the natural projection onto the summand $M\downarrow_{\FHH}^{\F}\cong\left(\lui{\Id_{H}}{\left(M\downarrow_{\FHH}^{\F}\right)}\right)\text{\ensuremath{\uparrow}}_{\FHH}^{\FHH}$.
Dually to the previous implication we have that $\pi\theta^{H}\downarrow_{\FHH}^{\F}=\Id_{M}\downarrow_{\FHH}^{\F}$.
Item \eqref{enu:injective.} now follows from Item \eqref{enu:theta^H-is-mono-rel-split.}
by taking $N:=M_{H}$ and $\varphi=\theta^{H}$.

\eqref{enu:projective.}$\Rightarrow$\eqref{enu:M-summand-M_H.}
and \eqref{enu:injective.}$\Rightarrow$\eqref{enu:M-summand-M_H.}.

The fact that $M$ is a summand of $M_{H}$ is an immediate consequence
of either $\theta_{H}$ being split surjective (Item \eqref{enu:projective.})
or $\theta^{H}$ being split injective (Item \eqref{enu:injective.}).

\eqref{enu:M-summand-M_H.}$\Rightarrow$\eqref{enu:G-centric-summand.}.

From Proposition \ref{prop:centric-Induction-restriction.} we know
that $N:=M\downarrow_{\FHH}^{\F}$ is $\G$-centric and, from Item
\eqref{enu:M-summand-M_H.} we have that $M$ is a summand of $N\uparrow_{\FHH}^{\F}=M_{H}$.
\end{proof}
\begin{rem}
The equivalence \eqref{enu:summand.}$\Leftrightarrow$\eqref{enu:G-centric-summand.}
of Theorem \ref{thm:Higman's-criterion.} can be proven independently
from the rest.
\end{rem}

We conclude this section with the following result which will allow
us to always talk about the vertex of an indecomposable centric Mackey
functor over a fusion system.
\begin{cor}
\label{cor:irreducible-admits-vertex.}Let $\R$ be a complete local
and $p$-local $PID$, let $\G$ be a fusion system containing $\F$
and let $M\in\MackFHR{\G}{\F}$ be an indecomposable Mackey functor.
Then $M$ admits a vertex (see Definition \ref{def:vertex.}). Moreover
$V_{M}\in\F\cap\G^{c}$ and, for any $N\in\MackFcR$ such that $M$
is a summand of $N$, we have that $V_{M}\in\mathcal{X}_{N}$.
\end{cor}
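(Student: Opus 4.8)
The plan is to combine Higman's criterion (Theorem \ref{thm:Higman's-criterion.}) with the Krull–Schmidt–Azumaya theorem, exactly in the spirit of the classical Mackey-functor-over-groups argument sketched in the introduction to Section \ref{sec:Relative-projectivity-and-the-Higman's-criterion.}. First I would invoke Lemma \ref{lem:projective-relative-to-S.} to see that $M$ is $S$-projective, so the defect set $\mathcal{X}_{M}$ of Definition \ref{def:vertex.} is a nonempty family closed under $\F$-subconjugacy; let $\mathcal{X}_{M}^{\text{max}}$ be its maximal elements up to $\F$-isomorphism (Corollary \ref{cor:projective-respect-to-bigger.}). By Corollary \ref{cor:projective-respect-to-bigger.} \eqref{enu:X-proj implies X^max proj.}, $M$ is $\mathcal{X}_{M}^{\text{max}}$-projective, so $\theta_{\mathcal{X}_{M}^{\text{max}}}$ is split surjective. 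Writing $M_{\mathcal{X}_{M}^{\text{max}}}=\bigoplus_{H\in\mathcal{X}_{M}^{\text{max}}}M_{H}$ and using $M_{H}=N\uparrow_{\FHH}^{\F}$ with $N=M\downarrow_{\FHH}^{\F}\in\MackFHR{\G}{\FHH}$ (Proposition \ref{prop:centric-Induction-restriction.}), the splitting exhibits $M$ as a direct summand of $\bigoplus_{H\in\mathcal{X}_{M}^{\text{max}}}M_{H}$.

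Next I would apply Krull–Schmidt–Azumaya: since $\R$ is a complete local $PID$, $\End(M)$ is as well a suitable ring for the theorem (cf. \cite[Theorem 6.12 (ii)]{MethodsOfRepresentationTheoryCurtisReiner}), so the indecomposable $M$ is isomorphic to an indecomposable summand of $M_{H}$ for some fixed $H\in\mathcal{X}_{M}^{\text{max}}$. By Theorem \ref{thm:Higman's-criterion.} (the equivalence \eqref{enu:M-summand-M_H.}$\Leftrightarrow$\eqref{enu:projective.}$\Leftrightarrow$\eqref{enu:G-centric-summand.}), being a summand of $M_{H}$ means $M$ is $H$-projective; hence $\{H\}$ is a family relative to which $M$ is projective, and by minimality of $\mathcal{X}_{M}$ together with closure under $\F$-subconjugacy we get that the $\F$-subconjugacy closure of $\{H\}$ equals $\mathcal{X}_{M}$ — in particular every other element of $\mathcal{X}_{M}^{\text{max}}$ is $\F$-isomorphic to $H$, so $|\mathcal{X}_{M}^{\text{max}}|=1$ and $M$ admits a vertex. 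Replacing $H$ by a fully $\F$-normalized conjugate (using Proposition \ref{prop:Properties-of-transfer-restriction-and-conjugation-maps.} \eqref{enu:Transfer-eliminates-conjugation.} to see $\Tr_{H}^{\F}=\Tr_{\varphi(H)}^{\F}$, hence $H$-projectivity is $\F$-isomorphism invariant, consistent with Lemma \ref{lem:Transfer-inclusion.}) yields $V_{M}$.

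For $V_{M}\in\F\cap\G^{c}$: the defect set $\mathcal{X}_{M}$ consists of $\F$-centric subgroups by construction (Definition \ref{def:vertex.}), so I need the stronger statement that $V_{M}$ is $\G$-centric. Here I would use that $M\in\MackFHR{\G}{\F}$, so by Higman's criterion \eqref{enu:G-centric-summand.} applied with the vertex, $M$ is a summand of $N\uparrow_{\FHH[V_{M}]}^{\F}$ for some $N\in\MackFHR{\G}{\FHH[V_{M}]}$; but a $\G$-centric Mackey functor over $\FHH[V_{M}]$ is annihilated by $I_{K}^{K}$ for $K\notin\G^{c}$, and if $V_{M}\notin\G^{c}$ then $I_{V_{M}}^{V_{M}}\cdot N=0$, forcing $N=0$ (since $V_{M}$ is the top group) and hence $M=0$, a contradiction. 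For the last clause, suppose $M$ is a summand of $N\in\MackFcR$; then $N$ is $\mathcal{X}_{N}$-projective, so $N$—and hence its summand $M$—is a summand of $N_{\mathcal{X}_{N}}=\bigoplus_{H\in\mathcal{X}_{N}}N_{H}$, and Krull–Schmidt places the indecomposable $M$ inside some $N_{H}=N\downarrow_{\FHH}^{\F}\uparrow_{\FHH}^{\F}$ with $H\in\mathcal{X}_{N}$; Higman's criterion then gives $H$-projectivity of $M$, so $V_{M}\le_{\F}H$, and since $\mathcal{X}_{N}$ is $\F$-subconjugacy closed, $V_{M}\in\mathcal{X}_{N}$. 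The main obstacle I anticipate is the bookkeeping around $\F$-isomorphism classes versus actual subgroups when invoking Krull–Schmidt — one must be careful that "summand of $\bigoplus M_{H}$" plus indecomposability genuinely pins down a single $\F$-conjugacy class of $H$, which is where the minimality of the defect set and Lemma \ref{lem:Transfer-inclusion.} do the real work.
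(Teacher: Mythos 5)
Your proposal is correct and follows essentially the same route as the paper's proof: Krull--Schmidt--Azumaya to place the indecomposable $M$ inside a single $M_{H}$ with $H$ in the defect set, Higman's criterion to convert this into $H$-projectivity, and minimality together with closure under $\F$-subconjugacy to identify $\mathcal{X}_{M}$ with the subconjugacy closure of $\left\{ H\right\} $. The only step whose justification is off is the one showing $V_{M}\in\G^{c}$: you claim that $I_{V_{M}}^{V_{M}}\cdot N=0$ forces $N=0$ ``since $V_{M}$ is the top group'', but the identity of $\FmuFR{\FHH[V_{M}]}$ is $\sum_{K\le V_{M}}I_{K}^{K}$ rather than $I_{V_{M}}^{V_{M}}$, so annihilation by $I_{V_{M}}^{V_{M}}$ alone does not kill $N$. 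The correct reason --- and the one the paper uses, phrased as $M_{H}=0$ whenever $H\notin\G^{c}$ --- is that non-$\G$-centricity is inherited by subgroups (\cite[Proposition 4.4]{IntroductionToFusionSystemsLinckelmann}), so every $I_{K}^{K}$ with $K\le V_{M}$ annihilates $N$ and hence $N=0$. Your handling of the final clause (Krull--Schmidt inside $N_{\mathcal{X}_{N}}$ followed by Higman) is a mild variant of the paper's direct argument, which instead restricts the splitting of $\theta_{\mathcal{X}_{N}}^{N}$ to the summand $M$ to show that $\theta_{\mathcal{X}_{N}}^{M}$ is itself split surjective; both are valid.
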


\begin{proof}
By definition of defect set we know that the map $\theta_{\mathcal{X}_{M}}^{M}\colon\bigoplus_{H\in\mathcal{X}_{M}}M_{H}\to M$
is split surjective, in particular $M$ is a summand of $\bigoplus_{H\in\mathcal{X}_{M}}M_{H}$.
Since $\R$ is a complete local $PID$ and $M$ is indecomposable,
then we can apply the Krull-Schmidt-Azumaya theorem (see \cite[Theorem 6.12 (ii)]{MethodsOfRepresentationTheoryCurtisReiner})
in order to deduce that there exists $H\in\mathcal{X}_{M}$ such that
$M$ is a summand of $M_{H}$. Because of Theorem \ref{thm:Higman's-criterion.}
this implies that $M$ is $H$-projective. Since $M$ is $\G$-centric
then $M_{H}=0$ for every $H\in\F\backslash\left(\F\cap\G^{c}\right)$.
Therefore we necessarily have $H\in\F\cap\G^{c}$. Define $\mathcal{X}_{H}:=\left\{ K\in\Fc\,:\,K\le_{\F}H\right\} $.
Since $M$ is $H$-projective we can deduce from Corollary \ref{cor:projective-respect-to-bigger.}
\eqref{enu:X<Y implies Y-proj.} that $M$ is also $\mathcal{X}_{H}$-projective.
From minimality of $\mathcal{X}_{M}$ (see Definition \ref{def:vertex.})
this implies that $\mathcal{X}_{M}\subseteq\mathcal{X}_{H}$. Since
$\mathcal{X}_{M}$ is closed under $\F$-subconjugacy and $H\in\mathcal{X}_{M}$
we also have that $\mathcal{X}_{H}\subseteq\mathcal{X}_{M}$ and,
therefore, $\mathcal{X}_{H}=\mathcal{X}_{M}$. By construction of
$\mathcal{X}_{H}$ this is equivalent to saying that $M$ admits a
vertex (namely any fully $\F$-normalized $K=_{\F}H$).

Let $N$ be as in the statement and let $L\in\MackFcR$ such that
$N=M\oplus L$. Since induction and restriction preserve direct sum
decomposition we have that $N_{\mathcal{X}_{N}}=M_{\mathcal{X}_{N}}\oplus L_{\mathcal{X}_{N}}$.
Immediately from the definition of $\theta_{\mathcal{X}_{N}}^{N}$
we have that $\theta_{\mathcal{X}_{N}}^{N}\left(M_{\mathcal{X}_{N}}\right)\subseteq M$
and that $\theta_{\mathcal{X}_{N}}^{N}\left(L_{\mathcal{X}_{N}}\right)\subseteq L$.
Moreover, the restriction of $\theta_{\mathcal{X}_{N}}^{N}$ to $M_{\mathcal{X}_{N}}$
and $L_{\mathcal{X}_{N}}$ coincide with $\theta_{\mathcal{X}_{N}}^{M}$
and $\theta_{\mathcal{X}_{N}}^{L}$ respectively. In other words we
have that $\theta_{\mathcal{X}_{N}}^{N}=\theta_{\mathcal{X}_{N}}^{M}\pi_{M}+\theta_{\mathcal{X}_{N}}^{L}\pi_{L}$
where $\pi_{M}$ and $\pi_{L}$ denote the natural projections onto
$M_{\mathcal{X}_{N}}$ and $L_{\mathcal{X}_{N}}$ respectively. On
the other hand, from definition of defect set, we know that there
exists a Mackey functor morphism $u\colon N\to N_{\mathcal{X}_{N}}$
such that $\theta_{\mathcal{X}_{N}}^{N}u=\Id_{N}$. Therefore, denoting
by $u_{|M}$ the restriction of $u$ to $M$, we obtain the identity
$\left(\theta_{\mathcal{X}_{N}}^{M}\pi_{M}+\theta_{\mathcal{X}_{N}}^{L}\pi_{L}\right)u_{|M}=\Id_{M}$.
Since $\theta_{\mathcal{X}_{N}}^{L}$ maps to $L$ and $L\cap M=\left\{ 0\right\} $
we can conclude that $\theta_{\mathcal{X}_{N}}^{L}\pi_{L}u_{|M}=0$
and, therefore, $\theta_{\mathcal{X}_{N}}^{M}\pi_{M}u_{|M}=\Id_{M}$.
In particular $\theta_{\mathcal{X}_{N}}^{M}$ is split surjective
or, equivalently, $M$ is $\mathcal{X}_{N}$-projective. From minimality
of the defect set we can then conclude that $V_{M}\in\mathcal{X}_{N}$.
\end{proof}

\section{\label{sec:Green-correspondence.}Green correspondence.}

In this section we will prove the main result of this paper. More
precisely we will prove that the Green holds for centric Mackey functors
over fusion systems (see Theorem \ref{thm:Green-correspondence.}).

We will start in Subsection \ref{subsec:Categorical-version.} by
proving Proposition \ref{prop:Green-correspondence-for-endomorphisms.}
which will give us a list of sufficient conditions to prove a Green
correspondence like result. Subsections \ref{subsec:N-is-a-direct-summand-of-Ninduction-restriction.}
to \ref{subsec:Transfer-map-from-trHNF-to-trHF.} will then be dedicated
to building the tools needed in order to prove that Proposition \ref{prop:Green-correspondence-for-endomorphisms.}
can be applied to endomorphism rings of $\F$-centric Mackey functors.

Finally we will conclude with Subsection \ref{subsec:Proof-of-Green-correspondence.}
where we will use Proposition \ref{prop:Green-correspondence-for-endomorphisms.}
together with Theorem \ref{thm:Higman's-criterion.} in order to translate
Green correspondence to the context of centric Mackey functors over
fusion systems (see Theorem \ref{thm:Green-correspondence.}).

\subsection{\label{subsec:Categorical-version.}Correspondence of endomorphisms.}

The goal of this subsection is that of stating and proving Proposition
\ref{prop:Green-correspondence-for-endomorphisms.}. This result will
become in Subsection \ref{subsec:Proof-of-Green-correspondence.}
one of the key-stones for proving Theorem \ref{thm:Green-correspondence.}.

Let's start with some notation.
\begin{defn}
Let $A$ and $B$ be rings (not necessarily having a unit) and let
$f\colon A\twoheadrightarrow B$ be a surjective ring morphism. We
say that $f$ is a \textbf{near isomorphism }if $A\ker\left(f\right)=\ker\left(f\right)A=0$.
\end{defn}

The following Lemmas will be useful later on and provide examples
of near isomorphisms.
\begin{lem}
\label{lem:Iso-is-near-iso.}Let $A$ and $B$ be rings (not necessarily
having a unit) and let $f\colon A\to B$ be a ring morphism. If $f$
is an isomorphism then it is a near isomorphism and if $f$ is a near
isomorphism and $A$ has a unit then $f$ is an isomorphism.
\end{lem}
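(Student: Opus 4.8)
The statement has two halves, and both are elementary once the definitions are unwound. For the first half, suppose $f\colon A\to B$ is a ring isomorphism. Then $f$ is surjective and $\ker(f)=0$, so trivially $A\ker(f)=\ker(f)A=0$, which is exactly the condition for a near isomorphism. This direction needs essentially no work beyond citing the definitions.

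For the second half, suppose $f\colon A\to B$ is a near isomorphism and $A$ has a unit $1_A$. Since $f$ is surjective by hypothesis, it suffices to show $f$ is injective, i.e.\ $\ker(f)=0$. The plan is to use the unit: for any $x\in\ker(f)$ we have $x = 1_A x \in A\ker(f) = 0$, hence $x=0$. This gives injectivity immediately, so $f$ is a bijective ring morphism and therefore an isomorphism.

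I would write this up in one short paragraph per implication. The only point requiring any care is purely bookkeeping: in the second implication one must note that ``near isomorphism'' already bundles in the surjectivity of $f$ (it is defined as a condition \emph{on} a surjective ring morphism), so that once injectivity is established there is nothing left to check. There is no genuine obstacle here — the lemma is a definitional observation whose role is to be invoked later (for instance to recognise that projections like $\pi\colon\muFR\to\muFcR$ restricted to suitable corners, or the maps $\overline{\Gamma}$ of Proposition \ref{prop:Action-of-centric-burnside-ring.}, behave well). The ``hard part'', insofar as there is one, is simply remembering that the unit hypothesis is what rules out the pathology where $A$ is, say, a nilpotent (non-unital) ring on which multiplication is identically zero, so that $A\ker(f)=0$ holds vacuously without $f$ being injective.

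\begin{proof}
Suppose first that $f$ is an isomorphism. Then $f$ is surjective and $\ker\left(f\right)=0$, so $A\ker\left(f\right)=\ker\left(f\right)A=0$ and hence $f$ is a near isomorphism.

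Conversely, suppose that $f$ is a near isomorphism and that $A$ has a unit $1_{A}$. By definition a near isomorphism is in particular a surjective ring morphism, so it suffices to prove that $f$ is injective. Let $x\in\ker\left(f\right)$. Then $x=1_{A}x\in A\ker\left(f\right)=0$, so $x=0$. Thus $\ker\left(f\right)=0$ and $f$ is a bijective ring morphism, hence an isomorphism.
\end{proof}
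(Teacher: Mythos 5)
Your proof is correct and follows essentially the same route as the paper's: the first half is immediate from the definitions, and the second uses the unit to write any $x\in\ker\left(f\right)$ as a product with $1_{A}$ lying in $A\ker\left(f\right)=0$ (the paper uses $x1_{A}\in\ker\left(f\right)A$ instead, which is the same argument on the other side). Nothing is missing.
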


\begin{proof}
If $f$ is an isomorphism it is surjective and $\ker\left(f\right)=0$.
In particular $A\ker\left(f\right)=\ker\left(f\right)A=0$ and, therefore,
$f$ is a near isomorphism. Assume now that $f$ is a near isomorphism
and $A$ has a unit. Then, for every $x\in\ker\left(f\right)$, we
have that $x1_{A}=0$ and, therefore, $\ker\left(f\right)=0$. Thus
$f$ is injective. Since $f$ is also surjective by definition of
near isomorphism then it is an isomorphism thus concluding the proof.
\end{proof}
\begin{lem}
\label{lem:Projection-is-near-iso.}Let $A$ be a ring (not necessarily
having a unit) and let $I$ and $J$ be two sided ideals of $A$ such
that $I\subseteq J$ and that $JA,AJ\subseteq I$. Then the natural
surjective ring morphism $f\colon A/I\twoheadrightarrow A/J$ is a
near isomorphism.
\end{lem}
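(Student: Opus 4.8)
The statement to prove is Lemma~\ref{lem:Projection-is-near-iso.}: given a (not necessarily unital) ring $A$ and two-sided ideals $I\subseteq J$ with $JA,AJ\subseteq I$, the natural surjection $f\colon A/I\twoheadrightarrow A/J$ is a near isomorphism, i.e.\ $(A/I)\ker(f)=\ker(f)(A/I)=0$.

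The plan is straightforward and essentially a diagram chase at the level of cosets. First I would identify the kernel: since $I\subseteq J$, the map $f$ sending $a+I$ to $a+J$ is a well-defined surjective ring homomorphism with $\ker(f)=J/I=\{a+I : a\in J\}$. Next I would verify the two annihilation conditions. Take any element $a+I\in A/I$ and any element $j+I\in\ker(f)=J/I$ with $j\in J$. Their product in $A/I$ is $(a+I)(j+I)=aj+I$. Since $j\in J$ and $J$ is a two-sided ideal with $AJ\subseteq I$, we have $aj\in AJ\subseteq I$, so $aj+I=0$ in $A/I$. This shows $(A/I)\ker(f)=0$. Symmetrically, $(j+I)(a+I)=ja+I$ with $ja\in JA\subseteq I$, giving $\ker(f)(A/I)=0$.

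Since $f$ is surjective by construction and both $(A/I)\ker(f)=0$ and $\ker(f)(A/I)=0$ hold, $f$ satisfies the definition of a near isomorphism, completing the proof. There is no real obstacle here; the only point requiring a modicum of care is being explicit that the hypotheses $AJ\subseteq I$ and $JA\subseteq I$ are exactly what is needed to kill products of an arbitrary coset of $A/I$ with a coset coming from $J$, and that no unit is used anywhere — consistent with the fact that Lemma~\ref{lem:Iso-is-near-iso.} already warned that near isomorphism is strictly weaker than isomorphism in the non-unital setting. If desired, one could also remark that this lemma will be applied later with $A=\muFR$ (or a related ring), $J=\I$ the ideal generated by the $I_K^K$ with $K\in\F\setminus\Fc$, and $I$ a suitable sub-ideal, which is precisely the situation where the quotient $\muFcR$ arises, but that contextual remark is not needed for the proof itself.
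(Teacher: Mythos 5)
Your proof is correct and follows essentially the same route as the paper: identify $\ker(f)=J/I$ and observe that the hypotheses $AJ,JA\subseteq I$ make every product of a coset of $A/I$ with a coset from $J$ vanish in $A/I$. Your version simply spells out the coset computation that the paper compresses into the single line $\overline{A}\,\overline{J}=\overline{J}\,\overline{A}=\overline{I}=\overline{0}$.
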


\begin{proof}
For every $C\subseteq A$ denote by $\overline{C}$ the image of $C$
under the natural projection onto $A/I$. Then, by construction, we
have that $\ker\left(f\right)=\overline{J}$. Since $AJ,JA\subseteq I$
we have that $\overline{A}\,\overline{J}=\overline{J}\,\overline{A}=\overline{I}=\overline{0}$
thus concluding the proof.
\end{proof}
\begin{lem}
\label{lem:Factors-of-near-iso-are-near-iso.}Let $A,B$ and $C$
be rings (not necessarily having a unit) and let $f\colon A\to B$
and $g\colon B\to C$ be ring homomorphisms. If $gf$ is a near isomorphism
and $f$ is surjective then both $f$ and $g$ are near isomorphisms.
\end{lem}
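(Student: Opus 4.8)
The plan is to prove the two claims separately, both by unwinding the definition of near isomorphism. Recall that a surjective ring morphism $h\colon X\twoheadrightarrow Y$ is a near isomorphism if and only if $X\ker(h)=\ker(h)X=0$.

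First I would show that $g$ is surjective. Since $gf$ is a near isomorphism it is in particular surjective, so $C=gf(A)\subseteq g(B)\subseteq C$, hence $g(B)=C$. Next I would locate $\ker(g)$ in terms of the data we control. Because $f$ is surjective, every element of $\ker(g)\subseteq B$ has the form $f(a)$ for some $a\in A$, and $f(a)\in\ker(g)$ exactly when $a\in\ker(gf)$; thus $\ker(g)=f(\ker(gf))$. Now compute, using surjectivity of $f$ and the fact that $f$ is a ring homomorphism:
\[
B\ker(g)=f(A)\,f(\ker(gf))=f\bigl(A\ker(gf)\bigr)=f(0)=0,
\]
and symmetrically $\ker(g)B=f(\ker(gf)A)=f(0)=0$. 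Hence $g$ is a near isomorphism.

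For $f$, it is surjective by hypothesis, so it remains to check $A\ker(f)=\ker(f)A=0$. The key observation is the inclusion $\ker(f)\subseteq\ker(gf)$, which is immediate. Therefore
\[
A\ker(f)\subseteq A\ker(gf)=0,\qquad \ker(f)A\subseteq\ker(gf)A=0,
\]
so both products vanish and $f$ is a near isomorphism.

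I do not anticipate a real obstacle here: the only mild subtlety is the identification $\ker(g)=f(\ker(gf))$, where one must use surjectivity of $f$ in both directions (the inclusion $\supseteq$ is clear, and for $\subseteq$ one lifts an arbitrary element of $\ker(g)$ along $f$). Everything else is a direct manipulation of ideals under a surjective ring homomorphism, valid without assuming any unit. If desired, one could also phrase the $g$-part as: $\ker(g)=f(\ker(gf))$ is itself a two-sided ideal annihilated on both sides by $B=f(A)$, which is exactly the near-isomorphism condition.
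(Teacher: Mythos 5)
Your proof is correct and follows essentially the same route as the paper's: the inclusion $\ker(f)\subseteq\ker(gf)$ handles $f$, and the identity $\ker(g)=f(\ker(gf))$ together with $B=f(A)$ handles $g$. The only difference is the order in which you treat the two maps, which is immaterial.
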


\begin{proof}
First of all notice that $\ker\left(f\right)\subseteq\ker\left(gf\right)$.
Since $gf$ is a near isomorphims then we have that $A\ker\left(f\right)\subseteq A\ker\left(gf\right)=0$
and that $\ker\left(f\right)A\subseteq\ker\left(gf\right)A=0$. Since
$f$ is surjective by hypothesis then we can conclude that $f$ is
a near isomorphism.

On the other hand, since $gf$ is a near isomorphism, then it is surjective
and, therefore, $g$ is also surjective. Since $f$ is surjective,
then we have that $\ker\left(g\right)=f\left(\ker\left(gf\right)\right)$
and $B=f\left(A\right)$. Therefore we can conclude that $B\ker\left(g\right)=f\left(A\ker\left(gf\right)\right)=0$
and that $\ker\left(g\right)B=f\left(\ker\left(gf\right)A\right)=0$
thus concluding the proof.
\end{proof}
\begin{lem}
\label{lem:Composition-of-near-iso-and-iso-are-near-iso.}Let $A,B$
and $C$ be rings (not necessarily having a unit), let $f\colon A\twoheadrightarrow B$
be a near isomorphism and let $g\colon B\bjarrow C$ be an isomorphism
then $gf$ is a near isomorphism.
\end{lem}

\begin{proof}
Since both $f$ and $g$ are surjective, then $h:=gf$ is also surjective.
The result follows from applying Lemma \ref{lem:Factors-of-near-iso-are-near-iso.}
to $f=g^{-1}h$.
\end{proof}
The importance of near isomorphisms comes from the following well
known lemma due to Green which we state without proving.
\begin{lem}
\label{lem:Near-isomorphisms-preserve-idempotents.}(\cite[Lemma 4.22]{GreenAxiomaticRepresentation})
Let $A$ and $B$ be $\R$-algebras and let $f\colon A\twoheadrightarrow B$
be a near isomorphism. Denote by $E\left(A\right)$ and $E\left(B\right)$
the sets of idempotents of $A$ and $B$ respectively. Then the following
are satisfied
\begin{enumerate}
\item \label{enu:bijection.}$f$ induces a bijection from $E\left(A\right)$
to $E\left(B\right)$.
\item \label{enu:local-to-local.}Let $x\in E\left(A\right)$ be a local
idempotent. Then $f(x)\in E\left(B\right)$ is also a local idempotent.
\item \label{enu:Conjugate-iff-conjugate.}Let $x,y\in E\left(A\right)$
be idempotents. Then $x$ and $y$ are conjugate in $A$ if and only
if $f\left(x\right)$ and $f\left(y\right)$ are conjugate in $B$.
\end{enumerate}
\end{lem}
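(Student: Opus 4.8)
The plan is to reduce everything to two elementary facts about $K:=\ker(f)$, both immediate from the definition of a near isomorphism: $K$ is a two-sided ideal of $A$ with $AK=KA=0$, and consequently $K^{2}\subseteq KA=0$. It is worth stressing up front that one cannot simply invoke Lemma \ref{lem:Iso-is-near-iso.} to assume $f$ is an isomorphism, because in the intended applications (to ideals such as $\Tr_{H}^{\F}$) the algebra $A$ need not be unital; but the fact that $K$ is square-zero and two-sidedly annihilated by $A$ is precisely the rigidity that makes the idempotent theory work.

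For part \eqref{enu:bijection.} I would argue injectivity and surjectivity separately. Injectivity: if $e,e'\in E(A)$ satisfy $f(e)=f(e')$ then $e-e'\in K$, so $AK=KA=0$ gives $e(e-e')=0$ and $(e-e')e'=0$, that is $e=ee'$ and $ee'=e'$, hence $e=e'$. Surjectivity: given $\bar e\in E(B)$, choose $a\in A$ with $f(a)=\bar e$; then $n:=a^{2}-a\in K$, so $an=na=0$, which forces $a^{3}=a^{2}$ and therefore $(a^{2})^{2}=a\cdot a^{3}=a\cdot a^{2}=a^{2}$, so $a^{2}\in E(A)$ lifts $\bar e$. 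For part \eqref{enu:local-to-local.} I would pass to corner rings: the restriction $f_{e}\colon eAe\to f(e)Bf(e)$ is surjective because $f$ is, and $\ker(f_{e})=eAe\cap K=0$ since $z\in eAe\cap K$ gives $z=eze$ with $ez=0$; thus $f_{e}$ is an isomorphism (equivalently, a near isomorphism out of the unital ring $eAe$, to which Lemma \ref{lem:Iso-is-near-iso.} applies), and ``local'' being an isomorphism-invariant property of rings, $e$ is local iff $f(e)$ is.

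For part \eqref{enu:Conjugate-iff-conjugate.}, one direction is formal: applying $f$ (or, in the non-unital setting, its extension to unitalizations) to $y=uxu^{-1}$ gives $f(y)=f(u)f(x)f(u)^{-1}$ with $f(u)$ a unit. For the converse, from $\bar uf(x)\bar u^{-1}=f(y)$ I would lift $\bar u,\bar u^{-1}$ to elements $u,v$ with $w:=uv-1\in K$; since $K^{2}=0$ one has $(1+w)(1-w)=1$, so $uv$, and likewise $vu$, is a unit, hence so is $u$, with $f(u)=\bar u$; then $uxu^{-1}\in E(A)$ maps to $f(y)$, so $uxu^{-1}=y$ by part \eqref{enu:bijection.}. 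The main point requiring care---and the step I expect to cause the only real trouble---is the non-unital case of \eqref{enu:Conjugate-iff-conjugate.}: one must fix the definition of conjugacy of idempotents in a ring without unit and run the above inside the unitalization $A^{+}=\R\oplus A$, noting that although $f^{+}\colon A^{+}\to B^{+}$ is not itself a near isomorphism, every element one needs to invert differs from a genuine unit by an element of $K$, and $K^{2}=0$ computed in $A$ makes such perturbations invertible, so the same square-zero bookkeeping carries through; alternatively one simply quotes \cite[Lemma 4.22]{GreenAxiomaticRepresentation}.
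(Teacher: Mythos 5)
The paper does not prove this lemma at all: it is introduced as a ``well known lemma which we state without proving'' with a citation to \cite[Lemma 4.22]{GreenAxiomaticRepresentation}, so there is no in-paper argument to compare against. Your proposal is a correct, self-contained proof, and the reduction to the two facts $AK=KA=0$ and $K^{2}=0$ for $K=\ker(f)$ is exactly the right engine: the injectivity computation ($e=ee'=e'$), the lift $a\mapsto a^{2}$ using $a^{3}=a^{2}$, and the corner-ring isomorphism $eAe\cong f(e)Bf(e)$ (whose kernel $eAe\cap K$ dies because $z=eze=(ez)e=0$) are all sound, and your remark that one cannot shortcut via Lemma \ref{lem:Iso-is-near-iso.} is apt, since in the intended applications (Proposition \ref{prop:Green-correspondence-for-endomorphisms.}) the source is a quotient of a non-unital ideal. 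The only point of genuine delicacy is the one you already flag in part \eqref{enu:Conjugate-iff-conjugate.}: in the non-unital case one must first fix what ``conjugate'' means (conjugation by units of the unitalization $A^{+}=\R\oplus A$ is the sensible choice), and your observation that $f^{+}$ fails to be a near isomorphism but that $uv-1$ and $vu-1$ land in the square-zero ideal $K$, hence $u$ is invertible in $A^{+}$ with $f^{+}(u)=\bar u$, correctly closes the argument via the injectivity of part \eqref{enu:bijection.}. No gaps.
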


With this in mind we can now prove the main result of this subsection.
\begin{prop}
\label{prop:Green-correspondence-for-endomorphisms.}Let $A$ and
$B$ be $\R$-algebras, let $C,J$ be a two sided ideals of $A$,
let $I$ and $K$ be two sided ideals of $C$ and $B$ respectively
($C$ seen as a ring with potentially no unit) and let $f\colon C\to B$
and $g\colon B\to C+J$ be $\R$-linear maps. Assume that the following
are satisfied:
\begin{enumerate}
\item \label{enu:condition-1.}$\left(C\cap J\right)C,C\left(C\cap J\right)\subseteq I\subseteq C\cap J$,
\item \label{enu:condition-2.}$g\left(K\right)\subseteq J$,
\item \label{enu:condition-3.}$f\left(I\right)\subseteq K$,
\item \label{enu:condition-4.}$f$ is surjective.
\item \label{enu:g-preserves-idempotents.}$g$ sends idempotents to idempotents.
\item \label{enu:condition-6.}The $\R$-linear maps $\overline{f}\colon C/I\to B/K$
and $\overline{g}\colon B/K\to\left(C+J\right)/J$ induced by $f$
and $g$ respectively commute with multiplication (i.e. $\overline{f}\left(xy\right)=\overline{f}\left(x\right)\overline{f}\left(y\right)$
and $\overline{g}\left(vw\right)=\overline{f}\left(v\right)\overline{f}\left(w\right)$
for every $x,y\in C/I$ and every $v,w\in B/K$).
\item \label{enu:condition-7.}The natural isomorphism $s\colon C/\left(C\cap J\right)\to\left(C+J\right)/J$
and the natural projection $q\colon C/I\to C/\left(C\cap J\right)$
satisfy $sq=\overline{g}\overline{f}$.
\item \label{enu:Decomposition-idempotent.}For every idempotent $x\in A$
there exists a unique (up to conjugation) decomposition of $x$ as
a finite sum of orthogonal local idempotents.
\end{enumerate}
Let $b\in B$ be a local idempotent such that $b\not\in K$. Then
$g\left(b\right)\in C+J\subseteq A$ and, from Conditions \eqref{enu:g-preserves-idempotents.}
and \eqref{enu:Decomposition-idempotent.}, there exists a unique
$n\in\mathbb{N}$ and a unique (up to conjugation) set of orthogonal
local idempotents $\left\{ a_{0},\dots,a_{n}\right\} \subseteq A$
such that 
\[
g\left(b\right)=\sum_{i=0}^{n}a_{i}.
\]
With this notation there exists exactly one value $j\in\left\{ 0,\dots,n\right\} $
such that $a_{j}\in C\backslash\left(C\cap J\right)$. Moreover, if
we define $a:=a_{j}$, we have that
\begin{align*}
g(b) & \equiv a\mod J, & f\left(a\right) & \equiv b\mod K.
\end{align*}
\end{prop}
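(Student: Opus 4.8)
The plan is to carefully unwind the near-isomorphisms guaranteed by the hypotheses and apply Lemma \ref{lem:Near-isomorphisms-preserve-idempotents.} to transport the local idempotent $b$ back and forth. First I would observe that Conditions \eqref{enu:condition-1.}, \eqref{enu:condition-4.} and \eqref{enu:condition-6.} make $\overline{f}\colon C/I\to B/K$ a surjective ring morphism, and moreover a near isomorphism: indeed $\ker\left(\overline{f}\right)$ is an ideal of $C/I$, and using Condition \eqref{enu:condition-7.} (namely $sq=\overline{g}\,\overline{f}$ with $s$ an isomorphism and $q$ the projection whose kernel is $\left(C\cap J\right)/I$) we get $\ker\left(\overline{f}\right)\subseteq\ker\left(sq\right)=\left(C\cap J\right)/I$; then Condition \eqref{enu:condition-1.} gives $\left(C/I\right)\ker\left(\overline{f}\right)=\ker\left(\overline{f}\right)\left(C/I\right)=0$. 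Similarly $\overline{g}\colon B/K\to\left(C+J\right)/J$ is a ring morphism by \eqref{enu:condition-6.} and, since $\overline{g}\,\overline{f}=sq$ is surjective (as $s,q$ are) and $\overline{f}$ is surjective, Lemma \ref{lem:Factors-of-near-iso-are-near-iso.} shows $\overline{g}$ is also a near isomorphism. So both $\overline{f}$ and $\overline{g}$ are near isomorphisms of $\R$-algebras.

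Next I would apply the dictionary of Lemma \ref{lem:Near-isomorphisms-preserve-idempotents.} to $\overline{f}$. Let $\pi_C\colon C\to C/I$ and $\pi_B\colon B\to B/K$ be the projections. Given the local idempotent $b\in B$ with $b\notin K$, its image $\pi_B(b)$ is a nonzero local idempotent in $B/K$ by \eqref{enu:local-to-local.} (applied to the near isomorphism $B\to B/K$, which is near by Lemma \ref{lem:Projection-is-near-iso.} once we note $KB,BK\subseteq K$ trivially — actually here we just need that $\pi_B(b)\ne 0$, which holds since $b\notin K$, and locality is inherited under any ring surjection with the standard argument, or more simply we use that $\overline{f}$ being a near isomorphism transports locality). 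By \eqref{enu:bijection.} for $\overline{f}$ there is a unique idempotent $e\in C/I$ with $\overline{f}(e)=\pi_B(b)$, and by \eqref{enu:local-to-local.} $e$ is local. Now lift $e$ through $\pi_C$: since $I$ is an ideal of $C$ with $IC,CI\subseteq I$, Lemma \ref{lem:Projection-is-near-iso.} (with the two ideals $0\subseteq I$, though one must check $I$-nilpotence-type conditions; alternatively one invokes \eqref{enu:condition-1.} which gives $\left(C\cap J\right)C\subseteq I$ hence in particular that lifting idempotents along $\pi_C$ is unproblematic here) lets us lift $e$ to a local idempotent of $C$; but in fact the cleanest route is to instead work directly with $g(b)$.

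The key move is this: $g(b)\in C+J$, and $\overline{g}(\pi_B(b)) = sq(e)$ where $e\in C/I$ is the idempotent above; so modulo $J$, $g(b)$ equals the image of a local idempotent of $C/\left(C\cap J\right)$. Now apply Condition \eqref{enu:Decomposition-idempotent.} to the idempotent $g(b)\in A$ (it is an idempotent of $A$ by \eqref{enu:g-preserves-idempotents.}) to write $g(b)=\sum_{i=0}^n a_i$ with the $a_i$ mutually orthogonal local idempotents of $A$. Reducing modulo $J$: the idempotents $a_i$ fall into two classes, those lying in $C$ (equivalently, whose image under $A\to A/J$ lands in $\left(C+J\right)/J$ via $C$) and the rest. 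Because $sq(e)$ is a \emph{local} (hence primitive) idempotent in $\left(C+J\right)/J\cong C/\left(C\cap J\right)$, and $g(b)\equiv sq(e)\bmod J$, exactly one of the $a_i$ can have nonzero image in $C/\left(C\cap J\right)$ under the identification — that is, exactly one $a_j$ lies in $C\setminus\left(C\cap J\right)$; all the others, after reduction mod $J$, must be killed or absorbed, and primitivity forces uniqueness of $j$. Setting $a:=a_j$, we get $g(b)\equiv a\bmod J$ directly. Finally $f(a)\equiv b\bmod K$ follows by applying $\overline{f}$: one checks $\pi_C(a)=e$ (using that $\pi_C(a)$ is a local idempotent of $C/I$ mapping under $sq$ to the same primitive idempotent $sq(e)$, together with the bijectivity in \eqref{enu:bijection.} for the near isomorphism $sq=\overline{g}\,\overline{f}$), and then $\overline{f}(\pi_C(a))=\overline{f}(e)=\pi_B(b)$, i.e. $f(a)\equiv b\bmod K$.

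The main obstacle I anticipate is the bookkeeping in the middle step: correctly matching up the idempotent $e\in C/I$ produced by the near isomorphism $\overline{f}$ with the specific summand $a_j$ of the decomposition of $g(b)$ in $A$, and arguing that exactly one summand survives in $C\setminus\left(C\cap J\right)$. This requires using primitivity (locality) of $sq(e)$ together with the uniqueness-up-to-conjugation clause of \eqref{enu:Decomposition-idempotent.}, and being careful that conjugation in $A$ does not move a summand out of $C$ in a way that breaks the count — one handles this by noting $C$ and $J$ are ideals, so conjugation by units of $A$ preserves $C$, $J$, and $C\cap J$, hence preserves membership in $C\setminus\left(C\cap J\right)$. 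The near-isomorphism lemmas \ref{lem:Iso-is-near-iso.}–\ref{lem:Near-isomorphisms-preserve-idempotents.} do all the heavy lifting once the setup is in place; the rest is diagram-chasing modulo the relevant ideals.
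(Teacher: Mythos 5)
Your overall strategy matches the paper's: establish that $\overline{f}$ and $\overline{g}$ are near isomorphisms (your direct kernel computation for $\overline{f}$ via $\ker\left(\overline{f}\right)\subseteq\ker\left(sq\right)=\left(C\cap J\right)/I$ together with Condition \eqref{enu:condition-1.} is a fine alternative to the paper's route through Lemmas \ref{lem:Projection-is-near-iso.}, \ref{lem:Composition-of-near-iso-and-iso-are-near-iso.} and \ref{lem:Factors-of-near-iso-are-near-iso.}), then use Lemma \ref{lem:Near-isomorphisms-preserve-idempotents.} to see that $\overline{g}\left(\overline{b}\right)=\sum_{i}\overline{a_{i}}$ is a local, hence primitive, idempotent of $\left(C+J\right)/J$, so that exactly one $\overline{a_{j}}$ is nonzero; and the concluding step $f\left(a\right)\equiv b\bmod K$ via injectivity of a near isomorphism on idempotents is also essentially the paper's argument.

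However, there is a genuine gap at the central claim. From primitivity of $\overline{g}\left(\overline{b}\right)$ you correctly get that exactly one $a_{j}$ satisfies $a_{j}\notin J$, but you then assert ``that is, exactly one $a_{j}$ lies in $C\backslash\left(C\cap J\right)$''. Having nonzero image in $\left(C+J\right)/J\cong C/\left(C\cap J\right)$ only says $a_{j}\in\left(C+J\right)\backslash J$; it does not place $a_{j}$ in $C$ (an element $c+j$ with $c\in C\backslash J$ and $j\in J\backslash C$ has nonzero image mod $J$ without lying in $C$). The membership $a\in C$ is precisely what makes the statement nontrivial, and the paper proves it by a separate local-ring argument: first each $a_{i}$ is shown to be a local idempotent of the non-unital ring $C+J$ (because $a_{i}=a_{i}g\left(b\right)\in C+J$ and $a_{i}Aa_{i}=a_{i}\left(C+J\right)a_{i}$, the latter since $C+J$ is a two-sided ideal of $A$ containing the idempotent $a_{i}$); then $aCa$ and $aJa$ are two-sided ideals of the local ring $a\left(C+J\right)a$ whose sum is the whole ring, so by locality $a\left(C+J\right)a$ equals one of them; it cannot equal $aJa\subseteq J$ since $a=a\cdot a\cdot a\notin J$, hence $a\in a\left(C+J\right)a=aCa\subseteq C$. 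Without some version of this argument your proof does not establish $a\in C$, and consequently the element $\pi_{C}\left(a\right)\in C/I$ used in your final step is not even defined.
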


\begin{proof}
Since both $C$ and $J$ are two sided ideals of $A$ then $C+J$
is also a two sided ideal of $A$. With notation as in the statement,
since all the $a_{i}$ are pairwise orthogonal, then, for every $i=0,\dots,n$,
we have that $a_{i}=a_{i}g\left(b\right)$ and, since $g\left(b\right)\in C+J$,
we can conclude that $a_{i}\in C+J$. Since $C+J$ is a two sided
ideal of $A$ we can conclude that $a_{i}Aa_{i}\subseteq\left(C+J\right)$.
Since $C+J\subseteq A$ and each $a_{i}$ is an idempotent we obtain
the other inclusion and, therefore, we obtain the identity
\[
a_{i}\left(C+J\right)a_{i}=a_{i}Aa_{i}.
\]
In particular, since $a_{i}$ is a local idempotent of $A$, we have
that $a_{i}\left(C+J\right)a_{i}$ is a local ring and, since $a_{i}\in C+J$,
we can conclude that each $a_{i}$ is a local idempotent of $C+J$
(and not just of $A$). 

Since $b\not\in K$ by hypothesis, then the projection $\overline{b}$
of $b$ onto $B/K$ is non zero. Since, by hypothesis, $b$ is a local
idempotent then we can conclude that $\overline{b}$ is also a local
idempotent (because quotients of local rings are still local rings).
Likewise, for every $i=0,\dots,n$, we have that the projection $\overline{a_{i}}$
of $a_{i}$ onto $\left(C+J\right)/J$ is either $0$ or a local idempotent
of $\left(C+J\right)/J$.

From Lemma \ref{lem:Projection-is-near-iso.} and Condition \eqref{enu:condition-1.}
we know that the natural projection $q$ of Condition \eqref{enu:condition-7.}
is a near isomorphism. From Lemma \ref{lem:Iso-is-near-iso.} we know
that $s$ is also a near isomorphism. From Lemma \ref{lem:Composition-of-near-iso-and-iso-are-near-iso.}
and Condition \eqref{enu:condition-7.} we can conclude that $\overline{g}\overline{f}$
is also a near isomorphism. Finally, from Lemma \ref{lem:Factors-of-near-iso-are-near-iso.}
and Condition \eqref{enu:condition-4.}, we can conclude that $\overline{f}$
and $\overline{g}$ are near isomorphisms. Since $\overline{b}$ is
a local idempotent then we can conclude from Lemma \ref{lem:Near-isomorphisms-preserve-idempotents.}
\eqref{enu:local-to-local.} that $\overline{g}\left(\overline{b}\right)=\sum_{i=0}^{n}\overline{a_{i}}$
is also a local idempotent. Since local idempotents are primitive
we can conclude that there exists exactly one $j\in\left\{ 0,\dots,n\right\} $
such that $\overline{a_{j}}\not=0$. We can assume without loss of
generality that $j=0$ and define $a:=a_{0}$. In other words we have
that $\overline{g}\left(\overline{b}\right)=\overline{a}$ (equivalently
$g\left(b\right)\equiv a\mod\left(J\right)$) while for every $i=1,\dots,n$
we have that $\overline{a_{i}}=0$ (equivalently $a_{i}\in J$). This
proves the first equivalence. Since $a\not\in J$ (because $\overline{a}\not=0$)
then, in order to complete the proof, we just need to prove that $a\in C$
and that the second equivalence is satisfied.

Since both $C$ and $J$ are two sided ideals of $A$ then we can
deduce that $aCa$ and $aJa$ are two sided ideals of $a\left(C+J\right)a$.
Since $a$ is a local idempotent of $C+J$, then, by definition, we
have that $a\left(C+J\right)a$ is a local ring. Notice also that,
from the distributive property of the product, we have that $aCa+aJa=a\left(C+J\right)a$.
From definition of local ring we can conclude that either
\begin{align*}
a\left(C+J\right)a & =aCa\subseteq C, & \text{ or } &  & a\left(C+J\right)a & =aJa\subseteq J.
\end{align*}
Since $a$ is an idempotent and $a\not\in J$ then we can conclude
that the identity on the right in the above equation is not possible.
Therefore the identity on the left must be satisfied and we can conclude
that $a\in C\backslash\left(C\cap J\right)$.

In order to complete the proof we are just left with proving that
$f\left(a\right)$ is equivalent to $b$ modulo $K$. Denote by $\overline{\overline{a}}$
the projection of $a$ on $C/I$. Since $a$ is an idempotent then
$\overline{\overline{a}}$ must also be an idempotent and, from Condition
\eqref{enu:condition-6.} we can deduce that $\overline{f}\left(\overline{\overline{a}}\right)$
is an idempotent. On the other hand, from the first part of the proof,
we know that $\overline{a}=\overline{g}\left(\overline{b}\right)$.
Thus, from Condition \eqref{enu:condition-7.}, we can deduce that
\[
\overline{g}\left(\overline{f}\left(\overline{\overline{a}}\right)\right)=s\left(q\left(\overline{\overline{a}}\right)\right)=\overline{a}=\overline{g}\left(\overline{b}\right).
\]
Since $\overline{g}$ is a near isomorphism (as already proven) then,
from the above identities and Lemma \ref{lem:Near-isomorphisms-preserve-idempotents.}
\eqref{enu:bijection.}, we can conclude that $\overline{f}\left(\overline{\overline{a}}\right)=\overline{b}$.
From Condition \eqref{enu:condition-3.} and definition of $\overline{f}$
this is equivalent to saying that $f\left(a\right)$ is equivalent
to $b$ modulo $K$. This concludes the proof.
\end{proof}
Let's conclude this subsection by giving an example where Proposition
\ref{prop:Green-correspondence-for-endomorphisms.} is used in order
to prove that Green correspondence holds for Green functors (see \cite[Proposition 4.34]{GreenAxiomaticRepresentation}).
\begin{example}
\label{exa:Green.correspondence-Green-functors.}Let $\R$ be a complete
local $PID$, let $G$ be a finite group, let $D,H\le G$ be subgroups
such that $\NG[D]\le H$ and let $M$ be a Green functor over $G$
on $\R$ (see the first definition of \cite[Subsection 1.3]{GreenAxiomaticRepresentation}).
With the notation of Proposition \ref{prop:Green-correspondence-for-endomorphisms.}
we can define 
\begin{align*}
A & :=\End\left(M\downarrow_{H}^{G}\right), & B & :=\tr_{D}^{G}\left(\End\left(M\downarrow_{D}^{G}\right)\right),\\
C & :=\tr_{D}^{H}\left(\End\left(M\downarrow_{D}^{G}\right)\right), & K & :=\sum_{x\in G-H}\tr_{D^{x}\cap D}^{G}\left(\End\left(M\downarrow_{D^{x}\cap D}^{G}\right)\right),\\
I & :=\sum_{x\in G-H}\tr_{D^{x}\cap D}^{H}\left(\End\left(M\downarrow_{D^{x}\cap D}^{G}\right)\right), & J & :=\sum_{x\in G-H}\tr_{D^{x}\cap H}^{H}\left(\End\left(M\downarrow_{D^{x}\cap H}^{G}\right)\right),\\
f & :=\tr_{H}^{G}, & g & :=\r_{H}^{G}.
\end{align*}
With this setup the Green correspondence for Green functors (see \cite[Proposition 4.34]{GreenAxiomaticRepresentation})
follows from Proposition \ref{prop:Green-correspondence-for-endomorphisms.}
and the first remark after \cite[Hypothesis 4.31]{GreenAxiomaticRepresentation}.
\end{example}

\subsection{\label{subsec:N-is-a-direct-summand-of-Ninduction-restriction.}Composing
induction and restriction.}

We have seen in Subsection \ref{subsec:Mackey-functors-over-fusion-systems.}
that, when working with Mackey functors over finite groups, there
exists a way of rewriting the composition of induction and restriction
functors (see Equation \eqref{eq:Induction-restriction-groups.}).
In that same subsection we have proven (see Lemma \ref{lem:Mackey-formula-induction-restriction.})
that a similar result holds for centric Mackey functors over fusion
systems when composing induction functors of the form $\uparrow_{\FHH}^{\F}$
with restriction functors of the form $\downarrow_{\FHH[K]}^{\F}$
for some $H,K\in\Fc$. However, we haven't shown any result regarding
compositions of induction and restriction functors when the fusion
systems $\FHH$ and $\FHH[K]$ of Lemma \ref{lem:Mackey-formula-induction-restriction.}
are replaced with other fusion subsystems of $\F$. The goal of this
subsection will be to do exactly that. More precisely, let $H\in\Fc$
be fully $\F$-normalized, let $M\in\MackFHR{\F}{\FHH}$ and let $N\in\MackFHR{\F}{\NFH}$
(see Example \ref{exa:definition-NF.}). In this subsection we will
study the $\F$-centric Mackey functors of the form $M\uparrow_{\FHH}^{\F}\downarrow_{\NFH}^{\F}$
(see Lemma \ref{lem:Mackey-formula-induction-restriction-to-normalizer.})
and $N\uparrow_{\NFH}^{\F}\downarrow_{\NFH}^{\F}$ (see Lemma \ref{lem:induction-from-normalizer-followed-by-restriction-to-normalizer.}).

Before proceeding let us introduce some notation that will be used
throughout the rest of this document.
\begin{notation}
\label{nota:X-and-Y.}From now on and unless otherwise specified $H$
\textbf{will denote a fully $\F$-normalized, $\F$-centric subgroup
of $S$}, we will denote \textbf{the normalizer of $H$ in $S$ }(i.e.
$\NSH$) simply \textbf{as $\NS$}, we will denote \textbf{the normalizer
fusion system $\NFH$} (see Example \ref{exa:definition-NF.}) simply
\textbf{as $\NF$} and $\mathcal{X}$ and $\mathcal{Y}$ will denote
the following sets
\begin{align*}
\mathcal{Y} & :=\left\{ K\le_{\F}H\,:\,K\le\NS\text{, }K\in\Fc\text{ and }K\not=H\right\} ,\\
\mathcal{X} & :=\left\{ K\lneq H\,:\,K\in\Fc\right\} =\left\{ K\in\mathcal{Y}\,:\,K\le H\right\} .
\end{align*}
\end{notation}

\begin{lem}
\label{lem:in-onf-iff-lifting-in-onf.}Let $\left(A,\overline{\varphi}\right)\in\left[H\times_{\F}\NS\right]$,
fix a representative $\varphi$ of $\overline{\varphi}$, let $K\in\Fc\cap\NF$,
let $\left(B,\overline{\psi}\right)\in\left[\varphi\left(A\right)\times_{\NF}K\right]$
such that $B\in\FHH[\varphi\left(A\right)]\cap\Fc$ and denote by
$\tilde{\varphi}\colon\tilde{\varphi}^{-1}\left(B\right)\to B$ the
morphism $\varphi$ seen as an isomorphism between the given subgroups
(i.e. the unique morphism such that $\varphi\iota_{\tilde{\varphi}^{-1}\left(B\right)}^{A}=\iota_{B}^{\NS}\tilde{\varphi}$).
From the universal properties of products we know that there exist
a unique $\left(B^{\F,\varphi},\overline{\psi^{\F,\varphi}}\right)\in\left[H\times_{\F}K\right]$
and a unique morphism $\overline{\gamma_{\left(B,\overline{\psi}\right)}^{\F,\varphi}}\colon\tilde{\varphi}^{-1}\left(B\right)\to B^{\F,\varphi}$
such that $\overline{\iota_{B^{\F,\varphi}}^{H}}\overline{\gamma_{\left(B,\overline{\psi}\right)}^{\F,\varphi}}=\overline{\iota_{\varphi^{-1}\left(B\right)}^{H}}$
and $\overline{\psi^{\F,\varphi}}\overline{\gamma_{\left(B,\overline{\psi}\right)}^{\F,\varphi}}=\overline{\psi\tilde{\varphi}}$.
With this setup the morphism $\overline{\gamma_{\left(B,\overline{\psi}\right)}^{\F,\varphi}}$
belongs to $\Orbitize{\FHH}$ and the morphism $\overline{\psi^{\F,\varphi}}$
belongs to $\ONF$ if and only if $\overline{\varphi}$ belongs to
$\ONF$.
\end{lem}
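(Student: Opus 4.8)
The statement has two assertions: (i) $\overline{\gamma_{(B,\overline{\psi})}^{\F,\varphi}}\in\Orbitize{\FHH}$, and (ii) $\overline{\psi^{\F,\varphi}}\in\ONF$ if and only if $\overline{\varphi}\in\ONF$. The plan is to handle (i) exactly as in Lemma \ref{lem:proving-muFR-morphism.} \eqref{enu:Def-A^theta.} and Lemma \ref{lem:in-onf-iff-lifting-in-onf.}'s own predecessors: from the defining identity $\overline{\iota_{B^{\F,\varphi}}^{H}}\,\overline{\gamma_{(B,\overline{\psi})}^{\F,\varphi}}=\overline{\iota_{\varphi^{-1}(B)}^{H}}$ and the definition of the orbit category $\OF$, any morphism $\overline{\delta}\colon X\to Y$ with $X,Y\le H$ satisfying $\overline{\iota_Y^H}\,\overline{\delta}=\overline{\iota_X^H}$ is represented by an honest inclusion-after-$S$-conjugation lying \emph{inside} $H$, hence is in $\Hom_{\OFc}(X,Y)$ coming from $\FHH$; so $\overline{\gamma_{(B,\overline{\psi})}^{\F,\varphi}}\in\Orbitize{\FHH}$. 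This is a short argument and I would dispatch it first.

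**The core equivalence.** For (ii), I would argue both directions using the characterization of $\ONF$ morphisms coming from Example \ref{exa:definition-NF.}: a morphism $\overline{\rho}\in\Hom_{\OFc}(C,D)$ with $C,D\le\NS$ lies in $\ONF$ precisely when some (equivalently, any) representative $\rho$ extends to a morphism in $\Hom_{\F}(CH,DH)$ compatible with the inclusions $\iota_C^{CH}$, $\iota_D^{DH}$. For the ``only if'' direction, suppose $\overline{\varphi}\in\ONF$, so $\varphi$ extends to some $\hat\varphi\in\Hom_{\F}(AH,\varphi(A)H)$. Then $\tilde\varphi$ (the restriction of $\varphi$ to $\varphi^{-1}(B)$) also extends via a suitable restriction of $\hat\varphi$, and composing with the normalizer-extension witnessing $\overline{\psi}\in\ONF$ (which exists since $(B,\overline{\psi})\in[\varphi(A)\times_{\NF}K]$), we build an extension of $\psi\tilde\varphi$, i.e.\ of the representative of $\overline{\psi^{\F,\varphi}}\,\overline{\gamma_{(B,\overline{\psi})}^{\F,\varphi}}$. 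Here I would use that $\overline{\gamma_{(B,\overline{\psi})}^{\F,\varphi}}$ is already known (from part (i)) to lie in $\Orbitize{\FHH}$, hence its representative is conjugation by an element of $H$, which extends trivially; this lets me ``divide off'' $\gamma$ and conclude $\overline{\psi^{\F,\varphi}}\in\ONF$. For the ``if'' direction, I would reverse this: given that $\overline{\psi^{\F,\varphi}}$ extends over $H$ and $\overline{\gamma_{(B,\overline{\psi})}^{\F,\varphi}}$ comes from $\FHH$, I recover an extension of $\psi\tilde\varphi$, restrict and manipulate to extract an extension of (a representative of) $\overline{\varphi}$ — using that $\varphi^{-1}(B)H$ and $B H$ relate back to $AH$, $\varphi(A)H$ in a controlled way, and that $\psi$ itself was already $\NF$-admissible — so $\overline{\varphi}\in\ONF$.

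**Main obstacle.** The delicate point is the bookkeeping of which groups the extending morphisms are defined on: passing from $\varphi\colon A\to\varphi(A)$ to $\tilde\varphi\colon\varphi^{-1}(B)\to B$ and then to the ``lifted'' pair $(B^{\F,\varphi},\overline{\psi^{\F,\varphi}})$ involves three layers of universal properties (one for $[\varphi(A)\times_{\NF}K]$, one for $[H\times_{\F}K]$, and the implicit one for $[H\times_{\F}\NS]$), and I must track carefully that the $H$-augmentations $(-)H$ interact correctly with restrictions of $\hat\varphi$ — in particular that $\varphi^{-1}(B)H$ is sent into $BH$ by the appropriate restriction of $\hat\varphi$, which needs $\varphi^{-1}(B)\le A\le\NS$ and the fact that $\hat\varphi$ restricts to $\varphi$ on $A$. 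A secondary subtlety is checking representative-independence throughout, since $\overline{\varphi}$, $\overline{\psi}$, $\overline{\gamma_{(B,\overline{\psi})}^{\F,\varphi}}$ are all orbit-category morphisms; I would pin down representatives once (as the statement already does for $\varphi$) and use Corollary \ref{cor:conjugation-in-F-and-in-orbit-F.} to absorb ambiguities. Once the group-theoretic dictionary is set up, both implications should follow by direct composition/restriction of extensions with no further ideas needed.
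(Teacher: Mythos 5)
Your proposal is correct and follows essentially the same route as the paper's proof: part (i) is read off from the defining identity and the definition of $\OF$ exactly as you say, and part (ii) is reduced, via the extension characterization of $\NF$ from Example \ref{exa:definition-NF.}, to the chain $\overline{\varphi}\in\ONF\Leftrightarrow\overline{\tilde{\varphi}}\in\ONF\Leftrightarrow\overline{\psi\tilde{\varphi}}\in\ONF\Leftrightarrow\overline{\psi^{\F,\varphi}}\in\ONF$ (the paper phrases one direction contrapositively). The only ingredient you leave implicit in your ``divide off'' steps is that inclusions, and hence all morphisms, of $\OFc$ are epimorphisms (\cite[Theorem 4.9]{IntroductionToFusionSystemsLinckelmann}); this is precisely what the paper invokes to cancel $\overline{\iota_{\varphi^{-1}\left(B\right)}^{A}}$ and $\overline{\gamma_{\left(B,\overline{\psi}\right)}^{\F,\varphi}}$ on the right, and it disposes of the bookkeeping concerns you raise.
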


\begin{proof}
The fact that $\overline{\gamma_{\left(B,\overline{\psi}\right)}^{\F,\varphi}}$
is a morphism in $\Orbitize{\FHH}$ follows immediately from the identity
$\overline{\iota_{B^{\F,\varphi}}^{H}}\overline{\gamma_{\left(B,\overline{\psi}\right)}^{\F,\varphi}}=\overline{\iota_{\tilde{\varphi}^{-1}\left(B\right)}^{H}}$
and definition of the orbit category (see Definition \ref{def:Orbit-category.}).
Throughout this proof, contrary to Notation \ref{nota:Initial-notation.},
we will write $\overline{\alpha}\in\ONF$ to denote that $\overline{\alpha}$
is a morphism in $\ONF$ instead of an object in $\ONF$.

Assume that $\overline{\varphi}\not\in\ONF$. If $\overline{\tilde{\varphi}}\in\ONF$,
since $\varphi^{-1}\left(B\right)\le H$ then, by definition of $\NF$,
there would exist $\overline{\hat{\varphi}}\in\Hom_{\ONF}\left(H,\NS\right)$
such that $\overline{\hat{\varphi}}\overline{\iota_{\tilde{\varphi}^{-1}\left(B\right)}^{H}}=\overline{\iota_{B}^{\NS}}\overline{\tilde{\varphi}}$.
By definition of $\tilde{\varphi}$ this implies that $\overline{\hat{\varphi}}\overline{\iota_{A}^{H}}\overline{\iota_{\tilde{\varphi}^{-1}\left(B\right)}^{A}}=\overline{\varphi}\overline{\iota_{\tilde{\varphi}^{-1}\left(B\right)}^{A}}$.
From \cite[Theorem 4.9]{IntroductionToFusionSystemsLinckelmann} we
know that $\overline{\iota_{\tilde{\varphi}^{-1}\left(B\right)}^{A}}$
is an epimorphism and, therefore, we can conclude that $\overline{\hat{\varphi}}\overline{\iota_{A}^{H}}=\overline{\varphi}$.
In particular we would have that $\overline{\varphi}\in\ONF$ which
contradicts our assumption. We can therefore deduce that $\overline{\tilde{\varphi}}\not\in\ONF$.
Since $\overline{\psi}\in\ONF$ this implies that $\overline{\psi\tilde{\varphi}}\not\in\ONF$.
On the other hand, since $\Orbitize{\FHH}\subseteq\ONF$, then we
have that $\overline{\gamma_{\left(B,\overline{\psi}\right)}^{\F,\varphi}}\in\ONF$.
Thus, from the identity $\overline{\psi^{\F,\varphi}}\overline{\gamma_{\left(B,\overline{\psi}\right)}^{\F,\varphi}}=\overline{\psi\tilde{\varphi}}$,
we can conclude that $\overline{\psi^{\F,\varphi}}\not\in\ONF$.

Assume now that $\overline{\varphi}\in\ONF$. In this situation we
have that $\overline{\tilde{\varphi}}\in\ONF$ and, therefore, $\overline{\psi\tilde{\varphi}}=\overline{\psi^{\F,\varphi}}\overline{\gamma_{\left(B,\overline{\psi}\right)}^{\F,\varphi}}\in\ONF$.
Since $\tilde{\varphi}^{-1}\left(B\right)\le H$ then, by definition
of $\NF$, there exists a morphism $\overline{\theta}\colon H\to\NS$
in $\ONF$ such that $\overline{\theta}\overline{\iota_{\tilde{\varphi}^{-1}\left(B\right)}^{H}}=\overline{\iota_{K}^{\NS}}\overline{\psi^{\F,\varphi}}\overline{\gamma_{\left(B,\overline{\psi}\right)}^{\F,\varphi}}$.
Since $\overline{\gamma_{\left(B,\overline{\psi}\right)}^{\F,\varphi}}\in\Orbitize{\FHH}$
then there exists $h\in H$ such that $\tilde{\varphi}^{-1}\left(B\right)^{h}\le B^{\F,\varphi}$
and $\overline{\gamma_{\left(B,\overline{\psi}\right)}^{\F,\varphi}}=\overline{\iota_{\tilde{\varphi}^{-1}\left(B\right)^{h}}^{B^{\F,\varphi}}c_{h^{-1}}}$.
Using both these identities and definition of the orbit category we
can deduce that $\overline{\theta}\overline{\iota_{B^{\F,\varphi}}^{H}}\overline{\iota_{\tilde{\varphi}^{-1}\left(B\right)^{h}}^{B^{\F,\varphi}}}=\overline{\iota_{K}^{\NS}}\overline{\psi^{\F,\varphi}}\overline{\iota_{\tilde{\varphi}^{-1}\left(B\right)^{h}}^{B^{\F,\varphi}}}$.
From \cite[Theorem 4.9]{IntroductionToFusionSystemsLinckelmann} we
know that $\overline{\iota_{\tilde{\varphi}^{-1}\left(B\right)^{h}}^{B^{\F,\varphi}}}$
is an epimorphism and, therefore, we can conclude from the previous
identity that $\overline{\theta}\overline{\iota_{B^{\F,\varphi}}^{H}}=\overline{\iota_{K}^{\NS}}\overline{\psi^{\F,\varphi}}\in\ONF$.
In particular $\overline{\psi^{\F,\varphi}}\in\ONF$ thus concluding
the proof.
\end{proof}
Using Lemma \ref{lem:in-onf-iff-lifting-in-onf.} we can now give
the first of the two main results of this section.
\begin{lem}
\label{lem:Mackey-formula-induction-restriction-to-normalizer.}Let
$\R$ be a complete local and $p$-local $PID$, let $\G$ be a fusion
system containing $\F$ and let $M\in\MackFHR{\G}{\FHH}$. Then
\[
M\uparrow_{\FHH}^{\NF}\oplus\bigoplus_{K\in\mathcal{Y}}M^{K}\cong M\uparrow_{\FHH}^{\F}\downarrow_{\NF}^{\F}
\]
where, for every $K\in\mathcal{Y}$, we have that $M^{K}\in\MackFHR{\G}{\NF}$
is $K$-projective. Moreover the isomorphism realizing the above equivalence
can be taken so that the summand $M\uparrow_{\FHH}^{\NF}$ on the
left hand side is mapped isomorphically to the $\FmuFR{\NF}$-submodule
$\FmuFR{\NF}\otimes M$ of $M\uparrow_{\FHH}^{\F}\downarrow_{\NF}^{\F}$.
Here we are using Corollary \ref{cor:Mackey-algebra-inclusion.} in
order to view $\FmuFR{\NF}\otimes M$ as a submodule of $M\uparrow_{\FHH}^{\F}\downarrow_{\NF}^{\F}$. 
\end{lem}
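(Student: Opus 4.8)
The strategy is to decompose $M\uparrow_{\FHH}^{\F}\downarrow_{\NF}^{\F}$ by first running it through $\downarrow_{\FHH[\NS]}^{\F}$, where Lemma \ref{lem:Mackey-formula-induction-restriction.} applies directly, and then restricting further to $\NF\subseteq\FHH[\NS]$. More precisely, by Proposition \ref{prop:Decomposition-centric-Mackey-functor.} and Proposition \ref{prop:properties-HXK.} we get an $\R$-module (indeed $\FmuFR{\FHH[\NS]}$-module) decomposition indexed by pairs $\left(A,\overline{\varphi}\right)\in\left[H\times_{\F}\NS\right]$, fixing a representative $\varphi$ of each $\overline{\varphi}$ viewed as an isomorphism onto its image, with summands of the form $\left(\lui{\varphi}{\left(M\downarrow_{\FHH[A]}^{\FHH}\right)}\right)\uparrow_{\FHH[\varphi\left(A\right)]}^{\FHH[\NS]}$. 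Exactly one such pair, namely $\left(H,\overline{\iota_{H}^{\NS}}\right)$ (which lies in $\left[H\times_{\F}\NS\right]$ since $H$ is fully $\F$-normalized and $H\le\NS$), has $\overline{\varphi}=\overline{\iota_{H}^{\NS}}\in\ONF$ and yields the summand $M\uparrow_{\FHH}^{\FHH[\NS]}$; every other pair has $\overline{\varphi}\notin\ONF$. Restricting this further to $\NF$ and invoking Lemma \ref{lem:in-onf-iff-lifting-in-onf.}, the "good" summand contributes precisely $M\uparrow_{\FHH}^{\NF}$ (this is where one must check $\FHH[\NS]\downarrow$ restricted to $\NF$ recovers the correct module — using that $\FHH\subseteq\NF\subseteq\FHH[\NS]$ and Corollary \ref{cor:Mackey-algebra-inclusion.} to identify $\FmuFR{\NF}\otimes M$ as the relevant submodule), while every other summand, after restriction to $\NF$, becomes a $\G$-centric Mackey functor over $\NF$ that is $K$-projective for some $K\in\mathcal{Y}$.

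For the $K$-projectivity claim: each summand coming from a pair $\left(A,\overline{\varphi}\right)$ with $\overline{\varphi}\notin\ONF$ is, by construction and by Proposition \ref{prop:centric-Induction-restriction.}, induced up from a proper subgroup. Concretely, one shows that after restricting to $\NF$ such a summand is a direct summand of $\left(L\downarrow_{\FHH[K]}^{\NF}\right)\uparrow_{\FHH[K]}^{\NF}$ for a suitable $K\le\varphi(A)$ with $K\in\mathcal{Y}$ — i.e. $K\le_{\F}H$, $K\le\NS$, $K\in\Fc$, and crucially $K\ne H$, the last point following because $\overline{\varphi}\notin\ONF$ forces $\varphi(A)$ (or the relevant intersection subgroup appearing via Proposition \ref{prop:properties-HXK.} \eqref{enu:prod-if-F-is-F_S(S).} when computing $\varphi(A)\times_{\FHH[\NS]}(\text{subgroup of }\NS)$) to be strictly smaller than $H$ up to $\F$-conjugacy, or at least to not contain $H$ up to $\NF$-conjugacy. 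Then Theorem \ref{thm:Higman's-criterion.} (equivalence of "is a summand of something induced from $K$" with "$K$-projective") gives the conclusion. Since $\R$ is a complete local $PID$, the Krull-Schmidt-Azumaya theorem lets us collect these summands and relabel them as a single $M^{K}\in\MackFHR{\G}{\NF}$ for each $K\in\mathcal{Y}$ (absorbing into $M^{K}$ all summands whose minimal such $K$ is the given one), so the indexing set on the left becomes $\mathcal{Y}$ as stated.

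The main obstacle I anticipate is the bookkeeping in the "two-step restriction" $\downarrow_{\NF}^{\F}=\downarrow_{\NF}^{\FHH[\NS]}\downarrow_{\FHH[\NS]}^{\F}$: Lemma \ref{lem:Mackey-formula-induction-restriction.} is stated for $\downarrow_{\FHH[K]}^{\F}$ with $K\in\Fc$, so it applies to $\downarrow_{\FHH[\NS]}^{\F}$, but one then needs to understand how each summand $\left(\lui{\varphi}{\left(M\downarrow_{\FHH[A]}^{\FHH}\right)}\right)\uparrow_{\FHH[\varphi\left(A\right)]}^{\FHH[\NS]}$ behaves under the further restriction to $\NF$. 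This requires a Mackey-type formula for $\uparrow_{\FHH[\varphi(A)]}^{\FHH[\NS]}\downarrow_{\NF}^{\FHH[\NS]}$, which is essentially another instance of the double-coset combinatorics but now relative to $\NF$ rather than a subgroup fusion system; the clean way is to re-run the argument of Lemma \ref{lem:Mackey-formula-induction-restriction.} with $\F$ replaced by $\FHH[\NS]$ and $\FHH[K]$ replaced by $\NF$, using that $\NF$ is itself a saturated fusion system (Example \ref{exa:definition-NF.}) and that $\Orbitize{\FHH}\subseteq\ONF\subseteq\Orbitize{\FHH[\NS]}$. The other delicate point is verifying that the isomorphism can be arranged to send $M\uparrow_{\FHH}^{\NF}$ exactly onto $\FmuFR{\NF}\otimes M$; this follows by tracing the explicit map $\Gamma$ of Lemma \ref{lem:Mackey-formula-induction-restriction.} on the distinguished summand indexed by $\left(H,\overline{\iota_{H}^{\NS}}\right)$ and checking, via Lemma \ref{lem:in-onf-iff-lifting-in-onf.}, that exactly the basis elements $\pi(I_{\overline{\theta}(C)}^{J}c_{\overline{\theta\varphi}})$ with $\overline{\theta\varphi}\in\ONF$ survive, which are precisely those spanning $\FmuFR{\NF}\otimes M$.
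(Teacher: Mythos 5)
Your first step matches the paper's: apply Lemma \ref{lem:Mackey-formula-induction-restriction.} to $M\uparrow_{\FHH}^{\F}\downarrow_{\FHH[\NS]}^{\F}$, sort the summands indexed by $\left(A,\overline{\varphi}\right)\in\left[H\times_{\F}\NS\right]$ according to whether $\overline{\varphi}\in\ONF$, and use Lemma \ref{lem:in-onf-iff-lifting-in-onf.} to see that the single pair with $\overline{\varphi}\in\ONF$ (which forces $A=H$) accounts for $\FmuFR{\NF}\otimes M\cong M\uparrow_{\FHH}^{\NF}$. But the mechanism you propose for converting this $\FHH[\NS]$-level decomposition into an $\NF$-level one is backwards: you write $\downarrow_{\NF}^{\F}=\downarrow_{\NF}^{\FHH[\NS]}\downarrow_{\FHH[\NS]}^{\F}$ and speak of ``restricting further to $\NF\subseteq\FHH[\NS]$'', but the containment runs the other way, $\FHH[\NS]\subseteq\NF$ (every $\NS$-conjugation normalizes $H$, hence extends in the sense of Example \ref{exa:definition-NF.}), so there is no restriction functor from $\FmuFRmod[\R][{}]{\FHH[\NS]}$-type data down to $\NF$. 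For the same reason the ``clean way'' you suggest — re-running Lemma \ref{lem:Mackey-formula-induction-restriction.} with $\FHH[K]$ replaced by $\NF$ — is exactly the Mackey formula relative to $\NF$ that the paper states it could not prove directly (see the discussion opening Subsection \ref{subsec:Restriction-map-from-trHF-to-trHNF+trYNF.}); the entire point of this lemma is to avoid it.

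The missing idea is to go \emph{up}, not down: apply $\uparrow_{\FHH[\NS]}^{\NF}$ to the decomposition of $M^{\NF}\downarrow_{\FHH[\NS]}^{\NF}$ (where $M^{\NF}:=M\uparrow_{\FHH}^{\F}\downarrow_{\NF}^{\F}$), obtaining $\left(M^{\NF}\right)_{\NS}=M^{H}\oplus M^{\mathcal{Y}}$, and then use that $\theta_{\NS}^{M^{\NF}}$ is split surjective — this is Lemma \ref{lem:projective-relative-to-S.}, and it is the step that consumes the $p$-locality of $\R$, a hypothesis your argument never invokes. One then shows via Proposition \ref{prop:Decomposition-centric-Mackey-functor.} and Lemma \ref{lem:in-onf-iff-lifting-in-onf.} that $\theta_{\NS}\left(M^{H}\right)$ and $\theta_{\NS}\left(M^{\mathcal{Y}}\right)$ intersect trivially, so that $M^{\NF}=\theta_{\NS}\left(M^{H}\right)\oplus\theta_{\NS}\left(M^{\mathcal{Y}}\right)$, identifies $\theta_{\NS}\left(M^{H}\right)$ with $\FmuFR{\NF}\otimes M$, and finally applies Krull--Schmidt--Azumaya to the (split) surjection $M^{\mathcal{Y}}\twoheadrightarrow\theta_{\NS}\left(M^{\mathcal{Y}}\right)$ to extract the $K$-projective summands $M^{K}$ via Theorem \ref{thm:Higman's-criterion.}. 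Your $K$-projectivity discussion is salvageable once this framework is in place, but as written the proof does not go through.
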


\begin{proof}
In order to simplify notation we define $M^{\NF}:=M\uparrow_{\FHH}^{\F}\downarrow_{\NF}^{\F}$.
From Proposition \ref{prop:centric-Induction-restriction.} we know
that $M^{\NF}$ is $\G$-centric and, therefore, every $M^{K}$ (if
exists) must necessarily be $\G$-centric.

For every $\left(A,\overline{\varphi}\right)\in\left[H\times_{\F}\NS\right]$
fix a representative $\varphi$ of $\overline{\varphi}$ and view
it as an isomorphism onto its image. Since $M^{\NF}\downarrow_{\FHH[\NS]}^{\NF}=M\uparrow_{\FHH}^{\F}\downarrow_{\FHH[\NS]}^{\F}$
we can use Lemma \ref{lem:Mackey-formula-induction-restriction.}
in order to obtain a decomposition of $M^{\NF}\downarrow_{\FHH[\NS]}^{\NF}$
as a direct sum of $\FmuFR{\FHH[\NS]}$-modules. Applying the additive
functor $\uparrow_{\FHH[\NS]}^{\NF}$ to the resulting decomposition
we can conclude that $\left(M^{\NF}\right)_{\NS}=M^{H}\oplus M^{\mathcal{Y}}$
(see Definition \ref{def:theta_S-and-theta^S.}) where, using the
notation of Lemma \ref{lem:Mackey-formula-induction-restriction.},
we have that
\begin{align*}
M^{H} & :=\bigoplus_{\begin{array}{c}
{\scriptstyle {\scriptstyle \left(A,\overline{\varphi}\right)\in\left[H\times_{\F}\NS\right]}}\\
{\scriptstyle \overline{\varphi}\in\ONF}
\end{array}}M_{\left(A,\overline{\varphi}\right)}\uparrow_{\FHH[\varphi\left(A\right)]}^{\NF}, & M^{\mathcal{Y}} & :=\bigoplus_{\begin{array}{c}
{\scriptstyle {\scriptstyle \left(A,\overline{\varphi}\right)\in\left[H\times_{\F}\NS\right]}}\\
{\scriptstyle \overline{\varphi}\not\in\ONF}
\end{array}}M_{\left(A,\overline{\varphi}\right)}\uparrow_{\FHH[\varphi\left(A\right)]}^{\NF}.
\end{align*}
Here we are viewing the right hand sides of the above definitions
as submodules of $\left(M^{\NF}\right)_{\NS}$ via the isomorphism
described in Lemma \ref{lem:Mackey-formula-induction-restriction.}.
From Proposition \ref{prop:Decomposition-centric-Mackey-functor.}
we know that all the elements in $\theta_{\NS}^{M^{\NS}}\left(M_{\left(A,\overline{\varphi}\right)}\uparrow_{\FHH[\varphi\left(A\right)]}^{\NF}\right)$
(see Definition \ref{def:theta_S-and-theta^S.}) can be written as
finite sums of elements of the form $I_{\overline{\psi}\left(B\right)}^{K}c_{\overline{\psi\tilde{\varphi}}}\otimes x$
for some $K\in\NF\cap\G^{c}$, some $\left(B,\overline{\psi}\right)\in\left[\varphi\left(A\right)\times_{\NF}K\right]$
such that $B\in\FHH[\varphi\left(A\right)]\cap\G^{c}$ and some $x\in I_{\varphi^{-1}\left(B\right)}^{\varphi^{-1}\left(B\right)}M$.
Here $\tilde{\varphi}\colon\varphi^{-1}\left(B\right)\to B$ denotes
the morphism $\varphi$ seen as an isomorphism between the given subgroups.
From Lemma \ref{lem:in-onf-iff-lifting-in-onf.} we can now conclude
that, for every $K\in\NF\cap\G^{c}$, the elements of $I_{K}^{K}\theta_{\NS}^{M^{\NS}}\left(M^{H}\right)$
can be written as finite sums of elements of the form $I_{\overline{\theta}\left(C\right)}^{K}c_{\overline{\theta}}\otimes x$
for some $x\in I_{C}^{C}M$ and some $\left(C,\overline{\theta}\right)\in\left[H\times_{\F}K\right]$
such that $\overline{\theta}\in\ONF$. Notice that the tensor product
is over $\FmuFR{\FHH}$ and not $\FmuFR{\FHH[\NS]}$ since $I_{\overline{\theta}\left(C\right)}^{K}c_{\overline{\theta}}\otimes x$
is an element of $M^{\NF}=M\uparrow_{\FHH}^{\F}\downarrow_{\NF}^{\F}$.
Likewise, the elements of $I_{K}^{K}\theta_{\NS}^{M^{\NS}}\left(M^{\mathcal{Y}}\right)$
can be written as finite sums of elements of the form $I_{\overline{\theta}\left(C\right)}^{K}c_{\overline{\theta}}\otimes x$
for some $x\in I_{C}^{C}M$ and some $\left(C,\overline{\theta}\right)\in\left[H\times_{\F}K\right]$
such that $\overline{\theta}\not\in\ONF$. Applying again Proposition
\ref{prop:Decomposition-centric-Mackey-functor.} we can conclude
that $\theta_{\NS}^{M^{\NS}}\left(M^{H}\right)\cap\theta_{\NS}^{M^{\NS}}\left(M^{\mathcal{Y}}\right)=\left\{ 0\right\} $.
On the other hand, since $\R$ is $p$-local, we have from Lemma \ref{lem:projective-relative-to-S.}
that $\theta_{\NS}^{M^{\NS}}$ is split surjective and, in particular,
surjective. Since $\left(M^{\NF}\right)_{\NS}=M^{H}\oplus M^{\mathcal{Y}}$
then, from the previous result, we can conclude that 
\begin{equation}
M^{\NF}=\theta_{\NS}^{M^{\NS}}\left(M^{H}\right)\oplus\theta_{\NS}^{M^{\NS}}\left(M^{\mathcal{Y}}\right).\label{eq:aux-decomposition-M^NF}
\end{equation}

By definition of $\ONF$ (see Example \ref{exa:definition-NF.}) we
have that for every $A\le H$ and every $\overline{\varphi}\colon A\to\NS$
in $\ONF$ there exists a morphism $\overline{\hat{\varphi}}\colon H\to\NS$
in $\ONF$ such that $\overline{\hat{\varphi}}\overline{\iota_{A}^{H}}=\overline{\varphi}$.
We also have that $\ONF\subseteq\OF$. Therefore, for every $\left(A,\overline{\varphi}\right)\in\left[H\times_{\F}\NS\right]$
such that $\overline{\varphi}\in\ONF$, we have from maximality (see
Definition \ref{def:HX_FK}) that $A=H$ and $\overline{\varphi}\in\Aut_{\OF}\left(H\right)$.
From Proposition \ref{prop:properties-HXK.} \eqref{enu:prod-iso-left.}
and the above description of elements in $\theta_{\NS}^{M^{\NS}}\left(M_{\left(A,\overline{\varphi}\right)}\uparrow_{\FHH}^{\NF}\right)$
we can then conclude that, for every $K\in\NF\cap\G^{c}$, the elements
in $I_{K}^{K}\theta_{\NS}^{M^{\NS}}\left(M^{H}\right)$ are finite
sums of elements of the form $I_{\overline{\psi}\left(B\right)}^{K}c_{\overline{\psi}}\otimes x$
for some $\left(B,\overline{\psi}\right)\in\left[H\times_{\NF}K\right]$
and some $x\in I_{B}^{B}M$. From Proposition \ref{prop:Decomposition-centric-Mackey-functor.}
we can then conclude that $\theta_{\NS}^{M^{\NS}}\left(M_{\left(A,\overline{\varphi}\right)}\uparrow_{\FHH}^{\NF}\right)$
is precisely the submodule $\FmuFR{\NF}\otimes M$ of $M\uparrow_{\FHH}^{\F}\downarrow_{\NF}^{\F}$
which is, by definition, isomorphic to $M\uparrow_{\FHH}^{\NF}$.

From Equation \eqref{eq:aux-decomposition-M^NF} and the fact that
$\theta_{\NS}^{M^{\NS}}$ is split surjective, we conclude that the
restriction of $\theta_{\NS}^{M^{\NS}}$ as a map from $M^{\mathcal{Y}}$
to $\theta_{\NS}^{M^{\NS}}\left(M^{\mathcal{Y}}\right)$ is also split
surjective. In particular we have that $\theta_{\NS}^{M^{\NS}}\left(M^{\mathcal{Y}}\right)$
is isomorphic to a summand of $M^{\mathcal{Y}}$. Notice now that,
for every $\left(A,\overline{\varphi}\right)\in\left[H\times_{\F}\NS\right]$,
we have that $\varphi\left(A\right)\le_{\F}H$ and, if $\varphi\left(A\right)=H$,
then we necessarily have that $A=H$ and $\overline{\varphi}\in\Aut_{\OF}\left(H\right)=\Aut_{\ONF}\left(H\right)$.
We can therefore conclude that 
\begin{align*}
M^{\mathcal{Y}} & =\bigoplus_{K\in\mathcal{Y}}M'^{K} & \text{where} &  & M'^{K} & :=\bigoplus_{\begin{array}{c}
{\scriptstyle {\scriptstyle \left(A,\overline{\varphi}\right)\in\left[H\times_{\F}\NS\right]}}\\
{\scriptstyle \varphi\left(A\right)=K}
\end{array}}M_{\left(A,\overline{\varphi}\right)}\uparrow_{\FHH[K]}^{\NF}.
\end{align*}
Since $\R$ is a complete local $PID$ we can now apply the Krull-Schmidt-Azumaya
theorem (see \cite[Theorem 6.12 (ii)]{MethodsOfRepresentationTheoryCurtisReiner})
in order to write $\theta_{\NS}^{M^{\NS}}\left(M^{\mathcal{Y}}\right)=\bigoplus_{K\in\mathcal{Y}}M^{K}$
where each $M^{K}$ is a summand of $M'^{K}$. From Theorem \ref{thm:Higman's-criterion.}
we know that each $M'^{K}$ is $K$-projective. Therefore since each
$M^{K}$ is a summand of $M'^{K}$ we can conclude, again from Theorem
\ref{thm:Higman's-criterion.}, that $M^{K}$ is $K$-projective thus
concluding the proof.
\end{proof}
Using Lemma \ref{lem:Mackey-formula-induction-restriction-to-normalizer.}
we can now obtain the following result with which we conclude this
subsection.
\begin{lem}
\label{lem:induction-from-normalizer-followed-by-restriction-to-normalizer.}Let
$\R$ be a complete local and $p$-local $PID$, let $\G$ be a fusion
system containing $\F$ and let $M\in\MackFHR{\G}{\NF}$ be $H$-projective.
Then, there exists an $\mathcal{Y}$-projective $M'\in\MackFHR{\G}{\NF}$
such that
\[
M\uparrow_{\NF}^{\F}\downarrow_{\NF}^{\F}\cong M\oplus M'.
\]
\end{lem}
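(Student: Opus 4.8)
The plan is to reduce the statement to Lemma~\ref{lem:Mackey-formula-induction-restriction-to-normalizer.} by realizing $M$ as a direct summand of a Mackey functor induced from $\FHH$ up to $\NF$. Throughout I would work with the saturated fusion system $\NF$ in place of $\F$: since $\NF$ is saturated (Example~\ref{exa:definition-NF.}) and $\FHH\subseteq\NF\subseteq\F\subseteq\G$ (the first inclusion holding because every morphism $c_{h}$ of $\FHH$, $h\in H$, extends to the morphism $c_{h}\colon H\to H$ of $\NF$), the constructions $L_{H}$, $\theta_{H}$, $\theta^{H}$ and relative projectivity (Definitions~\ref{def:theta_S-and-theta^S.} and~\ref{def:Relative-projectivity.}) all make sense over $\NF$, as do Propositions~\ref{prop:centric-Induction-restriction.} and~\ref{prop:Decomposition-centric-Mackey-functor.}; note that $H$ and every element of $\mathcal{Y}$ is $\NF$-centric because it is $\F$-centric. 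Set $N:=M\downarrow_{\FHH}^{\NF}\in\MackFHR{\G}{\FHH}$ (Proposition~\ref{prop:centric-Induction-restriction.}). Since $M$ is $H$-projective over $\NF$, the map $\theta_{H}^{M}\colon N\uparrow_{\FHH}^{\NF}=M\downarrow_{\FHH}^{\NF}\uparrow_{\FHH}^{\NF}\to M$ is split surjective (Definition~\ref{def:Relative-projectivity.}), so there is a decomposition $N\uparrow_{\FHH}^{\NF}\cong M\oplus M''$ in $\MackFHR{\G}{\NF}$.

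Next I would apply Lemma~\ref{lem:Mackey-formula-induction-restriction-to-normalizer.} to $N$, obtaining an isomorphism $N\uparrow_{\FHH}^{\F}\downarrow_{\NF}^{\F}\cong N\uparrow_{\FHH}^{\NF}\oplus Q$ where $Q:=\bigoplus_{K\in\mathcal{Y}}N^{K}$ with each $N^{K}$ being $K$-projective; as each summand is projective relative to an element of $\mathcal{Y}$, $Q$ is $\mathcal{Y}$-projective (cf. Corollary~\ref{cor:projective-respect-to-bigger.}~\eqref{enu:X<Y implies Y-proj.} and additivity of the $\theta$'s). The key point is that this isomorphism identifies the summand $N\uparrow_{\FHH}^{\NF}$ with the natural submodule $\FmuFR{\NF}\otimes N$ of $N\uparrow_{\FHH}^{\F}\downarrow_{\NF}^{\F}$; unwinding the definitions of the induction functors as extensions of scalars and using $1_{\FmuFR{\NF}}1_{\FmuFR{\FHH}}=1_{\FmuFR{\FHH}}$, one checks that this inclusion is exactly the unit $j_{L}\colon L\to L\uparrow_{\NF}^{\F}\downarrow_{\NF}^{\F}$, $x\mapsto 1_{\FmuFR{\NF}}\otimes x$, of the adjunction $(\uparrow_{\NF}^{\F},\downarrow_{\NF}^{\F})$, evaluated at $L=N\uparrow_{\FHH}^{\NF}$. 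Since induction is transitive, $N\uparrow_{\FHH}^{\F}=(N\uparrow_{\FHH}^{\NF})\uparrow_{\NF}^{\F}=M\uparrow_{\NF}^{\F}\oplus M''\uparrow_{\NF}^{\F}$, whence $N\uparrow_{\FHH}^{\F}\downarrow_{\NF}^{\F}=M\uparrow_{\NF}^{\F}\downarrow_{\NF}^{\F}\oplus M''\uparrow_{\NF}^{\F}\downarrow_{\NF}^{\F}$, and by naturality of $j$ the inclusion $j_{N\uparrow_{\FHH}^{\NF}}$ is the direct sum $j_{M}\oplus j_{M''}$ relative to $N\uparrow_{\FHH}^{\NF}=M\oplus M''$.

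Now $j_{M}\oplus j_{M''}$ is split injective (it is the split inclusion provided by Lemma~\ref{lem:Mackey-formula-induction-restriction-to-normalizer.}), so reading off the block form of a retraction shows that $j_{M}$ and $j_{M''}$ are individually split injective. Hence $M\uparrow_{\NF}^{\F}\downarrow_{\NF}^{\F}\cong M\oplus M'$ and $M''\uparrow_{\NF}^{\F}\downarrow_{\NF}^{\F}\cong M''\oplus M'''$ for some $M',M'''\in\MackFHR{\G}{\NF}$ (these are $\G$-centric, being summands of $\G$-centric functors, by Proposition~\ref{prop:centric-Induction-restriction.}). Comparing the two resulting internal direct-sum decompositions of $N\uparrow_{\FHH}^{\F}\downarrow_{\NF}^{\F}$, namely $\bigl(\mathrm{im}\,j_{M}\oplus\mathrm{im}\,j_{M''}\bigr)\oplus\bigl(M'\oplus M'''\bigr)$ from the previous sentence and $\bigl(\mathrm{im}\,j_{M}\oplus\mathrm{im}\,j_{M''}\bigr)\oplus Q$ from Lemma~\ref{lem:Mackey-formula-induction-restriction-to-normalizer.}, which share the same first summand, gives $M'\oplus M'''\cong Q$ (two complements of a fixed summand of a module are isomorphic; alternatively use the Krull--Schmidt--Azumaya theorem, available since $\R$ is a complete local $PID$). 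Since $Q$ is $\mathcal{Y}$-projective and $M'$ is a direct summand of it, $M'$ is $\mathcal{Y}$-projective, and $M\uparrow_{\NF}^{\F}\downarrow_{\NF}^{\F}\cong M\oplus M'$, as required.

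I expect the main obstacle to lie in the second paragraph: carefully verifying that the explicit submodule $\FmuFR{\NF}\otimes N$ appearing in Lemma~\ref{lem:Mackey-formula-induction-restriction-to-normalizer.} coincides with the image of the adjunction unit $j$, so that the naturality of $j$ may legitimately be invoked to split it along $N\uparrow_{\FHH}^{\NF}=M\oplus M''$. The remaining steps are formal manipulations of direct sums, together with the transitivity of induction and the fact that $\G$-centricity and relative projectivity are inherited by restrictions, inductions and direct summands.
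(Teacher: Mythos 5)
Your proof is correct and follows essentially the same route as the paper: both realize $M$ as a direct summand of $N\uparrow_{\FHH}^{\NF}$ (you via split surjectivity of $\theta_{H}$ with $N:=M\downarrow_{\FHH}^{\NF}$, the paper via Theorem \ref{thm:Higman's-criterion.}) and then exploit the final clause of Lemma \ref{lem:Mackey-formula-induction-restriction-to-normalizer.} identifying the summand $N\uparrow_{\FHH}^{\NF}$ with the canonical submodule $\FmuFR{\NF}\otimes N$, i.e.\ the image of the adjunction unit. The only difference is bookkeeping at the end (you split the unit blockwise and compare complements; the paper quotients by the complement $U$ and invokes Krull--Schmidt--Azumaya), which changes nothing of substance.
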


\begin{proof}
From Proposition \ref{prop:centric-Induction-restriction.} we know
that if such a direct sum decomposition exists then $M'$ is necessarily
$\G$-centric. From Theorem \ref{thm:Higman's-criterion.} we know
that there exist $N\in\MackFHR{\G}{\FHH}$ and $U\in\MackFHR{\G}{\NF}$
such that $M\oplus U\cong N\uparrow_{\FHH}^{\NF}$. Since induction
and restriction preserve direct sum decomposition then, from Lemma
\ref{lem:Mackey-formula-induction-restriction-to-normalizer.}, we
obtain an isomorphism 
\[
f:M\uparrow_{\NF}^{\F}\downarrow_{\NF}^{\F}\oplus\,U\uparrow_{\NF}^{\F}\downarrow_{\NF}^{\F}\bjarrow N\uparrow_{\FHH}^{\NF}\oplus\bigoplus_{K\in\mathcal{Y}}N^{K}.
\]
Where each $N^{K}$ is $K$-projective. Lemma \ref{lem:Mackey-formula-induction-restriction-to-normalizer.}
also tells us that $f$ sends the sub-module $M\oplus U$ of $M\uparrow_{\NF}^{\F}\downarrow_{\NF}^{\F}\oplus U\uparrow_{\NF}^{\F}\downarrow_{\NF}^{\F}$
isomorphically onto the summand $N\uparrow_{\FHH}^{\NF}$ of the right
hand side. Using this we obtain the following equivalence of $\FmuFR{\NF}$-modules
\begin{align*}
M\oplus\bigoplus_{K\in\mathcal{Y}}N^{K} & \cong\left(M\oplus U\oplus\bigoplus_{K\in\mathcal{Y}}N^{K}\right)/U,\\
 & \cong\left(M\uparrow_{\NF}^{\F}\downarrow_{\NF}^{\F}\oplus U\uparrow_{\NF}^{\F}\downarrow_{\NF}^{\F}\right)/U,\\
 & \cong M\uparrow_{\NF}^{\F}\downarrow_{\NF}^{\F}\oplus\left(U\uparrow_{\NF}^{\F}\downarrow_{\NF}^{\F}/U\right).
\end{align*}
In particular we can conclude that $M\uparrow_{\NF}^{\F}\downarrow_{\NF}^{\F}$
is a summand of $M\oplus\bigoplus_{K\in\mathcal{Y}}N^{K}$. Moreover,
again from the description of $f$, we have that $M\uparrow_{\NF}^{\F}\downarrow_{\NF}^{\F}$
contains the summand $M$. Since $\R$ is complete local and $p$-local
then we can use this and the Krull-Schmidt-Azumaya theorem in order
to conclude that there exists a summand $M'$ of $\bigoplus_{K\in\mathcal{Y}}N^{K}$
(which is necessarily $\mathcal{Y}$-projective from Theorem \ref{thm:Higman's-criterion.})
such that $M\uparrow_{\NF}^{\F}\downarrow_{\NF}^{\F}\cong M\oplus M'$.
This concludes the proof.
\end{proof}

\subsection{\label{subsec:Restriction-map-from-trHF-to-trHNF+trYNF.}Composing
transfer and restriction.}

Let $G$ be a finite group and let $H,K$ and $J$ be subgroups of
$G$ such that $J\le K$. The following decomposition of double cosets
representatives is well known
\begin{equation}
\left[J\backslash G/H\right]=\bigsqcup_{x\in\left[K\backslash G/H\right]}\left[J\backslash K/\left(K\cap\lui{x}{H}\right)\right]x.\label{eq:decomposition-biset-1.}
\end{equation}
where we define 
\[
\left[J\backslash K/\left(K\cap\lui{x}{H}\right)\right]x:=\left\{ yx\in G\,:\,y\in\left[J\backslash K/\left(K\cap\lui{x}{H}\right)\right]\right\} .
\]
Denoting by $\tr_{J}^{G}$ and $\r_{H}^{G}$ the transfer and restriction
maps of the Endomorphism Mackey functor $\End\left(M\right)$ (see
\cite[Definition 2.7]{SASAKI198298}) Equation \eqref{eq:decomposition-biset-1.}
can be used in order to prove that for any Mackey functor $M$ over
$G$
\begin{equation}
\r_{K}^{G}\tr_{H}^{G}=\sum_{x\in\left[K\backslash G/H\right]}\tr_{K\cap\lui{x}{H}}^{K}c_{c_{x}}\r_{K^{x}\cap H}^{H}.\label{eq:r*tr-groups.}
\end{equation}
We know from Proposition \ref{prop:Properties-of-transfer-restriction-and-conjugation-maps.}
\eqref{enu:Mackey-formula.} that a similar result holds in the case
of the transfer and restriction maps of Definition \ref{def:Transfer-and-restriction.}.
However, Proposition \ref{prop:Properties-of-transfer-restriction-and-conjugation-maps.}
\eqref{enu:Mackey-formula.} only involves composition of transfer
and restriction maps of the form $\r_{\FHH[K]}^{\F}\tr_{\FHH[H]}^{\F}$
for some $K,H\in\Fc$ and tells us nothing regarding compositions
of transfer and restriction of the form $\r_{\G}^{\F}\tr_{\FHH[H]}^{\F}$
for other fusion system $\G$ contained in $\F$. Attempting to obtain
a decomposition similar to that of Proposition \ref{prop:Properties-of-transfer-restriction-and-conjugation-maps.}
\eqref{enu:Mackey-formula.} in this situation leads to several complications.
These can be traced back to the lack of a result analogous to Proposition
\ref{prop:properties-HXK.} \eqref{enu:prod-pullback-left.} in the
case where $H$ is replaced with $\G$ and $\left[A\backslash H/J\right]$
is replaced with $\left[A\times_{\G}J\right]$. Some experimentation
leads us to believe that such a result is possible when $K=H$ and
$\G=\NF$ (i.e. a result dual to Theorem \ref{thm:decomposition-direct-product.}),
however we were unable to prove it. Nonetheless we were able to obtain
a result analogous to Equation \eqref{eq:r*tr-groups.} for the composition
$\r_{\NF}^{\F}\tr_{\FHH}^{\F}$ (see Proposition \ref{prop:Conjecture-workaround.})
and this subsection is dedicated to proving it. In order to do so
we first need to develop some tools.
\begin{lem}
\label{lem:transfer-and-restriction.}Let $\R$ be a $p$-local ring,
let $M\in\MackFcR$ and let $\overline{\NS}\in\BGR{\left(\NF\right)^{c}}$
be the isomorphism class of $\NS$. From Proposition \ref{prop:Inverse-of-S.}
we know that $\overline{\NS}$ admits an inverse in $\BGR{\left(\NF\right)^{c}}$.
Then we have that 
\[
\r_{\NF}^{\F}\tr_{\FHH}^{\F}=\sum_{\left(A,\overline{\varphi}\right)\in\left[H\times_{\F}\NS\right]}\left(\overline{\NS}^{-1}\cdot\right)_{*}\tr_{\FHH[\varphi\left(A\right)]}^{\NF}\lui{\varphi}{\cdot}\r_{\FHH[A]}^{\FHH}.
\]
where we are using Notation \ref{nota:Initial-notation.} as well
as the notation of Proposition \ref{prop:Action-of-centric-burnside-ring.}
and Definition \ref{def:Transfer-and-restriction.} and we are viewing
the representative $\varphi$ of $\overline{\varphi}$ as an isomorphism
onto its image. Equivalently, using the same notation, we have that
\[
\left(\overline{\NS}\cdot\right)_{*}\r_{\NF}^{\F}\tr_{\FHH}^{\F}=\sum_{\left(A,\overline{\varphi}\right)\in\left[H\times_{\F}\NS\right]}\tr_{\FHH[\varphi\left(A\right)]}^{\NF}\lui{\varphi}{\cdot}\r_{\FHH[A]}^{\FHH}.
\]
\end{lem}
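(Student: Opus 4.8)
The plan is to mimic the proof of Equation \eqref{eq:r*tr-groups.} for Mackey functors over groups, but replacing the double coset decomposition \eqref{eq:decomposition-biset-1.} with the product decomposition in $\aOFc$ provided by Proposition \ref{prop:properties-HXK.}, and replacing the restriction functor to a subgroup with the restriction functor to the fusion subsystem $\NF$. First I would fix $K\in\NF\cap\Fc$ and $x\in I_{K}^{K}M$, and apply the definition of $\tr_{\FHH}^{\F}$ together with Proposition \ref{prop:Decomposition-centric-Mackey-functor.} to write $\tr_{\FHH}^{\F}(g)(x)$ as a sum over $\left(A,\overline{\varphi}\right)\in\left[H\times_{\F}K\right]$. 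The difficulty is that $\left[H\times_{\F}K\right]$ is not naturally indexed by the pieces appearing on the right-hand side (which are indexed by $\left[H\times_{\F}\NS\right]$ and then, for each such piece, by products inside $\NF$), because $\aOF$ does not admit products; this is exactly the obstruction mentioned in the subsection preamble. The key observation allowing a workaround is that this obstruction disappears after composing with $\left(\overline{\NS}\cdot\right)_{*}$: by Proposition \ref{prop:Properties-of-transfer-restriction-and-conjugation-maps.} \eqref{enu:Restriction-and-transfer-makes-burnside.} we have $\left(\overline{\NS}\cdot\right)_{*}=\tr_{\FHH[\NS]}^{\F}\r_{\FHH[\NS]}^{\F}$ acting on $\End(M)$ by post-composition, and the transfer $\tr_{\FHH[\NS]}^{\F}$ factors through $\NF$ in the sense that it only records data up to $\NS$-conjugacy inside $\NF$.

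Concretely, the main computational step is to expand $\left(\overline{\NS}\cdot\right)_{*}\r_{\NF}^{\F}\tr_{\FHH}^{\F}(g)(x)$. I would first use Proposition \ref{prop:Properties-of-transfer-restriction-and-conjugation-maps.} \eqref{enu:composition-transfer.} and \eqref{enu:Mackey-formula.} to rewrite $\r_{\NF}^{\F}\tr_{\FHH}^{\F}$: since $\r_{\NF}^{\F}$ is restriction to a subsystem, I need to pass through $\FHH[\NS]$ first. The natural route is: $\left(\overline{\NS}\cdot\right)_{*}\r_{\NF}^{\F}=\r_{\NF}^{\F}\left(\overline{\NS}\cdot\right)_{*}=\r_{\NF}^{\F}\tr_{\FHH[\NS]}^{\F}\r_{\FHH[\NS]}^{\F}$, where the first equality uses that $\overline{\NS}\cdot$ lies in the center of $\End(M)$ after the appropriate identification (Proposition \ref{prop:Action-of-centric-burnside-ring.}), and $\r_{\NF}^{\F}$ is an algebra morphism (Definition \ref{def:Transfer-and-restriction.}), so it commutes with multiplication and with central elements' action by $*$. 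Hmm — more carefully, I should track that $\r_{\NF}^{\F}\tr_{\FHH[\NS]}^{\F}$ applied inside $\End(M)$ and restricted to the image of $\r_{\FHH[\NS]}^{\F}$ behaves like a transfer inside $\NF$, because every element of $\FHH[\NS]$-data is already $\NS$-local; this should let me convert $\r_{\NF}^{\F}\tr_{\FHH}^{\F}$ into $\sum_{\left(A,\overline{\varphi}\right)}\tr_{\FHH[\varphi(A)]}^{\NF}\lui{\varphi}{\cdot}\r_{\FHH[A]}^{\FHH}$ once we have already applied $\left(\overline{\NS}\cdot\right)_{*}$ to kill the discrepancy between $\F$-products and $\NF$-products.

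The cleanest way to organize the bookkeeping is via Lemma \ref{lem:in-onf-iff-lifting-in-onf.}, which gives, for each $\left(A,\overline{\varphi}\right)\in\left[H\times_{\F}\NS\right]$ and each $\left(B,\overline{\psi}\right)\in\left[\varphi(A)\times_{\NF}K\right]$, a canonically associated $\left(B^{\F,\varphi},\overline{\psi^{\F,\varphi}}\right)\in\left[H\times_{\F}K\right]$ together with the connecting morphism $\overline{\gamma_{\left(B,\overline{\psi}\right)}^{\F,\varphi}}\in\Orbitize{\FHH}$. So the plan is: expand $\tr_{\FHH}^{\F}(g)(x)$ as a sum over $\left[H\times_{\F}K\right]$ using Proposition \ref{prop:Decomposition-centric-Mackey-functor.}; partition $\left[H\times_{\F}K\right]$ according to which pair $\left(A,\overline{\varphi}\right)\in\left[H\times_{\F}\NS\right]$ and which $\left(B,\overline{\psi}\right)\in\left[\varphi(A)\times_{\NF}K\right]$ produce it, using the $\aOFc$-analogue of Proposition \ref{prop:properties-HXK.} \eqref{enu:prod-pullback-right.} (applying $J:=K\le\NS$ to $\left[H\times_{\F}\NS\right]$); then recognize each block of the sum, using Corollary \ref{cor:conjugation-in-F-and-in-orbit-F.} to absorb $\overline{\gamma_{\left(B,\overline{\psi}\right)}^{\F,\varphi}}$ into the conjugation and restriction pieces, as exactly $\tr_{\FHH[\varphi(A)]}^{\NF}\lui{\varphi}{\cdot}\r_{\FHH[A]}^{\FHH}(g)(x)$ — except for an overcounting factor, which is the index $\left[\NS:\NS\cap\text{(stabilizer)}\right]$ and which is precisely what $\left(\overline{\NS}\cdot\right)_{*}$ corrects. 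I expect the main obstacle to be verifying that this overcounting is genuinely uniform and exactly equals the action of $\overline{\NS}$ — i.e. that summing over the extra $\NS$-coset data on the left matches multiplication by $\overline{\NS}$ on the right — and I would handle it by applying $\left(\overline{\NS}\cdot\right)_{*}=\tr_{\FHH[\NS]}^{\F}\r_{\FHH[\NS]}^{\F}$ and the Mackey formula \eqref{enu:Mackey-formula.} to the left-hand side in parallel, so that both sides become literally the same sum over triples indexed by $\left[H\times_{\F}\NS\right]$, $\left[\varphi(A)\times_{\NF}K\right]$ and a set of double-coset representatives inside $\NS$. The first displayed identity then follows from the second by multiplying through by $\left(\overline{\NS}^{-1}\cdot\right)_{*}$, using that $\overline{\NS}$ is invertible in $\BGR{\left(\NF\right)^{c}}$ (Proposition \ref{prop:Inverse-of-S.}) and that the $\NF$-transfer maps land in $\End(M\downarrow_{\NF}^{\F})$ where this inverse acts, together with Proposition \ref{prop:Properties-of-transfer-restriction-and-conjugation-maps.} \eqref{enu:Green-formula.} to commute the central action past the transfers.
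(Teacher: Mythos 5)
There is a genuine gap. The identity is not obtained by re-indexing the sum defining $\tr_{\FHH}^{\F}$ over $\left[H\times_{\F}K\right]$ and dividing out a uniform ``overcounting factor'': $\left(\overline{\NS}\cdot\right)_{*}$ is not multiplication by an index, but post-composition with the operator $\tr_{\FHH[\NS]}^{\NF}\r_{\FHH[\NS]}^{\NF}$ on $\End\left(M\downarrow_{\NF}^{\F}\right)$, i.e.\ Proposition \ref{prop:Properties-of-transfer-restriction-and-conjugation-maps.} \eqref{enu:Restriction-and-transfer-makes-burnside.} applied inside $\NF$. You instead write $\left(\overline{\NS}\cdot\right)_{*}=\tr_{\FHH[\NS]}^{\F}\r_{\FHH[\NS]}^{\F}$, which is the action of the class of $\NS$ in $\BFcR$ on $\End\left(M\right)$, not the action of $\overline{\NS}\in\BGR{\left(\NF\right)^{c}}$ on $\End\left(M\downarrow_{\NF}^{\F}\right)$ that appears in the statement; the asserted commutation $\left(\overline{\NS}\cdot\right)_{*}\r_{\NF}^{\F}=\r_{\NF}^{\F}\left(\overline{\NS}\cdot\right)_{*}$ therefore does not even typecheck, and the parallel expansion you propose for the left-hand side would not reproduce the right-hand side. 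Moreover, Proposition \ref{prop:properties-HXK.} \eqref{enu:prod-pullback-right.} decomposes $\left[H\times_{\F}K\right]$ (for $K\le\NS$) into pieces indexed by $\left[H\times_{\F}\NS\right]$ and double cosets inside the group $\NS$, i.e.\ by $\left[\varphi\left(A\right)\times_{\FHH[\NS]}K\right]$ rather than by $\left[\varphi\left(A\right)\times_{\NF}K\right]$; converting the resulting $\FHH[\NS]$-transfers into the $\NF$-transfers of the statement is exactly where $\tr_{\FHH[\NS]}^{\NF}$ must enter, and your proposal never performs this step (Lemma \ref{lem:in-onf-iff-lifting-in-onf.} also only provides a map into $\left[H\times_{\F}K\right]$, not the bijective partition your bookkeeping would require).

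The correct argument is short and purely formal, using only Proposition \ref{prop:Properties-of-transfer-restriction-and-conjugation-maps.}: write $\left(\overline{\NS}\cdot\right)_{*}=\tr_{\FHH[\NS]}^{\NF}\r_{\FHH[\NS]}^{\NF}$ by item \eqref{enu:Restriction-and-transfer-makes-burnside.}; then $\r_{\FHH[\NS]}^{\NF}\r_{\NF}^{\F}=\r_{\FHH[\NS]}^{\F}$ by item \eqref{enu:Composition-restriction.}, so that $\r_{\FHH[\NS]}^{\F}\tr_{\FHH}^{\F}$ is covered by the already established Mackey formula of item \eqref{enu:Mackey-formula.} (note $\NS\in\Fc$ since $H\le\NS$), yielding $\sum_{\left(A,\overline{\varphi}\right)\in\left[H\times_{\F}\NS\right]}\tr_{\FHH[\varphi\left(A\right)]}^{\FHH[\NS]}\lui{\varphi}{\cdot}\r_{\FHH[A]}^{\FHH}$; finally $\tr_{\FHH[\NS]}^{\NF}\tr_{\FHH[\varphi\left(A\right)]}^{\FHH[\NS]}=\tr_{\FHH[\varphi\left(A\right)]}^{\NF}$ by item \eqref{enu:composition-transfer.}. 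No element-level computation and no new decomposition of index sets is needed. Your derivation of the first displayed identity from the second via invertibility of $\overline{\NS}$ is fine.
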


\begin{proof}
Since the first and second identities of the statement are equivalent
we will just prove the second identity. Let us start by rewriting
\begin{align*}
\left(\overline{\NS}\cdot\right)_{*}\r_{\NF}^{\F}\tr_{\FHH}^{\F} & =\tr_{\FHH[\NS]}^{\NF}\r_{\FHH[\NS]}^{\NF}\r_{\NF}^{\F}\tr_{\FHH}^{\F}=\tr_{\FHH[\NS]}^{\NF}\sum_{\left(A,\overline{\varphi}\right)\in\left[H\times_{\F}\NS\right]}\tr_{\FHH[\varphi\left(A\right)]}^{\FHH[\NS]}\lui{\varphi}{\cdot}\r_{\FHH[A]}^{\FHH}.
\end{align*}
Here we are using Item \eqref{enu:Restriction-and-transfer-makes-burnside.}
of Proposition \ref{prop:Properties-of-transfer-restriction-and-conjugation-maps.}
for the first identity and Items \eqref{enu:Composition-restriction.}
and \eqref{enu:Mackey-formula.} for the second identity. The Lemma
follows after applying Item \eqref{enu:composition-transfer.} to
the identity above.
\end{proof}
\begin{lem}
\label{lem:same-cardinal-for-product.}Let $H\in\Fc$ be such that
$\F=\NFH$ and let $K\in\FHH\cap\Fc$. Then we have that
\begin{align*}
\left[K\times_{\F}S\right] & =\left\{ \left(K,\overline{\varphi}\right)\,|\,\overline{\varphi}\in\Hom_{\OF}\left(K,S\right)\right\} , & \text{and} &  & \Hom_{\OF}\left(K,S\right) & \cong\Hom_{\OF}\left(H,S\right).
\end{align*}
In particular we have the following bijection of finite sets
\begin{align*}
\left[K\times_{\F}S\right] & \cong\left[H\times_{\F}S\right]\cong\Hom_{\OF}\left(K,S\right).
\end{align*}
\end{lem}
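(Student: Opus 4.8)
The plan is to exploit the hypothesis $\F = \NFH$ heavily, since it forces every morphism in $\F$ whose source contains (or can be extended to contain) $H$ to respect $H$ in a strong sense. First I would unwind the definition of the product $\left[K\times_{\F}S\right]$ from Definition \ref{def:HX_FK}: its elements are maximal pairs $\left(A,\overline{\varphi}\right)$ with $A\le K$ and $\overline{\varphi}\in\Hom_{\OF}\left(A,S\right)$, taken up to the equivalence of Equation \eqref{eq:Equivalence-relation.}. To prove the first claimed identity, I would show that for any such pair, maximality forces $A = K$. The key input is that $K\in\FHH\cap\Fc$, so $K\le\NS$; and since $\F=\NFH$, every morphism $\overline{\psi}\in\Hom_{\OF}\left(A,S\right)$ with $A\le K\le\NS$ extends (by the defining property of $\NFH$ in Example \ref{exa:definition-NF.}, using $AH\le S$ and $KH\le S$) along the inclusion $\overline{\iota_{A}^{K}}$ to a morphism defined on $K$. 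Combined with $\overline{\iota_{A}^{K}}\in\Hom_{\OF}\left(A,K\right)$ and the standard argument (analogous to the proof of Proposition \ref{prop:properties-HXK.} \eqref{enu:prod-if-F-is-F_S(S).}), this shows $\left(A,\overline{\varphi}\right)\precsim_{K}\left(K,\overline{\widehat{\varphi}}\right)$, so by maximality $A=_{\F}K$ inside $K$, hence $A=K$. Then the set of equivalence classes of maximal pairs is exactly $\left\{\left(K,\overline{\varphi}\right) : \overline{\varphi}\in\Hom_{\OF}\left(K,S\right)\right\}$, since the equivalence relation restricted to pairs with first coordinate $K$ collapses to equality of $\overline{\varphi}$ (there is no room for a nontrivial $h\in K$ with $K^h\le K$ and a twist, as $K^h = K$ forces the conjugation to be absorbed into $\overline{\varphi}$ — again using that in the orbit category post-conjugation by $\Aut_K(K)$ is already quotiented out, cf.\ Corollary \ref{cor:conjugation-in-F-and-in-orbit-F.}). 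This gives the first displayed equality and, as a byproduct, the bijection $\left[K\times_{\F}S\right]\cong\Hom_{\OF}\left(K,S\right)$.

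Next, for the isomorphism $\Hom_{\OF}\left(K,S\right)\cong\Hom_{\OF}\left(H,S\right)$, the plan is to use restriction-along-inclusion and extension as mutually inverse maps. The map $\left(\overline{\iota_K^H}\right)^{*}\colon\Hom_{\OF}\left(H,S\right)\to\Hom_{\OF}\left(K,S\right)$ sends $\overline{\psi}\mapsto\overline{\psi\,\iota_K^H}$. In the other direction, given $\overline{\varphi}\in\Hom_{\OF}\left(K,S\right)$, the hypothesis $\F=\NFH$ provides via Example \ref{exa:definition-NF.} an extension $\overline{\widehat{\varphi}}\in\Hom_{\OF}\left(H,S\right)$ with $\overline{\widehat{\varphi}}\,\overline{\iota_K^H}=\overline{\varphi}$; I would invoke \cite[Theorem 4.9]{IntroductionToFusionSystemsLinckelmann} (the same result cited in the proof of Lemma \ref{lem:in-onf-iff-lifting-in-onf.}) to guarantee this extension is \emph{unique}, which makes $\overline{\varphi}\mapsto\overline{\widehat{\varphi}}$ a well-defined two-sided inverse to restriction. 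Composing the two displays gives the final chain $\left[K\times_{\F}S\right]\cong\left[H\times_{\F}S\right]\cong\Hom_{\OF}\left(K,S\right)$; finiteness is automatic since all hom-sets in $\OF$ are finite.

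The step I expect to be the main obstacle is verifying that maximality genuinely forces $A=K$ rather than merely $A=_{\F}K$, and that no subtlety with the equivalence relation $\sim$ reintroduces extra classes. The delicate point is that $\F=\NFH$ is what guarantees extendability of morphisms defined on subgroups of $\NS=S$ up to all of $H$ — without this hypothesis the pair $\left(A,\overline{\varphi}\right)$ need not be $\precsim_{K}$-below anything with first coordinate $K$, and the product can be strictly larger. I would therefore be careful to cite the defining condition of $\NFH$ in the precise form needed (existence of $\overline{\widehat{\varphi}}$ on $AH$, here $AH\le S$ since $A\le K\le\NS$ and $\F$ is a fusion system \emph{over} $S=\NS$), and to note that $K\le H$ together with $\F=\NFH$ makes every element of $\Hom_{\OF}(K,S)$ a restriction of a unique element of $\Hom_{\OF}(H,S)$, which is exactly what closes both parts of the argument simultaneously.
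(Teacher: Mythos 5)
Your proposal is correct and follows essentially the same route as the paper: the hypothesis $\F=\NFH$ supplies, for every $A\le K$ and $\overline{\varphi}\in\Hom_{\OF}\left(A,S\right)$, an extension $\overline{\hat{\varphi}}\in\Hom_{\OF}\left(H,S\right)$, which by maximality forces $A=K$ in every pair of $\left[K\times_{\F}S\right]$; and the bijection $\Hom_{\OF}\left(K,S\right)\cong\Hom_{\OF}\left(H,S\right)$ is obtained exactly as in the paper via $\left(\overline{\iota_{K}^{H}}\right)^{*}$, with injectivity from the fact that $\overline{\iota_{K}^{H}}$ is an epimorphism (\cite[Theorem 4.9]{IntroductionToFusionSystemsLinckelmann}) and surjectivity from the extension property. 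The only cosmetic slip is attributing the collapse of the equivalence relation to quotienting by $\Aut_{K}\left(K\right)$, whereas the correct mechanism is that pre-conjugation by $h\in K$ becomes post-conjugation by $\varphi\left(h\right)\in S$, which is killed in $\Hom_{\OF}\left(K,S\right)$; this does not affect the validity of the argument.
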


\begin{proof}
Since $K\le H$ then, for any subgroup $A\le K$, we have that $HA=H$.
Analogously we also have that $HS=S$. Since $\F=\NFH$ then, by definition
(see Example \ref{exa:definition-NF.}), we can conclude that for
every $A\le K$ and every morphism $\overline{\varphi}\colon A\to S$
in $\OF$ there exists a morphism $\overline{\hat{\varphi}}\colon H\to S$
in $\OF$ such that $\overline{\hat{\varphi}}\overline{\iota_{K}^{H}}\overline{\iota_{A}^{K}}=\overline{\hat{\varphi}}\overline{\iota_{A}^{H}}=\overline{\varphi}$.
From maximality of the pairs $\left(A,\overline{\varphi}\right)\in\left[K\times_{\F}S\right]$
(see Definition \ref{def:HX_FK}) we can conclude that $A=K$ and,
therefore, we have that
\begin{align*}
\left[K\times_{\F}S\right] & =\left\{ \left(K,\overline{\varphi}\right)\,|\,\overline{\varphi}\in\Hom_{\OF}\left(K,S\right)\right\} \cong\Hom_{\OF}\left(K,S\right).
\end{align*}
Thus we are only left with proving that $\Hom_{\OF}\left(K,S\right)\cong\Hom_{\OF}\left(H,S\right)$.

It suffices to prove that the map $\left(\overline{\iota_{K}^{H}}\right)^{*}$
from $\Hom_{\OF}\left(H,S\right)$ to $\Hom_{\OF}\left(K,S\right)$
(see Notation \ref{nota:Initial-notation.}) is bijective. From \cite[Theorem 4.9]{IntroductionToFusionSystemsLinckelmann}
we know that $\overline{\iota_{K}^{H}}$ is surjective. On the other
hand it is well known that the contravariant $\Hom$ functor $\Hom_{\OF}\left(-,S\right)$
is left exact and, in particular, sends surjective morphisms to injective
morphisms. Joining both these facts we can conclude that $\left(\overline{\iota_{K}^{H}}\right)^{*}$
is injective.

On the other hand, as mentioned at the beginning of the proof, for
every morphism $\overline{\varphi}:K\to S$ in $\OF$ there exists
a morphism $\overline{\hat{\varphi}}:H\to S$ in $\OF$ such that
$\overline{\varphi}=\overline{\hat{\varphi}}\,\overline{\iota_{K}^{H}}$.
This proves that $\left(\overline{\iota_{K}^{H}}\right)^{*}$ is also
surjective thus concluding the proof.
\end{proof}
We can now finally obtain the last ingredient needed in order to prove
Proposition \ref{prop:Conjecture-workaround.}.
\begin{lem}
\label{lem:multiple-transfers-averaged-are-single-transfer.}Let $H\in\Fc$
be such that $\F=\NFH$, let $M\in\MackFcR$, let $\R$ be a $p$-local
ring and let $\overline{S}\in\BFcR$ be the isomorphism class of $S$.
From Proposition \ref{prop:Inverse-of-S.} we know that $\overline{S}$
has an inverse in $\BFcR$. With this setup the following equivalent
identities are satisfied 
\begin{align*}
\sum_{\overline{\varphi}\in\Hom_{\OF}\left(H,S\right)}\left(\overline{S}^{-1}\cdot\right)_{*}\tr_{\FHH}^{\F}\lui{\varphi}{\cdot} & =\tr_{\FHH}^{\F}, & \sum_{\overline{\varphi}\in\Hom_{\OF}\left(H,S\right)}\tr_{\FHH}^{\F}\lui{\varphi}{\cdot} & =\left(\overline{S}\cdot\right)_{*}\tr_{\FHH}^{\F}.
\end{align*}
Where we are viewing the representative $\varphi$ of $\overline{\varphi}$
as an isomorphism onto its image and we are dropping the superindex
$M$ in order to keep notation simple.
\end{lem}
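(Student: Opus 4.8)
The plan is to reduce the claimed identity, via the endomorphism-Mackey-functor formalism, to an identity inside the centric Burnside ring $\BFcR$ which can then be checked by applying the map $\varUpsilon'$ of Proposition \ref{prop:Action-of-centric-burnside-ring.} to a convenient argument. First I would note that the two stated identities are equivalent (multiply by $\overline{S}\cdot$, which is invertible since $\R$ is $p$-local and $\F=\NF$ forces $S$ itself to be $\F$-centric so that $\BFcR$ contains $\overline{S}^{-1}$ by Proposition \ref{prop:Inverse-of-S.}), so it suffices to prove $\sum_{\overline{\varphi}\in\Hom_{\OF}(H,S)}\tr_{\FHH}^{\F}\lui{\varphi}{\cdot}=(\overline{S}\cdot)_{*}\tr_{\FHH}^{\F}$.

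Next I would unwind both sides using Proposition \ref{prop:Properties-of-transfer-restriction-and-conjugation-maps.}. Using \eqref{enu:composition-transfer.} write $\tr_{\FHH}^{\F}=\tr_{\FHH[S]}^{\F}\tr_{\FHH}^{\FHH[S]}$, and using \eqref{enu:Transfer-commutes-with-conjugation.} and \eqref{enu:Transfer-eliminates-conjugation.} rewrite $\tr_{\FHH}^{\F}\lui{\varphi}{\cdot}=\tr_{\FHH[S]}^{\F}\bigl(\tr_{\FHH[\varphi(H)]}^{\FHH[S]}\lui{\varphi}{\cdot}\,\r_{\FHH[H]}^{\FHH[H]}\bigr)$ — here $\varphi$ is an isomorphism $H\to\varphi(H)\le S$, and $\FHH[\varphi(H)]\subseteq\FHH[S]$ so the inner transfer is defined. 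Thus $\sum_{\overline{\varphi}}\tr_{\FHH}^{\F}\lui{\varphi}{\cdot}=\tr_{\FHH[S]}^{\F}\bigl(\sum_{\overline{\varphi}}\tr_{\FHH[\varphi(H)]}^{\FHH[S]}\lui{\varphi}{\cdot}\bigr)$. On the other side, $(\overline{S}\cdot)_{*}=\tr_{\FHH[S]}^{\F}\r_{\FHH[S]}^{\F}$ by \eqref{enu:Restriction-and-transfer-makes-burnside.}, and by \eqref{enu:Mackey-formula.} $\r_{\FHH[S]}^{\F}\tr_{\FHH}^{\F}=\sum_{(A,\overline{\varphi})\in[H\times_{\F}S]}\tr_{\FHH[\varphi(A)]}^{\FHH[S]}\lui{\varphi}{\cdot}\,\r_{\FHH[A]}^{\FHH}$. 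By Lemma \ref{lem:same-cardinal-for-product.}, since $\F=\NF$ the set $[H\times_{\F}S]$ is exactly $\{(H,\overline{\varphi}):\overline{\varphi}\in\Hom_{\OF}(H,S)\}$, so this last sum is precisely $\sum_{\overline{\varphi}\in\Hom_{\OF}(H,S)}\tr_{\FHH[\varphi(H)]}^{\FHH[S]}\lui{\varphi}{\cdot}\,\r_{\FHH[H]}^{\FHH}$, and $\r_{\FHH[H]}^{\FHH}=\Id$ by \eqref{enu:Identity.}. Applying $\tr_{\FHH[S]}^{\F}$ to both computations shows the two sides agree termwise; this is essentially bookkeeping once Lemma \ref{lem:same-cardinal-for-product.} is invoked.

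The main obstacle I expect is making the bookkeeping in the previous paragraph airtight — specifically checking that the representatives $\varphi$ chosen in the sum over $\Hom_{\OF}(H,S)$ match (up to the ambiguities already absorbed by Corollary \ref{cor:conjugation-in-F-and-in-orbit-F.} and the well-definedness of $\lui{\varphi}{\cdot}$ via Proposition \ref{prop:Properties-of-transfer-restriction-and-conjugation-maps.} \eqref{enu:Composition-conjugation.} and Corollary \ref{cor:Conjugation-is-isomorphism.}) the representatives appearing in the Mackey-formula expansion of $\r_{\FHH[S]}^{\F}\tr_{\FHH}^{\F}$, and that no multiplicities are lost or doubled when passing from the index set $[H\times_{\F}S]$ to $\Hom_{\OF}(H,S)$. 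Lemma \ref{lem:same-cardinal-for-product.} gives the bijection on the nose ($A=H$ for every pair, and the second component ranges freely over $\Hom_{\OF}(H,S)$), so this should go through, but one must be careful that the restriction factor $\r_{\FHH[A]}^{\FHH}$ collapses to the identity exactly because $A=H$ — which is where the hypothesis $\F=\NF$ is genuinely used. A cleaner alternative, if the termwise matching proves delicate, is to apply everything to an arbitrary $M\in\MackFcR$, evaluate at some $K\in\Fc$ and $x\in I_K^K M$, expand both sides using the explicit formula for $\tr_{\FHH}^{\F}$ in Definition \ref{def:Transfer-and-restriction.} together with Proposition \ref{prop:properties-HXK.} \eqref{enu:prod-pullback-right.} and the description of $(\overline{S}\cdot)$ from Proposition \ref{prop:Action-of-centric-burnside-ring.}, and check the resulting sums over pairs in $\aOFc$ coincide; but I would first attempt the structural argument above since it avoids element-chasing entirely.
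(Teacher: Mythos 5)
Your argument is correct and is essentially the paper's own proof: both reduce the second identity to $(\overline{S}\cdot)_{*}\tr_{\FHH}^{\F}=\tr_{\FHH[S]}^{\F}\r_{\FHH[S]}^{\F}\tr_{\FHH}^{\F}$ via Proposition \ref{prop:Properties-of-transfer-restriction-and-conjugation-maps.} \eqref{enu:Restriction-and-transfer-makes-burnside.}, expand with the Mackey formula \eqref{enu:Mackey-formula.}, and then invoke Lemma \ref{lem:same-cardinal-for-product.} so that every pair in $\left[H\times_{\F}S\right]$ has first component $H$ (killing the restriction factor) before recombining the transfers with \eqref{enu:composition-transfer.}. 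The Burnside-ring/$\varUpsilon'$ detour announced in your first paragraph is never needed, and the remark that $\F=\NF$ is required for $\overline{S}$ to be invertible is inaccurate ($S$ is always $\F$-centric; only $p$-locality of $\R$ is used there), but neither affects the validity of the argument you actually carry out.
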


\begin{proof}
We will only prove the second identity since both identities are equivalent.
From Proposition \ref{prop:Properties-of-transfer-restriction-and-conjugation-maps.}
\eqref{enu:Restriction-and-transfer-makes-burnside.} we know that
$\left(\overline{S}\cdot\right)_{*}=\tr_{\FHH}^{\F}\r_{\FHH}^{\F}$.
Combining this with Proposition \ref{prop:Properties-of-transfer-restriction-and-conjugation-maps.}
\eqref{enu:Mackey-formula.} we obtain the identity $\left(\overline{S}\cdot\right)_{*}\tr_{\FHH}^{\F}=\tr_{\FHH[S]}^{\F}\sum_{\left(A,\overline{\varphi}\right)\in\left[H\times_{\F}S\right]}\tr_{\FHH}^{\FHH[S]}\lui{\varphi}{\cdot}$.
The result now follows from Proposition \ref{prop:Properties-of-transfer-restriction-and-conjugation-maps.}
\eqref{enu:composition-transfer.} and Lemma \ref{lem:same-cardinal-for-product.}.
\end{proof}
We are now finally able to give a result for centric Mackey functors
over fusion systems analogous to that of Equation \eqref{eq:r*tr-groups.}
in a case not covered by Proposition \ref{prop:Properties-of-transfer-restriction-and-conjugation-maps.}
\eqref{enu:Mackey-formula.}.
\begin{prop}
\label{prop:Conjecture-workaround.}Let $\R$ be a $p$-local ring,
let $M\in\MackFcR$ and for every $\left(A,\overline{\varphi}\right)\in\left[H\times_{\F}\NS\right]$
fix a representative $\varphi$ of $\overline{\varphi}$ seen as an
isomorphism onto its image. From Proposition \ref{prop:Inverse-of-S.}
we know that the $\NF$-conjugacy class $\overline{\NS}\in\BGR{\left(\NF\right)^{c}}$
of $\NS$ has an inverse in $\BGR{\left(\NF\right)^{c}}$ and, using
the notation of Proposition \ref{prop:Action-of-centric-burnside-ring.},
we have that $\overline{\NS}^{-1}\cdot\in\End\left(M\downarrow_{\NF}^{\F}\right)$.
For every $f\in\End\left(M\downarrow_{\FHH}^{\F}\right)$ and every
$K\in\mathcal{Y}$ (see Notation \ref{nota:X-and-Y.}) we can now
define
\[
f_{K}:=\sum_{\begin{array}{c}
{\scriptstyle \left(A,\overline{\varphi}\right)\in\left[H\times_{\F}\NS\right]}\\
{\scriptstyle \varphi\left(A\right)=K}
\end{array}}\left(\r_{\FHH[\varphi\left(A\right)]}^{\NF}\left(\overline{\NS}^{-1}\cdot\right)\right)\lui{\varphi}{\left(\r_{\FHH[A]}^{\FHH}\left(f\right)\right)}\in\End\left(M\downarrow_{\FHH[K]}^{\F}\right).
\]
and the following is satisfied
\[
\r_{\NF}^{\F}\left(\tr_{\FHH}^{\F}\left(f\right)\right)=\tr_{\FHH}^{\NF}\left(f\right)+\sum_{K\in\mathcal{Y}}\tr_{\FHH[K]}^{\NF}\left(f_{K}\right).
\]
Different choices of $\left[H\times_{\F}\NS\right]$ and representative
$\varphi\in\overline{\varphi}$ can lead to different definitions
of each individual $f_{K}$ but the result holds for any such choice.
\end{prop}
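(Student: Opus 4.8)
The plan is to start from Lemma~\ref{lem:transfer-and-restriction.}, which already expresses $\r_{\NF}^{\F}\tr_{\FHH}^{\F}$ as a sum over $\left[H\times_{\F}\NS\right]$, and then to sort the resulting summands according to the value of $\varphi\left(A\right)$, exactly as the two-part decomposition in the proof of Lemma~\ref{lem:Mackey-formula-induction-restriction-to-normalizer.} is obtained. Applying Lemma~\ref{lem:transfer-and-restriction.} to $f$ gives
\[
\r_{\NF}^{\F}\left(\tr_{\FHH}^{\F}\left(f\right)\right)=\sum_{\left(A,\overline{\varphi}\right)\in\left[H\times_{\F}\NS\right]}\left(\overline{\NS}^{-1}\cdot\right)_{*}\left(\tr_{\FHH[\varphi\left(A\right)]}^{\NF}\left(\lui{\varphi}{\left(\r_{\FHH[A]}^{\FHH}\left(f\right)\right)}\right)\right),
\]
where $\varphi$ is the fixed representative of $\overline{\varphi}$ viewed as an isomorphism onto its image. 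Every $\left(A,\overline{\varphi}\right)\in\left[H\times_{\F}\NS\right]$ has $\varphi\left(A\right)\le\NS$, $\varphi\left(A\right)\in\Fc$ and $\varphi\left(A\right)\le_{\F}H$, so $\varphi\left(A\right)$ is either $H$ or lies in $\mathcal{Y}$; moreover, as noted in the proof of Lemma~\ref{lem:Mackey-formula-induction-restriction-to-normalizer.}, $\varphi\left(A\right)=H$ is equivalent to $\overline{\varphi}\in\ONF$, and in that case $A=H$ and $\overline{\varphi}\in\Aut_{\OF}\left(H\right)=\Aut_{\ONF}\left(H\right)$. I would split the above sum along this dichotomy.

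For the summands with $\varphi\left(A\right)=K\in\mathcal{Y}$: since $K\le\NS$ we have $\FHH[K]\subseteq\NF$, so $\tr_{\FHH[K]}^{\NF}$ and $\r_{\FHH[K]}^{\NF}$ are defined on $\End\left(M\downarrow_{\NF}^{\F}\right)$, and $\left(\overline{\NS}^{-1}\cdot\right)_{*}$ is post-composition with the endomorphism $\overline{\NS}^{-1}\cdot\in\End\left(M\downarrow_{\NF}^{\F}\right)$. Applying the Green formula (the $\NF$-version of Proposition~\ref{prop:Properties-of-transfer-restriction-and-conjugation-maps.}~\eqref{enu:Green-formula.}), pulling $\tr_{\FHH[K]}^{\NF}$ out of the finite sum over the pairs with $\varphi\left(A\right)=K$, and observing that (since $\varphi\left(A\right)=K$ throughout) the constant factor $\r_{\FHH[K]}^{\NF}\left(\overline{\NS}^{-1}\cdot\right)$ times $\sum_{\varphi\left(A\right)=K}\lui{\varphi}{\left(\r_{\FHH[A]}^{\FHH}\left(f\right)\right)}$ is precisely $f_{K}$ shows that these summands contribute exactly $\sum_{K\in\mathcal{Y}}\tr_{\FHH[K]}^{\NF}\left(f_{K}\right)$.

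For the summands with $\varphi\left(A\right)=H$: then $A=H$, so $\r_{\FHH[A]}^{\FHH}=\Id$ and the sub-sum is $\sum_{\overline{\varphi}}\left(\overline{\NS}^{-1}\cdot\right)_{*}\left(\tr_{\FHH}^{\NF}\left(\lui{\varphi}{f}\right)\right)$ over the relevant pairs $\left(H,\overline{\varphi}\right)$. The structural point I would use is that $H$ is fully $\F$-normalized, hence normal in the fusion system $\NF$, i.e. $N_{\NF}\left(H\right)=\NF$; this lets one identify the index set of this sub-sum (with its product equivalence relation) with $\Hom_{\ONF}\left(H,\NS\right)$ and then collapse the sub-sum to $\tr_{\FHH}^{\NF}\left(f\right)$ by Lemma~\ref{lem:same-cardinal-for-product.} and Lemma~\ref{lem:multiple-transfers-averaged-are-single-transfer.}, applied to $M\downarrow_{\NF}^{\F}$ with ambient fusion system $\NF$ and underlying $p$-group $\NS$. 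Adding the two parts yields the claimed identity. Independence of the total from the choices of $\left[H\times_{\F}\NS\right]$ and of the representatives $\varphi$ is then automatic: the left-hand side plainly does not depend on them, while any re-choice only reindexes, via Proposition~\ref{prop:properties-HXK.} and the $\FmuFR{\FHH}$-linearity used throughout, the summands defining each $f_{K}$, so an individual $f_{K}$ may change although $\tr_{\FHH[K]}^{\NF}\left(f_{K}\right)$ does not.

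The step I expect to be the main obstacle is the $\varphi\left(A\right)=H$ part: one has to match the pairs $\left(H,\overline{\varphi}\right)\in\left[H\times_{\F}\NS\right]$ with $\varphi\left(H\right)=H$, together with their multiplicities, against the index set $\Hom_{\ONF}\left(H,\NS\right)$ of Lemma~\ref{lem:multiple-transfers-averaged-are-single-transfer.}, and one must check that Lemmas~\ref{lem:same-cardinal-for-product.} and~\ref{lem:multiple-transfers-averaged-are-single-transfer.} — stated for centric Mackey functors over a fusion system $\G$ satisfying $N_{\G}\left(H\right)=\G$ — still apply to $M\downarrow_{\NF}^{\F}$, which is only $\F$-centric over $\NF$; their proofs use only the Mackey relations and the vanishing $I_{C}^{C}\cdot M=0$ for $C\notin\Fc$, so this should go through but must be said. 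A minor related point is the legitimacy of the $\NF$-version of the Green formula for the $\F$-centric Mackey functor $M\downarrow_{\NF}^{\F}$ over $\NF$.
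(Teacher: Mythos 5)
Your proposal is correct and follows essentially the same route as the paper's proof: both start from Lemma \ref{lem:transfer-and-restriction.}, split the sum over $\left[H\times_{\F}\NS\right]$ according to whether $\varphi\left(A\right)=H$ or $\varphi\left(A\right)\in\mathcal{Y}$, recombine the $\mathcal{Y}$-summands into $\tr_{\FHH[K]}^{\NF}\left(f_{K}\right)$ via Proposition \ref{prop:Properties-of-transfer-restriction-and-conjugation-maps.} \eqref{enu:Green-formula.}, and collapse the $\varphi\left(A\right)=H$ part to $\tr_{\FHH}^{\NF}\left(f\right)$ by identifying its index set with $\Hom_{\ONF}\left(H,\NS\right)$ and invoking Lemma \ref{lem:multiple-transfers-averaged-are-single-transfer.}. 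The only difference is organizational (you decompose the left-hand side directly, the paper subtracts the $\mathcal{Y}$-part from both sides first), and the applicability checks you flag at the end do go through as you expect.
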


\begin{proof}
Applying Proposition \ref{prop:Properties-of-transfer-restriction-and-conjugation-maps.}
\eqref{enu:Green-formula.} to $\tr_{\FHH[K]}^{\NF}\left(f_{K}\right)$
for every $K\in\mathcal{Y}$ we obtain 
\begin{align*}
\sum_{K\in\mathcal{Y}}\tr_{\FHH[K]}^{\NF}\left(f_{K}\right) & =\sum_{K\in\mathcal{Y}}\sum_{\begin{array}{c}
{\scriptstyle \left(A,\overline{\varphi}\right)\in\left[H\times_{\F}\NS\right]}\\
{\scriptstyle \varphi\left(A\right)=K}
\end{array}}\left(\overline{\NS}^{-1}\cdot\right)_{*}\left(\tr_{\FHH[K]}^{\NF}\left(\lui{\varphi}{\left(\r_{\FHH[A]}^{\FHH}\left(f\right)\right)}\right)\right),\\
 & =\sum_{\begin{array}{c}
{\scriptstyle \left(A,\overline{\varphi}\right)\in\left[H\times_{\F}\NS\right]}\\
{\scriptstyle \varphi\left(A\right)\in\mathcal{Y}}
\end{array}}\left(\overline{\NS}^{-1}\cdot\right)_{*}\left(\tr_{\FHH[K]}^{\NF}\left(\lui{\varphi}{\left(\r_{\FHH[A]}^{\FHH}\left(f\right)\right)}\right)\right).
\end{align*}
Subtracting the above identity to the one in the statement and applying
Lemma \ref{lem:transfer-and-restriction.} to $\r_{\NF}^{\F}\left(\tr_{\FHH}^{\F}\left(f\right)\right)$
we obtain that the following identity is equivalent to the one in
the statement
\[
\sum_{\begin{array}{c}
{\scriptstyle \left(A,\overline{\varphi}\right)\in\left[H\times_{\F}\NS\right]}\\
{\scriptstyle \varphi\left(A\right)\not\in\mathcal{Y}}
\end{array}}\left(\overline{\NS}^{-1}\cdot\right)_{*}\,\tr_{\FHH[K]}^{\NF}\,\lui{\varphi}{\cdot}\,\r_{\FHH[A]}^{\FHH}=\tr_{\FHH}^{\NF}.
\]

Because of Lemma \ref{lem:multiple-transfers-averaged-are-single-transfer.}
it now suffices to prove the identity
\[
\left\{ \left(A,\overline{\varphi}\right)\in\left[H\times_{\F}\NS\right]\,|\,\varphi\left(A\right)\not\in\mathcal{Y}\right\} =\left\{ \left(H,\overline{\varphi}\right)\,|\,\overline{\varphi}\in\Hom_{\ONF}\left(H,\NS\right)\right\} .
\]
For every $\left(A,\overline{\varphi}\right)\in\left[H\times_{\F}\NS\right]$
we have that $\varphi\left(A\right)\le_{\F}\NS$ and, therefore, by
definition of $\mathcal{Y}$ (see Notation \ref{nota:X-and-Y.}) we
have that $\varphi\left(A\right)\not\in\mathcal{Y}$ if and only if
$\varphi\left(A\right)=H$. Since $\varphi$ is an isomorphism, $A\le H$
and the groups $A$ and $H$ are finite then the identity $\varphi\left(A\right)=H$
implies that $A=H$ and, therefore, $\varphi\in\Aut_{\F}\left(H\right)=\Aut_{\NF}\left(H\right)$.
Equivalently $\overline{\varphi}\in\Hom_{\ONF}\left(H,\NS\right)$.
This proves one inclusion. On the other hand, for every $\overline{\varphi}\in\Hom_{\ONF}\left(H,\NS\right)$,
we know from the universal properties of product that there exist
a unique $\left(B,\overline{\psi}\right)\in\left[H\times_{\F}\NS\right]$
and a unique $\overline{\gamma}\colon H\to B$ such that $\overline{\psi}\overline{\gamma}=\overline{\varphi}$
and $\overline{\iota_{B}^{H}}\overline{\gamma}=\overline{\iota_{H}^{H}}=\overline{\Id_{H}}$.
From these identities we can conclude that $\left(B,\overline{\psi}\right)=\left(H,\overline{\varphi}\right)$.
This proves the second inclusion thus completing the proof.
\end{proof}

\subsection{\label{subsec:decomposition-of-product-in-aOFc.}Decomposing the
product in $\aOFc$.}

Let $G$ be a finite group and let $H,K$ and $J$ be subgroups of
$G$ such that $J\le K$. The following decomposition of double coset
representatives dual to Equation \eqref{eq:decomposition-biset-1.}
is well known

\begin{align}
\left[H\backslash G/J\right] & =\bigsqcup_{x\in\left[H\backslash G/K\right]}x\left[\left(H^{x}\cap K\right)\backslash K/J\right].\label{eq:decomposition-biset-2.}
\end{align}
where we define 
\[
x\left[\left(H^{x}\cap K\right)\backslash K/J\right]:=\left\{ xy\in G\,:\,y\in\left[\left(H^{x}\cap K\right)\backslash K/J\right]\right\} .
\]
In the case of Mackey functors over finite groups, this can be used
to prove that $\tr_{K}^{G}\tr_{J}^{K}=\tr_{J}^{G}$ where $\tr_{A}^{B}$
denote the transfer maps of the endomorphism Mackey functor $\End\left(M\right)$
for some Mackey functor $M$ over $G$ (see \cite[Definition 2.7]{SASAKI198298}).
Proposition \ref{prop:Properties-of-transfer-restriction-and-conjugation-maps.}
\eqref{enu:composition-transfer.} proves that a similar result holds
for fusion systems. However, in the case of Mackey functors over fusion
systems, given $M\in\MackFcR$ and a fusion system $\mathcal{K}$,
such that $\FHH\subseteq\mathcal{K}\subseteq\F$ the transfer $\tr_{\mathcal{K}}^{\F}\colon\End\left(M\downarrow_{\mathcal{K}}^{\F}\right)\to\End\left(M\right)$
is in general not defined. We show with Definition \ref{def:tr_NF^F.}
and Lemma \ref{lem:transfer-composes-nicely.} that the transfer $\tr_{\mathcal{K}}^{\F}$
can be defined when $\mathcal{K}=\NF$ (see Notation \ref{nota:X-and-Y.})
and that, in this case, we have $\tr_{\NF}^{\F}\tr_{\FHH}^{\NF}=\tr_{\FHH}^{\F}$.
However, in order to prove such a result, we first need to translate
Equation \eqref{eq:decomposition-biset-2.} to the context of fusion
systems. More precisely, we need to prove that, for every $K\in\Fc$,
we can write $\left[H\times_{\F}K\right]$ in terms of sets of the
form $\left[H\times_{\NF}A\right]$ with $A\in\NF\cap\Fc$. This section
is dedicated to proving exactly that (see Theorem \ref{thm:decomposition-direct-product.}).

Let us start by finding what can replace the groups $H^{x}\cap K$
of Equation \eqref{eq:decomposition-biset-2.} in the context of fusion
systems.
\begin{lem}
\label{lem:Normalizer-via-phi.}Let $A,K\in\Fc$ with $A\le\NS$ and
let $\varphi\in\Hom_{\F}\left(A,K\right)$. Using Notation \ref{nota:X-and-Y.}
we define the \textbf{normalizer after $\varphi$ in $\NF$} as
\[
\prescript{\NF}{\varphi}{N}:=\left\{ x\in N_{K}\left(\varphi\left(A\right)\right)\,:\,\varphi^{-1}c_{x}\varphi\in\Aut_{\NF}\left(A\right)\right\} ,
\]
where, on the right hand side, we are viewing $\varphi$ as an isomorphism
onto its image. Then $\prescript{\NF}{\varphi}{N}$ is the unique
maximal subgroup of $N_{K}\left(\varphi\left(A\right)\right)$ such
that 
\[
\Aut_{\prescript{\NF}{\varphi}{N}}\left(\varphi\left(A\right)\right)\le\lui{\varphi}{\Aut_{\NF}\left(A\right)}.
\]
Moreover there exist a fully $\NF$-normalized subgroup $A'\le\NS$,
an isomorphism $\theta\in\Hom_{\NF}\left(A',A\right)$ and a subgroup
$N_{\varphi\theta}^{\NF}$ of $N_{\NS}\left(A'\right)$ containing
$A'$ such that 
\[
\Aut_{\prescript{\NF}{\varphi}{N}}\left(\varphi\left(A\right)\right)=\lui{\varphi\theta}{\Aut_{N_{\varphi\theta}^{\NF}}\left(A'\right)}.
\]
More precisely we can take $\theta$ such that
\[
N_{\varphi\theta}^{\NF}=\left\{ x\in N_{\NS}\left(A'\right)\,:\,c_{x}\in\Aut_{\prescript{\NF}{\varphi}{N}}\left(\varphi\left(A\right)\right)^{\varphi\theta}\right\} .
\]
We call any morphism of the form $\varphi\theta$ with $\theta$ as
before \textbf{$\NF$-top of $\varphi$} and denote it by $\varphi^{\NF}$.
We also call \textbf{normalizer before $\varphi^{\NF}$ in $\NF$}
any group of the form $N_{\varphi^{\NF}}^{\NF}$.
\end{lem}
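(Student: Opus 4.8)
The plan is to treat the two assertions of the lemma separately: first the characterization of $\prescript{\NF}{\varphi}{N}$ as a unique maximal subgroup, which is purely elementary, and then the existence of the $\NF$-top $\varphi^{\NF}$, which is where saturation of $\NF$ (valid by Example \ref{exa:definition-NF.}, since $H$ is fully $\F$-normalized) genuinely enters.

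For the first assertion I would first check that $\prescript{\NF}{\varphi}{N}$ is a subgroup of $N_{K}\left(\varphi\left(A\right)\right)$ containing $\varphi\left(A\right)$: closure is immediate because $\Aut_{\NF}\left(A\right)$ is a group, and for $x\in\varphi\left(A\right)$ one has $\varphi^{-1}c_{x}\varphi=c_{\varphi^{-1}\left(x\right)}\in\Aut_{A}\left(A\right)\subseteq\Aut_{\NF}\left(A\right)$. By the very definition of $\prescript{\NF}{\varphi}{N}$, for $x$ in it the automorphism $c_{x}$ of $\varphi\left(A\right)$ equals $\lui{\varphi}{\left(\varphi^{-1}c_{x}\varphi\right)}$ with $\varphi^{-1}c_{x}\varphi\in\Aut_{\NF}\left(A\right)$, so $\Aut_{\prescript{\NF}{\varphi}{N}}\left(\varphi\left(A\right)\right)\le\lui{\varphi}{\Aut_{\NF}\left(A\right)}$. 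Conversely, if $L\le N_{K}\left(\varphi\left(A\right)\right)$ satisfies $\Aut_{L}\left(\varphi\left(A\right)\right)\le\lui{\varphi}{\Aut_{\NF}\left(A\right)}$ then for every $x\in L$ we get $\varphi^{-1}c_{x}\varphi\in\Aut_{\NF}\left(A\right)$, i.e. $x\in\prescript{\NF}{\varphi}{N}$; hence $L\le\prescript{\NF}{\varphi}{N}$, which gives at once maximality and uniqueness.

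For the second assertion, write $P:=\Aut_{\prescript{\NF}{\varphi}{N}}\left(\varphi\left(A\right)\right)$; it is a $p$-group, being a quotient of the $p$-group $\prescript{\NF}{\varphi}{N}\le S$, and $P^{\varphi}:=\varphi^{-1}P\varphi\le\Aut_{\NF}\left(A\right)$ by the first part. Choose a fully $\NF$-normalized subgroup $A'=_{\NF}A$ and some $\theta_{0}\in\Aut_{\NF}\left(A',A\right)$ (both exist by the standard theory of saturated fusion systems, see \cite{IntroductionToFusionSystemsLinckelmann}); then $\theta_{0}^{-1}P^{\varphi}\theta_{0}$ is a $p$-subgroup of $\Aut_{\NF}\left(A'\right)$. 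Since $A'$ is fully $\NF$-normalized and $\NF$ is saturated, $\Aut_{\NS}\left(A'\right)$ is a Sylow $p$-subgroup of $\Aut_{\NF}\left(A'\right)$, so there is $\beta\in\Aut_{\NF}\left(A'\right)$ with $\lui{\beta}{\left(\theta_{0}^{-1}P^{\varphi}\theta_{0}\right)}\le\Aut_{\NS}\left(A'\right)$. Set $\theta:=\theta_{0}\beta^{-1}\in\Aut_{\NF}\left(A',A\right)$ and $\varphi^{\NF}:=\varphi\theta\colon A'\to\varphi\left(A\right)$, so that $P^{\varphi^{\NF}}:=\left(\varphi^{\NF}\right)^{-1}P\,\varphi^{\NF}=\theta^{-1}P^{\varphi}\theta=\lui{\beta}{\left(\theta_{0}^{-1}P^{\varphi}\theta_{0}\right)}\le\Aut_{\NS}\left(A'\right)$. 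Then put $N_{\varphi^{\NF}}^{\NF}:=\left\{ x\in N_{\NS}\left(A'\right)\,:\,c_{x}\in P^{\varphi^{\NF}}\right\}$: this is a subgroup, being the preimage of the subgroup $P^{\varphi^{\NF}}$ of $\Aut\left(A'\right)$ under $x\mapsto c_{x}$, and it contains $A'$ because $\varphi^{\NF}$ carries $\Aut_{A'}\left(A'\right)$ onto $\Aut_{\varphi\left(A\right)}\left(\varphi\left(A\right)\right)$ and $\varphi\left(A\right)\le\prescript{\NF}{\varphi}{N}$, so $\Aut_{A'}\left(A'\right)\le P^{\varphi^{\NF}}$. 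Finally $\Aut_{N_{\varphi^{\NF}}^{\NF}}\left(A'\right)\subseteq P^{\varphi^{\NF}}$ holds by construction, while the reverse inclusion holds because every element of $P^{\varphi^{\NF}}\le\Aut_{\NS}\left(A'\right)$ is of the form $c_{x}$ with $x\in N_{\NS}\left(A'\right)$, forcing $x\in N_{\varphi^{\NF}}^{\NF}$; conjugating back by $\varphi^{\NF}$ gives $\lui{\varphi^{\NF}}{\Aut_{N_{\varphi^{\NF}}^{\NF}}\left(A'\right)}=\lui{\varphi^{\NF}}{P^{\varphi^{\NF}}}=P=\Aut_{\prescript{\NF}{\varphi}{N}}\left(\varphi\left(A\right)\right)$, which is both the asserted identity and the explicit description of $N_{\varphi^{\NF}}^{\NF}$.

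The step doing the real work is the Sylow adjustment of $\theta$: for an arbitrary $\theta_{0}$ one only gets $\Aut_{N_{\varphi\theta_{0}}^{\NF}}\left(A'\right)=P^{\varphi\theta_{0}}\cap\Aut_{\NS}\left(A'\right)$, which may be strictly smaller than $P^{\varphi\theta_{0}}$, so the conclusion genuinely requires replacing $\theta_{0}$ by $\theta_{0}\beta^{-1}$; the only external input is that $\Aut_{\NS}\left(A'\right)$ is Sylow in $\Aut_{\NF}\left(A'\right)$ for fully $\NF$-normalized $A'$, a consequence of saturation of $\NF$. The rest is bookkeeping of the conjugations relating $\Aut\left(A\right)$, $\Aut\left(A'\right)$ and $\Aut\left(\varphi\left(A\right)\right)$ via $\varphi$, $\theta$ and $\beta$, none of which should cause difficulty.
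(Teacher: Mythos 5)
Your proof is correct and follows essentially the same route as the paper's: an elementary verification of the subgroup property and maximality for the first claim, and for the second, passing to a fully $\NF$-normalized representative $A'$ and using the Sylow property of $\Aut_{\NS}\left(A'\right)$ in $\Aut_{\NF}\left(A'\right)$ (a consequence of saturation of $\NF$) to adjust the isomorphism so that $P^{\varphi\theta}$ lands inside $\Aut_{\NS}\left(A'\right)$, after which $N_{\varphi\theta}^{\NF}$ is the preimage of $P^{\varphi\theta}$ under $x\mapsto c_{x}$. The only cosmetic difference is that you take $\theta:=\theta_{0}\beta^{-1}$ where the paper takes $\theta:=\alpha\beta$, which is just the left- versus right-conjugation form of Sylow's second theorem.
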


\begin{proof}
First of all notice that $\varphi^{-1}c_{1_{N_{K}\left(\varphi\left(A\right)\right)}}\varphi=\Id_{A}$,
for any $x\in\prescript{\NF}{\varphi}{N}$ we have that $\varphi^{-1}c_{x^{-1}}\varphi=\left(\varphi^{-1}c_{x}\varphi\right)^{-1}$
and for any other $y\in\prescript{\NF}{\varphi}{N}$ we have that
$\varphi^{-1}c_{xy}\varphi=\left(\varphi^{-1}c_{x}\varphi\right)\left(\varphi^{-1}c_{y}\varphi\right)$.
Since $\Aut_{\NF}\left(A\right)$ is a subgroup of $\Aut\left(A\right)$
the previous equations prove that $1_{N_{K}\left(\varphi\left(A\right)\right)}\in\prescript{\NF}{\varphi}{N}$,
that $x^{-1}\in\prescript{\NF}{\varphi}{N}$ and that $xy\in\prescript{\NF}{\varphi}{N}$
respectively. We can therefore conclude that $\prescript{\NF}{\varphi}{N}$
is indeed a subgroup of $N_{K}\left(\varphi\left(A\right)\right)$.
Moreover, from definition of $\prescript{\NF}{\varphi}{N}$, we have
that $\Aut_{\prescript{\NF}{\varphi}{N}}\left(\varphi\left(A\right)\right)^{\varphi}$
is a subgroup of $\Aut_{\NF}\left(A\right)$. Equivalently, $\Aut_{\prescript{\NF}{\varphi}{N}}\left(\varphi\left(A\right)\right)$
is a subgroup of $\lui{\varphi}{\Aut_{\NF}\left(A\right)}$. Moreover,
for every $x\in N_{K}\left(\varphi\left(A\right)\right)$ such that
$c_{x}\in\lui{\varphi}{\Aut_{\NF}\left(A\right)}$ we have by definition
that $\varphi^{-1}c_{x}\varphi\in\Aut_{\NF}\left(A\right)$ and, therefore,
that $x\in\prescript{\NF}{\varphi}{N}$. This proves that $\prescript{\NF}{\varphi}{N}$
is indeed the unique maximal subgroup of $N_{K}\left(\varphi\left(A\right)\right)$
with the desired properties.

Let's now prove the second half of the statement. Let $A'=_{\NF}A$
be fully $\NF$-normalized and let $\alpha$ be an isomorphism in
$\NF$ from $A'$ to $A$. Since $\prescript{\NF}{\varphi}{N}\le S$
and $S$ is a $p$-group then $\prescript{\NF}{\varphi}{N}$ is also
a $p$-group. It follows that $\Aut_{\prescript{\NF}{\varphi}{N}}\left(\varphi\left(A\right)\right)^{\varphi\alpha}$
is also a $p$-group. From construction of $A'$ and $\alpha$ have
that 
\[
\Aut_{\prescript{\NF}{\varphi}{N}}\left(\varphi\left(A\right)\right)^{\varphi\alpha}\le\Aut_{\NF}\left(A'\right).
\]
From \cite[Proposition 2.5]{StancuEquivalentDefinitions} we know
that $\Aut_{\NS}\left(A'\right)$ is a Sylow $p$-subgroup of $\Aut_{\NF}\left(A'\right)$.
Thus we can apply the second Sylow theorem in order to obtain $\beta\in\Aut_{\NF}\left(A'\right)$
satisfying
\begin{equation}
\Aut_{\prescript{\NF}{\varphi}{N}}\left(\varphi\left(A\right)\right)^{\varphi\alpha\beta}\le\Aut_{\NS}\left(A'\right).\label{eq:aux-N^NF_f.}
\end{equation}
We can now define $\theta:=\alpha\beta$ and let $N_{\varphi\theta}^{\NF}$
be as in the statement. The same arguments used to prove that $\prescript{\NF}{\varphi}{N}$
is a subgroup of $N_{K}\left(\varphi\left(A\right)\right)$ can be
used to prove that $N_{\varphi\theta}^{\NF}$ is a subgroup of $N_{\NS}\left(A'\right)$.
For every $x\in A'$ we have $\varphi\theta c_{x}\left(\varphi\theta\right)^{-1}=c_{\varphi\theta\left(x\right)}\in\Aut_{\prescript{\NF}{\varphi}{N}}\left(\varphi\left(A\right)\right)$
and, therefore, the inclusion $A'\le N_{\varphi\theta}^{\NF}$ follows.
It is also immediate from definition that $\lui{\varphi\theta}{\Aut_{N_{\varphi\theta}^{\NF}}\left(A'\right)}$
is contained in $\Aut_{\prescript{\NF}{\varphi}{N}}\left(\varphi\left(A\right)\right)$.
The other inclusion on the other hand follows from Equation \eqref{eq:aux-N^NF_f.}
and definition of $N_{\varphi\theta}^{\NF}$. This concludes the proof.
\end{proof}
\begin{cor}
\label{cor:varphi(N_=00007Bvarphi=00007D^=00007B=00005CNF=00007D).}With
the notation of Lemma \ref{lem:Normalizer-via-phi.} assume that $\varphi=\varphi^{\NF}$.
If there exists $\hat{\varphi}\in\Hom_{\F}\left(N_{\varphi}^{\NF},S\right)$
such that $\iota_{K}^{S}\varphi=\hat{\varphi}\iota_{A}^{N_{\varphi}^{\NF}}$
then $\hat{\varphi}\left(N_{\varphi}^{\NF}\right)=\prescript{\NF}{\varphi}{N}$.
\end{cor}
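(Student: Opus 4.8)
The plan is to unwind the definitions and show both inclusions. Recall that $\varphi=\varphi^{\NF}$ means $\varphi=\varphi\theta$ with $\theta=\Id_A$, so by Lemma \ref{lem:Normalizer-via-phi.} we have $N_\varphi^{\NF}=\{x\in N_{\NS}(A)\,:\,c_x\in\Aut_{\prescript{\NF}{\varphi}{N}}(\varphi(A))^{\varphi}\}$ and also $\lui{\varphi}{\Aut_{N_\varphi^{\NF}}(A)}=\Aut_{\prescript{\NF}{\varphi}{N}}(\varphi(A))$. Now suppose $\hat\varphi\in\Hom_{\F}(N_\varphi^{\NF},S)$ satisfies $\iota_K^S\varphi=\hat\varphi\iota_A^{N_\varphi^{\NF}}$; viewing $\hat\varphi$ as an isomorphism onto its image, this says $\hat\varphi$ restricts to $\varphi$ on $A$. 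The key elementary fact I would use is that for $x\in N_\varphi^{\NF}$, since $\hat\varphi$ is a group homomorphism extending $\varphi$, we get $\hat\varphi c_x\hat\varphi^{-1}=c_{\hat\varphi(x)}$ as automorphisms of $\hat\varphi(N_\varphi^{\NF})$, and restricting to $\hat\varphi(A)=\varphi(A)$ this gives $c_{\hat\varphi(x)}|_{\varphi(A)}=\varphi c_x\varphi^{-1}$.

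For the inclusion $\hat\varphi(N_\varphi^{\NF})\subseteq\prescript{\NF}{\varphi}{N}$: take $x\in N_\varphi^{\NF}$. Since $x\in N_{\NS}(A)$ we have $c_x\in\Aut_{\NS}(A)\subseteq\Aut_{\NF}(A)$, hence $c_{\hat\varphi(x)}|_{\varphi(A)}=\varphi c_x\varphi^{-1}\in\lui{\varphi}{\Aut_{\NF}(A)}$. In particular $\hat\varphi(x)$ normalizes $\varphi(A)$ inside $S$ (and in fact inside $K$, since $c_{\hat\varphi(x)}$ stabilizes $\varphi(A)\le K$ and one checks $\hat\varphi(x)$ lands in the appropriate group — here I would be slightly careful and instead argue $\hat\varphi(x)\in N_{\hat\varphi(N_\varphi^{\NF})}(\varphi(A))$ and use that $\hat\varphi(N_\varphi^{\NF})$ is the candidate, circling back). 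The cleaner route: first establish $\hat\varphi(N_\varphi^{\NF})\le N_K(\varphi(A))$ by noting $\Aut_{\hat\varphi(N_\varphi^{\NF})}(\varphi(A))=\lui{\varphi}{\Aut_{N_\varphi^{\NF}}(A)}=\Aut_{\prescript{\NF}{\varphi}{N}}(\varphi(A))\le\lui{\varphi}{\Aut_{\NF}(A)}$, so every $c_{\hat\varphi(x)}|_{\varphi(A)}$ lies in $\lui{\varphi}{\Aut_{\NF}(A)}$, which by definition of $\prescript{\NF}{\varphi}{N}$ forces $\hat\varphi(x)\in\prescript{\NF}{\varphi}{N}$ — once we know $\hat\varphi(x)\in N_K(\varphi(A))$. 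The containment in $N_K(\varphi(A))$ I would get from $\prescript{\NF}{\varphi}{N}\le N_K(\varphi(A))$ having $\Aut_{\prescript{\NF}{\varphi}{N}}(\varphi(A))$ equal to $\Aut_{\hat\varphi(N_\varphi^{\NF})}(\varphi(A))$ together with a size/index comparison, or more directly by observing that $\hat\varphi(N_\varphi^{\NF})\le S$ and it normalizes $\varphi(A)$, and then intersecting with the observation that $\varphi(A)\le K$ forces $N_S(\varphi(A))\cap(\text{things acting as }\lui{\varphi}{\Aut_{\NF}(A)})$ to sit inside $K$ by maximality of $\prescript{\NF}{\varphi}{N}$.

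For the reverse inclusion $\prescript{\NF}{\varphi}{N}\subseteq\hat\varphi(N_\varphi^{\NF})$: I would compare orders. We have $|\hat\varphi(N_\varphi^{\NF})|=|N_\varphi^{\NF}|$, and by the description of $N_\varphi^{\NF}$ together with the identity $\Aut_{N_\varphi^{\NF}}(A)=\lui{\varphi^{-1}}{\Aut_{\prescript{\NF}{\varphi}{N}}(\varphi(A))}$ from Lemma \ref{lem:Normalizer-via-phi.}, and the standard fact $|N|=|C_S(A)|\cdot|\Aut_N(A)|$ for $A\le N\le N_S(A)$ when $A$ is centric (so $C_S(A)\le A$), one gets that $N_\varphi^{\NF}$ and $\prescript{\NF}{\varphi}{N}$ have the same order: indeed $|\prescript{\NF}{\varphi}{N}|=|C_K(\varphi(A))|\cdot|\Aut_{\prescript{\NF}{\varphi}{N}}(\varphi(A))|$ and, $A$ being $\F$-centric, $C_K(\varphi(A))\le\varphi(A)$ and $C_S(A)\le A$, giving $|C_K(\varphi(A))|=|\varphi(A)|=|A|=|C_S(A)\cdot A|$ up to the centric reduction — so both normalizers have order $|A|\cdot|\Aut_{\prescript{\NF}{\varphi}{N}}(\varphi(A))|$. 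Combined with $\hat\varphi(N_\varphi^{\NF})\subseteq\prescript{\NF}{\varphi}{N}$ from the first part, equality of finite groups of equal order gives the claim. The main obstacle I anticipate is the bookkeeping in the first inclusion — specifically pinning down that $\hat\varphi(x)$ lands in $K$ (not merely in $S$) and correctly identifying $\Aut_{\hat\varphi(N_\varphi^{\NF})}(\varphi(A))$ with $\Aut_{\prescript{\NF}{\varphi}{N}}(\varphi(A))$ using that $\hat\varphi|_A=\varphi$; everything else is the standard centric-subgroup order counting and the defining property of $\prescript{\NF}{\varphi}{N}$ as the maximal subgroup realizing automorphisms in $\lui{\varphi}{\Aut_{\NF}(A)}$.
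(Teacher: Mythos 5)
Your proposal assembles the right ingredients --- the identity $\Aut_{\hat{\varphi}\left(N_{\varphi}^{\NF}\right)}\left(\varphi\left(A\right)\right)=\lui{\varphi}{\Aut_{N_{\varphi}^{\NF}}\left(A\right)}=\Aut_{\prescript{\NF}{\varphi}{N}}\left(\varphi\left(A\right)\right)$ and the centricity of $\varphi\left(A\right)$ --- but the step you yourself flag as the obstacle is a genuine gap, and the justification you sketch for it does not work. To place $\hat{\varphi}\left(x\right)$ in $\prescript{\NF}{\varphi}{N}$ you must first place it in $N_{K}\left(\varphi\left(A\right)\right)$, hence in $K$; a priori $\hat{\varphi}$ only lands in $S$. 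The ``maximality'' of $\prescript{\NF}{\varphi}{N}$ in Lemma \ref{lem:Normalizer-via-phi.} is maximality among subgroups of $N_{K}\left(\varphi\left(A\right)\right)$, so it cannot be invoked to pull a subgroup of $N_{S}\left(\varphi\left(A\right)\right)$ into $K$. Your first inclusion is therefore unproven, and the order count in the second half (which is fine in spirit, modulo the slip that $C_{N}\left(\varphi\left(A\right)\right)=Z\left(\varphi\left(A\right)\right)$ rather than $\varphi\left(A\right)$ for $\varphi\left(A\right)$ centric) cannot rescue it: equal orders only yield equality of the two groups once one containment is established.

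The paper closes exactly this gap by never arguing a one-sided containment directly. Both $\prescript{\NF}{\varphi}{N}$ and $\hat{\varphi}\left(N_{\varphi}^{\NF}\right)$ normalize $\varphi\left(A\right)$ and induce the same subgroup of $\Aut_{S}\left(\varphi\left(A\right)\right)$, so their preimages under the quotient $N_{S}\left(\varphi\left(A\right)\right)\to\Aut_{S}\left(\varphi\left(A\right)\right)$ coincide, i.e.\ $\prescript{\NF}{\varphi}{N}\,C_{S}\left(\varphi\left(A\right)\right)=\hat{\varphi}\left(N_{\varphi}^{\NF}\right)C_{S}\left(\varphi\left(A\right)\right)$; since $C_{S}\left(\varphi\left(A\right)\right)\le\varphi\left(A\right)$ by centricity and $\varphi\left(A\right)$ lies in both groups, the factor $C_{S}\left(\varphi\left(A\right)\right)$ is absorbed on both sides and equality follows. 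If you prefer to keep your two-inclusion structure, the same observation repairs it: given $x\in N_{\varphi}^{\NF}$, choose $k\in\prescript{\NF}{\varphi}{N}$ inducing the same automorphism of $\varphi\left(A\right)$ as $\hat{\varphi}\left(x\right)$; then $k^{-1}\hat{\varphi}\left(x\right)\in C_{S}\left(\varphi\left(A\right)\right)\le\varphi\left(A\right)\le\prescript{\NF}{\varphi}{N}$, whence $\hat{\varphi}\left(x\right)\in\prescript{\NF}{\varphi}{N}$. In either formulation it is centricity, not maximality, that does the work you are missing.
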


\begin{proof}
By definition, we have that $N_{\varphi}^{\NF}\le N_{\NS}\left(A\right)$
and that $\hat{\varphi}\left(A\right)=\varphi\left(A\right)$. Therefore
we can deduce that $\hat{\varphi}\left(N_{\varphi}^{\NF}\right)\le N_{\NS}\left(\varphi\left(A\right)\right)$.
Moreover, from Lemma \ref{lem:Normalizer-via-phi.} we have that
\[
\Aut_{\prescript{\NF}{\varphi}{N}}\left(\varphi\left(A\right)\right)=\lui{\varphi}{\Aut_{N_{\varphi}^{\NF}}\left(A\right)}=\Aut_{\hat{\varphi}\left(N_{\varphi}^{\NF}\right)}\left(\varphi\left(A\right)\right).
\]
From these identities we can conclude that
\[
\prescript{\NF}{\varphi}{N}C_{S}\left(\varphi\left(A\right)\right)=\hat{\varphi}\left(N_{\varphi}^{\NF}\right)C_{S}\left(\varphi\left(A\right)\right).
\]
Recall now that, by hypothesis, we have $A\in\Fc$. Therefore we also
have $\varphi\left(A\right)\in\Fc$. In particular $C_{S}\left(\varphi\left(A\right)\right)\le\varphi\left(A\right)$.
Finally we have from Lemma \ref{lem:Normalizer-via-phi.} that $\varphi\left(A\right)\le\prescript{\NF}{\varphi}{N}$
and that $A\le N_{\varphi}^{\NF}$. Putting all this together we obtain
the following identities from which the result follows.
\begin{align*}
\prescript{\NF}{\varphi}{N} & =\prescript{\NF}{\varphi}{N}C_{S}\left(\varphi\left(A\right)\right)=\hat{\varphi}\left(N_{\varphi}^{\NF}\right)C_{S}\left(\varphi\left(A\right)\right)=\hat{\varphi}\left(N_{\varphi}^{\NF}\right).
\end{align*}
\end{proof}
\begin{cor}
\label{cor:Invariance-^NF_varphiN}With notation as in Lemma \ref{lem:Normalizer-via-phi.},
for every $A'\in\Fc$ and isomorphism $\theta\in\Hom_{\NF}\left(A',A\right)$
we have that $\prescript{\NF}{\varphi}{N}=\prescript{\NF}{\varphi\theta}{N}$.
\end{cor}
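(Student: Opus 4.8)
The plan is to unwind the definition of the ``normalizer after $\varphi$ in $\NF$'' from Lemma \ref{lem:Normalizer-via-phi.} and observe that pre-composing with an $\NF$-isomorphism $\theta$ changes neither the ambient group in which this normalizer lives nor its membership condition. First I would note that, since $\theta\in\Hom_{\NF}\left(A',A\right)$ is an isomorphism, $\left(\varphi\theta\right)\left(A'\right)=\varphi\left(\theta\left(A'\right)\right)=\varphi\left(A\right)$; moreover $A'$, being an object of $\NF$, is a subgroup of $\NS$, so together with the hypothesis $A'\in\Fc$ the group $\prescript{\NF}{\varphi\theta}{N}$ is indeed defined, and both $\prescript{\NF}{\varphi}{N}$ and $\prescript{\NF}{\varphi\theta}{N}$ are by construction subgroups of the single group $N_K\left(\varphi\left(A\right)\right)$.

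Next I would compare the two defining conditions. Fix $x\in N_K\left(\varphi\left(A\right)\right)$ and view $\varphi$ and $\varphi\theta$ as isomorphisms onto their common image $\varphi\left(A\right)$; then $\left(\varphi\theta\right)^{-1}=\theta^{-1}\varphi^{-1}$, and hence
\[
\left(\varphi\theta\right)^{-1}c_x\left(\varphi\theta\right)=\theta^{-1}\left(\varphi^{-1}c_x\varphi\right)\theta\in\Aut\left(A'\right).
\]
The key point is that conjugation by $\theta$ restricts to a bijection $\Aut_{\NF}\left(A\right)\to\Aut_{\NF}\left(A'\right)$, $\psi\mapsto\theta^{-1}\psi\theta$, with inverse $\psi'\mapsto\theta\psi'\theta^{-1}$: this holds because $\theta\in\NF$ by hypothesis and $\theta^{-1}\in\NF$ by the third axiom of Definition \ref{def:Fusion-system.} applied to the saturated fusion system $\NF$ (see Example \ref{exa:definition-NF.}), so that any composite $\theta^{-1}\psi\theta$ with $\psi\in\Aut_{\NF}\left(A\right)$ is again a morphism of $\NF$ from $A'$ to $A'$, i.e. an element of $\Aut_{\NF}\left(A'\right)$, and symmetrically in the other direction. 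Therefore $\varphi^{-1}c_x\varphi\in\Aut_{\NF}\left(A\right)$ if and only if $\theta^{-1}\left(\varphi^{-1}c_x\varphi\right)\theta\in\Aut_{\NF}\left(A'\right)$, that is, $x\in\prescript{\NF}{\varphi}{N}$ if and only if $x\in\prescript{\NF}{\varphi\theta}{N}$. Since these two sets sit inside the same group $N_K\left(\varphi\left(A\right)\right)$, they coincide.

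I do not expect a genuine obstacle here: the only things requiring care are the composition convention (the paper composes morphisms like functions, so $\varphi\theta$ means ``$\theta$ first, then $\varphi$'', which is what makes $\left(\varphi\theta\right)^{-1}=\theta^{-1}\varphi^{-1}$ correct) and the observation that $\theta^{-1}$ is itself a morphism of $\NF$, which is exactly what ensures conjugation by $\theta$ preserves the subgroup $\Aut_{\NF}\left(-\right)\le\Aut\left(-\right)$ in both directions. With those two points in place, the equality of the two normalizers follows at once from the equivalence of the membership conditions.
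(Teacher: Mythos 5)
Your proposal is correct and follows essentially the same route as the paper: both unwind the definition of $\prescript{\NF}{\varphi}{N}$, use $(\varphi\theta)(A')=\varphi(A)$ to place both sets inside $N_{K}\left(\varphi\left(A\right)\right)$, and reduce the claim to the identity $\Aut_{\NF}\left(A\right)=\lui{\theta}{\Aut_{\NF}\left(A'\right)}$, which you justify (slightly more explicitly than the paper) via the fusion system axioms. No gaps.
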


\begin{proof}
Since $\theta$ is an isomorphism in $\NF$ then we have that $\Aut_{\NF}\left(A\right)=\lui{\theta}{\Aut_{\NF}\left(A'\right)}$.
With this in mind the result follows from the identities below.
\begin{align*}
\prescript{\NF}{\varphi\theta}{N} & =\left\{ x\in N_{K}\left(\varphi\theta\left(A'\right)\right)\,:\,\left(\varphi\theta\right)^{-1}c_{x}\varphi\theta\in\Aut_{\NF}\left(A'\right)\right\} ,\\
 & =\left\{ x\in N_{K}\left(\varphi\left(A\right)\right)\,:\,\varphi^{-1}c_{x}\varphi\in\lui{\theta}{\left(\Aut_{\NF}\left(A'\right)\right)}\right\} ,\\
 & =\left\{ x\in N_{K}\left(\varphi\left(A\right)\right)\,:\,\varphi^{-1}c_{x}\varphi\in\Aut_{\NF}\left(A\right)\right\} =\prescript{\NF}{\varphi}{N}.
\end{align*}
\end{proof}
\begin{cor}
\label{cor:varphi^NF^NF=00003Dvarphi^NF.}With the notation of Lemma
\ref{lem:Normalizer-via-phi.}:
\begin{enumerate}
\item \label{enu:f^NF^NF-is-f^NF.}We can always take $\left(\varphi^{\NF}\right)^{\NF}=\varphi^{\NF}$.
\item \label{enu:varphi-ch.}If $\varphi=\varphi^{\NF}$ then for every
$h\in H$ we have that $\varphi c_{h}=\left(\varphi c_{h}\right)^{\NF}$
where $c_{h}:A^{h}\to A$ is seen as an isomorphism in $\NF$.
\end{enumerate}
\end{cor}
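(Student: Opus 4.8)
The plan is to read off both items from the ``more precisely'' clause of Lemma \ref{lem:Normalizer-via-phi.} together with Corollary \ref{cor:Invariance-^NF_varphiN}. The observation to record first is what the equality $\psi=\psi^{\NF}$ means for a morphism $\psi\colon A\to K$ in $\F$ with $A\le\NS$: since by construction $\psi^{\NF}$ is a morphism $\psi'\theta$ whose domain is a fully $\NF$-normalized group, $\psi=\psi^{\NF}$ forces that domain to be $A$, forces $A$ to be fully $\NF$-normalized, and forces $\Id_{A}$ to be an admissible choice of the auxiliary isomorphism in Lemma \ref{lem:Normalizer-via-phi.}; concretely it means $A$ is fully $\NF$-normalized and $\Aut_{\prescript{\NF}{\psi}{N}}(\psi(A))=\lui{\psi}{\Aut_{N_{\psi}^{\NF}}(A)}$, where $N_{\psi}^{\NF}:=\{x\in N_{\NS}(A)\,:\,c_{x}\in\Aut_{\prescript{\NF}{\psi}{N}}(\psi(A))^{\psi}\}$.

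For item \eqref{enu:f^NF^NF-is-f^NF.}, write $\varphi^{\NF}=\varphi\theta$ with $\theta\colon A'\to A$ an isomorphism in $\NF$, $A'\le\NS$ fully $\NF$-normalized, and $N_{\varphi\theta}^{\NF}\le N_{\NS}(A')$ the subgroup produced by Lemma \ref{lem:Normalizer-via-phi.}, so that $\Aut_{\prescript{\NF}{\varphi}{N}}(\varphi(A))=\lui{\varphi\theta}{\Aut_{N_{\varphi\theta}^{\NF}}(A')}$ and $N_{\varphi\theta}^{\NF}=\{x\in N_{\NS}(A')\,:\,c_{x}\in\Aut_{\prescript{\NF}{\varphi}{N}}(\varphi(A))^{\varphi\theta}\}$. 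Since $A'=_{\NF}A\in\Fc$, Corollary \ref{cor:Invariance-^NF_varphiN} gives $\prescript{\NF}{\varphi\theta}{N}=\prescript{\NF}{\varphi}{N}$, and of course $(\varphi\theta)(A')=\varphi(A)$. Comparing with the description of ``$\psi=\psi^{\NF}$'' above applied to $\psi:=\varphi^{\NF}=\varphi\theta$, the subgroup $N_{\varphi\theta}^{\NF}$ is literally the set $N_{\psi}^{\NF}$ appearing there, and the required automorphism-group identity is exactly the one Lemma \ref{lem:Normalizer-via-phi.} already furnishes; hence $\Id_{A'}$ is an admissible auxiliary isomorphism for $\varphi^{\NF}$, and $(\varphi^{\NF})^{\NF}=\varphi^{\NF}\Id_{A'}=\varphi^{\NF}$.

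For item \eqref{enu:varphi-ch.}, I would first dispatch the elementary facts. As $h\in H\le\NS$ and $\NS$ is a group, conjugation by $h$ is an automorphism of $\NS$, so $A^{h}\le\NS$; since $\langle A^{h},H\rangle=\langle A,H\rangle=AH$, conjugation by $h$ restricts to an inner automorphism of $AH$ extending $c_{h}$, so $c_{h}$ is an isomorphism $A^{h}\to A$ in $\NF$; and $A^{h}\in\Fc$ because $A\in\Fc$. Conjugation by $h$ also restricts to a bijection $N_{\NS}(A)\to N_{\NS}(A^{h})$, so $|N_{\NS}(A^{h})|=|N_{\NS}(A)|$; since $\varphi=\varphi^{\NF}$ makes $A$ fully $\NF$-normalized, $A^{h}$ is then fully $\NF$-normalized. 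Now set $\psi:=\varphi c_{h}\colon A^{h}\to K$ and $N_{\psi}^{\NF}:=(N_{\varphi}^{\NF})^{h}$; conjugating by $h\in\NS$ the inclusions $A\le N_{\varphi}^{\NF}\le N_{\NS}(A)$ yields $A^{h}\le N_{\psi}^{\NF}\le N_{\NS}(A^{h})$, Corollary \ref{cor:Invariance-^NF_varphiN} gives $\prescript{\NF}{\psi}{N}=\prescript{\NF}{\varphi}{N}$, and $\psi(A^{h})=\varphi(A)$. The only computation is the intertwining identity $c_{h}\,c_{x^{h}}\,c_{h}^{-1}=c_{x}$ of automorphisms of $A$ for $x\in N_{\varphi}^{\NF}$; combined with $\psi=\varphi c_{h}$ it delivers both $\lui{\psi}{\Aut_{N_{\psi}^{\NF}}(A^{h})}=\lui{\varphi}{\Aut_{N_{\varphi}^{\NF}}(A)}=\Aut_{\prescript{\NF}{\varphi}{N}}(\varphi(A))=\Aut_{\prescript{\NF}{\psi}{N}}(\psi(A^{h}))$ and $(N_{\varphi}^{\NF})^{h}=\{x\in N_{\NS}(A^{h})\,:\,c_{x}\in\Aut_{\prescript{\NF}{\psi}{N}}(\psi(A^{h}))^{\psi}\}$. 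Thus $N_{\psi}^{\NF}$ and $\Id_{A^{h}}$ are an admissible normalizer-before and auxiliary isomorphism for $\psi$, so $(\varphi c_{h})^{\NF}=\psi^{\NF}=\psi\,\Id_{A^{h}}=\varphi c_{h}$.

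I expect the main obstacle to be purely bookkeeping: keeping straight which ambient group each normalizer ($\prescript{\NF}{\varphi}{N}$, $N_{\varphi}^{\NF}$, $N_{\varphi\theta}^{\NF}$) is compared against, matching the two occurrences of the ``more precisely'' set of Lemma \ref{lem:Normalizer-via-phi.}, and being careful about left- versus right-conjugation conventions when verifying $c_{h}\,c_{x^{h}}\,c_{h}^{-1}=c_{x}$ and the extension condition defining $\Hom_{\NF}$. Once these are pinned down, nothing beyond Lemma \ref{lem:Normalizer-via-phi.} and Corollary \ref{cor:Invariance-^NF_varphiN} is required.
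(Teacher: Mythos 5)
Your proposal is correct and follows essentially the same route as the paper: both items are reduced to the automorphism-group identity of Lemma \ref{lem:Normalizer-via-phi.} via Corollary \ref{cor:Invariance-^NF_varphiN}, with $N_{\varphi c_{h}}^{\NF}:=\left(N_{\varphi}^{\NF}\right)^{h}$ in item \eqref{enu:varphi-ch.} exactly as the paper does. Your explicit check that $A^{h}$ is fully $\NF$-normalized is a small point the paper leaves implicit, but it does not change the argument.
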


\begin{proof}
$\phantom{.}$
\begin{enumerate}
\item The result follows from definition of $\left(\varphi^{\NF}\right)^{\NF}$
and the identities below
\[
\lui{\varphi^{\NF}}{\Aut_{N_{\varphi^{\NF}}^{\NF}}\left(A'\right)}=\Aut_{\prescript{\NF}{\varphi}{N}}\left(\varphi\left(A\right)\right)=\Aut_{\prescript{\NF}{\varphi^{\NF}}{N}}\left(\varphi^{\NF}\left(A'\right)\right),
\]
Where we are using Corollary \ref{cor:Invariance-^NF_varphiN} for
the second identity.
\item With the notation of Item \eqref{enu:varphi-ch.} we have that
\[
\Aut_{\prescript{\NF}{\varphi c_{h}}{N}}\left(\varphi c_{h}\left(A^{h}\right)\right)=\Aut_{\prescript{\NF}{\varphi}{N}}\left(\varphi\left(A\right)\right)=\lui{\varphi}{\Aut_{N_{\varphi}^{\NF}}\left(A\right)}=\lui{\varphi c_{h}}{\Aut_{\left(N_{\varphi}^{\NF}\right)^{h}}\left(A^{h}\right)}.
\]
Where, for the first identity, we are using Corollary \ref{cor:Invariance-^NF_varphiN},
while, for the second identity, we are using the fact that $\varphi=\varphi^{\NF}$.
Using the above and the description of $N_{\varphi}^{\NF}$ given
in Lemma \ref{lem:Normalizer-via-phi.} we obtain
\begin{align*}
\left(N_{\varphi}^{\NF}\right)^{h} & =\left\{ x\in N_{\NS}\left(A\right)\,:\,c_{x}\in\Aut_{\prescript{\NF}{\varphi}{N}}\left(\varphi\left(A\right)\right)^{\varphi}\right\} ^{h},\\
 & =\left\{ y\in N_{\NS}\left(A^{h}\right)\,:\,c_{\lui{h}{y}}\in\Aut_{\prescript{\NF}{\varphi}{N}}\left(\varphi c_{h}\left(A^{h}\right)\right)^{\varphi}\right\} ^{h},\\
 & =\left\{ y\in N_{\NS}\left(A^{h}\right)\,:\,c_{y}\in\Aut_{\prescript{\NF}{\varphi}{N}}\left(\varphi c_{h}\left(A^{h}\right)\right)^{\varphi c_{h}}\right\} ^{h},\\
 & =\left\{ y\in N_{\NS}\left(A^{h}\right)\,:\,c_{y}\in\Aut_{\prescript{\NF}{\varphi c_{h}}{N}}\left(\varphi c_{h}\left(A^{h}\right)\right)^{\varphi c_{h}}\right\} ^{h}.
\end{align*}
Where we are using Corollary \ref{cor:Invariance-^NF_varphiN} for
the last identity. The result follows by defining $N_{\varphi c_{h}}^{\NF}:=\left(N_{\varphi}^{\NF}\right)^{h}$.
\end{enumerate}
\end{proof}
\begin{lem}
\label{lem:N_kphi^NF}With the notation of Lemma \ref{lem:Normalizer-via-phi.}
for every $k\in K$ we have that $\lui{k}{\left(\prescript{\NF}{\varphi}{N}\right)}=N_{c_{k}\varphi}^{\NF}$.
Moreover, if $\varphi=\varphi^{\NF}$, we also have that $c_{k}\varphi=\left(c_{k}\varphi\right)^{\NF}$
and that $N_{\varphi}^{\NF}=N_{c_{k}\varphi}^{\NF}$.
\end{lem}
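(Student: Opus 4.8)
The plan is to unpack all quantities from the definition of the $\NF$-normalizer after $\varphi$ (Lemma \ref{lem:Normalizer-via-phi.}) and track how conjugation by an element $k\in K$ transforms them. First I would recall that by definition
\[
\prescript{\NF}{\varphi}{N}=\left\{x\in N_{K}\left(\varphi\left(A\right)\right)\,:\,\varphi^{-1}c_{x}\varphi\in\Aut_{\NF}\left(A\right)\right\},
\]
and that $\prescript{\NF}{\varphi}{N}$ is characterized as the unique maximal subgroup of $N_{K}\left(\varphi\left(A\right)\right)$ whose automizer of $\varphi\left(A\right)$ lies in $\lui{\varphi}{\Aut_{\NF}\left(A\right)}$. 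The first step is the elementary observation that $c_{k}\varphi$ again has source $A$ and image $\lui{k}{\left(\varphi\left(A\right)\right)}$, so $N_{K}\left(\left(c_{k}\varphi\right)\left(A\right)\right)=N_{K}\left(\lui{k}{\varphi\left(A\right)}\right)=\lui{k}{\left(N_{K}\left(\varphi\left(A\right)\right)\right)}$.

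The second step is the direct verification that $y\in N_{c_{k}\varphi}^{\NF}$ if and only if $\lui{k^{-1}}{y}\in\prescript{\NF}{\varphi}{N}$: writing $y=\lui{k}{x}$ with $x\in N_{K}\left(\varphi\left(A\right)\right)$ one has $\left(c_{k}\varphi\right)^{-1}c_{y}\left(c_{k}\varphi\right)=\varphi^{-1}c_{k^{-1}}c_{\lui{k}{x}}c_{k}\varphi=\varphi^{-1}c_{x}\varphi$, and this lies in $\Aut_{\NF}\left(A\right)$ exactly when $x\in\prescript{\NF}{\varphi}{N}$. This gives $\lui{k}{\left(\prescript{\NF}{\varphi}{N}\right)}=N_{c_{k}\varphi}^{\NF}$. (Here I should be mildly careful that in the statement $N_{c_{k}\varphi}^{\NF}$ denotes the normalizer \emph{after} $c_{k}\varphi$, i.e. the analogue of $\prescript{\NF}{\bullet}{N}$ for the morphism $c_{k}\varphi$; the notation in Lemma \ref{lem:Normalizer-via-phi.} uses both $\prescript{\NF}{\varphi}{N}$ and $N_{\varphi^{\NF}}^{\NF}$, and I would state explicitly which one is meant, matching the convention used in Corollary \ref{cor:varphi^NF^NF=00003Dvarphi^NF.} \eqref{enu:varphi-ch.}.)

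For the second assertion, assume $\varphi=\varphi^{\NF}$. I would compute the automizer: since conjugation by $k$ carries $\Aut_{\prescript{\NF}{\varphi}{N}}\left(\varphi\left(A\right)\right)$ onto $\Aut_{\lui{k}{\left(\prescript{\NF}{\varphi}{N}\right)}}\left(\lui{k}{\varphi\left(A\right)}\right)$, and using the first step together with the hypothesis $\Aut_{\prescript{\NF}{\varphi}{N}}\left(\varphi\left(A\right)\right)=\lui{\varphi}{\Aut_{N_{\varphi}^{\NF}}\left(A\right)}$, conjugation by $c_{k}$ conjugates the relation $\varphi$, so $\lui{c_{k}\varphi}{\Aut_{N_{\varphi}^{\NF}}\left(A\right)}=\lui{k}{\lui{\varphi}{\Aut_{N_{\varphi}^{\NF}}\left(A\right)}}=\Aut_{N_{c_{k}\varphi}^{\NF}}\left(\left(c_{k}\varphi\right)\left(A\right)\right)$. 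This exhibits a group $N_{\varphi}^{\NF}$ with the defining property that makes $c_{k}\varphi$ equal to its own $\NF$-top, i.e. $c_{k}\varphi=\left(c_{k}\varphi\right)^{\NF}$ (one may take the isomorphism $\theta$ in the definition of $\NF$-top to be $\Id_{A}$), and it shows $N_{c_{k}\varphi}^{\NF}$ can be chosen equal to $N_{\varphi}^{\NF}$. The main obstacle, as usual in this section, is purely bookkeeping: making sure the two distinct notations $\prescript{\NF}{\varphi}{N}$ and $N_{\varphi^{\NF}}^{\NF}$ are not conflated, that the isomorphism $\theta$ appearing implicitly in ``$\NF$-top'' can be taken trivial here (so there is no spurious Sylow-conjugation step as in Lemma \ref{lem:Normalizer-via-phi.}), and that all conjugations $\lui{k}{(-)}$ are applied to the correct side; the algebra itself is a one-line conjugation computation.
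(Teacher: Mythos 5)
Your proposal is correct and follows essentially the same route as the paper: the first claim is the one-line conjugation computation $\left(c_{k}\varphi\right)^{-1}c_{\lui{k}{x}}\left(c_{k}\varphi\right)=\varphi^{-1}c_{x}\varphi$ unwinding the definition of $\prescript{\NF}{\varphi}{N}$, and the second follows by conjugating the automizer identity $\Aut_{\prescript{\NF}{\varphi}{N}}\left(\varphi\left(A\right)\right)=\lui{\varphi}{\Aut_{N_{\varphi}^{\NF}}\left(A\right)}$ by $c_{k}$, exactly as in the paper. Your parenthetical about reading $N_{c_{k}\varphi}^{\NF}$ in the first claim as the normalizer \emph{after} $c_{k}\varphi$ (i.e.\ $\prescript{\NF}{c_{k}\varphi}{N}$) correctly resolves a notational slip that is also present in the paper's own statement, whose proof likewise computes $\prescript{\NF}{c_{k}\varphi}{N}$.
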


\begin{proof}
First of all notice that
\begin{align*}
\prescript{\NF}{c_{k}\varphi}{N} & =\left\{ x\in N_{K}\left(\lui{k}{\left(\varphi\left(A\right)\right)}\right)\,:\,\left(c_{k}\varphi\right)^{-1}c_{x}c_{k}\varphi\in\Aut_{\NF}\left(A\right)\right\} ,\\
 & =\left\{ x\in\lui{k}{\left(N_{K}\left(\varphi\left(A\right)\right)\right)}\,:\,\varphi^{-1}c_{x^{k}}\varphi\in\Aut_{\NF}\left(A\right)\right\} ,\\
 & =\left\{ \lui{k}{y}\,:\,y\in N_{K}\left(\varphi\left(A\right)\right)\text{ and }\varphi^{-1}c_{y}\varphi\in\Aut_{\NF}\left(A\right)\right\} =\lui{k}{\left(\prescript{\NF}{\varphi}{N}\right)}.
\end{align*}
This proves the first half of the lemma. For the second part we can
use the above and the identity $\varphi=\varphi^{\NF}$ to obtain
the identities below
\[
\lui{c_{k}\varphi}{\Aut_{N_{\varphi}^{\NF}}\left(A\right)}=\lui{c_{k}}{\left(\Aut_{\prescript{\NF}{\varphi}{N}}\left(\varphi\left(A\right)\right)\right)}=\Aut_{\lui{k}{\left(\prescript{\NF}{\varphi}{N}\right)}}\left(c_{k}\varphi\left(A\right)\right)=\Aut_{\prescript{\NF}{c_{k}\varphi}{N}}\left(c_{k}\varphi\left(A\right)\right).
\]
This proves both that $c_{k}\varphi=\left(c_{k}\varphi\right)^{\NF}$
and that $N_{\varphi}^{\NF}=N_{c_{k}\varphi}^{\NF}$.
\end{proof}
Lemma \ref{lem:N_kphi^NF} allows us to introduce the following definition.
\begin{defn}
\label{def:N_varphi^-^NF.}Let $A,K\in\Fc$ with $A\le\NS$, let $\overline{\varphi}\in\Hom_{\F}\left(A,K\right)$
if there exists a representative $\varphi$ of $\overline{\varphi}$
such that $\varphi=\varphi^{\NF}$ then, from Lemma \ref{lem:N_kphi^NF}
this happens for every representative of $\overline{\varphi}$. Then
we write $\overline{\varphi}=\overline{\varphi}^{\NF}$ and we define
the \textbf{normalizer before $\overline{\varphi}$ in $\NF$} as
$N_{\overline{\varphi}}^{\NF}:=N_{\varphi}^{\NF}$ which does not
depend on the choice of representative $\varphi$ of $\overline{\varphi}$
because of Lemma \ref{lem:N_kphi^NF}.
\end{defn}

As we will see in Theorem \ref{thm:decomposition-direct-product.},
for every $H,K\in\Fc$ and every $\left(A,\overline{\varphi}\right)\in\left[H\times K\right]$
such that $\overline{\varphi}=\overline{\varphi}^{\NF}$ the subgroups
$N_{\overline{\varphi}}^{\NF}$ of $H$ play, in the context of fusion
systems, a role analogous to the one that the groups $H^{x}\cap K$
play in Equation \eqref{eq:decomposition-biset-2.}. Let's now look
into what objects play, in the context of fusion systems, a role analogous
to that of the biset representatives $\left[H\backslash G/K\right]$
of Equation \eqref{eq:decomposition-biset-2.}.
\begin{defn}
\label{def:NFXK.}Let $K\in\Fc$. We define an equivalence relation
$\sim$ in $\left[H\times_{\F}K\right]$ by setting $\left(A,\overline{\varphi}\right)\sim\left(B,\overline{\psi}\right)$
if and only if there exists an isomorphism $\overline{\theta}\in\Hom_{\mathcal{O}\left(\NF\right)}\left(A,B\right)$
such that $\overline{\varphi}=\overline{\psi}\overline{\theta}$.
Lemma \ref{lem:Normalizer-via-phi.} ensures us that for each equivalence
class in $\left[H\times K\right]/\sim$ we can choose one representative
$\left(A,\overline{\varphi}\right)$ such that $A$ is fully $\NF$-normalized
and $\overline{\varphi}=\overline{\varphi}^{\NF}$. We define the
\textbf{product of $\NF$ and $K$ in $\F$ }to be any subset $\left[\NF\times K\right]\subseteq\left[H\times_{\F}K\right]$
formed by such representatives.
\end{defn}

We want the elements $\left[\NF\times K\right]$ and $N_{\overline{\varphi}}^{\NF}$
to play, in the context of fusion systems, the same role that the
elements $\left[H\backslash G/K\right]$ and $H^{x}\cap K$ play in
Equation \eqref{eq:decomposition-biset-2.}. In order to do so we
need to be able to define something analogous to the set $x\left[H^{x}\cap K\backslash K/J\right]$
of Equation \eqref{eq:decomposition-biset-2.}. In other words, for
every $\left(A,\overline{\varphi}\right)\in\left[\NF\times K\right]$
we need to be able to lift the morphism $\overline{\varphi}:A\to K$
in a unique way to a morphism $\overline{\hat{\varphi}}:N_{\overline{\varphi}}^{\NF}\to K$.
\begin{prop}
\label{prop:Extension-to-normalizer.}Let $K\in\Fc$, let $\left(A,\overline{\varphi}\right)\in\left[\NF\times K\right]$
and let $\varphi$ be a representative of $\overline{\varphi}$. There
exists a morphism $\hat{\varphi}:N_{\varphi}^{\NF}\to K$ in $\F$
such that $\varphi=\hat{\varphi}\iota_{A}^{N_{\varphi}^{\NF}}$. In
particular, from Corollary \ref{cor:varphi(N_=00007Bvarphi=00007D^=00007B=00005CNF=00007D).},
we have that $\hat{\varphi}\left(N_{\varphi}^{\NF}\right)=\prescript{\NF}{\varphi}{N}\le N_{K}\left(\varphi\left(A\right)\right)$.
Moreover there exists a unique morphism $\overline{\hat{\varphi}}:N_{\overline{\varphi}}^{\NF}\to K$
in $\OFc$ such that $\overline{\hat{\varphi}}\overline{\iota_{A}^{N_{\overline{\varphi}}^{\NF}}}=\overline{\varphi}$
and $\hat{\varphi}$ is necessarily a representative of $\overline{\hat{\varphi}}$.
\end{prop}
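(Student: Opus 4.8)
The plan is to prove existence of $\hat\varphi$ at the level of groups first, using the saturation axiom of Definition \ref{def:Saturated-fusion-system.}, and then descend the statement to the orbit category $\OFc$. For the group-level statement I would unwind the definition of $\left[\NF\times K\right]$ (Definition \ref{def:NFXK.}): for $\left(A,\overline\varphi\right)\in\left[\NF\times K\right]$ the group $A$ is fully $\NF$-normalized and $\overline\varphi=\overline\varphi^{\NF}$, so by Corollary \ref{cor:varphi^NF^NF=00003Dvarphi^NF.} and Lemma \ref{lem:Normalizer-via-phi.} we have $\varphi=\varphi^{\NF}$ for the chosen representative and $N_\varphi^{\NF}$ is defined via $\Aut_{N_\varphi^{\NF}}\left(A\right)$. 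The key point is that $N_\varphi^{\NF}\le N_\varphi$ (the ordinary $\varphi$-normalizer of Definition \ref{def:varphi-normalizer.}): indeed if $x\in N_\varphi^{\NF}$ then $c_x\in\Aut_{N_\varphi^{\NF}}\left(A\right)$ and conjugation by $\varphi(x)\in\prescript{\NF}{\varphi}{N}\le N_K\left(\varphi(A)\right)$ on $\varphi(A)$ realizes $\varphi c_x\varphi^{-1}$, which is exactly the defining condition for membership in $N_\varphi$ once one also checks $x\in\NSH[A]$; this last containment follows from $N_\varphi^{\NF}\le N_{\NS}\left(A\right)\le\NS\le S$ combined with $A$ being fully normalized so that the $\varphi$-normalizer is taken with the correct target. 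Since $A=\varphi^{-1}(\varphi(A))$ is fully $\F$-normalized (here one uses that $A$ is fully $\NF$-normalized together with the standard fact that $\F$-centric subgroups which are fully normalized in $\NF$ remain controlled appropriately — more carefully, one first replaces $\varphi(A)$ by a fully $\F$-normalized $\F$-conjugate and composes with an isomorphism in $\F$, invoking saturation there), the saturation axiom applied to $\varphi\colon A\to S$ yields $\hat\varphi\in\Hom_\F\left(N_\varphi,S\right)$ with $\hat\varphi\iota_A^{N_\varphi}=\iota_{\varphi(A)}^S\varphi$; restricting $\hat\varphi$ to $N_\varphi^{\NF}$ and corestricting to $K$ (which is legitimate because $\hat\varphi\left(N_\varphi^{\NF}\right)=\prescript{\NF}{\varphi}{N}\le N_K\left(\varphi(A)\right)\le K$ by Corollary \ref{cor:varphi(N_=00007Bvarphi=00007D^=00007B=00005CNF=00007D).}) gives the desired $\hat\varphi\colon N_\varphi^{\NF}\to K$ in $\F$ with $\varphi=\hat\varphi\iota_A^{N_\varphi^{\NF}}$.

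Next I would pass to the orbit category. For existence of $\overline{\hat\varphi}$, simply take $\overline{\hat\varphi}$ to be the class of the group-level $\hat\varphi$ just constructed; then $\overline{\hat\varphi}\,\overline{\iota_A^{N_{\overline\varphi}^{\NF}}}=\overline{\hat\varphi\iota_A^{N_\varphi^{\NF}}}=\overline\varphi$ and $\hat\varphi$ is by construction a representative of $\overline{\hat\varphi}$. Note $N_{\overline\varphi}^{\NF}=N_\varphi^{\NF}$ is well defined independently of the representative by Definition \ref{def:N_varphi^-^NF.} and Lemma \ref{lem:N_kphi^NF}, so there is no ambiguity in the source of $\overline{\hat\varphi}$.

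The remaining point is uniqueness of $\overline{\hat\varphi}$ in $\OFc$. Suppose $\overline{\hat\varphi}_1,\overline{\hat\varphi}_2\colon N_{\overline\varphi}^{\NF}\to K$ both satisfy $\overline{\hat\varphi}_i\,\overline{\iota_A^{N_{\overline\varphi}^{\NF}}}=\overline\varphi$. Choosing representatives $\hat\varphi_1,\hat\varphi_2$, the equality in $\OFc$ means there are $k\in K$ with $\hat\varphi_i\iota_A^{N_\varphi^{\NF}}$ and $c_k\varphi$ agreeing as morphisms $A\to K$ in $\F$; absorbing these $c_k$'s (using Lemma \ref{lem:N_kphi^NF}, which says $c_k\varphi=(c_k\varphi)^{\NF}$ and $N_{c_k\varphi}^{\NF}=N_\varphi^{\NF}$) we reduce to the case $\hat\varphi_1\iota=\hat\varphi_2\iota=\varphi$ on the nose. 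Then $\hat\varphi_2^{-1}\hat\varphi_1$ (viewing both as isomorphisms onto their common image $\prescript{\NF}{\varphi}{N}$, which holds by Corollary \ref{cor:varphi(N_=00007Bvarphi=00007D^=00007B=00005CNF=00007D).}) is an automorphism of $N_\varphi^{\NF}$ in $\F$ restricting to the identity on $A$; since $A\in\Fc$, it follows that $A$ is also centric inside $N_\varphi^{\NF}$ and one concludes this automorphism is conjugation by an element of $C_S(A)\le A\le N_\varphi^{\NF}$, i.e. it is an inner automorphism of $N_\varphi^{\NF}$, hence trivial in $\OFc$; therefore $\overline{\hat\varphi}_1=\overline{\hat\varphi}_2$. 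I expect this uniqueness argument — specifically the step that an $\F$-automorphism of $N_\varphi^{\NF}$ restricting to the identity on the $\F$-centric subgroup $A$ is inner, together with the bookkeeping of the $c_k$'s — to be the main obstacle; it is essentially the same mechanism used throughout the paper (e.g. in the proof of Proposition \ref{prop:Decomposition-centric-Mackey-functor.} and Lemma \ref{lem:Equivalence-R-mod-I_A^AM.}) and can be carried out by the same $C_S(-)\le(-)$ centricity argument applied inside $N_\varphi^{\NF}$ rather than inside $S$.
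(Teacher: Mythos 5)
There is a genuine gap in each half of your argument, and the one in the uniqueness part is fatal as written. On existence: the saturation axiom of Definition \ref{def:Saturated-fusion-system.} requires the \emph{image} $\varphi\left(A\right)$ to be fully $\F$-normalized, not the source; your assertion that ``$A$ is fully $\F$-normalized'' is therefore beside the point, and it is also unjustified ($A$ being fully $\NF$-normalized does not give full $\F$-normalization). What is actually available is that $\varphi\left(A\right)\in\Fc$ forces $\varphi\left(A\right)$ to be fully $\F$-\emph{centralized}, and the paper then invokes the standard consequence of saturation that a morphism with fully centralized image extends to the full $\varphi$-normalizer $N_{\iota_{K}^{S}\varphi}$ (\cite[Proposition 4.4]{StancuEquivalentDefinitions}); your parenthetical ``replace $\varphi\left(A\right)$ by a fully normalized conjugate and compose'' is precisely the proof of that cited result, but you do not carry it out, so this step is incomplete rather than wrong. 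The remainder of your existence argument (the containment $N_{\varphi}^{\NF}\le N_{\iota_{K}^{S}\varphi}$ and the corestriction to $K$ via $\hat{\varphi}\left(N_{\varphi}^{\NF}\right)=\prescript{\NF}{\varphi}{N}$) coincides with the paper's.

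The uniqueness step is where the real error lies. From $\alpha:=\hat{\varphi}_{2}^{-1}\hat{\varphi}_{1}\in\Aut_{\F}\left(N_{\varphi}^{\NF}\right)$ with $\alpha_{|A}=\Id_{A}$ and $C_{S}\left(A\right)\le A$ one can only conclude that $n^{-1}\alpha\left(n\right)\in Z\left(A\right)$ for every $n\in N_{\varphi}^{\NF}$, i.e.\ that $\alpha$ is encoded by a $1$-cocycle $N_{\varphi}^{\NF}/A\to Z\left(A\right)$. Such a cocycle need not be a coboundary (for instance, $H^{1}$ of a cyclic $p$-group acting on an elementary abelian $p$-group is typically nonzero, and examples occur with $A$ centric in $N_{\varphi}^{\NF}$), so $\alpha$ need not be conjugation by an element of $Z\left(A\right)$, and your conclusion that it is inner does not follow from centricity alone. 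The fact you are implicitly reproving --- that $\overline{\iota_{A}^{N_{\overline{\varphi}}^{\NF}}}$ is an epimorphism in $\OFc$ --- is exactly \cite[Theorem 4.9]{IntroductionToFusionSystemsLinckelmann}, which is the one-line argument the paper's proof uses for uniqueness; its proof genuinely needs saturation (a Sylow argument inside $\Aut_{\F}$ of the relevant subgroup), not merely the condition $C_{S}\left(A\right)\le A$. Replacing your cocycle step by a citation of that theorem repairs the argument and reduces your uniqueness paragraph to the paper's.
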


\begin{proof}
If the first part of the statement is satisfied then the morphism
$\overline{\hat{\varphi}}$ in $\OFc$ having representative $\hat{\varphi}$
satisfies $\overline{\hat{\varphi}}\overline{\iota_{A}^{N_{\overline{\varphi}}^{\NF}}}=\overline{\varphi}$.
From \cite[Theorem 4.9]{IntroductionToFusionSystemsLinckelmann} we
know that $\overline{\iota_{A}^{N_{\overline{\varphi}}^{\NF}}}$ is
an epimorphism. Therefore, for any morphism $\overline{\psi}\colon N_{\varphi}^{\NF}\to K$
in $\OFc$ satisfying $\overline{\psi}\overline{\iota_{A}^{N_{\varphi}^{\NF}}}=\overline{\varphi}=\overline{\hat{\varphi}}\overline{\iota_{A}^{N_{\varphi}^{\NF}}}$,
we must necessarily have $\overline{\psi}=\overline{\hat{\varphi}}$.
This proves uniqueness of $\overline{\hat{\varphi}}$.

We are now only left with proving that there exists a morphism $\hat{\varphi}$
as in the statement.

We know from definition of $\left[\NF\times K\right]$ that $A\in\Fc$.
Therefore we must also have $\varphi\left(A\right)\in\Fc$ and, in
particular, $\varphi\left(A\right)$ is fully $\F$-centrialized.
From \cite[Proposition 4.4]{StancuEquivalentDefinitions} (see also
\cite[Proposition 2.7]{IntroductionToFusionSystemsLinckelmann}) we
can now deduce that there exists a morphism $\psi\colon N_{\iota_{K}^{S}\varphi}\to S$
(see Definition \ref{def:varphi-normalizer.}) such that $\psi\iota_{A}^{N_{\iota_{K}^{S}\varphi}}=\iota_{K}^{S}\varphi$.
By definition of $N_{\varphi}^{\NF}$ (see Lemma \ref{lem:Normalizer-via-phi.})
we have that $N_{\varphi}^{\NF}\le N_{\iota_{K}^{S}\varphi}$. Therefore
we can apply Corollary \ref{cor:varphi(N_=00007Bvarphi=00007D^=00007B=00005CNF=00007D).}
(taking $\hat{\varphi}:=\psi\iota_{N_{\varphi}^{\NF}}^{N_{\iota_{K}^{S}\varphi}}$)
to deduce that $\psi\left(N_{\varphi}^{\NF}\right)=\prescript{\NF}{\varphi}{N}$.
In particular we have that $\psi\left(N_{\varphi}^{\NF}\right)\le K$.
This allows us to define the morphism $\hat{\varphi}\colon N_{\varphi}^{\NF}\to K$
in $\F$ by setting $\hat{\varphi}\left(x\right):=\psi\left(x\right)$
for every $x\in N_{\varphi}^{\NF}$. Since $\psi\iota_{A}^{N_{\iota_{K}^{S}\varphi}}=\iota_{K}^{S}\varphi$
then we have that $\hat{\varphi}=\varphi\iota_{A}^{N_{\varphi}^{\NF}}$
thus completing the proof.
\end{proof}
Notice that maximality of the pairs $\left(A,\overline{\varphi}\right)\in\left[\NF\times K\right]$
does not imply that the pair $\left(N_{\overline{\varphi}}^{\NF},\overline{\hat{\varphi}}\right)$
given by Proposition \ref{prop:Extension-to-normalizer.} satisfies
$\left(N_{\overline{\varphi}}^{\NF},\overline{\hat{\varphi}}\right)=\left(A,\overline{\varphi}\right)$
since we might have that $N_{\overline{\varphi}}^{\NF}\not\le H$.
However, we have the following corollary.
\begin{cor}
\label{cor:A=00003DHcapN.}Let $K\in\Fc$ and let $\left(A,\overline{\varphi}\right)\in\left[\NF\times K\right]$
then $A=N_{\overline{\varphi}}^{\NF}\cap H$. 
\end{cor}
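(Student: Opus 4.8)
The plan is to prove the two inclusions $A \le N_{\overline{\varphi}}^{\NF} \cap H$ and $N_{\overline{\varphi}}^{\NF} \cap H \le A$ separately. The first inclusion is essentially immediate: by Definition \ref{def:NFXK} we have $\left(A,\overline{\varphi}\right) \in \left[H\times_{\F}K\right]$, so in particular $A \le H$; and Lemma \ref{lem:Normalizer-via-phi.} (together with Definition \ref{def:N_varphi^-^NF.}, using that $\overline{\varphi}=\overline{\varphi}^{\NF}$) gives $A \le N_{\overline{\varphi}}^{\NF}$. Hence $A \le N_{\overline{\varphi}}^{\NF}\cap H$.

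For the reverse inclusion, first apply Proposition \ref{prop:Extension-to-normalizer.} to obtain the unique morphism $\overline{\hat\varphi}\colon N_{\overline{\varphi}}^{\NF}\to K$ in $\OFc$ with $\overline{\hat\varphi}\,\overline{\iota_{A}^{N_{\overline{\varphi}}^{\NF}}}=\overline{\varphi}$; fix a representative $\hat\varphi$ with $\hat\varphi\iota_{A}^{N_{\overline{\varphi}}^{\NF}}=\varphi$. Now set $B := N_{\overline{\varphi}}^{\NF}\cap H$. Since $A\le B\le H$ and $\hat\varphi$ restricts to a morphism $\hat\varphi|_{B}\colon B\to K$ in $\F$, the pair $\left(B,\overline{\hat\varphi|_{B}}\right)$ is a candidate pair of the kind appearing in the construction of $H\times_{\F}K$ (i.e. $B\in\Fc$ — this needs checking, see below — with $B\le H$ and a morphism $\overline{\hat\varphi|_{B}}\in\Hom_{\OFc}\left(B,K\right)$). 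Moreover $\left(A,\overline{\varphi}\right)\precsim_{H}\left(B,\overline{\hat\varphi|_{B}}\right)$ via $h=1_{H}$, because $A\le B$ and $\overline{\varphi}=\overline{\hat\varphi|_{B}}\,\overline{\iota_{A}^{B}}$. Since $\left(A,\overline{\varphi}\right)$ is a maximal pair under $\precsim_{H}$ (being an element of $\left[\NF\times K\right]\subseteq\left[H\times_{\F}K\right]$, see Definition \ref{def:HX_FK}), and since $A\le B$, maximality forces $A=B$, i.e. $A=N_{\overline{\varphi}}^{\NF}\cap H$, as desired.

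The main obstacle I anticipate is the bookkeeping needed to legitimately invoke maximality: one must verify that $B=N_{\overline{\varphi}}^{\NF}\cap H$ is itself $\F$-centric, so that the pair $\left(B,\overline{\hat\varphi|_{B}}\right)$ genuinely lies among the pairs over which the preorder $\precsim_{H}$ is defined. This should follow from $A\le B\le H$ with $A\in\Fc$: an overgroup of an $\F$-centric subgroup need not be $\F$-centric in general, but here $B$ sits between $A$ and $N_{\overline{\varphi}}^{\NF}$, and $\hat\varphi$ maps $B$ into $\prescript{\NF}{\varphi}{N}\le N_{K}\left(\varphi\left(A\right)\right)$ with $\varphi\left(A\right)\in\Fc$; using $C_{S}\left(\varphi\left(A\right)\right)\le\varphi\left(A\right)$ as in the proof of Corollary \ref{cor:varphi(N_=00007Bvarphi=00007D^=00007B=00005CNF=00007D).}, one sees $C_{S}(\hat\varphi(B))$ is small, and transporting back along $\hat\varphi$ gives $C_{S}(B)\le B$; combined with the analogous statement for $\F$-conjugates of $B$ (which are controlled because $\overline{\varphi}=\overline{\varphi}^{\NF}$ pins down the relevant automizers), this yields $B\in\Fc$. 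Once this point is settled, the maximality argument closes the proof with no further computation. Alternatively, if verifying $B\in\Fc$ directly proves delicate, one can instead argue that $\left(B,\overline{\hat\varphi|_{B}}\right)$ dominates $\left(A,\overline{\varphi}\right)$, pass to a maximal pair above it, and use the equivalence relation $\sim$ of Equation \eqref{eq:Equivalence-relation.} together with $A\le H$ to conclude $A=B$.
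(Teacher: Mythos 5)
Your proof is correct and follows essentially the same route as the paper's: the inclusion $A\le N_{\overline{\varphi}}^{\NF}\cap H$ from Lemma \ref{lem:Normalizer-via-phi.} and Definition \ref{def:NFXK.}, then the reverse inclusion by restricting the extension $\hat{\varphi}$ of Proposition \ref{prop:Extension-to-normalizer.} to $N_{\overline{\varphi}}^{\NF}\cap H$ and invoking maximality of the pair $\left(A,\overline{\varphi}\right)$. The only flaw is your side remark that ``an overgroup of an $\F$-centric subgroup need not be $\F$-centric in general'': this is false. If $A\le B$ and $A\in\Fc$, then for any $\F$-isomorphism $\psi$ defined on $B$ one has $C_{S}\left(\psi\left(B\right)\right)\le C_{S}\left(\psi\left(A\right)\right)\le\psi\left(A\right)\le\psi\left(B\right)$, so $B\in\Fc$; this is \cite[Proposition 4.4]{IntroductionToFusionSystemsLinckelmann}, already used repeatedly in the paper, and it makes your anticipated obstacle (and the proposed workaround) unnecessary.
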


\begin{proof}
From Lemma \ref{lem:Normalizer-via-phi.} we know that $A\le N_{\overline{\varphi}}^{\NF}$
and from Definitions \ref{def:HX_FK} and \ref{def:NFXK.} we know
that $A\le H$. Therefore we can deduce that $A\le N_{\overline{\varphi}}^{\NF}\cap H$.
From this and using the notation of Proposition \ref{prop:Extension-to-normalizer.}
we obtain the identity $\overline{\hat{\varphi}\iota_{N_{\overline{\varphi}}^{\NF}\cap H}^{N_{\overline{\varphi}}^{\NF}}}\,\overline{\iota_{A}^{N_{\overline{\varphi}}^{\NF}\cap H}}=\overline{\varphi}$.
Since $N_{\varphi}^{\NF}\cap H\le H$ then we can deduce from maximality
of the pair $\left(A,\overline{\varphi}\right)$ that $A=N_{\overline{\varphi}}^{\NF}\cap H$
thus concluding the proof.
\end{proof}
From Corollary \ref{cor:A=00003DHcapN.} we now obtain the following.
\begin{lem}
\label{lem:psi(B)-is-A.}Let $K\in\NF\cap\Fc$ and let $\left(A,\overline{\varphi}\right)\in\left[\NF\times K\right]$.
For every $\left(B,\overline{\psi}\right)\in\left[H\times_{\NF}N_{\overline{\varphi}}^{\NF}\right]$
and every representative $\psi$ of $\overline{\psi}$ we have that
$\psi\left(B\right)=N_{\overline{\varphi}}^{\NF}\cap H=A$.
\end{lem}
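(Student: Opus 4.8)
The plan is to use the defining property of $N_{\overline{\varphi}}^{\NF}$ from Lemma \ref{lem:Normalizer-via-phi.} together with Corollary \ref{cor:A=00003DHcapN.}. First I would recall that $\left(A,\overline{\varphi}\right)\in\left[\NF\times K\right]$ means $A\le\NS$ is fully $\NF$-normalized, $A\le H$, $\overline{\varphi}=\overline{\varphi}^{\NF}$, and (from Corollary \ref{cor:A=00003DHcapN.}) $A=N_{\overline{\varphi}}^{\NF}\cap H$. So the second equality $N_{\overline{\varphi}}^{\NF}\cap H=A$ is immediate, and the content is the first equality $\psi\left(B\right)=N_{\overline{\varphi}}^{\NF}\cap H$.

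Next I would unpack what $\left(B,\overline{\psi}\right)\in\left[H\times_{\NF}N_{\overline{\varphi}}^{\NF}\right]$ means (Definition \ref{def:HX_FK} applied inside the fusion system $\NF$): $B\in\Fc$, $B\le H$, $\overline{\psi}\in\Hom_{\Orbitize{\left(\NF\right)^{c}}}\left(B,N_{\overline{\varphi}}^{\NF}\right)$, and the pair is maximal under the preorder $\precsim_{H}$. The key observation is that since $A=N_{\overline{\varphi}}^{\NF}\cap H$ and $A\le H$, the inclusion $\overline{\iota_{A}^{N_{\overline{\varphi}}^{\NF}}}$ together with the identity $\overline{\iota_{A}^{H}}$ exhibit $\left(A,\overline{\iota_{A}^{N_{\overline{\varphi}}^{\NF}}}\right)$ as a pair of the type being maximized, so it is $\precsim_{H}$-below some maximal pair; conversely any maximal pair $\left(B,\overline{\psi}\right)$ must, via $\overline{\psi}$ composed with $\overline{\iota}$ into $H$ (using that $\NF$ is a fusion system over $\NS$, and $N_{\overline{\varphi}}^{\NF}\le\NS$), land inside $H\cap N_{\overline{\varphi}}^{\NF}=A$. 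Concretely: from $\overline{\psi}\in\Hom_{\Orbitize{\left(\NF\right)^{c}}}\left(B,N_{\overline{\varphi}}^{\NF}\right)$ we may pick a representative $\psi\in\Hom_{\NF}\left(B,N_{\overline{\varphi}}^{\NF}\right)$, which is in particular a morphism in $\F$; then $\psi\left(B\right)\le N_{\overline{\varphi}}^{\NF}$. Because $B\le H$ and $\psi$ lies in $\NF$, the standard argument (analogous to the proof of Proposition \ref{prop:properties-HXK.} \eqref{enu:prod-if-F-is-F_S(S).} specialized to the "inner" fusion system $\FHH[\NS]\subseteq\NF$, or directly: $\psi=c_{x}$ on $\NS$ for suitable $x\in\NS$ when $\overline{\psi}$ comes from $\OFHH[\NS]$, and in general $\psi\left(B\right)\le\NS$) forces $\psi\left(B\right)\le\NS$, hence $\psi\left(B\right)\le N_{\overline{\varphi}}^{\NF}\cap\NS=N_{\overline{\varphi}}^{\NF}$; combining with $\psi\left(B\right)\le H$ via restriction of $\psi$ composed with $\overline{\iota_{N_{\overline{\varphi}}^{\NF}}^{?}}$ — here I need $\psi\left(B\right)\le H$ — gives $\psi\left(B\right)\le A$. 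Then maximality of $\left(B,\overline{\psi}\right)$ together with $A=N_{\overline{\varphi}}^{\NF}\cap H\le N_{\overline{\varphi}}^{\NF}$ forces $\psi\left(B\right)=A$, i.e. $\left(B,\overline{\psi}\right)$ is $\sim$-equivalent to $\left(A,\overline{\iota_{A}^{N_{\overline{\varphi}}^{\NF}}}\right)$ and in particular $\psi\left(B\right)=A=N_{\overline{\varphi}}^{\NF}\cap H$.

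The main obstacle I anticipate is making rigorous the claim that for $\left(B,\overline{\psi}\right)\in\left[H\times_{\NF}N_{\overline{\varphi}}^{\NF}\right]$ any representative $\psi$ satisfies $\psi\left(B\right)\le H$ — the image naturally lies in $N_{\overline{\varphi}}^{\NF}$, not obviously in $H$. I would resolve this by exploiting that $\psi$, being a morphism in $\NF$ between subgroups of $\NS$, restricts/extends through $H$: more precisely, since $\NF=N_{\F}\left(H\right)$ and $H$ is fully $\F$-normalized, every morphism $\psi\colon B\to N_{\overline{\varphi}}^{\NF}$ in $\NF$ with $B\le H$ extends to $\hat{\psi}\colon BH\to N_{\overline{\varphi}}^{\NF}H$ in $\F$ (Example \ref{exa:definition-NF.}), but $BH=H$ since $B\le H$, so $\psi$ is the restriction of an $\F$-automorphism-type map defined on $H$; chasing this shows $\psi\left(B\right)\le H\cdot C$ for appropriate centralizer $C$ absorbed by $\Fc$-centricity, hence $\psi\left(B\right)\le H$. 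With $\psi\left(B\right)\le H$ established, the rest is the maximality argument above, which is routine given Corollary \ref{cor:A=00003DHcapN.} and Definitions \ref{def:HX_FK} and \ref{def:NFXK.}.
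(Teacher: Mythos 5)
Your overall strategy matches the paper's: reduce to the first equality via Corollary \ref{cor:A=00003DHcapN.}, show $\psi\left(B\right)\le H$, then invoke maximality. But the decisive step is missing. Knowing $\psi\left(B\right)\le N_{\overline{\varphi}}^{\NF}\cap H=A$ and that $\left(B,\overline{\psi}\right)$ is maximal under $\precsim_{H}$ does \emph{not} by itself force $\psi\left(B\right)=A$: maximality only rules out the existence of a pair strictly dominating $\left(B,\overline{\psi}\right)$, so to derive a contradiction from $\psi\left(B\right)\lneq A$ you must actually \emph{construct} such a dominating pair. The paper does this by extending $\psi$ to $\hat{\psi}\in\Aut_{\NF}\left(H\right)$ (possible precisely because $\psi$ is an $\NF$-morphism with source inside $H$), setting $C:=\hat{\psi}^{-1}\left(N_{\overline{\varphi}}^{\NF}\cap H\right)$ and $\theta:=\hat{\psi}_{|C}\colon C\to N_{\overline{\varphi}}^{\NF}$; then $B\le C$, $\overline{\theta}\,\overline{\iota_{B}^{C}}=\overline{\psi}$, and $\theta\left(C\right)=N_{\overline{\varphi}}^{\NF}\cap H$, so maximality forces $\left(C,\overline{\theta}\right)=\left(B,\overline{\psi}\right)$ and hence $\psi\left(B\right)=A$. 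Your proposal never builds this pair; the remark that $\left(A,\overline{\iota_{A}^{N_{\overline{\varphi}}^{\NF}}}\right)$ sits below \emph{some} maximal pair only controls one equivalence class and says nothing about an arbitrary $\left(B,\overline{\psi}\right)\in\left[H\times_{\NF}N_{\overline{\varphi}}^{\NF}\right]$ (nor is every such pair $\sim$-equivalent to that one, as you assert in passing).

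Secondly, your treatment of $\psi\left(B\right)\le H$ is muddled. The correct justification is exactly the extension property you half-invoke: since $B\le H$, an $\NF$-morphism $\psi$ out of $B$ extends to $\hat{\psi}\colon BH=H\to N_{\overline{\varphi}}^{\NF}H$ stabilizing $H$, so $\psi\left(B\right)=\hat{\psi}\left(B\right)\le\hat{\psi}\left(H\right)=H$; this is what the paper means by ``every morphism in $\NF$ sends subgroups of $H$ to subgroups of $H$''. Your alternative chain through ``$\psi\left(B\right)\le H\cdot C$ for an appropriate centralizer absorbed by $\Fc$-centricity'' does not correspond to any actual argument and should be dropped. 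Note also that the same automorphism $\hat{\psi}$ is the object you need for the missing maximality step, so repairing both gaps amounts to carrying out the paper's construction.
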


\begin{proof}
Because of Corollary \ref{cor:A=00003DHcapN.} we just need to prove
that $\psi\left(B\right)=N_{\overline{\varphi}}^{\NF}\cap H$ for
any representative $\psi$ of $\overline{\psi}$. Since $\psi$ is
a morphism in $\NF$ and, by definition, every morphism on $\NF$
sends subgroups of $H$ to subgroups of $H$, we can conclude that
$\psi\left(B\right)\le H$. Therefore we must have $\psi\left(B\right)\le N_{\overline{\varphi}}^{\NF}\cap H$.
From definition of $\NF$ this implies that there exists $\hat{\psi}\in\Aut_{\NF}\left(H\right)$
such that $\hat{\psi}\left(x\right)=\psi\left(x\right)$ for every
$x\in B$. Define now $\theta\colon C:=\hat{\psi}^{-1}\left(N_{\overline{\varphi}}^{\NF}\cap H\right)\to N_{\overline{\varphi}}^{\NF}$
by setting $\theta\left(x\right):=\hat{\psi}\left(x\right)$ for every
$x\in C$. Then we have that $B\le C$ and $\overline{\theta}\overline{\iota_{B}^{C}}=\overline{\psi}$.
Since $\theta$ is a morphism in $\NF$ and $C\le H$ then, from maximality
of the pair $\left(B,\overline{\psi}\right)$ (see Definition \ref{def:HX_FK}),
we can conclude that $\left(C,\overline{\theta}\right)=\left(B,\overline{\psi}\right)$.
In particular we have that $\psi\left(B\right)=\theta\left(C\right)$
and since $\theta\left(C\right)=\hat{\psi}\left(\hat{\psi}^{-1}\left(N_{\overline{\varphi}}^{\NF}\cap H\right)\right)=N_{\overline{\varphi}}^{\NF}\cap H$
the result follows.
\end{proof}
We have now gathered all the ingredients needed to prove Theorem \ref{thm:decomposition-direct-product.}
with which we conclude this subsection.
\begin{thm}
\label{thm:decomposition-direct-product.}Using Notation \ref{nota:X-and-Y.}
let $K\in\Fc$ (see Definition \ref{def:F-centric.}) and for every
$\left(A,\overline{\varphi}\right)\in\left[\NF\times K\right]$ (see
Example \ref{exa:definition-NF.} and Definition \ref{def:NFXK.})
let $\overline{\hat{\varphi}}$ be as in Proposition \ref{prop:Extension-to-normalizer.}.
Then, for every $\left(A,\overline{\varphi}\right)\in\left[\NF\times K\right]$
we can take $\left[H\times_{\NF}N_{\overline{\varphi}}^{\NF}\right]$
(see Definition \ref{def:HX_FK} and Lemma \ref{lem:N_kphi^NF}) so
that
\[
\left[H\times_{\F}K\right]=\bigsqcup_{\left(A,\overline{\varphi}\right)\in\left[\NF\times K\right]}\overline{\hat{\varphi}}\left[H\times_{\NF}N_{\overline{\varphi}}^{\NF}\right],
\]
where
\[
\overline{\hat{\varphi}}\left[H\times_{\NF}N_{\overline{\varphi}}^{\NF}\right]:=\bigsqcup_{\left(B,\overline{\psi}\right)\in\left[H\times_{\NF}N_{\overline{\varphi}}^{\NF}\right]}\left\{ \left(B,\overline{\hat{\varphi}}\overline{\psi}\right)\right\} .
\]
\end{thm}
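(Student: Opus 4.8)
The plan is to prove this by establishing a bijection between the two sides and then checking that the bijection respects maximality, so that a valid choice of representatives on the right-hand side assembles into a valid choice of $\left[H\times_{\F}K\right]$ on the left. The intuition comes directly from the group-theoretic analogue in Equation \eqref{eq:decomposition-biset-2.}: the elements $\left(A,\overline{\varphi}\right)\in\left[\NF\times K\right]$ play the role of the outer double-coset representatives $x\in\left[H\backslash G/K\right]$, the subgroup $N_{\overline{\varphi}}^{\NF}$ plays the role of $H^{x}\cap K$, and the lifted morphism $\overline{\hat{\varphi}}$ together with the inner data $\left[H\times_{\NF}N_{\overline{\varphi}}^{\NF}\right]$ plays the role of the set $x\left[\left(H^{x}\cap K\right)\backslash K/J\right]$.

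First I would observe that the map is well defined: for $\left(A,\overline{\varphi}\right)\in\left[\NF\times K\right]$ and $\left(B,\overline{\psi}\right)\in\left[H\times_{\NF}N_{\overline{\varphi}}^{\NF}\right]$, by Lemma \ref{lem:psi(B)-is-A.} we have $\psi\left(B\right)=A$, so $\overline{\hat{\varphi}}\,\overline{\psi}\in\Hom_{\OFc}\left(B,K\right)$ makes sense since $\overline{\hat{\varphi}}\colon N_{\overline{\varphi}}^{\NF}\to K$ and $\overline{\psi}$ has image contained in $A\le N_{\overline{\varphi}}^{\NF}$; and $B\le H$ by construction, so $\left(B,\overline{\hat{\varphi}}\,\overline{\psi}\right)$ is a candidate pair for $\left[H\times_{\F}K\right]$. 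Next I would prove maximality of each such pair under the preorder $\precsim_H$: given $\left(B,\overline{\hat{\varphi}}\,\overline{\psi}\right)\precsim_{H}\left(C,\overline{\delta}\right)$, I would use that the $\precsim_H$-class of $\left(C,\overline{\delta}\right)$ meets some $\left(A',\overline{\varphi'}\right)\in\left[\NF\times K\right]$, show via the defining property of $\left[\NF\times K\right]$ (and Corollary \ref{cor:A=00003DHcapN.}, which gives $A=N_{\overline{\varphi}}^{\NF}\cap H$) that necessarily $\left(A',\overline{\varphi'}\right)\sim\left(A,\overline{\varphi}\right)$ in the sense of Definition \ref{def:NFXK.}, hence these can be taken equal, and then that $\left(C,\overline{\delta}\right)$ comes from some element of $\left[H\times_{\NF}N_{\overline{\varphi}}^{\NF}\right]$ above $\left(B,\overline{\psi}\right)$, which forces equality by maximality of $\left(B,\overline{\psi}\right)$ in the $\NF$-product.

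Then I would prove injectivity and surjectivity of the assembled map onto a complete set of $\precsim_H$-maximal classes. For surjectivity: given any maximal $\left(C,\overline{\delta}\right)$ for $H\times_{\F}K$, the morphism $\overline{\iota_C^H}\colon C\to H$ and $\overline{\delta}\colon C\to K$ produce, via the universal property of the product used to define $\left[\NF\times K\right]$, a unique $\left(A,\overline{\varphi}\right)\in\left[\NF\times K\right]$ together with a factorization of $\overline{\delta}$ through $\overline{\hat{\varphi}}$; the remaining morphism $C\to N_{\overline{\varphi}}^{\NF}$ lands in $\Orbitize{\NF}$ by the usual argument (it is compatible with inclusions into $H$ after composing appropriately, cf. Lemma \ref{lem:in-onf-iff-lifting-in-onf.}), so it determines an element of $\left[H\times_{\NF}N_{\overline{\varphi}}^{\NF}\right]$ mapping to $\left(C,\overline{\delta}\right)$. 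For injectivity: if $\left(B,\overline{\hat{\varphi}}\,\overline{\psi}\right)$ and $\left(B',\overline{\hat{\varphi'}}\,\overline{\psi'}\right)$ are $\precsim_H$-equivalent, then composing with $\overline{\iota}$ into $H$ shows the two $\NF$-data are equivalent, forcing $\left(A,\overline{\varphi}\right)=\left(A',\overline{\varphi'}\right)$ since $\left[\NF\times K\right]$ has one representative per $\sim$-class, and then $\overline{\hat{\varphi}}$ being the unique lift (Proposition \ref{prop:Extension-to-normalizer.}) together with epimorphicity of $\overline{\iota_A^{N_{\overline{\varphi}}^{\NF}}}$ lets us cancel $\overline{\hat{\varphi}}$ and conclude $\left(B,\overline{\psi}\right)=\left(B',\overline{\psi'}\right)$.

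The main obstacle I expect is the maximality/surjectivity bookkeeping: one must carefully track how the two-level preorder (the outer $\sim$ on $\left[\NF\times K\right]$ coming from $\Orbitize{\NF}$-isomorphisms, and the inner $\precsim_H$ defining the $\NF$-product) interacts with the single preorder $\precsim_H$ defining $H\times_{\F}K$, and in particular verify that a morphism $C\to N_{\overline{\varphi}}^{\NF}$ obtained from the universal property genuinely lies in $\Orbitize{\NF}$ rather than merely in $\OF$ — this is exactly where $N_{\overline{\varphi}}^{\NF}=N_{\overline{\varphi}}^{\NF}\cap$-type identities (Corollary \ref{cor:A=00003DHcapN.}) and the characterization of $\Orbitize{\NF}$-morphisms as those liftable against inclusions (Example \ref{exa:definition-NF.} and \cite[Theorem 4.9]{IntroductionToFusionSystemsLinckelmann}) do the essential work. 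Once these compatibilities are in hand, choosing $\left[H\times_{\NF}N_{\overline{\varphi}}^{\NF}\right]$ for each $\left(A,\overline{\varphi}\right)$ and taking the disjoint union yields a valid $\left[H\times_{\F}K\right]$, which is the assertion of the theorem.
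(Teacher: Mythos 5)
Your overall strategy coincides with the paper's: show that each pair $\left(B,\overline{\hat{\varphi}}\,\overline{\psi}\right)$ actually lies in $\left[H\times_{\F}K\right]$, show that every element of $\left[H\times_{\F}K\right]$ arises this way, and show that the assignment is injective. The first two steps are handled in the paper essentially as you describe, via the universal property of the product together with the defining property of $\left[\NF\times K\right]$ and Lemma \ref{lem:psi(B)-is-A.}, so that part of your outline is sound.

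The gap is in your injectivity step, where you claim that ``epimorphicity of $\overline{\iota_{A}^{N_{\overline{\varphi}}^{\NF}}}$ lets us cancel $\overline{\hat{\varphi}}$''. Cancelling $\overline{\hat{\varphi}}$ on the left would require it to be a \emph{monomorphism} in $\OFc$, which it is not in general; epimorphicity of the inclusion only gives uniqueness of the lift $\overline{\hat{\varphi}}$, not left-cancellability. Concretely, the equality $\overline{\hat{\varphi}}\,\overline{\psi}=\overline{\hat{\varphi}}\,\overline{\psi'}$ in $\Hom_{\OFc}\left(B,K\right)$ only says that chosen representatives agree up to post-conjugation by some $k\in K$, whereas $\overline{\psi}=\overline{\psi'}$ in $\Hom_{\ONF}\left(B,N_{\overline{\varphi}}^{\NF}\right)$ requires post-conjugation by an element of $N_{\overline{\varphi}}^{\NF}$. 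Bridging these is exactly the hard point of the proof and is where the specific construction of $N_{\overline{\varphi}}^{\NF}$ enters: one checks that the witnessing $k$ normalizes $\varphi\left(A\right)$ and that $\tilde{\varphi}^{-1}c_{k}\tilde{\varphi}=\tilde{\psi'}\tilde{\psi}^{-1}\in\Aut_{\NF}\left(A\right)$, so $k\in\prescript{\NF}{\varphi}{N}$ by Lemma \ref{lem:Normalizer-via-phi.}; then, because $\overline{\varphi}=\overline{\varphi}^{\NF}$ and $\hat{\varphi}\left(N_{\overline{\varphi}}^{\NF}\right)=\prescript{\NF}{\varphi}{N}$ (Corollary \ref{cor:varphi(N_=00007Bvarphi=00007D^=00007B=00005CNF=00007D).}), one writes $k=\hat{\varphi}\left(k'\right)$ with $k'\in N_{\overline{\varphi}}^{\NF}$ and absorbs $c_{k'}$ into the orbit-category equivalence for morphisms into $N_{\overline{\varphi}}^{\NF}$. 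Without this argument the disjointness of the union is not established, so you need to supply it (or an equivalent one) before the proof is complete.
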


\begin{proof}
Let $\left(A,\overline{\varphi}\right)\in\left[\NF\times K\right]$,
let $\left(B,\overline{\psi}\right)\in\left[H\times_{\NF}N_{\overline{\varphi}}^{\NF}\right]$
and let $\psi$ be a representative of $\overline{\psi}$ and let
$\tilde{\psi}:B\to\psi\left(B\right)$ be the morphism $\psi$ seen
as an isomorphism onto its image. From Lemma \ref{lem:psi(B)-is-A.}
we know that $\tilde{\psi}$ is in fact an isomorphism from $B$ to
$A$. From definition of $\NF$ (see Example \ref{exa:definition-NF.})
we can now choose an automorphism $\hat{\psi}\in\Aut_{\NF}\left(H\right)$
satisfying $\hat{\psi}\iota_{B}^{H}=\iota_{A}^{H}\psi$. For every
$D\le H$ we denote by $\hat{\psi}_{D}\colon D\to\hat{\psi}\left(D\right)$
the isomorphism in $\NF$ obtained by restricting $\hat{\psi}$ to
$D$.

From the universal property of products, we know that there exist
a unique $\left(C,\overline{\theta}\right)\in\left[H\times_{\F}K\right]$
and a unique $\overline{\gamma}\colon B\to C$ such that $\overline{\theta}\overline{\gamma}=\overline{\hat{\varphi}}\overline{\psi}$
and $\overline{\iota_{C}^{H}}\overline{\gamma}=\overline{\iota_{B}^{H}}$.
From the second identity we can deduce that $\overline{\gamma}\in\Orbitize{\FHH}$
(see Example \ref{exa:fusion-system.} and Definition \ref{def:Orbit-category.}).
Therefore we can choose $\left[H\times_{\NF}N_{\overline{\varphi}}^{\NF}\right]$
so that $B\le C$ and $\overline{\gamma}=\overline{\iota_{B}^{C}}$.
With this setup the first identity can be rewritten as $\overline{\theta}\overline{\iota_{B}^{C}}=\overline{\hat{\varphi}}\overline{\psi}=\overline{\varphi}\overline{\tilde{\psi}}$.
Using this and the notation introduced at the start we obtain the
identity $\overline{\theta}\overline{\hat{\psi}_{C}^{-1}}\overline{\iota_{A}^{\hat{\psi}\left(C\right)}}=\overline{\varphi}$.
Since $\left(C,\overline{\theta}\right)\in\left[H\times_{\F}K\right]$
we know from Proposition \ref{prop:properties-HXK.} \eqref{enu:prod-iso-left.}
that there exists $h\in H$ satisfying $\left(\hat{\psi}\left(C\right)^{h},\overline{\theta\hat{\psi}_{C}^{-1}c_{h}}\right)\in\left[H\times_{\F}K\right]$.
With this setup we obtain the identities $\overline{\varphi}=\overline{\theta\hat{\psi}_{C}^{-1}c_{h}}\,\overline{c_{h^{-1}}\iota_{A}^{\hat{\psi}\left(C\right)}}$
and $\overline{\iota_{A}^{H}}=\overline{\iota_{\hat{\psi}\left(C\right)^{h}}^{H}}\,\overline{c_{h^{-1}}\iota_{A}^{\hat{\psi}\left(C\right)}}$.
Since $\left(A,\overline{\varphi}\right)\in\left[\NF\times K\right]\subseteq\left[H\times_{\F}K\right]$
we can conclude from the previous identities and the universal properties
of product that $\left(A,\overline{\varphi}\right)=\left(\hat{\psi}\left(C\right)^{h},\overline{\theta\hat{\psi}_{C}^{-1}c_{h}}\right)$
and $\overline{c_{h^{-1}}\iota_{A}^{\hat{\psi}\left(C\right)}}=\overline{\Id_{A}}$.
In particular we have that $A=\hat{\psi}\left(C\right)$ and, from
Lemma \ref{lem:psi(B)-is-A.}, we can conclude that $B=C$. This implies
that $\overline{\iota_{B}^{C}}=\overline{\Id_{B}}$ and this allows
us to deduce from the identity $\overline{\theta}\overline{\iota_{B}^{C}}=\overline{\hat{\varphi}}\overline{\psi}$
that $\left(B,\overline{\hat{\varphi}}\overline{\psi}\right)=\left(C,\overline{\theta}\right)\in\left[H\times_{\F}K\right]$
thus proving that
\begin{equation}
\bigcup_{\left(A,\overline{\varphi}\right)\in\left[\NF\times_{\F}K\right]}\bigcup_{\left(B,\overline{\psi}\right)\in\left[H\times_{\NF}N_{\overline{\varphi}}^{\NF}\right]}\left\{ \left(B,\overline{\hat{\varphi}}\overline{\psi}\right)\right\} \subseteq\left[H\times_{\F}K\right].\label{eq:aux-first-inclusion.}
\end{equation}

Let's now prove the other inclusion. By definition of $\left[\NF\times K\right]$
(see Definition \ref{def:NFXK.}) we know that for every $\left(C,\overline{\theta}\right)\in\left[H\times_{\F}K\right]$
there exists a unique $\left(A,\overline{\varphi}\right)\in\left[\NF\times K\right]$
and an isomorphism $\overline{\gamma}:C\to A$ in $\ONF$ such that
$\overline{\theta}=\overline{\varphi}\overline{\gamma}=\overline{\hat{\varphi}}\overline{\iota_{A}^{N_{\overline{\varphi}}^{\NF}}}\overline{\gamma}$.
From the universal properties of products we know that there exists
a unique $\left(B,\overline{\psi}\right)\in\left[H\times_{\NF}N_{\varphi}^{\NF}\right]$
and a unique map $\overline{\delta}:C\to B$ such that $\overline{\iota_{B}^{H}}\overline{\delta}=\overline{\iota_{C}^{H}}$
and that $\overline{\psi}\,\overline{\delta}=\overline{\iota_{A}^{N_{\overline{\varphi}}^{\NF}}}\overline{\gamma}$.
Joining the second identity with the previous one we obtain the identity
$\overline{\theta}=\overline{\hat{\varphi}}\overline{\psi}\overline{\delta}$.
Since $\left(B,\overline{\hat{\varphi}}\overline{\psi}\right)\in\left[H\times_{\F}K\right]$
as shown in the first part of the proof we can conclude from the universal
properties of products that $\overline{\delta}=\overline{\Id_{B}}$
and $\left(B,\overline{\hat{\varphi}}\overline{\psi}\right)=\left(C,\overline{\theta}\right)$.
This gives us the inclusion dual to that of Equation \eqref{eq:aux-first-inclusion.}
thus leading us to the identity
\[
\left[H\times_{\F}K\right]=\bigcup_{\left(A,\overline{\varphi}\right)\in\left[\NF\times_{\F}K\right]}\bigcup_{\left(B,\overline{\psi}\right)\in\left[H\times_{\NF}N_{\overline{\varphi}}^{\NF}\right]}\left\{ \left(B,\overline{\hat{\varphi}}\overline{\psi}\right)\right\} .
\]

We are now only left with proving that the above unions are disjoint.

Let $\left(A,\overline{\varphi}\right),\left(A',\overline{\varphi'}\right)\in\left[\NF\times K\right]$,
let $\left(B,\overline{\psi}\right)\in\left[H\times_{\NF}N_{\overline{\varphi}}^{\NF}\right]$
and let $\left(B',\overline{\psi'}\right)\in\left[H\times_{\NF}N_{\overline{\varphi'}}^{\NF}\right]$
such that $\left(B',\overline{\hat{\varphi'}}\overline{\psi'}\right)=\left(B,\overline{\hat{\varphi}}\overline{\psi}\right)$.
Fix representatives, $\psi$ and $\psi'$ of $\overline{\psi}$ and
$\overline{\psi'}$ respectively and let $\tilde{\psi}$ and $\tilde{\psi'}$
be as defined at the start of the proof and let $\overline{\hat{\varphi}}$
and $\overline{\hat{\varphi'}}$ be as in Proposition \ref{prop:Extension-to-normalizer.}.
With this setup we have that $\overline{\hat{\varphi}}\overline{\psi}=\overline{\varphi}\overline{\tilde{\psi}}$
and that $\overline{\hat{\varphi'}}\overline{\psi'}=\overline{\varphi'}\overline{\tilde{\psi'}}$.
Thus, from the identity $\overline{\hat{\varphi}}\overline{\psi}=\overline{\hat{\varphi'}}\overline{\psi'}$
we can conclude that $\overline{\varphi}=\overline{\varphi'}\overline{\tilde{\psi'}}\overline{\tilde{\psi}^{-1}}$.
Since $\overline{\tilde{\psi'}}\overline{\tilde{\psi}^{-1}}$ is an
isomorphism in $\ONF$ then, by definition of $\left[\NF\times K\right]$,
we can conclude that $\left(A,\overline{\varphi}\right)=\left(A',\overline{\varphi'}\right)$.

Take now a representative $\varphi$ of $\overline{\varphi}$ and
let $\tilde{\varphi}\colon A\to\varphi\left(A\right)$ be the isomorphism
obtained by viewing $\varphi$ as an isomorphism onto its image. With
this setup we have that $\overline{\varphi}\overline{\tilde{\psi}}=\overline{\iota_{\varphi\left(A\right)}^{K}}\overline{\tilde{\varphi}}\overline{\tilde{\psi}}$
and that $\overline{\varphi}\overline{\tilde{\psi'}}=\overline{\iota_{\varphi\left(A\right)}^{K}}\overline{\tilde{\varphi}}\overline{\tilde{\psi'}}$.
Thus we obtain the identity $\overline{\iota_{\varphi\left(A\right)}^{K}}\overline{\tilde{\varphi}}\overline{\tilde{\psi}}=\overline{\iota_{\varphi\left(A\right)}^{K}}\overline{\tilde{\varphi}}\overline{\tilde{\psi'}}$
and we can deduce that there exists $k\in K$ such that $c_{k}\tilde{\varphi}\tilde{\psi}=\tilde{\varphi}\tilde{\psi'}$
as isomorphisms from $B$ to $\varphi\left(A\right)$. Since $\tilde{\varphi}\tilde{\psi}$
is an isomorphism from $B$ to $A$ then, from the previous identity,
we can conclude that $k\in N_{K}\left(\varphi\left(A\right)\right)$.
Always from the previous identity we obtain the identity $\tilde{\varphi}^{-1}c_{k}\tilde{\varphi}=\tilde{\psi'}\tilde{\psi}^{-1}$.
Since both $\tilde{\psi}$ and $\tilde{\psi'}$ are isomorphism in
$\NF$ we can deduce from Lemma \ref{lem:psi(B)-is-A.} that $\tilde{\psi'}\tilde{\psi}^{-1}\in\Aut_{\NF}\left(A\right)$.
Thus, from Lemma \ref{lem:Normalizer-via-phi.}, we have that $k\in\prescript{\NF}{\varphi}{N}$.
Now let $\hat{\varphi}$ be a representative of $\overline{\hat{\varphi}}$
as in Proposition \ref{prop:Extension-to-normalizer.}. From Corollary
\ref{cor:varphi(N_=00007Bvarphi=00007D^=00007B=00005CNF=00007D).}
we know that there exists a unique $k'\in N_{\overline{\varphi}}^{\NF}$
such that $k=\hat{\varphi}\left(k'\right)$. With this setup we can
deduce that $\tilde{\varphi}^{-1}c_{k}\tilde{\varphi}=\tilde{\varphi}^{-1}\tilde{\varphi}c_{k'}=c_{k'}$.
Thus we can conclude that $c_{k'}\tilde{\psi}=\tilde{\psi'}$ and,
since $k'\in N_{\overline{\varphi}}^{\NF}$ we can conclude by definition
of the orbit category (see Definition \ref{def:Orbit-category.})
that $\overline{\psi}=\overline{\iota_{A}^{K}c_{k'}\tilde{\psi}}=\overline{\iota_{A}^{K}\tilde{\psi'}}=\overline{\psi'}$
thus concluding the proof.
\end{proof}

\subsection{\label{subsec:Transfer-map-from-trHNF-to-trHF.}The transfer from
$\NF$ to $\F$.}

As we already explained at the beginning of Subsection \ref{subsec:decomposition-of-product-in-aOFc.}
given a fusion system $\mathcal{K}\subseteq\F$ and a Mackey functor
$M\in\MackFcR$ the transfer $\tr_{\mathcal{K}}^{\F}\colon\End\left(M\downarrow_{\mathcal{K}}^{\F}\right)\to\End\left(M\right)$
is, in general, not defined. However, just like Equation \eqref{eq:decomposition-biset-2.}
can be used in the case of Mackey functors over finite groups in order
to prove that transfer maps compose nicely, Theorem \ref{thm:decomposition-direct-product.}
can be used in the case of centric Mackey functors over fusion systems
in order to define $\tr_{\NF}^{\F}$ and prove that $\tr_{\NF}^{\F}\tr_{\FHH}^{\NF}=\tr_{\FHH}^{\F}$
(see Examples \ref{exa:fusion-system.} and \ref{exa:definition-NF.}
for notation). More precisely we have the following.
\begin{defn}
\label{def:tr_NF^F.}Let $\R$ be a $p$-local ring and let $M\in\MackFcR$.
From Proposition \ref{prop:Inverse-of-S.} we know that the isomorphism
class $\overline{S}\in\BFcR$ of $S$ has an inverse in $\BFcR$.\textbf{
}Thus, using the notation of Examples \ref{exa:fusion-system.} and
\ref{exa:definition-NF.} and of Proposition \ref{prop:Action-of-centric-burnside-ring.},
we define the \textbf{transfer from $\NF$ to $\F$} as the $\R$-module
morphism
\[
\lui{M}{\tr_{\NF}^{\F}}:=\left(\overline{S}^{-1}\cdot\right)_{*}\sum_{\left(A,\overline{\varphi}\right)\in\left[\NF\times S\right]}\lui{M}{\tr_{\FHH[N_{\overline{\varphi}}^{\NF}]}^{\F}}\lui{M\downarrow_{\NF}^{\F}}{\r_{\FHH[N_{\overline{\varphi}}^{\NF}]}^{\NF}}\colon\End\left(M\downarrow_{\NF}^{\F}\right)\to\End\left(M\right).
\]
From Items \eqref{enu:Transfer-eliminates-conjugation.} and \eqref{enu:Restriction-eliminates-conjugation.}
of Proposition \ref{prop:Properties-of-transfer-restriction-and-conjugation-maps.}
we know that $\lui{M}{\tr_{\NF}^{\F}}$ does not depend on the choice
of the set $\left[\NF\times S\right]$ (see Definition \ref{def:NFXK.}).
Whenever there is no confusion regarding $M$ we will simply write
$\tr_{\NF}^{\F}:=\lui{M}{\tr_{\NF}^{\F}}$.
\end{defn}

\begin{lem}
\label{lem:transfer-composes-nicely.}Let $\R$ be a $p$-local ring.
Regardless of the centric Mackey functor involved we have that $\tr_{\NF}^{\F}\tr_{\FHH}^{\NF}=\tr_{\FHH}^{\F}$.
In particular $\tr_{\NF}^{\F}$ sends $\Tr_{H}^{\NF}$ surjectively
onto $\Tr_{H}^{\F}$ (see Definition \ref{def:Image-transfer.}).
\end{lem}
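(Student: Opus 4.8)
The plan is to expand $\tr_{\NF}^{\F}\circ\tr_{\FHH}^{\NF}$ directly from Definition \ref{def:tr_NF^F.}, collapse the iterated transfers, restrictions and conjugations that appear by means of Proposition \ref{prop:Properties-of-transfer-restriction-and-conjugation-maps.}, re-index the double sum that results using the decomposition of $\left[H\times_{\F}S\right]$ furnished by Theorem \ref{thm:decomposition-direct-product.}, and finally let the $\overline{S}^{-1}$-normalization built into Definition \ref{def:tr_NF^F.} absorb a stray $\left(\overline{S}\cdot\right)_{*}$. Concretely, write $\tr_{\NF}^{\F}=\left(\overline{S}^{-1}\cdot\right)_{*}\circ\Xi$ as in Definition \ref{def:tr_NF^F.}, where
\[
\Xi:=\sum_{\left(A,\overline{\varphi}\right)\in\left[\NF\times S\right]}\tr_{\FHH[N_{\overline{\varphi}}^{\NF}]}^{\F}\,\r_{\FHH[N_{\overline{\varphi}}^{\NF}]}^{\NF}\colon\End\left(M\downarrow_{\NF}^{\F}\right)\to\End\left(M\right),
\]
and $\overline{S}\in\BFcR$ is invertible by Proposition \ref{prop:Inverse-of-S.} since $\R$ is $p$-local. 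As $H$ is $\F$-centric and $H\le\NS$, it is $\NF$-centric, so $\tr_{\FHH}^{\NF}$ — the transfer in the saturated fusion system $\NF$ for the Mackey functor $M\downarrow_{\NF}^{\F}$ — is defined; it then suffices to prove $\Xi\circ\tr_{\FHH}^{\NF}=\left(\overline{S}\cdot\right)_{*}\circ\tr_{\FHH}^{\F}$, because the Burnside-ring action of Proposition \ref{prop:Action-of-centric-burnside-ring.} is a ring action with $1_{\BFcR}\cdot=\Id_{M}$, so $\left(\overline{S}^{-1}\cdot\right)_{*}\left(\overline{S}\cdot\right)_{*}=\Id_{\End\left(M\right)}$ and hence $\tr_{\NF}^{\F}\tr_{\FHH}^{\NF}=\tr_{\FHH}^{\F}$.

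To compute $\Xi\circ\tr_{\FHH}^{\NF}$, I would apply the Mackey formula Proposition \ref{prop:Properties-of-transfer-restriction-and-conjugation-maps.} \eqref{enu:Mackey-formula.}, with $\F$ replaced by $\NF$ and $K$ replaced by $N_{\overline{\varphi}}^{\NF}$, to rewrite $\r_{\FHH[N_{\overline{\varphi}}^{\NF}]}^{\NF}\tr_{\FHH}^{\NF}$ as $\sum_{\left(B,\overline{\psi}\right)\in\left[H\times_{\NF}N_{\overline{\varphi}}^{\NF}\right]}\tr_{\FHH[\psi\left(B\right)]}^{\FHH[N_{\overline{\varphi}}^{\NF}]}\,\lui{\psi}{\cdot}\,\r_{\FHH[B]}^{\FHH}$. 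By Corollary \ref{cor:A=00003DHcapN.} (and the argument of Lemma \ref{lem:psi(B)-is-A.}, which uses only the properties of $N_{\overline{\varphi}}^{\NF}$ and so is insensitive to $S$ not being an object of $\NF$) each such $\psi\left(B\right)$ equals $N_{\overline{\varphi}}^{\NF}\cap H=A$. Hence transitivity of transfer, Proposition \ref{prop:Properties-of-transfer-restriction-and-conjugation-maps.} \eqref{enu:composition-transfer.}, turns $\tr_{\FHH[N_{\overline{\varphi}}^{\NF}]}^{\F}\tr_{\FHH[A]}^{\FHH[N_{\overline{\varphi}}^{\NF}]}$ into $\tr_{\FHH[A]}^{\F}$, and then Proposition \ref{prop:Properties-of-transfer-restriction-and-conjugation-maps.} \eqref{enu:Transfer-eliminates-conjugation.}, applied to the isomorphism $\psi\colon B\to A$, turns $\tr_{\FHH[A]}^{\F}\lui{\psi}{\cdot}$ into $\tr_{\FHH[B]}^{\F}$. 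Therefore
\[
\Xi\circ\tr_{\FHH}^{\NF}=\sum_{\left(A,\overline{\varphi}\right)\in\left[\NF\times S\right]}\;\sum_{\left(B,\overline{\psi}\right)\in\left[H\times_{\NF}N_{\overline{\varphi}}^{\NF}\right]}\tr_{\FHH[B]}^{\F}\,\r_{\FHH[B]}^{\FHH}.
\]

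Theorem \ref{thm:decomposition-direct-product.} with $K=S$ says precisely that $\left(A,\overline{\varphi},B,\overline{\psi}\right)\mapsto\left(B,\overline{\hat{\varphi}}\,\overline{\psi}\right)$ is a bijection from the index set of this double sum onto $\left[H\times_{\F}S\right]$, and it preserves the first coordinate $B$; since the summand depends on $\left(A,\overline{\varphi},B,\overline{\psi}\right)$ only through $B$, the double sum is $\sum_{\left(E,\overline{\xi}\right)\in\left[H\times_{\F}S\right]}\tr_{\FHH[E]}^{\F}\,\r_{\FHH[E]}^{\FHH}$. Finally I would identify this last sum: applying the Mackey formula once more, now to $\r_{\FHH[S]}^{\F}\tr_{\FHH}^{\F}$, and post-composing with $\tr_{\FHH[S]}^{\F}$, Proposition \ref{prop:Properties-of-transfer-restriction-and-conjugation-maps.} \eqref{enu:composition-transfer.} and \eqref{enu:Transfer-eliminates-conjugation.} give
\[
\tr_{\FHH[S]}^{\F}\,\r_{\FHH[S]}^{\F}\,\tr_{\FHH}^{\F}=\sum_{\left(E,\overline{\xi}\right)\in\left[H\times_{\F}S\right]}\tr_{\FHH[E]}^{\F}\,\r_{\FHH[E]}^{\FHH},
\]
while Proposition \ref{prop:Properties-of-transfer-restriction-and-conjugation-maps.} \eqref{enu:Restriction-and-transfer-makes-burnside.} identifies the left-hand side with $\left(\overline{S}\cdot\right)_{*}\tr_{\FHH}^{\F}$. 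This proves $\Xi\circ\tr_{\FHH}^{\NF}=\left(\overline{S}\cdot\right)_{*}\circ\tr_{\FHH}^{\F}$, hence $\tr_{\NF}^{\F}\tr_{\FHH}^{\NF}=\tr_{\FHH}^{\F}$; taking images then yields $\tr_{\NF}^{\F}\left(\Tr_{H}^{\NF}\right)=\tr_{\NF}^{\F}\left(\tr_{\FHH}^{\NF}\left(\End\left(M\downarrow_{\FHH}^{\F}\right)\right)\right)=\tr_{\FHH}^{\F}\left(\End\left(M\downarrow_{\FHH}^{\F}\right)\right)=\Tr_{H}^{\F}$ by Definition \ref{def:Image-transfer.}, so $\tr_{\NF}^{\F}$ sends $\Tr_{H}^{\NF}$ onto $\Tr_{H}^{\F}$. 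The step I expect to be the main obstacle is the re-indexing: one must match the double sum indexed by $\left(A,\overline{\varphi}\right)\in\left[\NF\times S\right]$ and $\left(B,\overline{\psi}\right)\in\left[H\times_{\NF}N_{\overline{\varphi}}^{\NF}\right]$ term-for-term against $\left[H\times_{\F}S\right]$ through Theorem \ref{thm:decomposition-direct-product.}, keeping careful track that the surviving data is exactly the subgroup $B$, and verify that the $K=S$ instances of the auxiliary results (Lemma \ref{lem:psi(B)-is-A.}, Corollary \ref{cor:A=00003DHcapN.}) are legitimate even though $S$ is not an object of $\NF$.
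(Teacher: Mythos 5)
Your proof is correct and follows essentially the same route as the paper's: multiply through by $\left(\overline{S}\cdot\right)_{*}$, expand via the Mackey formula and transitivity of transfer, re-index the double sum over $\left[\NF\times S\right]$ and $\left[H\times_{\NF}N_{\overline{\varphi}}^{\NF}\right]$ against $\left[H\times_{\F}S\right]$ using Theorem \ref{thm:decomposition-direct-product.}, and identify the result with $\left(\overline{S}\cdot\right)_{*}\tr_{\FHH}^{\F}$ via Proposition \ref{prop:Properties-of-transfer-restriction-and-conjugation-maps.} \eqref{enu:Restriction-and-transfer-makes-burnside.}. The only cosmetic difference is that you eliminate the conjugations $\lui{\tilde{\varphi}\tilde{\psi}}{\cdot}$ before re-indexing (so the summands visibly depend only on $B$), whereas the paper carries them through and matches them with the representatives $\tilde{\theta}$; your observation that Lemma \ref{lem:psi(B)-is-A.} applies for $K=S$ is also implicitly used by the paper itself.
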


\begin{proof}
For every $\left(A,\overline{\varphi}\right)\in\left[\NF\times S\right]$
fix a representative $\varphi$ of $\overline{\varphi}$ and a representative
$\hat{\varphi}$ of $\overline{\hat{\varphi}}$ as in Proposition
\ref{prop:Extension-to-normalizer.} and for every $\left(B,\overline{\psi}\right)\in\left[H\times_{\NF}N_{\overline{\varphi}}^{\NF}\right]$
fix a representative $\psi$ of $\overline{\psi}$. Let $\tilde{\varphi},\tilde{\hat{\varphi}}$
and $\tilde{\psi}$ denote the isomorphisms obtained by viewing $\varphi$,
$\hat{\varphi}$ and $\psi$ as isomorphisms onto their images. With
this notation and the notation of Proposition \ref{prop:Action-of-centric-burnside-ring.}
we have that
\begin{align*}
\left(\overline{S}\cdot\right)_{*}\tr_{\NF}^{\F}\tr_{\FHH}^{\NF} & =\sum_{\left(A,\overline{\varphi}\right)\in\left[\NF\times S\right]}\tr_{\FHH[\tilde{\hat{\varphi}}\left(N_{\overline{\varphi}}^{\NF}\right)]}^{\F}\lui{\tilde{\hat{\varphi}}}{\cdot}\r_{\FHH[N_{\overline{\varphi}}^{\NF}]}^{\NF}\tr_{\FHH}^{\NF},\\
 & =\sum_{\left(A,\overline{\varphi}\right)\in\left[\NF\times S\right]}\sum_{\left(B,\overline{\psi}\right)\in\left[H\times_{\NF}N_{\overline{\varphi}}^{\NF}\right]}\tr_{\FHH[\tilde{\hat{\varphi}}\left(N_{\overline{\varphi}}^{\NF}\right)]}^{\F}\lui{\tilde{\hat{\varphi}}}{\cdot}\tr_{\FHH[\tilde{\psi}\left(B\right)]}^{\FHH[N_{\overline{\varphi}}^{\NF}]}\lui{\tilde{\psi}}{\cdot}\r_{\FHH[B]}^{\FHH},\\
 & =\sum_{\left(A,\overline{\varphi}\right)\in\left[\NF\times S\right]}\sum_{\left(B,\overline{\psi}\right)\in\left[H\times_{\NF}N_{\overline{\varphi}}^{\NF}\right]}\tr_{\FHH[\tilde{\varphi}\left(\tilde{\psi}\left(B\right)\right)]}^{\F}\lui{\tilde{\varphi}\tilde{\psi}}{\cdot}\r_{\FHH[B]}^{\FHH}.
\end{align*}
Where we are using Item \eqref{enu:Transfer-eliminates-conjugation.}
of Proposition \ref{prop:Properties-of-transfer-restriction-and-conjugation-maps.}
for the first identity, Item \eqref{enu:Mackey-formula.} for the
second identity and Items \eqref{enu:composition-transfer.} and \eqref{enu:Transfer-commutes-with-conjugation.}
for the third identity. From Theorem \ref{thm:decomposition-direct-product.}
we can now replace the two sums in the last line of the previous equation
with a sum over the pairs $\left(C,\overline{\theta}\right)\in\left[H\times_{\F}S\right]$
and replace the isomorphisms $\tilde{\varphi}\tilde{\psi}$ with the
isomorphisms $\tilde{\theta}$ where $\theta$ is a representative
of $\overline{\theta}$ and $\tilde{\theta}$ is the isomorphism obtained
by viewing $\theta$ as an isomorphism onto its image. With this change
in mind we can apply Items \eqref{enu:composition-transfer.} and
\eqref{enu:Mackey-formula.} of Proposition \ref{prop:Properties-of-transfer-restriction-and-conjugation-maps.}
in order to obtain the identity $\left(\overline{S}\cdot\right)_{*}\tr_{\NF}^{\F}\tr_{\FHH}^{\NF}=\tr_{\FHH[S]}^{\F}\r_{\FHH[S]}^{\F}\tr_{\FHH}^{\F}$.
Applying now Item \eqref{enu:Restriction-and-transfer-makes-burnside.}
we obtain from here the identity $\left(\overline{S}\cdot\right)_{*}\tr_{\NF}^{\F}\tr_{\FHH}^{\NF}=\left(\overline{S}\cdot\right)_{*}\tr_{\FHH}^{\F}$.
Finally, from Propositions \ref{prop:Inverse-of-S.} and \ref{prop:Action-of-centric-burnside-ring.},
we know that $\left(\overline{S}\cdot\right)_{*}$ is invertible and,
therefore, we can deduce from the previous identity that $\tr_{\NF}^{\F}\tr_{\FHH}^{\NF}=\tr_{\FHH}^{\F}$
thus concluding the proof.
\end{proof}
\begin{cor}
\label{cor:tr-maps-Tr^NF-to-Tr^F.}Let $\R$ be a $p$-local ring
and let $M\in\MackFcR$. For every family $\mathfrak{X}$ of elements
in $\FHH\cap\Fc$ we have that $\tr_{\NF}^{\F}\left(\Tr_{\mathfrak{X}}^{\NF}\right)=\Tr_{\mathfrak{X}}^{\F}$.
\end{cor}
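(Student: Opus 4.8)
The plan is to reduce the statement about a whole family $\mathfrak{X}$ to the single-subgroup case, which is exactly Lemma \ref{lem:transfer-composes-nicely.}. Recall from Definition \ref{def:Image-transfer.} that $\Tr_{\mathfrak{X}}^{\F}=\sum_{H'\in\mathfrak{X}}\Tr_{H'}^{\F}$ and $\Tr_{\mathfrak{X}}^{\NF}=\sum_{H'\in\mathfrak{X}}\Tr_{H'}^{\NF}$, so since $\tr_{\NF}^{\F}$ is an $\R$-linear map it suffices to prove that $\tr_{\NF}^{\F}\left(\Tr_{H'}^{\NF}\right)=\Tr_{H'}^{\F}$ for each individual $H'\in\mathfrak{X}$. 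Here one needs to be slightly careful: the notation $\tr_{\FHH}^{\NF}$ and $\Tr_{H}^{\NF}$ in Lemma \ref{lem:transfer-composes-nicely.} is set up for the distinguished fully $\F$-normalized subgroup $H$ of Notation \ref{nota:X-and-Y.}, whereas here $H'$ is an arbitrary element of $\FHH\cap\Fc$. But any such $H'$ satisfies $H'\le H$, so $\FHH[H']\subseteq\NF$ and the transfer $\tr_{\FHH[H']}^{\NF}$ is defined; likewise $\Tr_{H'}^{\NF}=\tr_{\FHH[H']}^{\NF}\left(\End\left(M\downarrow_{\FHH[H']}^{\F}\right)\right)$ makes sense.

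First I would establish the single-subgroup identity $\tr_{\NF}^{\F}\,\tr_{\FHH[H']}^{\NF}=\tr_{\FHH[H']}^{\F}$ for every $H'\in\FHH\cap\Fc$. When $H'=H$ this is precisely Lemma \ref{lem:transfer-composes-nicely.}. For general $H'\le H$ one combines this with Proposition \ref{prop:Properties-of-transfer-restriction-and-conjugation-maps.} \eqref{enu:composition-transfer.}, which gives $\tr_{\FHH}^{\NF}\tr_{\FHH[H']}^{\FHH}=\tr_{\FHH[H']}^{\NF}$ and $\tr_{\FHH}^{\F}\tr_{\FHH[H']}^{\FHH}=\tr_{\FHH[H']}^{\F}$; hence
\[
\tr_{\NF}^{\F}\,\tr_{\FHH[H']}^{\NF}=\tr_{\NF}^{\F}\,\tr_{\FHH}^{\NF}\,\tr_{\FHH[H']}^{\FHH}=\tr_{\FHH}^{\F}\,\tr_{\FHH[H']}^{\FHH}=\tr_{\FHH[H']}^{\F}.
\]
Taking images of $\End\left(M\downarrow_{\FHH[H']}^{\F}\right)$ under both sides, and using that $\tr_{\FHH[H']}^{\NF}$ is surjective onto $\Tr_{H'}^{\NF}$ by definition, we get $\tr_{\NF}^{\F}\left(\Tr_{H'}^{\NF}\right)=\tr_{\NF}^{\F}\,\tr_{\FHH[H']}^{\NF}\left(\End\left(M\downarrow_{\FHH[H']}^{\F}\right)\right)=\tr_{\FHH[H']}^{\F}\left(\End\left(M\downarrow_{\FHH[H']}^{\F}\right)\right)=\Tr_{H'}^{\F}$.

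Finally, summing over $H'\in\mathfrak{X}$ and again invoking $\R$-linearity of $\tr_{\NF}^{\F}$ (together with the fact that the image of a sum of submodules is the sum of the images),
\[
\tr_{\NF}^{\F}\left(\Tr_{\mathfrak{X}}^{\NF}\right)=\tr_{\NF}^{\F}\left(\sum_{H'\in\mathfrak{X}}\Tr_{H'}^{\NF}\right)=\sum_{H'\in\mathfrak{X}}\tr_{\NF}^{\F}\left(\Tr_{H'}^{\NF}\right)=\sum_{H'\in\mathfrak{X}}\Tr_{H'}^{\F}=\Tr_{\mathfrak{X}}^{\F},
\]
which is the claim. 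I do not anticipate any genuine obstacle here: all the hard work has already been done in Theorem \ref{thm:decomposition-direct-product.} and Lemma \ref{lem:transfer-composes-nicely.}. The only point requiring a line of care is the reduction from $H'=H$ to arbitrary $H'\le H$ via Proposition \ref{prop:Properties-of-transfer-restriction-and-conjugation-maps.} \eqref{enu:composition-transfer.}, and checking that $\Tr_{H'}^{\NF}$ is literally the image of the transfer map so that surjectivity can be propagated through the composition.
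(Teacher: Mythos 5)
Your proposal is correct and follows essentially the same route as the paper: reduce to a single $K\in\FHH\cap\Fc$ by linearity, write $\Tr_{K}^{\NF}=\tr_{\FHH}^{\NF}\left(\Tr_{K}^{\FHH}\right)$ and $\Tr_{K}^{\F}=\tr_{\FHH}^{\F}\left(\Tr_{K}^{\FHH}\right)$ via Proposition \ref{prop:Properties-of-transfer-restriction-and-conjugation-maps.} \eqref{enu:composition-transfer.}, and then apply Lemma \ref{lem:transfer-composes-nicely.}. The only cosmetic difference is that you phrase the key step as an operator identity $\tr_{\NF}^{\F}\tr_{\FHH[K]}^{\NF}=\tr_{\FHH[K]}^{\F}$ before taking images, whereas the paper takes images first; the ingredients and logic are the same.
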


\begin{proof}
Because of linearity of $\tr_{\NF}^{\F}$ it suffices to prove the
statement when $\mathfrak{X}=\left\{ K\right\} $ for some $K\in\FHH\cap\Fc$.
From Proposition \ref{prop:Properties-of-transfer-restriction-and-conjugation-maps.}
\eqref{enu:composition-transfer.} we know that $\tr_{\FHH}^{\F}\tr_{\FHH[K]}^{\FHH}=\tr_{\FHH[K]}^{\F}$
and that $\tr_{\FHH}^{\NF}\tr_{\FHH[K]}^{\FHH}=\tr_{\FHH[K]}^{\NF}$.
Thus, from definition of $\Tr_{K}^{\F}$ and $\Tr_{K}^{\NF}$ (see
Definition \ref{def:Image-transfer.}), we have that $\Tr_{K}^{\F}=\tr_{\FHH}^{\F}\left(\Tr_{K}^{\FHH}\right)$
and that $\Tr_{K}^{\NF}=\tr_{\FHH}^{\NF}\left(\Tr_{K}^{\FHH}\right)$.
Since $\Tr_{K}^{\NF}\subseteq\End\left(M\downarrow_{\NF}^{\F}\right)$
the result now follows from the above and Lemma \ref{lem:transfer-composes-nicely.}
after applying $\tr_{\NF}^{\F}$ to $\Tr_{K}^{\NF}$.
\end{proof}
Corollary \ref{cor:tr-maps-Tr^NF-to-Tr^F.} allows us to give the
following definition.
\begin{defn}
\label{def:Quotient-tr}Let $\R$ be a $p$-local ring, let $M\in\MackFcR$,
let $\mathcal{X}$ be as in Notation \ref{nota:X-and-Y.} and let
$E_{\mathcal{X}}^{\NF}:=\End\left(M\downarrow_{\NF}^{\F}\right)/\Tr_{\mathcal{X}}^{\NF}$
and $E_{\mathcal{X}}^{\F}:=\End\left(M\right)/\Tr_{\mathcal{X}}^{\F}$.
We define the \textbf{quotient transfer from $\NF$ to $\F$ }as the
$\R$-module morphism $\overline{\tr_{\NF}^{\F}}:E_{\mathcal{X}}^{\NF}\to E_{\mathcal{X}}^{\F}$
obtained by setting $\overline{\tr_{\NF}^{\F}}\left(\overline{f}\right):=\overline{\tr_{\NF}^{\F}\left(f\right)}$
for every $f\in\End\left(M\downarrow_{\NF}^{\F}\right)$. Here we
are using the overline ($\overline{\cdot}$) to denote the projections
onto the appropriate quotients. Corollary \ref{cor:tr-maps-Tr^NF-to-Tr^F.}
assures us that $\overline{\tr_{\NF}^{\F}}\left(\overline{f}\right)$
does not depend of the chosen endomorphism $f$ projecting to $\overline{f}$.
\end{defn}

An important property of the $\R$-module morphism $\overline{\tr_{\NF}^{\F}}$
of Definition \ref{def:Quotient-tr} is the following.
\begin{lem}
\label{lem:quotient-is-multiplicative.}With notation as in Definition
\ref{def:Quotient-tr} view $\overline{\tr_{\NF}^{\F}}$ as a morphism
from $\overline{\Tr_{H}^{\NF}}:=\Tr_{H}^{\NF}/\Tr_{\mathcal{X}}^{\NF}$
to $\overline{\Tr_{H}^{\F}}:=\Tr_{H}^{\F}/\Tr_{\mathcal{X}}^{\F}$.
Then $\overline{\tr_{\NF}^{\F}}$ is surjective and commutes with
multiplication (i.e. $\overline{\tr_{\NF}^{\F}}\left(\overline{fg}\right)=\overline{\tr_{\NF}^{\F}}\left(\overline{f}\right)\overline{\tr_{\NF}^{\F}}\left(\overline{g}\right)$
for every $\overline{f},\overline{g}\in\overline{\Tr_{H}^{\NF}}$).
In particular, if $\Tr_{H}^{\NF}$ has a multiplicative unit, then
$\overline{\tr_{\NF}^{\F}}$ is a surjective $\R$-algebra morphism.
\end{lem}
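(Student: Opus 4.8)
The plan is to reduce everything to Lemma \ref{lem:transfer-composes-nicely.}, Corollary \ref{cor:tr-maps-Tr^NF-to-Tr^F.} and the ``associativity of transfer under Green's formula'' provided by Proposition \ref{prop:Properties-of-transfer-restriction-and-conjugation-maps.} \eqref{enu:Green-formula.}. Surjectivity of $\overline{\tr_{\NF}^{\F}}$ as a map $\overline{\Tr_{H}^{\NF}}\to\overline{\Tr_{H}^{\F}}$ is immediate: by Corollary \ref{cor:tr-maps-Tr^NF-to-Tr^F.} (applied to the family $\mathfrak{X}=\{H\}$, using that $H\in\FHH\cap\Fc$) we have $\tr_{\NF}^{\F}\left(\Tr_{H}^{\NF}\right)=\Tr_{H}^{\F}$, and by the same corollary applied to the family $\mathcal{X}$ of Notation \ref{nota:X-and-Y.} we get $\tr_{\NF}^{\F}\left(\Tr_{\mathcal{X}}^{\NF}\right)=\Tr_{\mathcal{X}}^{\F}$; hence $\tr_{\NF}^{\F}$ carries $\Tr_{H}^{\NF}$ onto $\Tr_{H}^{\F}$ and $\Tr_{\mathcal{X}}^{\NF}$ into $\Tr_{\mathcal{X}}^{\F}$, so the induced map on the quotients $\overline{\Tr_{H}^{\NF}}\to\overline{\Tr_{H}^{\F}}$ is well defined (this is Definition \ref{def:Quotient-tr}) and surjective.

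The substance is multiplicativity. First I would reduce to a statement about honest endomorphisms, not cosets: it suffices to show that for all $f,g\in\End\left(M\downarrow_{\FHH}^{\F}\right)$ one has
\[
\tr_{\NF}^{\F}\left(\tr_{\FHH}^{\NF}(f)\,\tr_{\FHH}^{\NF}(g)\right)\equiv\tr_{\NF}^{\F}\left(\tr_{\FHH}^{\NF}(f)\right)\,\tr_{\NF}^{\F}\left(\tr_{\FHH}^{\NF}(g)\right)\pmod{\Tr_{\mathcal{X}}^{\F}},
\]
since every element of $\overline{\Tr_{H}^{\NF}}$ is the class of some $\tr_{\FHH}^{\NF}(h)$. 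Now on the right-hand side, Lemma \ref{lem:transfer-composes-nicely.} gives $\tr_{\NF}^{\F}\tr_{\FHH}^{\NF}=\tr_{\FHH}^{\F}$, so that side is $\tr_{\FHH}^{\F}(f)\,\tr_{\FHH}^{\F}(g)$; by Green's formula (Proposition \ref{prop:Properties-of-transfer-restriction-and-conjugation-maps.} \eqref{enu:Green-formula.}) this equals $\tr_{\FHH}^{\F}\left(\r_{\FHH}^{\F}(\tr_{\FHH}^{\F}(f))\,g\right)$, and expanding $\r_{\FHH}^{\F}\tr_{\FHH}^{\F}$ via the Mackey formula Proposition \ref{prop:Properties-of-transfer-restriction-and-conjugation-maps.} \eqref{enu:Mackey-formula.} over $\left[H\times_{\F}H\right]$ produces a sum of terms $\tr_{\FHH[\varphi(A)]}^{\FHH}{}^{\varphi}(\cdot)\,\r_{\FHH[A]}^{\FHH}$; the terms with $A\lneq_{\F}H$, i.e. $A\in\mathcal{X}$, land in $\Tr_{\mathcal{X}}^{\FHH}$, hence after $\tr_{\FHH}^{\F}$ in $\Tr_{\mathcal{X}}^{\F}$ (using $\tr_{\FHH}^{\F}\tr_{\FHH[K]}^{\FHH}=\tr_{\FHH[K]}^{\F}$ from \eqref{enu:composition-transfer.}), so modulo $\Tr_{\mathcal{X}}^{\F}$ only the diagonal term survives and the right-hand side reduces to $\tr_{\FHH}^{\F}\left(\mathbf{1}\cdot f\cdot g\right)$-type expression — more precisely to $\tr_{\FHH}^{\F}\left(f'g\right)$ for an appropriate $f'$ congruent to $f$ acting through the identity double coset. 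The left-hand side I would handle the same way: apply Lemma \ref{lem:transfer-composes-nicely.} after first using Green's formula inside $\NF$ on $\tr_{\FHH}^{\NF}(f)\,\tr_{\FHH}^{\NF}(g)=\tr_{\FHH}^{\NF}\left(\r_{\FHH}^{\NF}(\tr_{\FHH}^{\NF}(f))\,g\right)$, expand $\r_{\FHH}^{\NF}\tr_{\FHH}^{\NF}$ via the Mackey formula over $\left[H\times_{\NF}H\right]$, and observe that the off-diagonal pieces are $\mathcal{X}$-transfers in $\NF$, which $\tr_{\NF}^{\F}$ (being $\mathcal{X}$-transfer-preserving, Corollary \ref{cor:tr-maps-Tr^NF-to-Tr^F.}) sends into $\Tr_{\mathcal{X}}^{\F}$. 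The two surviving diagonal contributions then agree, giving the congruence.

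The main obstacle I expect is bookkeeping the ``diagonal term'' correctly: in both the $\F$- and $\NF$-Mackey formulas one must identify which $\left(A,\overline{\varphi}\right)\in\left[H\times H\right]$ is the one with $A=H$ and $\overline{\varphi}$ an automorphism, check that the associated conjugation map ${}^{\varphi}(\cdot)$ is absorbed (it is an isomorphism by Corollary \ref{cor:Conjugation-is-isomorphism.} and can be eliminated using Proposition \ref{prop:Properties-of-transfer-restriction-and-conjugation-maps.} \eqref{enu:Transfer-eliminates-conjugation.}), and verify that the residual term one gets on the left via $\tr_{\NF}^{\F}$ literally coincides with the one on the right via $\tr_{\FHH}^{\F}$ — this is where Lemma \ref{lem:transfer-composes-nicely.} is used once more to collapse $\tr_{\NF}^{\F}\tr_{\FHH}^{\NF}$ back to $\tr_{\FHH}^{\F}$. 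The last sentence of the statement is then immediate: a surjective $\R$-linear multiplicative map between $\R$-algebras is an $\R$-algebra morphism, and if $\Tr_{H}^{\NF}$ has a unit its image contains a unit for $\overline{\Tr_{H}^{\F}}$, so $\overline{\tr_{\NF}^{\F}}$ is a surjective $\R$-algebra morphism.
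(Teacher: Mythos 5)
Your proposal is correct and follows essentially the same route as the paper: surjectivity via Corollary \ref{cor:tr-maps-Tr^NF-to-Tr^F.}, then Green's formula plus the Mackey formula over $\left[H\times_{\mathcal{K}}H\right]$ for $\mathcal{K}\in\left\{ \NF,\F\right\} $, discarding the off-diagonal terms into $\Tr_{\mathcal{X}}$ and collapsing with Lemma \ref{lem:transfer-composes-nicely.}. One small correction on the bookkeeping you flag: the paper does not eliminate the conjugation maps via Proposition \ref{prop:Properties-of-transfer-restriction-and-conjugation-maps.} \eqref{enu:Transfer-eliminates-conjugation.} (which would not apply to $\lui{\varphi}{g}$ sitting inside the product $f\lui{\varphi}{g}$ under the transfer); instead the surviving "diagonal" contribution is the full sum $\sum_{\overline{\varphi}\in\Aut_{\OF}\left(H\right)}\lui{\varphi}{g}$, and the two sides agree simply because $\Aut_{\NF}\left(H\right)=\Aut_{\F}\left(H\right)$, so this sum is literally the same object in both Mackey-formula expansions.
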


\begin{proof}
During this proof we will use the overline symbol ($\overline{\cdot}$)
in order to represent the projection of an endomorphism on the appropriate
quotient ring. From Corollary \ref{cor:tr-maps-Tr^NF-to-Tr^F.} we
know that the map $\overline{\tr_{\NF}^{\F}}$ viewed as in the statement
is surjective. If $\Tr_{H}^{\NF}$ has a multiplicative unit ($1_{\Tr_{H}^{\NF}}$)
and commutes with multiplication then, from surjectivenes, we necessarily
have that $\overline{\tr_{\NF}^{\F}}\left(1_{\Tr_{H}^{\NF}}\right)=1_{\Tr_{H}^{\F}}$.
Thus we only need to prove that $\overline{\tr_{\NF}^{\F}}$ commutes
with multiplication.

Let $\mathcal{K}\in\left\{ \NF,\F\right\} $, let $\left(A,\overline{\varphi}\right)\in\left[H\times_{\mathcal{K}}H\right]$
and let $\varphi$ be a representative of $\overline{\varphi}$. Since
$A\le H$, $\varphi$ is injective and both $A$ and $H$ are finite
groups then we have that $\varphi\left(A\right)\lneq H$ unless $A=H$.
From definition of $\mathcal{X}$ (see Notation \ref{nota:X-and-Y.}),
this is equivalent to saying that $\varphi\left(A\right)\in\mathcal{X}$
unless $\varphi\in\Aut_{\NF}\left(H\right)=\Aut_{\F}\left(H\right)$.
On the other hand, from maximality of the pairs in $\left[H\times_{\mathcal{K}}H\right]$
(see Definition \ref{def:HX_FK}), we have that $\left(H,\overline{\varphi}\right)\in\left[H\times_{\mathcal{K}}H\right]$
for every $\overline{\varphi}\in\Aut_{\ONF}\left(H\right)=\Aut_{\OF}\left(H\right)$.
Let $M\in\MackFcR$ and let $f\in\End\left(M\downarrow_{\FHH}^{\F}\right)$.
From the above discussion and Proposition \ref{prop:Properties-of-transfer-restriction-and-conjugation-maps.}
\eqref{enu:Mackey-formula.} we can conclude that
\begin{align*}
\r_{\FHH}^{\mathcal{K}}\left(\tr_{\FHH}^{\mathcal{K}}\left(f\right)\right) & \in\lui{\mathcal{F},H}{f}+\Tr_{\mathcal{X}}^{\FHH}. & \text{where} &  & \lui{\mathcal{F},H}{f} & :=\sum_{\overline{\varphi}\in\Aut_{\OF}\left(H\right)}\lui{\varphi}{f}.
\end{align*}
From Proposition \ref{prop:Properties-of-transfer-restriction-and-conjugation-maps.}
\eqref{enu:composition-transfer.} we also have that $\tr_{\FHH}^{\mathcal{K}}\left(\Tr_{\mathcal{X}}^{\FHH}\right)=\Tr_{\mathcal{X}}^{\mathcal{K}}$.
Using the above and Proposition \ref{prop:Properties-of-transfer-restriction-and-conjugation-maps.}
\eqref{enu:Green-formula.} we can conclude that
\[
\overline{\tr_{\FHH}^{\mathcal{K}}\left(f\right)\tr_{\FHH}^{\mathcal{K}}\left(g\right)}=\overline{\tr_{\FHH}^{\mathcal{K}}\left(f\r_{\FHH}^{\mathcal{K}}\left(\tr_{\FHH}^{\mathcal{K}}\left(g\right)\right)\right)}=\overline{\tr_{\FHH}^{\mathcal{K}}\left(f\,\lui{\F,H}{g}\right)}
\]
for every $f,g\in\End\left(M\downarrow_{\FHH}^{\F}\right)$. On the
other hand we know from Lemma \ref{lem:transfer-composes-nicely.}
that $\overline{\tr_{\NF}^{\F}}\left(\overline{\tr_{\FHH}^{\NF}\left(\alpha\right)}\right)=\overline{\tr_{\FHH}^{\F}}\left(\overline{\alpha}\right)$
for $\alpha\in\left\{ f,g,f\,\lui{\F,H}{g}\right\} $ and, therefore,
we can conclude that
\[
\overline{\tr_{\NF}^{\F}}\left(\overline{\tr_{\FHH}^{\NF}\left(f\right)}\right)\overline{\tr_{\NF}^{\F}}\left(\overline{\tr_{\FHH}^{\NF}\left(g\right)}\right)=\overline{\tr_{\NF}^{\F}}\left(\overline{\tr_{\FHH}^{\NF}\left(f\lui{\F,H}{g}\right)}\right)=\overline{\tr_{\NF}^{\F}}\left(\overline{\tr_{\FHH}^{\NF}\left(f\right)}\overline{\tr_{\FHH}^{\NF}\left(g\right)}\right).
\]
Since all elements in $\overline{\Tr_{H}^{\NF}}$ are, by definition,
of the form $\overline{\tr_{\FHH}^{\NF}\left(f\right)}$ for some
$f\in\End\left(M\downarrow_{\FHH}^{\F}\right)$ the result follows.
\end{proof}

\subsection{\label{subsec:Proof-of-Green-correspondence.}Green correspondence
for centric Mackey functors.}

In this subsection we will follow similar ideas as those Sasaki uses
in \cite[Proposition 3.1]{SASAKI198298} in order to prove that the
Green correspondence holds for centric Mackey functors over fusion
systems (Theorem \ref{thm:Green-correspondence.}). To do so we will
need to replace some results valid for Mackey functors over finite
groups with the analogue results developed in Subsections \ref{subsec:N-is-a-direct-summand-of-Ninduction-restriction.}
to \ref{subsec:Transfer-map-from-trHNF-to-trHF.}. First however we
need to prove that Proposition \ref{prop:Green-correspondence-for-endomorphisms.}
can be applied to centric Mackey functors over fusion systems just
like it can be applied to Green functors (see Example \ref{exa:Green.correspondence-Green-functors.}).
\begin{lem}
\label{lem:Green-correspondent-below.}Let $\R$ be a complete local
and $p$-local $PID$ and let $M\in\MackFcR$ be indecomposable with
vertex $H$ (see Corollary \ref{cor:irreducible-admits-vertex.}).
Using Notation \ref{nota:X-and-Y.} we can define
\begin{align*}
A & :=\End\left(M\downarrow_{\NF}^{\F}\right), & B & :=\Tr_{H}^{\F}=\End\left(M\right),\\
C & :=\Tr_{H}^{\NF}, & K & :=\Tr_{\mathcal{X}}^{\F},\\
I & :=\Tr_{\mathcal{X}}^{\NF}, & J & :=\Tr_{\mathcal{Y}}^{\NF},\\
f & :=\tr_{\NF}^{\F}, & g & :=\r_{\NF}^{\F}.
\end{align*}
Here we are using Theorem \ref{thm:Higman's-criterion.} and the fact
that $M$ has vertex $H$ to define $B$ and we are viewing $f$ as
a morphism from $\Tr_{H}^{\NF}$ to $\Tr_{H}^{\F}$ (see Lemma \ref{lem:transfer-composes-nicely.})
and $\r_{\NF}^{\F}$ as a morphism from $\Tr_{H}^{\F}$ to $\Tr_{H}^{\NF}+\Tr_{\mathcal{Y}}^{\NF}$
(see Proposition \ref{prop:Conjecture-workaround.}). With these definitions
the conditions needed to apply Proposition \ref{prop:Green-correspondence-for-endomorphisms.}
are met. Moreover $\Id_{M}$ is a local idempotent of $\End\left(M\right)$
satisfying $\r_{\NF}^{\F}\left(\Id_{M}\right)=\Id_{M\downarrow_{\NF}^{\F}}$.
Therefore, applying Proposition \ref{prop:Green-correspondence-for-endomorphisms.},
we have that there exists a unique way (up to conjugation) of writing
\[
\Id_{M\downarrow_{\NF}^{\F}}=\sum_{i=0}^{n}\varepsilon_{i}
\]
where each $\varepsilon_{i}$ is a local idempotent in $\Tr_{H}^{\NF}$
and they are all mutually orthogonal. Moreover there exists a unique
$j\in\left\{ 0,\dots,n\right\} $ such that $\varepsilon_{j}\in\Tr_{H}^{\NF}-\Tr_{\mathcal{Y}}^{\NF}$
and, defining $\left(\Id_{M}\right)_{\NF}:=\varepsilon_{j}$ the following
hold
\begin{align*}
\tr_{\NF}^{\F}\left(\left(\Id_{M}\right)_{\NF}\right) & \equiv\Id_{M}\mod\Tr_{\mathcal{X}}^{\F}, & \r_{\NF}^{\F}\left(\Id_{M}\right) & \equiv\left(\Id_{M}\right)_{\NF}\mod\Tr_{\mathcal{Y}}^{\F}.
\end{align*}
\end{lem}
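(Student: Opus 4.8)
The plan is to verify, one at a time, the eight hypotheses \eqref{enu:condition-1.}--\eqref{enu:Decomposition-idempotent.} of Proposition \ref{prop:Green-correspondence-for-endomorphisms.} for the concrete choices of $A,B,C,I,J,K,f,g$ listed in the statement, then invoke that proposition together with the observation that $\Id_M$ is a primitive idempotent of $\End(M)$ (which holds because $M$ is indecomposable and $\R$ is complete local, so $\End(M)$ is local by the usual Fitting/Krull--Schmidt argument via \cite[Theorem 6.12 (ii)]{MethodsOfRepresentationTheoryCurtisReiner}), and that $\r_{\NF}^{\F}(\Id_M)=\Id_{M\downarrow_{\NF}^{\F}}$ by functoriality of the restriction functor (Definition \ref{def:Transfer-and-restriction.}). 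The final two displayed congruences are then precisely the conclusion $g(b)\equiv a$ and $f(a)\equiv b$ of Proposition \ref{prop:Green-correspondence-for-endomorphisms.} applied to the local idempotent $b:=\Id_M\notin K=\Tr_{\mathcal{X}}^{\F}$ (note $b\notin K$ because if $\Id_M\in\Tr_{\mathcal{X}}^{\F}$ then by Theorem \ref{thm:Higman's-criterion.} \eqref{enu:IdM-projective.}$\Leftrightarrow$\eqref{enu:End(M)=00003DTr_H^F.} the functor $M$ would be $\mathcal{X}$-projective, contradicting that its defect set, which contains $H$ by Corollary \ref{cor:irreducible-admits-vertex.}, is minimal and $H\notin\mathcal{X}$).

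Now for the individual hypotheses. Hypotheses \eqref{enu:condition-2.} and \eqref{enu:condition-3.} follow from the Mackey formula, Proposition \ref{prop:Properties-of-transfer-restriction-and-conjugation-maps.} \eqref{enu:Mackey-formula.}, applied to $\r_{\NF}^{\F}$ via Proposition \ref{prop:Conjecture-workaround.} and to $\tr_{\NF}^{\F}$ via Lemma \ref{lem:transfer-composes-nicely.}: restricting a transfer from $H$ down to a fusion system normalising $H$ produces a sum of transfers indexed by $\left[H\times K\right]$, and every nontrivial intersection lands in a proper $\F$-subconjugate of $H$, hence in $\mathcal{X}$ (for $f(I)\subseteq K$) or $\mathcal{Y}$ (for $g(K)\subseteq J$); the key input here is that $\NS$ normalises $H$ so that products $\left[H\times_{\NF}A\right]$ behave well, exactly as exploited in Subsections \ref{subsec:Restriction-map-from-trHNF-to-trHF.} and \ref{subsec:decomposition-of-product-in-aOFc.}. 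Hypothesis \eqref{enu:condition-4.}, surjectivity of $f=\tr_{\NF}^{\F}\colon\Tr_H^{\NF}\to\Tr_H^{\F}$, is the last sentence of Lemma \ref{lem:transfer-composes-nicely.}. Hypothesis \eqref{enu:condition-1.}, $(C\cap J)C,\ C(C\cap J)\subseteq I\subseteq C\cap J$, is a statement about ideals of $\Tr_H^{\NF}$; the inclusion $I\subseteq C\cap J$ is $\Tr_{\mathcal{X}}^{\NF}\subseteq\Tr_H^{\NF}\cap\Tr_{\mathcal{Y}}^{\NF}$, which holds since $\mathcal{X}\subseteq\mathcal{Y}$ and each $K\in\mathcal{X}$ satisfies $K\le_{\F}H$ (Lemma \ref{lem:Transfer-inclusion.}), while the products-into-$I$ inclusions come from Green's formula \eqref{enu:Green-formula.} combined with the Mackey formula \eqref{enu:Mackey-formula.} over $\NF$: a product $\tr_{\FHH[K]}^{\NF}(\cdot)\,\tr_{\FHH}^{\NF}(\cdot)$ with $K\in\mathcal{X}$ rewrites, via $\r_{\FHH}^{\NF}\tr_{\FHH[K]}^{\NF}$, as a sum of transfers from $\left[K\times_{\NF}H\right]$, all of whose entries are $\F$-subconjugate to $K\in\mathcal{X}$ hence in $\mathcal{X}$. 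Hypotheses \eqref{enu:g-preserves-idempotents.} and \eqref{enu:condition-6.} are where the care lies: $g=\r_{\NF}^{\F}$ is an $\R$-\emph{algebra} morphism (Definition \ref{def:Transfer-and-restriction.}) so it sends idempotents to idempotents and the induced $\overline{g}$ is multiplicative; that $\overline{f}$ is multiplicative on $\Tr_H^{\NF}/\Tr_{\mathcal{X}}^{\NF}\to\Tr_H^{\F}/\Tr_{\mathcal{X}}^{\F}$ is exactly Lemma \ref{lem:quotient-is-multiplicative.}. Hypothesis \eqref{enu:condition-7.}, $sq=\overline{g}\,\overline{f}$, amounts to showing $\r_{\NF}^{\F}\tr_{\NF}^{\F}$ is, modulo $\Tr_{\mathcal{Y}}^{\NF}$ (equivalently modulo $C\cap J$), the identity on $\Tr_H^{\NF}$; this follows by expanding $\r_{\NF}^{\F}\tr_{\NF}^{\F}$ using Definition \ref{def:tr_NF^F.}, Proposition \ref{prop:Conjecture-workaround.} and the Mackey formula, and checking that the ``diagonal'' term coming from $\left(H,\overline{\Id_H}\right)$ survives while every other term factors through a proper $\F$-subconjugate of $H$, i.e. through $\mathcal{Y}$; Theorem \ref{thm:decomposition-direct-product.} is the decomposition that makes this bookkeeping exact. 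Finally hypothesis \eqref{enu:Decomposition-idempotent.} is the Krull--Schmidt--Azumaya theorem for $A=\End(M\downarrow_{\NF}^{\F})$, which applies because $\R$ is a complete local $PID$ (\cite[Theorem 6.12 (ii)]{MethodsOfRepresentationTheoryCurtisReiner}).

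The main obstacle I expect is hypothesis \eqref{enu:condition-7.} together with the closely related \eqref{enu:condition-1.}: both require showing that various composites of transfer and restriction, when expanded via the Mackey formula over $\F$ and over $\NF$, differ from the expected ``main term'' only by transfers indexed by proper $\F$-subconjugates of $H$ — and this is precisely the kind of combinatorial identity on the index sets $\left[H\times_{\F}K\right]$, $\left[\NF\times K\right]$ and $\left[H\times_{\NF}N_{\overline{\varphi}}^{\NF}\right]$ for which Theorem \ref{thm:decomposition-direct-product.} was built. I would therefore carry out \eqref{enu:condition-2.},\eqref{enu:condition-3.},\eqref{enu:condition-4.},\eqref{enu:g-preserves-idempotents.},\eqref{enu:condition-6.},\eqref{enu:Decomposition-idempotent.} first as quick consequences of already-proved lemmas, and spend the bulk of the argument on \eqref{enu:condition-1.} and \eqref{enu:condition-7.}, reducing each to an identity of sums of transfers and then matching terms using Theorem \ref{thm:decomposition-direct-product.} and the fact (from Notation \ref{nota:X-and-Y.} and Corollary \ref{cor:A=00003DHcapN.}) that a pair $\left(A,\overline{\varphi}\right)\in\left[\NF\times H\right]$ has $A=H$ iff $\overline{\varphi}\in\Aut_{\ONF}(H)=\Aut_{\OF}(H)$, so that the non-diagonal pairs genuinely contribute to $\mathcal{Y}$-transfers only.
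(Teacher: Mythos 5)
Your overall strategy is the paper's: verify the eight hypotheses of Proposition \ref{prop:Green-correspondence-for-endomorphisms.} using Lemma \ref{lem:transfer-composes-nicely.}, Proposition \ref{prop:Conjecture-workaround.}, Lemma \ref{lem:quotient-is-multiplicative.}, Corollary \ref{cor:tr-maps-Tr^NF-to-Tr^F.} and Krull--Schmidt--Azumaya, then apply the proposition to $b=\Id_{M}$. Your treatments of conditions \eqref{enu:condition-2.}--\eqref{enu:Decomposition-idempotent.} match the paper's (and your explicit check that $\Id_{M}\notin\Tr_{\mathcal{X}}^{\F}$ is a detail the paper leaves implicit). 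For condition \eqref{enu:condition-7.} the paper's route is shorter than the one you anticipate: every class in $C/I$ equals $\overline{\tr_{\FHH}^{\NF}\left(u\right)}$ for some $u$, so $\overline{g}\,\overline{f}$ of it is $\overline{\r_{\NF}^{\F}\left(\tr_{\FHH}^{\F}\left(u\right)\right)}$, and Proposition \ref{prop:Conjecture-workaround.} already isolates the diagonal term modulo $\Tr_{\mathcal{Y}}^{\NF}$; no further bookkeeping with Theorem \ref{thm:decomposition-direct-product.} is needed.

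The genuine problem is your sketch of condition \eqref{enu:condition-1.}. An element of $C\cap J=\Tr_{H}^{\NF}\cap\Tr_{\mathcal{Y}}^{\NF}$ is a priori only a sum of transfers $\tr_{\FHH[K]}^{\NF}\left(g_{K}\right)$ with $K\in\mathcal{Y}$, not $K\in\mathcal{X}$ as you write; if the indices were already in $\mathcal{X}$ there would be nothing to prove, since $\Tr_{\mathcal{X}}^{\NF}$ is a two-sided ideal. The whole difficulty sits in $\mathcal{Y}\setminus\mathcal{X}$, which consists of the $\F$-conjugates of $H$ inside $\NS$ other than $H$ itself, and for such $K$ your stated reason fails: the entries of $\left[K\times_{\NF}H\right]$ are not forced into $\mathcal{X}$ by being ``$\F$-subconjugate to $K$'' (indeed $H$ itself is $\F$-subconjugate to $K$). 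What actually closes the argument, and what the paper does, is to expand $\tr_{\FHH}^{\NF}\left(u\right)\cdot\tr_{\FHH[K]}^{\NF}\left(g_{K}\right)$ via Proposition \ref{prop:Properties-of-transfer-restriction-and-conjugation-maps.} \eqref{enu:Green-formula.} and \eqref{enu:Mackey-formula.} over $\NF$: each resulting term is a transfer from some $\varphi\left(A\right)$ with $A\le K$ and $\varphi\in\Hom_{\NF}\left(A,H\right)$, hence $\varphi\left(A\right)\le H$; and $\varphi\left(A\right)=H$ is impossible because, by the definition of $\NF$ (Example \ref{exa:definition-NF.}), $\varphi$ extends to an injective $\F$-morphism $AH\to H$, forcing $A=H$ and then $K=H$, contradicting $K\in\mathcal{Y}$. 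Therefore $\varphi\left(A\right)\lneq H$, so either $\varphi\left(A\right)\in\mathcal{X}$ or the term vanishes because $M$ is centric. This use of the definition of $\NF$ is the missing ingredient; without it the inclusion $\left(C\cap J\right)C\subseteq I$ is not established.
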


\begin{proof}
Since $\R$ is a complete local $PID$ and $M$ is indecomposable
we can apply \cite[Proposition 6.10 (ii)]{MethodsOfRepresentationTheoryCurtisReiner}
to deduce that $\End\left(M\right)$ is a local ring. In particular
$\Id_{M}$ is a local idempotent of $\End\left(M\right)$. The identity
$\r_{\NF}^{\F}\left(\Id_{M}\right)=\Id_{M\downarrow_{\NF}^{\F}}$
follows immediately from Definition \ref{def:Transfer-and-restriction.}.

Therefore, if we prove the first part of the statement, the second
part follows.

Let us start by proving that $A,B,C,K,I,J,f$ and $g$ are as defined
in Proposition \ref{prop:Green-correspondence-for-endomorphisms.}.
First of all notice that $\End\left(M\downarrow_{\NF}^{\F}\right)$
and $\End\left(M\right)$ are both $\R$-algebras. From Lemma \ref{lem:Image-transfer-is-ideal.}
we also know that $\Tr_{\mathcal{X}}^{\F}$ is a two sided ideal of
$\End\left(M\right)$ and that $\Tr_{H}^{\NF},\Tr_{\mathcal{X}}^{\NF}$
and $\Tr_{\mathcal{Y}}^{\NF}$ are two sided ideals of $\End\left(M\downarrow_{\NF}^{\F}\right)$.
By definition of $\mathcal{X}$ we know that for every $K\in\mathcal{X}$
then $\Hom_{\F}\left(K,H\right)\not=\emptyset$. Therefore, from Lemma
\ref{lem:Transfer-inclusion.} we can conclude that $\Tr_{\mathcal{X}}^{\NF}\subseteq\Tr_{H}^{\NF}$
and since $\Tr_{\mathcal{X}}^{\NF}$ is a two sided ideal of $\End\left(M\downarrow_{\NF}^{\F}\right)$
then we can view $\Tr_{\mathcal{X}}^{\NF}$ as a two sided ideal of
$\Tr_{H}^{\NF}$ (seen as a ring with potentially no unit). As mentioned
in the statement we can use Lemma \ref{lem:transfer-composes-nicely.}
to view $\tr_{\NF}^{\F}$ as a morphism of $\R$-modules from $\Tr_{H}^{\NF}$
to $\Tr_{H}^{\F}$. Finally, writing $E_{K}:=\End\left(M\downarrow_{\FHH[K]}^{\F}\right)$
for any $K\in\Fc$, we have from Proposition \ref{prop:Conjecture-workaround.}
that
\[
\r_{\NF}^{\F}\left(\Tr_{H}^{\F}\right)=\r_{\NF}^{\F}\left(\tr_{\FHH}^{\F}\left(E_{H}\right)\right)\subseteq\tr_{\FHH}^{\NF}\left(E_{H}\right)+\sum_{K\in\mathcal{Y}}\tr_{\FHH[K]}^{\NF}\left(E_{K}\right)=\Tr_{H}^{\NF}+\Tr_{\mathcal{Y}}^{\NF}.
\]
Thus, we can view $\r_{\NF}^{\F}$ as an $\R$-module morphism from
$\Tr_{H}^{\F}$ to $\Tr_{H}^{\NF}+\Tr_{\mathcal{Y}}^{\NF}$.

With this setup we just need to prove that the Conditions \eqref{enu:condition-1.}-\eqref{enu:Decomposition-idempotent.}
of Proposition \ref{prop:Green-correspondence-for-endomorphisms.}
are met for our choices of $A,B,C,K,I,J,f$ and $g$.
\begin{enumerate}
\item \emph{For Condition \eqref{enu:condition-1.} we need to check that
the following inclusions are satisfied}\textbf{\emph{
\begin{align*}
\left(\Tr_{H}^{\NF}\cap\Tr_{\mathcal{Y}}^{\NF}\right)\Tr_{H}^{\NF} & \subseteq\Tr_{\mathcal{X}}^{\NF}, & \Tr_{H}^{\NF}\left(\Tr_{H}^{\NF}\cap\Tr_{\mathcal{Y}}^{\NF}\right) & \subseteq\Tr_{\mathcal{X}}^{\NF},\\
\Tr_{\mathcal{X}}^{\NF} & \subseteq\Tr_{H}^{\NF}\cap\Tr_{\mathcal{Y}}^{\NF}.
\end{align*}
}}From definition of $\mathcal{X}$ and $\mathcal{Y}$ (see Notation
\ref{nota:X-and-Y.}) we know that $\mathcal{X}\subseteq\mathcal{Y}$
and that every element in $\mathcal{X}$ is a subgroup of $H$. Therefore,
from Lemma \ref{lem:Transfer-inclusion.} we can conclude that $\Tr_{\mathcal{X}}^{\NF}\subseteq\Tr_{H}^{\NF}$
and $\Tr_{\mathcal{X}}^{\NF}\subseteq\Tr_{\mathcal{Y}}^{\NF}$. This
proves the bottom inclusion. Let's now prove the top right inclusion
(the top left inclusion follows similarly). Let $f\in\End\left(M\downarrow_{\FHH}^{\F}\right)$
and for every $K\in\mathcal{Y}$ let $g_{K}\in\End\left(M\downarrow_{\FHH[K]}^{\F}\right)$
such that $\sum_{K\in\mathcal{Y}}\tr_{\FHH[K]}^{\NF}\left(g_{K}\right)\in\Tr_{H}^{\NF}\cap\Tr_{\mathcal{Y}}^{\NF}$.
Then, from Items \eqref{enu:Mackey-formula.} and \eqref{enu:Green-formula.}
of Proposition \ref{prop:Properties-of-transfer-restriction-and-conjugation-maps.}
we have that
\begin{align*}
\tr_{\FHH}^{\NF}\left(f\right)\left(\sum_{K\in\mathcal{Y}}\tr_{\FHH[K]}^{\NF}\left(g_{K}\right)\right) & =\sum_{K\in\mathcal{Y}}\tr_{\FHH}^{\NF}\left(f\r_{\FHH}^{\NF}\left(\tr_{\FHH[K]}^{\NF}\left(g_{K}\right)\right)\right),\\
 & =\sum_{K\in\mathcal{Y}}\sum_{\left(A,\overline{\varphi}\right)\in\left[K\times_{\NF}H\right]}\hspace{-10bp}\hspace{-10bp}\tr_{\FHH}^{\NF}\left(f\tr_{\FHH[\varphi\left(A\right)]}^{\FHH}\left(\lui{\varphi}{\left(\r_{\FHH[A]}^{\FHH[K]}\left(g_{K}\right)\right)}\right)\right).
\end{align*}
Where $\varphi$ is a representative of $\overline{\varphi}$ seen
as an isomorphism onto its image. Fix now $K\in\mathcal{Y}$ and $\left(A,\overline{\varphi}\right)\in\left[K\times_{\NF}H\right]$
and take $\varphi$ as before. If $A\not\in\Fc$ then, since $M$
is $\F$-centric, we have that $M\downarrow_{\FHH[A]}^{\F}=0$ and,
in particular, $\r_{\FHH[A]}^{\FHH[K]}\left(g_{K}\right)=0$. We can
therefore assume without loss of generality that $A\in\Fc$. Since
$A\in\FHH[K]\cap\Fc$ and $K\in\mathcal{Y}$ then we can conclude
from definition of $\mathcal{Y}$ (see Notation \ref{nota:X-and-Y.})
that $A\in\mathcal{Y}$. From this we can conclude that $\varphi\left(A\right)\in\mathcal{Y}$
and since $\varphi\left(A\right)\le H$ we can conclude that $\varphi\left(A\right)\in\mathcal{X}$.
With this in mind, applying Items \eqref{enu:composition-transfer.}
and \eqref{enu:Green-formula.} of Proposition \ref{prop:Properties-of-transfer-restriction-and-conjugation-maps.},
we have that
\[
\tr_{\FHH}^{\NF}\left(f\tr_{\FHH[\varphi\left(A\right)]}^{\FHH}\hspace{-2bp}\left(\lui{\varphi}{\left(\r_{\FHH[A]}^{\FHH[K]}\left(g_{K}\right)\right)}\right)\right)\hspace{-2bp}=\hspace{-2bp}\tr_{\FHH[\varphi\left(A\right)]}^{\F}\left(\r_{\FHH[\varphi\left(A\right)]}^{\FHH}\left(f\right)\lui{\varphi}{\left(\r_{\FHH[A]}^{\FHH[K]}\left(g_{K}\right)\right)}\right)\hspace{-2bp}\in\hspace{-2bp}\Tr_{\varphi\left(A\right)}^{\NF}\hspace{-2bp}\subseteq\Tr_{\mathcal{X}}^{\NF}.
\]
Therefore $\Tr_{H}^{\NF}\hspace{-2bp}\left(\Tr_{H}^{\NF}\hspace{-2bp}\cap\Tr_{\mathcal{Y}}^{\NF}\right)\hspace{-2bp}\subseteq\hspace{-2bp}\Tr_{\mathcal{X}}^{\NF}$
thus proving that Condition \eqref{enu:condition-1.} is satisfied.
\item \emph{For Condition \eqref{enu:condition-2.} we need to check that}\textbf{\emph{
$\r_{\NF}^{\F}\left(\Tr_{\mathcal{X}}^{\F}\right)\subseteq\Tr_{\mathcal{Y}}^{\NF}$}}\emph{.}
For every $K\in\mathcal{X}$ we have that $K\lneq H$ and, therefore,
from Proposition \ref{prop:Properties-of-transfer-restriction-and-conjugation-maps.}
\eqref{enu:composition-transfer.}, we have that $\Tr_{K}^{\F}=\tr_{\FHH}^{\F}\left(\Tr_{K}^{\FHH}\right)$.
From Proposition \ref{prop:Conjecture-workaround.} we can then deduce
that $\r_{\NF}^{\F}\left(\Tr_{K}^{\F}\right)\subseteq\tr_{\FHH}^{\NF}\left(\Tr_{K}^{\FHH}\right)+\Tr_{\mathcal{Y}}^{\NF}$.
Applying Proposition \ref{prop:Properties-of-transfer-restriction-and-conjugation-maps.}
\eqref{enu:composition-transfer.} once again we obtain that $\tr_{\FHH}^{\NF}\left(\Tr_{K}^{\FHH}\right)=\Tr_{K}^{\NF}$
and, since $K\in\mathcal{X}\subseteq\mathcal{Y}$, we can deduce that
$\Tr_{K}^{\NF}\subseteq\Tr_{\mathcal{Y}}^{\NF}$. Thus we can conclude
that $\r_{\NF}^{\F}\left(\Tr_{K}^{\F}\right)\subseteq\Tr_{\mathcal{Y}}^{\NF}$.
Since this works for every $K\in\mathcal{X}$ the result follows.
\item \emph{For Condition \eqref{enu:condition-3.} we need to check that
$\tr_{\NF}^{\F}\left(\Tr_{\mathcal{X}}^{\NF}\right)\subseteq\Tr_{\mathcal{X}}^{\F}$.}
This follows from Corollary \ref{cor:tr-maps-Tr^NF-to-Tr^F.}.
\item \emph{For Condition \eqref{enu:condition-4.} we need to check that
$\tr_{\NF}^{\F}$, seen as a morphism from $\Tr_{H}^{\NF}$ to $\Tr_{H}^{\F}$,
is surjective.} This is given by Lemma \ref{lem:transfer-composes-nicely.}.
\item \emph{For Condition \eqref{enu:g-preserves-idempotents.} we need
to check that $\r_{\NF}^{\F}$ sends idempotents to idempotents.}
This follows immediately from definition of \textbf{$\r_{\NF}^{\F}$}.
\item \emph{For Condition \eqref{enu:condition-6.} we need to check that
the $\R$-linear maps $\overline{\tr_{\NF}^{\F}}:\Tr_{H}^{\NF}/\Tr_{\mathcal{X}}^{\NF}\to\Tr_{H}^{\F}/\Tr_{\mathcal{X}}^{\F}$
and $\overline{\r_{\NF}^{\F}}:\Tr_{H}^{\F}/\Tr_{\mathcal{X}}^{\F}\to\left(\Tr_{H}^{\NF}+\Tr_{\mathcal{Y}}^{\NF}\right)/\Tr_{\mathcal{Y}}^{\NF}$
commute with multiplication.} From Lemma \ref{lem:quotient-is-multiplicative.}
we know that $\overline{\tr_{\NF}^{\F}}$ commutes with multiplication.
On the other hand it is immediate from definition that $\r_{\NF}^{\F}$
commutes with multiplication and, therefore, so does $\overline{\r_{\NF}^{\F}}$.
\item \emph{For Condition \eqref{enu:condition-7.} we need to check that
the natural isomorphism 
\[
s:\Tr_{H}^{\NF}/\left(\Tr_{H}^{\NF}\cap\Tr_{\mathcal{Y}}^{\NF}\right)\bjarrow\left(\Tr_{H}^{\NF}+\Tr_{\mathcal{Y}}^{\NF}\right)/\Tr_{\mathcal{Y}}^{\NF}
\]
and the natural projection 
\[
q:\Tr_{H}^{\NF}/\Tr_{\mathcal{X}}^{\NF}\twoheadrightarrow\Tr_{H}^{\NF}/\left(\Tr_{H}^{\NF}\cap\Tr_{\mathcal{Y}}^{\NF}\right)
\]
satisfy $sq=\overline{\r_{\NF}^{\F}}\,\overline{\tr_{\NF}^{\F}}$.}
Abusing a bit of notation we denote with an overline ($\overline{\cdot}$)
the projection of an endomorphism on the appropriate quotient. With
this notation, for every $f\in\End\left(M\downarrow_{\FHH}^{\F}\right)$,
we have that
\[
\overline{\r_{\NF}^{\F}}\left(\overline{\tr_{\NF}^{\F}}\left(\overline{\tr_{\FHH}^{\NF}\left(f\right)}\right)\right)=\overline{\r_{\NF}^{\F}\left(\tr_{\FHH}^{\F}\left(f\right)\right)}=\overline{\tr_{\FHH}^{\NF}\left(f\right)}=s\left(q\left(\overline{\tr_{\FHH}^{\NF}\left(f\right)}\right)\right),
\]
Where we are using Lemma \ref{lem:transfer-composes-nicely.} for
the first identity, we are using Proposition \ref{prop:Conjecture-workaround.}
and the fact that $\overline{\Tr_{\mathcal{Y}}^{\NF}}=\overline{0}$
for the second identity and we are using the definitions of $q$ and
$s$ for the third identity. Since every element in $\Tr_{H}^{\NF}/\Tr_{\mathcal{X}}^{\NF}$
is of the form $\overline{\tr_{\FHH}^{\NF}\left(f\right)}$ for some
$f\in\End\left(M\downarrow_{\FHH}^{\F}\right)$ the result follows.
\item \emph{For Condition \eqref{enu:Decomposition-idempotent.} we need
to prove that for every idempotent $f\in\End\left(M\downarrow_{\NF}^{\F}\right)$
there exists a unique (up to conjugation) decomposition of $f$ as
a finite sum of orthogonal local idempotents.} From Proposition \ref{prop:Mackey-algebra-basis.}
we know that the $\R$-algebra $\FmuFR{\NF}$ is finitely generated
as an $\R$-module. Therefore we can apply the Krull-Schmidt-Azumaya
theorem (see \cite[Theorem 6.12 (ii)]{MethodsOfRepresentationTheoryCurtisReiner})
together with \cite[Proposition 6.10 (ii)]{MethodsOfRepresentationTheoryCurtisReiner}
to conclude that Condition \eqref{enu:Decomposition-idempotent.}
is satisfied.
\end{enumerate}
Since all conditions are verified we can conclude the proof.
\end{proof}
\begin{cor}
\label{cor:Green-correspondent-below.}Let $\R$ be a complete local
and $p$-local $PID$ and let $M\in\MackFcR$ be indecomposable with
vertex $H$ (see Corollary \ref{cor:irreducible-admits-vertex.}).
There exists a unique (up to isomorphism) decomposition of $M\downarrow_{\NF}^{\F}$
as a direct sum of indecomposable $\F$-centric Mackey functors
\[
M\downarrow_{\NF}^{\F}=\bigoplus_{i=0}^{n}M_{i}.
\]
With this notation, there exists exactly one $j\in\left\{ 0,\dots,n\right\} $
such that $M_{j}$ has vertex $H$ while, for every other $i\in\left\{ 0,\dots,n\right\} -\left\{ j\right\} $,
we have that $M_{i}$ has vertex in $\mathcal{Y}$. We call $M_{j}$
the \textbf{Green correspondent} of $M$ and denote it by $M_{\NF}$.
\end{cor}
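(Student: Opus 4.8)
The plan is to deduce Corollary \ref{cor:Green-correspondent-below.} from Lemma \ref{lem:Green-correspondent-below.} by translating its idempotent statement back into the language of direct-sum decompositions, using the standard dictionary between idempotents in an endomorphism ring and direct summands of a module. First I would invoke the Krull-Schmidt-Azumaya theorem (as in \cite[Theorem 6.12 (ii)]{MethodsOfRepresentationTheoryCurtisReiner}), which applies since $\FmuFR{\NF}$ is finitely generated as an $\R$-module by Proposition \ref{prop:Mackey-algebra-basis.} and $\R$ is a complete local $PID$, to obtain a decomposition $M\downarrow_{\NF}^{\F}=\bigoplus_{i=0}^{n}M_{i}$ into indecomposables, unique up to isomorphism and reordering. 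This decomposition corresponds to a decomposition $\Id_{M\downarrow_{\NF}^{\F}}=\sum_{i=0}^{n}e_{i}$ of the identity into mutually orthogonal primitive (equivalently, local, since $\End(M_i)$ is local by \cite[Proposition 6.10 (ii)]{MethodsOfRepresentationTheoryCurtisReiner}) idempotents of $\End\left(M\downarrow_{\NF}^{\F}\right)$, where $e_i$ is the projection onto $M_i$.

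The key step is to identify this decomposition with the one produced in Lemma \ref{lem:Green-correspondent-below.}. By Theorem \ref{thm:Higman's-criterion.} (the equivalence of \eqref{enu:IdM-projective.} and \eqref{enu:M-summand-M_H.}), an indecomposable summand $M_i$ of $M\downarrow_{\NF}^{\F}$ is $K$-projective if and only if its projection idempotent $e_i$ lies in $\Tr_{K}^{\NF}$; more generally $M_i$ has a vertex (by Corollary \ref{cor:irreducible-admits-vertex.}, since $M_i\in\MackFcR$ is indecomposable) and that vertex is, up to $\F$-conjugacy, the minimal $K$ with $e_i\in\Tr_{K}^{\NF}$. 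Now Lemma \ref{lem:Green-correspondent-below.} gives a decomposition $\Id_{M\downarrow_{\NF}^{\F}}=\sum_{i=0}^{n}\varepsilon_{i}$ into mutually orthogonal local idempotents of $\End\left(M\downarrow_{\NF}^{\F}\right)$ (the local idempotents of the subring $\Tr_{H}^{\NF}$, which are automatically local idempotents of the ambient ring since $e\,\End\left(M\downarrow_{\NF}^{\F}\right)\,e = e\,\Tr_{H}^{\NF}\,e$ for any idempotent $e\in\Tr_{H}^{\NF}$, as $\Tr_{H}^{\NF}$ is a two-sided ideal by Lemma \ref{lem:Image-transfer-is-ideal.}). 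By the uniqueness clause of Krull-Schmidt-Azumaya applied to decompositions of $\Id$ into orthogonal primitive idempotents, the $\varepsilon_i$ are conjugate to the $e_i$ after a permutation; hence we may take $M_i := \varepsilon_i\left(M\downarrow_{\NF}^{\F}\right)$, and these are exactly the indecomposable summands above.

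With this identification, the vertex statement follows directly. Since each $\varepsilon_i\in\Tr_{H}^{\NF}$, Lemma \ref{lem:Transfer-inclusion.} together with Theorem \ref{thm:Higman's-criterion.} shows every $M_i$ is $H$-projective, so its vertex is $\F$-subconjugate to $H$; since $H$ is a vertex, the defect set considerations in Corollary \ref{cor:irreducible-admits-vertex.} pin the vertex of $M_i$ down to being either $H$ itself or an element of $\mathcal{X}\subseteq\mathcal{Y}$. By Lemma \ref{lem:Green-correspondent-below.} there is a unique index $j$ with $\varepsilon_j\in\Tr_{H}^{\NF}\setminus\Tr_{\mathcal{Y}}^{\NF}$; for this $j$, $M_j$ is not $\mathcal{Y}$-projective (again by Theorem \ref{thm:Higman's-criterion.} and the fact that $\mathcal{Y}$-projectivity of $M_j$ would force $\varepsilon_j\in\Tr_{\mathcal{Y}}^{\NF}$), so its vertex is not in $\mathcal{Y}$ and must therefore equal $H$; for $i\ne j$, $\varepsilon_i\in\Tr_{\mathcal{Y}}^{\NF}$ forces $M_i$ to be $\mathcal{Y}$-projective, hence to have vertex in $\mathcal{Y}$. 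Defining $M_{\NF}:=M_j$ completes the argument. The main obstacle will be the bookkeeping around the uniqueness: one must be careful that ``unique up to conjugation'' for the idempotent decomposition in Lemma \ref{lem:Green-correspondent-below.} and ``unique up to isomorphism'' for the module decomposition genuinely correspond, which is exactly the content of the Krull-Schmidt-Azumaya theorem but requires checking that conjugate idempotents yield isomorphic summands and that local idempotents of $\Tr_{H}^{\NF}$ remain primitive in the full endomorphism ring.
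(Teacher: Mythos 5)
Your proposal is correct and follows essentially the same route as the paper: both arguments rest entirely on Lemma \ref{lem:Green-correspondent-below.}, the Krull--Schmidt--Azumaya theorem (via \cite[Proposition 6.10 (ii) and Theorem 6.12 (ii)]{MethodsOfRepresentationTheoryCurtisReiner}) to pass between orthogonal local idempotents of $\End\left(M\downarrow_{\NF}^{\F}\right)$ and indecomposable summands, and Theorem \ref{thm:Higman's-criterion.} to read off the vertices, including your observation that $\varepsilon\End\left(M\downarrow_{\NF}^{\F}\right)\varepsilon=\varepsilon\Tr_{H}^{\NF}\varepsilon$ because $\Tr_{H}^{\NF}$ is a two-sided ideal. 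The only (inessential) differences are that the paper builds the module decomposition directly from the idempotents of Lemma \ref{lem:Green-correspondent-below.} rather than matching two decompositions by conjugacy, and that for $i\neq j$ it uses locality of $\varepsilon_{i}$ to extract a single $K\in\mathcal{Y}$ with $\varepsilon_{i}\in\varepsilon_{i}\Tr_{K}^{\NF}\varepsilon_{i}$, a step you reach implicitly through Krull--Schmidt--Azumaya when concluding that a $\mathcal{Y}$-projective indecomposable has vertex in $\mathcal{Y}$.
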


\begin{proof}
Applying Lemma \ref{lem:Green-correspondent-below.} we know that
there exists a unique (up to conjugation) decomposition of $\Id_{M\downarrow_{\NF}^{\F}}$
of the form
\[
\Id_{M\downarrow_{\NF}^{\F}}=\left(\Id_{M}\right)_{\NF}+\sum_{i=1}^{n}\varepsilon_{i}
\]
where the $\varepsilon_{i}$ and $\left(\Id_{M}\right)_{\NF}$ are
mutually orthogonal local idempotents satisfying $\left(\Id_{M}\right)_{\NF}\in\Tr_{H}^{\F}-\Tr_{\mathcal{Y}}^{\F}$
and $\varepsilon_{i}\in\Tr_{\mathcal{Y}}^{\F}$. From this decomposition
and \cite[Proposition 6.10 (ii)]{MethodsOfRepresentationTheoryCurtisReiner}
we can deduce that there exists a unique (up to isomorphism) decomposition
of $M\downarrow_{\NF}^{\F}$ as a direct sum of indecomposable Mackey
functors and it is given by
\[
M\downarrow_{\NF}^{\F}=\left(\Id_{M}\right)_{\NF}\left(M\downarrow_{\NF}^{\F}\right)\oplus\bigoplus_{i=1}^{n}\varepsilon_{i}\left(M\downarrow_{\NF}^{\F}\right).
\]
Since $\left(\Id_{M}\right)_{\NF}\in\Tr_{H}^{\F}-\Tr_{\mathcal{Y}}^{\F}$
we can conclude from Theorem \ref{thm:Higman's-criterion.} that $\left(\Id_{M}\right)_{\NF}\left(M\downarrow_{\NF}^{\F}\right)$
has vertex $H$. On the other hand, since $\varepsilon_{i}\in\Tr_{\mathcal{Y}}^{\F}$
and $\Tr_{\mathcal{Y}}^{\F}$ is an ideal (see Lemma \ref{lem:Image-transfer-is-ideal.})
we have that $\varepsilon_{i}\in\varepsilon_{i}\Tr_{\mathcal{Y}}^{\F}\varepsilon_{i}=\varepsilon_{i}\End\left(M\right)\varepsilon_{i}$.
Since $\varepsilon_{i}$ is a local idempotent then we can conclude
that $\varepsilon_{i}\Tr_{\mathcal{Y}}^{\F}\varepsilon_{i}$ is a
local ring and, therefore, there exists $K\in\mathcal{Y}$ such that
$\varepsilon_{i}\Tr_{K}^{\F}\varepsilon_{i}$. In particular $\varepsilon_{i}$
is $K$-projective and, from Theorem \ref{thm:Higman's-criterion.},
we can conclude that $\varepsilon_{i}\left(M\downarrow_{\NF}^{\F}\right)$
is also $K$-projective. Since $K\in\mathcal{Y}$ we can conclude
from minimality of the defect set that $\varepsilon_{i}\left(M\downarrow_{\NF}^{\F}\right)$
has vertex in $\mathcal{Y}$. The result follows by setting $M_{\NF}:=\left(\Id_{M}\right)_{\NF}\left(M\downarrow_{\NF}^{\F}\right)$.
\end{proof}
Corollary \ref{cor:Green-correspondent-below.} gives us the first
half of the Green correspondence. Let's now get the other half.
\begin{lem}
\label{lem:Green-correspondent-above.}Let $\R$ be a complete local
and $p$-local $PID$, let $N\in\MackFHR{\F}{\NF}$ be indecomposable
with vertex $H$. Using the notation of Corollary \ref{cor:Green-correspondent-below.}
there exists an indecomposable $M\in\MackFcR$ with vertex $H$ such
that $M_{\NF}\cong N$. Moreover $M$ is a summand of $N\uparrow_{\NF}^{\F}\cong M_{\NF}\uparrow_{\NF}^{\F}$.
\end{lem}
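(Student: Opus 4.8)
The statement is the upward direction of the Green correspondence for centric Mackey functors. I would prove it by running Proposition~\ref{prop:Green-correspondence-for-endomorphisms.} "in reverse" and then translating back to Mackey functors via Theorem~\ref{thm:Higman's-criterion.}. First I would observe that since $N\in\MackFHR{\F}{\NF}$ is indecomposable with vertex $H$, Theorem~\ref{thm:Higman's-criterion.} gives that $N$ is a summand of $N'\uparrow_{\FHH}^{\NF}$ for some $N'\in\MackFHR{\F}{\FHH}$, hence $N\uparrow_{\NF}^{\F}$ is a summand of $N'\uparrow_{\FHH}^{\F}$, so every indecomposable summand of $N\uparrow_{\NF}^{\F}$ is $H$-projective (again by Theorem~\ref{thm:Higman's-criterion.}, part~\eqref{enu:summand.}$\Rightarrow$\eqref{enu:projective.}). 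Since $\R$ is a complete local $PID$, the Krull--Schmidt--Azumaya theorem (\cite[Theorem 6.12 (ii)]{MethodsOfRepresentationTheoryCurtisReiner}) lets me write $N\uparrow_{\NF}^{\F}=\bigoplus_{k}M_k$ with each $M_k$ indecomposable and $H$-projective. By Corollary~\ref{cor:irreducible-admits-vertex.}, each $M_k$ admits a vertex $V_{M_k}\le_{\F}H$; I need to locate one whose vertex is exactly $H$ and whose Green correspondent is $N$.

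\textbf{The key step.} The heart of the argument is an idempotent-bookkeeping computation in $\End\left(N\downarrow_{\NF}^{\F}\right)$, dual to the one in Lemma~\ref{lem:Green-correspondent-below.}. I would restrict the decomposition $N\uparrow_{\NF}^{\F}=\bigoplus_k M_k$ back along $\downarrow_{\NF}^{\F}$ and use Lemma~\ref{lem:induction-from-normalizer-followed-by-restriction-to-normalizer.}: since $N$ has vertex $H$ (hence is $H$-projective as an $\NF$-Mackey functor), that lemma gives $N\uparrow_{\NF}^{\F}\downarrow_{\NF}^{\F}\cong N\oplus N'$ with $N'$ an $\mathcal{Y}$-projective Mackey functor over $\NF$. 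Therefore the summand $N$ of $N\uparrow_{\NF}^{\F}\downarrow_{\NF}^{\F}$ has a vertex-$H$ part. Matching this against $\bigoplus_k M_k\downarrow_{\NF}^{\F}$ via Krull--Schmidt, exactly one $M_k$, say $M:=M_j$, must have the property that its restriction $M\downarrow_{\NF}^{\F}$ contains a summand of vertex $H$; all other $M_k$'s restrict into $\mathcal{Y}$-projectives, so by Corollary~\ref{cor:irreducible-admits-vertex.} they themselves have vertex in $\mathcal{Y}$ --- in particular strictly below $H$. Hence $M$ is the unique summand of $N\uparrow_{\NF}^{\F}$ with vertex $H$, and the summand of $M\downarrow_{\NF}^{\F}$ with vertex $H$ is, by the uniqueness in Corollary~\ref{cor:Green-correspondent-below.}, precisely $M_{\NF}$. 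It remains to identify $M_{\NF}$ with $N$: since $N\mid M\downarrow_{\NF}^{\F}$ (it is a summand of $N\uparrow_{\NF}^{\F}\downarrow_{\NF}^{\F}$ and the other parts go into the $M_k$ with $k\ne j$, which are $\mathcal{Y}$-projective), and $N$ has vertex $H$, the uniqueness of the vertex-$H$ summand forces $M_{\NF}\cong N$.

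\textbf{The main obstacle.} The delicate point is the Krull--Schmidt matching: I need to be sure that when I write both $N\uparrow_{\NF}^{\F}\downarrow_{\NF}^{\F}\cong N\oplus N'$ (Lemma~\ref{lem:induction-from-normalizer-followed-by-restriction-to-normalizer.}) and $N\uparrow_{\NF}^{\F}\downarrow_{\NF}^{\F}\cong\bigoplus_k M_k\downarrow_{\NF}^{\F}$, and then decompose each $M_k\downarrow_{\NF}^{\F}$ into indecomposables, the copy of $N$ with vertex $H$ appears in exactly one block $M_k\downarrow_{\NF}^{\F}$, and that this is the same $j$ for which $M_j$ has vertex $H$ (rather than, a priori, $M_j$ having vertex $H$ while the vertex-$H$ summand of the restriction sits in a different block). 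This is handled by the separation of vertices: any $M_k$ that is not $M$ has vertex strictly inside $H$, so by Proposition~\ref{prop:centric-Induction-restriction.} and Theorem~\ref{thm:Higman's-criterion.} its restriction $M_k\downarrow_{\NF}^{\F}$ is $\mathcal{Y}$-projective and hence cannot contain $N$ (which has vertex $H\notin\mathcal{Y}$). I would also need to double-check that $M$ itself has vertex $H$ and not merely $\le_{\F}H$: this follows because if $M$ had vertex in $\mathcal{X}$ then so would $M\downarrow_{\NF}^{\F}$ (Corollary~\ref{cor:irreducible-admits-vertex.} applied inside $\MackFHR{\F}{\NF}$), contradicting $N\mid M\downarrow_{\NF}^{\F}$. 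Finally, the identification $N\uparrow_{\NF}^{\F}\cong M_{\NF}\uparrow_{\NF}^{\F}$ in the last clause is automatic once $M_{\NF}\cong N$, so no extra work is needed there.
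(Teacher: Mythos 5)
Your overall route coincides with the paper's: decompose $N\uparrow_{\NF}^{\F}$ by Krull--Schmidt--Azumaya, use Lemma \ref{lem:induction-from-normalizer-followed-by-restriction-to-normalizer.} to see that $N$ is a summand of $N\uparrow_{\NF}^{\F}\downarrow_{\NF}^{\F}$, pick the block $M:=M_{j}$ whose restriction contains $N$, show $M$ has vertex $H$, and then invoke the uniqueness in Corollary \ref{cor:Green-correspondent-below.}. However, there is a genuine gap at the step where you rule out $V_{M}\lneq_{\F}H$. You write that ``if $M$ had vertex in $\mathcal{X}$ then so would $M\downarrow_{\NF}^{\F}$ (Corollary \ref{cor:irreducible-admits-vertex.} applied inside $\MackFHR{\F}{\NF}$)''. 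Corollary \ref{cor:irreducible-admits-vertex.} only says that the vertex of an indecomposable \emph{direct summand} lies in the defect set of the ambient functor; it says nothing about how the defect set behaves under the restriction functor $\downarrow_{\NF}^{\F}$, and passing relative projectivity from $\F$ down to $\NF$ is precisely the nontrivial point here (it is the reason Subsection \ref{subsec:N-is-a-direct-summand-of-Ninduction-restriction.} exists). The same unjustified inference appears, in the opposite direction, in your claim that the other $M_{k}$ ``themselves have vertex in $\mathcal{Y}$'' because their restrictions are $\mathcal{Y}$-projective --- that statement is the content of Lemma \ref{lem:Green-correspondence-relation.} and is not needed for the present lemma, so it is harmless, but it cannot be obtained for free either.

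The correct way to close the gap (and what the paper does) is: if $V_{M}\lneq_{\F}H$, then by Corollary \ref{cor:projective-respect-to-bigger.} \eqref{enu:X-proj implies X^max proj.} $M$ is $K$-projective for some $K\lneq H$, so by Theorem \ref{thm:Higman's-criterion.} $M$ is a summand of $P\uparrow_{\FHH[K]}^{\F}=\left(P\uparrow_{\FHH[K]}^{\NF}\right)\uparrow_{\NF}^{\F}$ for some $P\in\MackFHR{\F}{\FHH[K]}$. Since $P':=P\uparrow_{\FHH[K]}^{\NF}$ is $H$-projective, Lemma \ref{lem:induction-from-normalizer-followed-by-restriction-to-normalizer.} gives $P'\uparrow_{\NF}^{\F}\downarrow_{\NF}^{\F}\cong P'\oplus Q$ with $Q$ being $\mathcal{Y}$-projective and $P'$ being $K$-projective with $K\in\mathcal{X}\subseteq\mathcal{Y}$; hence $N$, being an indecomposable summand of this $\mathcal{Y}$-projective functor, would have vertex in $\mathcal{Y}$ by Corollary \ref{cor:irreducible-admits-vertex.}, contradicting the hypothesis that $N$ has vertex $H$. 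With that repair your argument matches the paper's proof.
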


\begin{proof}
From Proposition \ref{prop:Mackey-algebra-basis.} we know that $\muFR$
is finitely generated as an $\R$-module. Therefore we can apply the
Krull-Schmidt-Azumaya theorem (see \cite[Theorem 6.12 (ii)]{MethodsOfRepresentationTheoryCurtisReiner})
in order to write $N\uparrow_{\NF}^{\F}\cong\bigoplus_{i=0}^{n}M_{i}$.
Where each $M_{i}\in\MackFcR$ (see Proposition \ref{prop:centric-Induction-restriction.})
is indecomposable. From Lemma \ref{lem:induction-from-normalizer-followed-by-restriction-to-normalizer.}
we know that $N$ is a summand of $N\uparrow_{\NF}^{\F}\downarrow_{\NF}^{\F}$.
Since the restriction functor is additive we can now use the fact
that $N$ is indecomposable and uniqueness of the Krull-Schmidt-Azumaya
theorem (now applied to $N\uparrow_{\NF}^{\F}\downarrow_{\NF}^{\F}$)
in order to chose $j\in\left\{ 0,\dots,n\right\} $ such that $N$
is a summand of $M_{j}\downarrow_{\NF}^{\F}$. To simplify notation
let us define $M:=M_{j}$. We are now only left with proving that
$M$ has vertex $H$. Since $N$ is $H$-projective and $M$ is a
summand of $N\uparrow_{\NF}^{\F}$ then we can deduce from Theorem
\ref{thm:Higman's-criterion.} that $M$ is $H$-projective. From
minimality of the defect set (see Corollary \ref{cor:defect-set.})
we can now conclude that the vertex $V_{M}$ of $M$ satisfies $V_{M}\le_{\F}H$
(see Notation \ref{nota:p,S,F}). Assume that $V_{M}\lneq_{\F}H$.
From Corollary \ref{cor:projective-respect-to-bigger.} \eqref{enu:X-proj implies X^max proj.}
we can deduce that there exists $K\lneq H$ such that $M$ is $K$-projective.
From Theorem \ref{thm:Higman's-criterion.} we can then deduce that
there exists $P\in\MackFHR{\F}{\FHH[K]}$ such that $M$ is a summand
of $P'\uparrow_{\NF}^{\F}$ where $P':=P\uparrow_{\FHH[K]}^{\NF}$.
Since $N$ is a summand of $M\downarrow_{\NF}^{\F}$ we can deduce
that $N$ is a summand of $P'\uparrow_{\NF}^{\F}\downarrow_{\NF}^{\F}$.
From Lemma \ref{lem:induction-from-normalizer-followed-by-restriction-to-normalizer.}
we can now deduce that there exists an $\mathcal{Y}$-projective $Q\in\MackFHR{\F}{\NF}$
such that $P'\uparrow_{\NF}^{\F}\downarrow_{\NF}^{\F}=P'\oplus Q=P\uparrow_{\FHH[K]}^{\NF}\oplus\,Q$.
Since $K\lneq H$ by hypothesis then we can conclude that $K\in\mathcal{X}\subseteq\mathcal{Y}$
and, therefore, that $P\uparrow_{\FHH[K]}^{\NF}\oplus\,Q$ is $\mathcal{Y}$-projective.
Since $N$ is an indecomposable summand of $P\uparrow_{\FHH[K]}^{\NF}\oplus\,Q$
we can then conclude from Corollary \ref{cor:irreducible-admits-vertex.}
that the vertex of $N$ lies in $\mathcal{Y}$. Since $H\not\in\mathcal{Y}$
this contradicts the hypothesis that $N$ has vertex $H$. Therefore
we cannot have $V_{M}\lneq_{\F}H$. Since we have proven that $V_{M}\le_{\F}H$
we can therefore conclude that $V_{M}=_{\F}H$. Since $H$ is fully
$\F$-normalized (see Notation \ref{nota:X-and-Y.}) then we can conclude
that $M$ has vertex $H$. We can therefore apply Corollary \ref{cor:Green-correspondent-below.}
to $M$ in order to conclude that there is a unique (up to isomorphism)
indecomposable summand $M_{\NF}$ of $M\downarrow_{\NF}^{\F}$ with
vertex $H$. Since $N$ is a summand of $M\downarrow_{\NF}^{\F}$
and has vertex $H$ the result follows.
\end{proof}
\begin{lem}
\label{lem:Green-correspondence-relation.}Let $\R$ be a complete
local and $p$-local $PID$, let $M\in\MackFcR$ be indecomposable
with vertex $H$ and let $M_{\NF}$ be as in Corollary \ref{cor:Green-correspondent-below.}.
Since $\muFR$ is finitely generated as an $\R$-module (see Proposition
\ref{prop:Mackey-algebra-basis.}) we can apply the Krull-Schmidt-Azumaya
theorem (see \cite[Theorem 6.12 (ii)]{MethodsOfRepresentationTheoryCurtisReiner})
together with Lemma \ref{lem:Green-correspondent-above.} in order
to write
\[
M_{\NF}\uparrow_{\NF}^{\F}\cong M\oplus\bigoplus_{i=1}^{n}M_{i}.
\]
Where each $M_{i}$ is indecomposable. With this notation we have
that each $M_{i}$ is $\F$-centric and has vertex $\F$-conjugate
to an element in $\mathcal{X}$ (see Notation \ref{nota:X-and-Y.}).
\end{lem}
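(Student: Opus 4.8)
The statement to prove is Lemma~\ref{lem:Green-correspondence-relation.}: writing $M_{\NF}\uparrow_{\NF}^{\F}\cong M\oplus\bigoplus_{i=1}^{n}M_i$ with each $M_i$ indecomposable, every $M_i$ is $\F$-centric and has vertex in $\mathcal{X}$. The $\F$-centricity is immediate: $M_{\NF}\in\MackFHR{\F}{\NF}$ by construction (it is a summand of $M\downarrow_{\NF}^{\F}$, which is $\F$-centric by Proposition~\ref{prop:centric-Induction-restriction.}), and then $M_{\NF}\uparrow_{\NF}^{\F}\in\MackFcR$ by Proposition~\ref{prop:centric-Induction-restriction.}; any summand of a centric Mackey functor is centric. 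So the content is the vertex claim, and the natural route is to run the ``other triangle'' of the Green correspondence exactly as in \cite[Proposition 3.1]{SASAKI198298}, now using the tools assembled in Subsections~\ref{subsec:N-is-a-direct-summand-of-Ninduction-restriction.}--\ref{subsec:Transfer-map-from-trHNF-to-trHF.}.

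First I would record that each $M_i$ is a summand of $M_{\NF}\uparrow_{\NF}^{\F}$ with $M_{\NF}$ being $H$-projective (it has vertex $H$ by Corollary~\ref{cor:Green-correspondent-below.}), so by Theorem~\ref{thm:Higman's-criterion.} each $M_i$ is $H$-projective; minimality of the defect set (Corollary~\ref{cor:defect-set.}) gives $V_{M_i}\le_{\F}H$, hence $V_{M_i}\in\mathcal{X}$ unless $V_{M_i}=_{\F}H$. So I must rule out $V_{M_i}=_{\F}H$ for $i\ge 1$. The key is to restrict back down to $\NF$ and compare with Corollary~\ref{cor:Green-correspondent-below.}. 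Apply Lemma~\ref{lem:induction-from-normalizer-followed-by-restriction-to-normalizer.} to $M_{\NF}$ (which is $H$-projective): $M_{\NF}\uparrow_{\NF}^{\F}\downarrow_{\NF}^{\F}\cong M_{\NF}\oplus M'$ with $M'$ being $\mathcal{Y}$-projective. Since restriction is additive, $M\downarrow_{\NF}^{\F}\oplus\bigoplus_{i\ge1}M_i\downarrow_{\NF}^{\F}\cong M_{\NF}\oplus M'$. Now by Corollary~\ref{cor:Green-correspondent-below.}, the left-hand side contains, inside $M\downarrow_{\NF}^{\F}$, a unique indecomposable summand of vertex $H$, namely $M_{\NF}$, and all its other indecomposable summands have vertex in $\mathcal{Y}$; and $M'$, being $\mathcal{Y}$-projective, has all its indecomposable summands of vertex in $\mathcal{Y}$ as well (Corollary~\ref{cor:irreducible-admits-vertex.} together with Corollary~\ref{cor:projective-respect-to-bigger.} \eqref{enu:X<Y implies Y-proj.}). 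By the Krull--Schmidt--Azumaya theorem (\cite[Theorem 6.12 (ii)]{MethodsOfRepresentationTheoryCurtisReiner}), applied since $\muFR$ is finitely generated over the complete local ring $\R$ (Proposition~\ref{prop:Mackey-algebra-basis.}), the multiset of indecomposable summands of vertex $H$ on the two sides must match. On the right that multiset is exactly $\{M_{\NF}\}$. On the left it contains $M_{\NF}$ (from $M\downarrow_{\NF}^{\F}$) plus, for each $i\ge1$ with $V_{M_i}=_{\F}H$, at least one indecomposable summand of $M_i\downarrow_{\NF}^{\F}$ of vertex $H$ (here I use that an $H$-projective indecomposable restricted to $\NF$ has its Green correspondent, of vertex $H$, as a summand --- this is Corollary~\ref{cor:Green-correspondent-below.} applied to $M_i$). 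Matching cardinalities forces $n'=0$ where $n'$ counts such $i$; that is, no $M_i$ with $i\ge1$ has vertex $=_{\F}H$, so they all have vertex in $\mathcal{X}$.

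The main obstacle I anticipate is the bookkeeping in the Krull--Schmidt comparison: one must be careful that $M$ itself appears on the left-hand decomposition with the ``right'' Green correspondent $M_{\NF}$ as its unique vertex-$H$ restricted summand, and that this $M_{\NF}$ is the \emph{same} (up to isomorphism) as the $M_{\NF}$ sitting on the right of the Lemma's displayed isomorphism --- this is guaranteed by Lemma~\ref{lem:Green-correspondent-above.} and the uniqueness clause of Corollary~\ref{cor:Green-correspondent-below.}, but it needs to be invoked explicitly so that the vertex-$H$ summands really cancel rather than merely counting the same. A clean way to phrase this is: delete the common summand $M_{\NF}$ from both sides of $M\downarrow_{\NF}^{\F}\oplus\bigoplus_{i\ge1}M_i\downarrow_{\NF}^{\F}\cong M_{\NF}\oplus M'$; the remainder on the right is $M'$, which is $\mathcal{Y}$-projective and so has every indecomposable summand of vertex in $\mathcal{Y}$; hence the remainder on the left, which includes every indecomposable summand of every $M_i\downarrow_{\NF}^{\F}$ except possibly the removed copy of $M_{\NF}$, has every summand of vertex in $\mathcal{Y}$; but if some $M_i$ ($i\ge1$) had vertex $=_{\F}H$, its restriction would contribute a vertex-$H$ summand distinct from the single removed $M_{\NF}$, contradiction. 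Everything else is routine application of Theorem~\ref{thm:Higman's-criterion.} and the defect-set formalism already set up.
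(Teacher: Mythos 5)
Your proposal is correct and follows essentially the same route as the paper: show each $M_i$ is $H$-projective via Theorem \ref{thm:Higman's-criterion.}, then restrict $M_{\NF}\uparrow_{\NF}^{\F}$ back to $\NF$, invoke Lemma \ref{lem:induction-from-normalizer-followed-by-restriction-to-normalizer.} to write it as $M_{\NF}$ plus a $\mathcal{Y}$-projective complement, and use Krull--Schmidt--Azumaya together with Corollary \ref{cor:Green-correspondent-below.} to rule out a vertex-$H$ summand among the $M_i$. The only (immaterial) difference is that you phrase the final step as a matching of vertex-$H$ summands, whereas the paper derives a direct contradiction by showing that the Green correspondent $\left(M_j\right)_{\NF}$ of a putative vertex-$H$ summand $M_j$ would land inside the $\mathcal{Y}$-projective complement.
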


\begin{proof}
From Proposition \ref{prop:centric-Induction-restriction.} we know
that $M_{i}\in\MackFcR$ for every $i=1,\dots,n$. From Corollary
\ref{cor:Green-correspondent-below.} we know $M_{\NF}$ has vertex
$H$. Therefore, from Theorem \ref{thm:Higman's-criterion.}, we know
that there exists $P\in\MackFHR{\F}{\FHH}$ such that $M_{\NF}$ is
a summand of $P\uparrow_{\FHH}^{\NF}$. Since induction preserves
direct sum decomposition then we can conclude that each $M_{i}$ is
a summand of $P\uparrow_{\FHH}^{\F}$. From Theorem \ref{thm:Higman's-criterion.}
this implies that each $M_{i}$ is $H$-projective. Assume now that
there exists $j\in\left\{ 1,\dots,n\right\} $ such that $M_{j}$
has vertex $H$. Since restriction preserves direct sum decomposition
we can conclude, using Corollary \ref{cor:Green-correspondent-below.}
that, $M_{\NF}\oplus\left(M_{j}\right)_{\NF}$ is a summand of $M_{\NF}\uparrow_{\NF}^{\F}\downarrow_{\NF}^{\F}$.
However, from Lemma \ref{lem:induction-from-normalizer-followed-by-restriction-to-normalizer.},
we know that there exists an $\mathcal{Y}$-projective $Q\in\MackFHR{\F}{\NF}$
such that $M_{\NF}\uparrow_{\NF}^{\F}\downarrow_{\NF}^{\F}\cong M_{\NF}\oplus Q$.
From uniqueness of the Krull-Schmidt-Axumaya theorem we can then conclude
that $\left(M_{j}\right)_{\NF}$ is a summand of $Q$. Thus, from
Corollary \ref{cor:irreducible-admits-vertex.}, we can conclude that
$\left(M_{j}\right)_{\NF}$ has vertex in $\mathcal{Y}$. This contradicts
Corollary \ref{cor:Green-correspondent-below.}. Thus we can conclude
that none of the $M_{i}$ has vertex in $H$. Since they are all $H$-projective
then we can conclude from minimality of the defect set and Corollary
\ref{cor:projective-respect-to-bigger.} that they are all $\mathcal{X}$-projective.
From Corollary \ref{cor:irreducible-admits-vertex.} this implies
that each $M_{i}$ has vertex $\F$-conjugate to an element in $\mathcal{X}$
thus concluding the proof.
\end{proof}
Putting the previous results together we can prove that the Green
correspondence holds for centric Mackey functors over fusion systems.
\begin{thm}
\label{thm:Green-correspondence.}(Green correspondence) Let $\R$
be a complete local and $p$-local $PID$ (see Definition \ref{def:p-local}),
let $M\in\MackFcR$ (see Definition \ref{def:F-centric-Mackey-functor.})
be indecomposable with vertex $H$ (see Definition \ref{def:vertex.}
and Notation \ref{nota:X-and-Y.}) and let $N\in\MackFHR{\F}{\NF}$
(see Example \ref{exa:definition-NF.}) be indecomposable with vertex
$H$. There exist unique (up to isomorphism) decompositions of $M\downarrow_{\NF}^{\F}$
and $N\uparrow_{\NF}^{\F}$ (see Definition \ref{def:Restriction-induction-and-conjugation-functors.})
into direct sums of indecomposable Mackey functors. Moreover, writing
these decompositions as
\begin{align*}
M\downarrow_{\NF}^{\F} & :=\bigoplus_{i=0}^{n}M_{i}, & N\uparrow_{\NF}^{\F} & :=\bigoplus_{j=0}^{m}N_{j},
\end{align*}
there exist unique $i\in\left\{ 0,\dots,n\right\} $ and $j\in\left\{ 0,\dots,m\right\} $
such that both $M_{i}$ and $N_{j}$ have vertex $H$. We call these
summands the \textbf{Green correspondents} of $M$ and $N$ and denote
them as $M_{\NF}$ and $N^{\NF}$ respectively. Every indecomposable
summand of $M\downarrow_{\NF}^{\F}$ other than $M_{\NF}$ has vertex
in $\mathcal{Y}$ (see Notation \ref{nota:X-and-Y.}) while every
indecomposable summand of $N\uparrow_{\NF}^{\F}$ other than $N^{\NF}$
has vertex $\F$-conjugate to an element in $\mathcal{X}$ (see Notation
\ref{nota:X-and-Y.}). Finally we have that $\left(M_{\NF}\right)^{\F}\cong M$
and that $\left(N^{\F}\right)_{\NF}\cong N$.
\end{thm}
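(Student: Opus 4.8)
The plan is to assemble the theorem from the four Lemmas and Corollary already proven in this subsection, namely Corollaries \ref{cor:irreducible-admits-vertex.} and \ref{cor:Green-correspondent-below.} and Lemmas \ref{lem:Green-correspondent-above.} and \ref{lem:Green-correspondence-relation.}. First I would dispose of the existence and uniqueness of the decompositions: since $\muFR$ and $\FmuFR{\NF}$ are finitely generated as $\R$-modules (Proposition \ref{prop:Mackey-algebra-basis.}) and $\R$ is a complete local $PID$, the Krull-Schmidt-Azumaya theorem (\cite[Theorem 6.12 (ii)]{MethodsOfRepresentationTheoryCurtisReiner}) gives unique decompositions of $M\downarrow_{\NF}^{\F}\in\MackFHR{\F}{\NF}$ and of $N\uparrow_{\NF}^{\F}\in\MackFcR$ (using Proposition \ref{prop:centric-Induction-restriction.} for centricity of the latter) into indecomposables; write these as $\bigoplus_{i=0}^{n}M_i$ and $\bigoplus_{j=0}^{m}N_j$.

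Next I would identify the distinguished summands. For $M\downarrow_{\NF}^{\F}$, Corollary \ref{cor:Green-correspondent-below.} applies verbatim: there is a unique $i$ with $M_i$ of vertex $H$ — this is $M_{\NF}$ — and every other summand has vertex in $\mathcal{Y}$. For $N\uparrow_{\NF}^{\F}$, Lemma \ref{lem:Green-correspondent-above.} produces an indecomposable $M\in\MackFcR$ of vertex $H$ which is a summand of $N\uparrow_{\NF}^{\F}$ and satisfies $M_{\NF}\cong N$; this $M$ is the required $N^{\NF}$. To see it is the \emph{only} summand of vertex $H$, and that all the others have vertex in $\mathcal{X}$, I would invoke Lemma \ref{lem:Green-correspondence-relation.} applied to this very $M$: writing $M_{\NF}\uparrow_{\NF}^{\F}\cong M\oplus\bigoplus_{k\ge 1}M_k$ with each $M_k$ of vertex in $\mathcal{X}$, and noting $M_{\NF}\cong N$, this is exactly the decomposition of $N\uparrow_{\NF}^{\F}$; uniqueness of Krull-Schmidt then forces the indecomposable summands to match up, so $N^{\NF}=M$ is the unique vertex-$H$ summand and the rest have vertex in $\mathcal{X}\subseteq\mathcal{Y}$.

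Finally, the two reciprocity isomorphisms: $\left(N^{\NF}\right)_{\NF}\cong N$ is immediate from the construction, since $N^{\NF}=M$ and Lemma \ref{lem:Green-correspondent-above.} gives $M_{\NF}\cong N$. For $\left(M_{\NF}\right)^{\F}\cong M$, I would start from $M$ indecomposable of vertex $H$, pass to its Green correspondent $M_{\NF}$ (vertex $H$, by Corollary \ref{cor:Green-correspondent-below.}), and apply Lemma \ref{lem:Green-correspondent-above.} to the indecomposable $N':=M_{\NF}\in\MackFHR{\F}{\NF}$ of vertex $H$: it yields an indecomposable $M'\in\MackFcR$ of vertex $H$, summand of $N'\uparrow_{\NF}^{\F}=M_{\NF}\uparrow_{\NF}^{\F}$, with $(M')_{\NF}\cong N'=M_{\NF}$. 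By Lemma \ref{lem:Green-correspondence-relation.}, $M$ itself is the unique summand of $M_{\NF}\uparrow_{\NF}^{\F}$ of vertex $H$, hence $M'\cong M$, i.e. $\left(M_{\NF}\right)^{\F}\cong M$. The one point requiring care — the main obstacle — is checking that the labelling $(-)^{\F}$ from Lemma \ref{lem:Green-correspondent-above.} is well-defined, i.e. that the indecomposable summand of vertex $H$ inside $N\uparrow_{\NF}^{\F}$ is genuinely unique; this is precisely what Lemma \ref{lem:Green-correspondence-relation.} guarantees, so the proof reduces to citing it correctly and tracking the Krull-Schmidt uniqueness through both directions.
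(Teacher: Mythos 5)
Your proposal is correct and follows essentially the same route as the paper's proof: both assemble the theorem from Corollary \ref{cor:Green-correspondent-below.} (for $M\downarrow_{\NF}^{\F}$), Lemma \ref{lem:Green-correspondent-above.} (to realize $N$ as $P_{\NF}$ for some indecomposable $P$ of vertex $H$), and Lemma \ref{lem:Green-correspondence-relation.} (to control the decomposition of $N\uparrow_{\NF}^{\F}$ and deduce both reciprocity isomorphisms via Krull--Schmidt--Azumaya). Your extra explicitness about matching summands and the slightly redundant second application of Lemma \ref{lem:Green-correspondent-above.} in the $\left(M_{\NF}\right)^{\F}\cong M$ step do not change the argument.
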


\begin{proof}
From Lemma \ref{lem:Green-correspondent-above.} we know that there
exists an indecomposable $P\in\MackFcR$ with vertex $H$ such that
$N=P_{\NF}$. It follows from Lemma \ref{lem:Green-correspondence-relation.}
that there exists a unique (up to isomorphism) decomposition of $N\uparrow_{\NF}^{\F}$
as the one in the statement and that $P\cong N^{\NF}$. In particular
$\left(N^{\F}\right)_{\NF}\cong N$.

From Corollary \ref{cor:Green-correspondent-below.} we know that
there exists a unique (up to isomorphism) decomposition of $M\downarrow_{\NF}^{\F}$
as the one in the statement. From Lemma \ref{lem:Green-correspondence-relation.}
and the first part of the statement we have that $\left(M_{\NF}\right)^{\F}\cong M$
which concludes the proof.
\end{proof}
Before concluding this paper let us see an example where Theorem \ref{thm:Green-correspondence.}
can be applied.
\begin{example}
\label{exa:minimal-F-centric.}Let $\R$ be a complete local and $p$-local
$PID$ and let $\F$ be a fusion system. For example, we can take
$\R=\Zp$ and, using the notation of Example \ref{exa:fusion-system.},
we can take the fusion system $\F_{1}:=\FAB{D_{8}}{GL_{2}\left(3\right)}$,
or the Ruiz-Viruel exotic fusion system $\F_{2}$ on $7_{+}^{1+2}$
having two $\F$-orbits of elementary abelian subgroups of rank 2
the first of which has 6 elements while the second has 2 elements
(see \cite[Theorem 1.1]{RuizViruelClassificationOverExtraspecial.}).

Choose now $H\in\Fc$ fully $\F$-normalized and minimal under the
preorder $\le_{\F}$ (see Notation \ref{nota:p,S,F}). For $\F_{1}$
we can take $H_{1}$ to be any one of the two characteristic elementary
abelian subgroups of rank 2 of $D_{8}$. For $\F_{2}$ we can take
$H_{2}$ to be one of the two elementary abelian subgroups of rank
two whose $\F_{2}$-orbit contains only 2 elements (make sure to take
one that is fully $\F_{2}$-normalized).

In order to visualize this example it might help to notice the following
identities 
\begin{align*}
N_{\F_{1}}\left(H_{1}\right) & =\FAB{D_{8}}{S_{4}}, & N_{\F_{2}}\left(H_{2}\right) & =\FAB{7_{+}^{1+2}}{L_{3}\left(7\right).3}.
\end{align*}
The first one follows after a straightforward calculation while the
second one follows from \cite[Theorem 1.1]{RuizViruelClassificationOverExtraspecial.}
and \cite[Section 4]{SubgroupFamiliesControllingpLocalFiniteGroups}.

Let $\I$ be as in Proposition \ref{prop:Decomposition-centric-Mackey-functor.}
and for every $x\in\muFR$ denote by $\overline{x}\in\muFR/\I$ its
image via the natural projection. From Proposition \ref{prop:Mackey-algebra-basis.}
we know that $\muFR$ is finitely generated as an $\R$-module. As
a consequence $\muFR/\I$ is also finitely generated as an $\R$-module.
Therefore we can apply the Krull-Schmidt-Azumaya theorem (see \cite[Theorem 6.12 (ii)]{MethodsOfRepresentationTheoryCurtisReiner})
together with \cite[Proposition 6.10 (ii)]{MethodsOfRepresentationTheoryCurtisReiner}
in order to conclude that, for every $H\in\Fc$, there exists a unique
(up to conjugation) decomposition of $\overline{I_{H}^{H}}$ in $\muFR/\I$
as a sum of orthogonal local idempotents. Let $\overline{I_{H}^{H}}=\sum_{i=0}^{n}\overline{x_{i}}$
be such decomposition. Define now $\overline{x}:=\overline{x_{0}}$.
For example, for $\F_{1}$ we have that $\Aut_{\F_{1}}\left(H_{1}\right)\cong S_{3}$
and, therefore, we can take $\varphi\in\Aut_{\F_{1}}\left(H_{1}\right)$
to be one of the two elements of order 3 and $\frac{2}{3}\overline{I_{H_{1}}^{H_{1}}}-\frac{1}{3}\overline{c_{\varphi}}-\frac{1}{3}\overline{c_{\varphi^{2}}}$
is a local idempotent in the decomposition of $\overline{I_{H_{1}}^{H_{1}}}$.

Since $\overline{I_{H}^{H}xI_{H}^{H}}=\overline{x}$ by construction
then, from Proposition \ref{prop:Mackey-algebra-basis.}, we know
that
\[
\overline{x}=\sum_{j=0}^{m}\lambda_{j}\overline{I_{\varphi_{j}\left(A_{j}\right)}^{H}c_{\varphi_{j}}R_{A_{j}}^{H}}.
\]
for some $\lambda_{j}\in\R$, some $A_{j}\le H$ and some isomorphisms
$\varphi_{j}\colon A_{j}\to\varphi_{j}\left(A_{j}\right)$ in $\F$
such that $\varphi_{j}\left(A_{j}\right)\le H$. Since $H$ is minimal
$\F$-centric then, by definition of $\I$, we can conclude that $\overline{I_{\varphi_{j}\left(A_{j}\right)}^{H}c_{\varphi_{j}}R_{A_{j}}^{H}}=\overline{0}$
unless $A_{j}=H$. In this situation we necessarily have that $\varphi_{j}\in\Aut_{\F}\left(H\right)=\Aut_{\NF}\left(H\right)$.
Viewing $\FmuFR{\NF}$ as a subset of $\muFR$ (see Corollary \ref{cor:Mackey-algebra-inclusion.}),
we have in particular that $\overline{x}\in\overline{\FmuFR{\NF}}$.
Define now the two sided ideal $\J$ of $\FmuFR{\NF}$ as $\J:=\I\cap\FmuFR{\NF}$.
We know that $\FmuFR{\NF}/\J\cong\left(\FmuFR{\NF}+\I\right)/\I$
and, therefore, we can view $\FmuFR{\NF}/\J$ as a subset of $\muFR\I$
and $\overline{x}$ as an idempotent in $\FmuFR{\NF}/\J$. Since $\overline{x}$
is a primitive idempotent of $\muFR/\I$ (recall that every local
idempotent is primitive), it is also a primitive idempotent of $\FmuFR{\NF}/\J$.
In particular we have that $M:=\left(\muFR/\I\right)\overline{x}$
and $N:=\left(\FmuFR{\NF}/\J\right)\overline{x}$ are indecomposable
as left $\muFR/\I$ and $\FmuFR{\NF}/\J$-modules respectively. In
particular they are indecomposable as $\muFR$ and $\FmuFR{\NF}$
modules respectively (i.e. as Mackey functors over $\F$ and $\NF$
respectively). From definition of $\I$ and $\J$ we can also conclude
that $M\in\MackFcR$ and $N\in\MackFHR{\F}{\NF}$.

From Lemma \ref{lem:Many-properties-definition.}, Proposition \ref{prop:Mackey-algebra-basis.}
and \cite[Proposition 4.4]{IntroductionToFusionSystemsLinckelmann}
we know that $\I$ is spanned as an $\R$-module by elements of the
form $I_{\varphi\left(C\right)}^{B}c_{\varphi}R_{C}^{A}$ with $C\in\FHH[A]\backslash\left(\FHH[A]\cap\Fc\right)$.
In particular $R_{C}^{A}\in\FmuFR{\FHH[A]}\cap\I$ and we can write
any element in $\I I_{H}^{H}$ (resp. $\J I_{H}^{H}$) as a finite
sum of elements of the form $bc$ with $b\in\muFR$ (resp. $\FmuFR{\NF}$)
and $c\in\FmuFR{\FHH}\cap\I$. Therefore, for every $y\otimes_{\FmuFR{\FHH}}\overline{x}\in M_{H}$
(resp. $y\otimes_{\FmuFR{\FHH}}\overline{x}\in N_{H}$) such that
$y\in\I$ (resp. $y\in\J$) we have that $y\otimes_{\FmuFR{\FHH}}\overline{x}=0$.
This allows us to define the morphisms of Mackey functors $u_{H}^{M}:M\to M_{H}$
and $u_{H}^{N}:N\to N_{H}$ by setting $u_{H}^{M}\left(\overline{a}\right)=aI_{H}^{H}\otimes\overline{x}$
and $u_{H}^{N}\left(\overline{b}\right)=bI_{H}^{H}\otimes\overline{x}$
for any representative $a\in\muFR$ of $\overline{a}\in M$ and any
representative $b\in\FmuFR{\NF}$ of $\overline{b}\in N$. Since $\overline{x}$
is an idempotent and $\overline{I_{H}^{H}}\overline{x}=\overline{x}$
by construction then, with this notation, we have that $\overline{aI_{H}^{H}}\overline{x}=\overline{a}$
and that $\overline{bI_{H}^{H}}\overline{x}=\overline{b}$. In other
words we have that $\theta_{H}^{M}u_{H}^{M}=\Id_{M}$ and that $\theta_{H}^{N}u_{H}^{N}=\Id_{N}$.
Equivalently both $M$ and $N$ are $H$-projective. Since $H$ is
minimal $\F$-centric and fully $\F$-normalized we can conclude from
minimality of the defect set and Corollary \ref{cor:irreducible-admits-vertex.}
that $H$ is in fact the vertex of both $M$ and $N$. We now have
by construction that $M\cong N\uparrow_{\NF}^{\F}$ which proves that
$M\cong N^{\NF}$. From Theorem \ref{thm:Green-correspondence.} we
can then conclude that $N=M_{\NF}$ and, therefore, that there exists
an $\mathcal{Y}$-projective $N'\in\MackFHR{\F}{\NF}$ such that $M\downarrow_{\NF}^{\F}\cong N\oplus N'$.
In the case of $\F_{1}$, since $H_{1}$ is characteristic and $H_{1}$
is minimal $\F$-centric, then we have that $\mathcal{Y}=\emptyset$
and, therefore, $N'=0$ and $M\downarrow_{\NF}^{\F}\cong N$. On the
other hand, in the case of $\F_{2}$, we have that $\mathcal{Y}=\left\{ K\right\} $
where $K$ is the only other group in the $\F_{2}$ orbit of $H_{2}$.
Thus $N'$ is $K$-projective. Since $H_{2}$ is minimal $\F$-centric
then so is $K$ and since $N'$ is $\F$-centric then $N'_{J}=0$
for every $J\lneq K$ and, therefore, we can conclude from Theorem
\ref{thm:Higman's-criterion.} that $N'$ has vertex $K$.
\end{example}

\bibliographystyle{unsrt}
\bibliography{bibliography}

\end{document}